\newcommand{\fr}{\mathfrak}
\newcommand{\cal}{\mathscr}
\newcommand{\op}{\operatorname}
\newcommand{\tn}{\textnormal}
\newcommand{\xysmall}{\xymatrix@C=1.5em@R=1.5em}
\newcommand{\loo}[1]{(\!(#1)\!)}
\newcommand{\arc}[1]{[\![#1]\!]}
\newcommand{\Spec}{\mathrm{Spec}}
\newcommand{\dR}{\mathrm{dR}}
\newcommand{\QCoh}{\mathrm{QCoh}}
\newcommand{\et}{\textnormal{\'et}}
\newcommand{\eh}{\textnormal{\'eh}}
\newcommand{\bfeh}{\textbf{\'eh}}
\newcommand{\Mod}{\textnormal{-}\mathrm{Mod}}
\newcommand{\Ran}{\mathrm{Ran}}
\newtheorem*{thm*}{Theorem}
\newtheorem{thm}{Theorem}[section]
\newtheorem{thmx}{Theorem}
\newtheorem{prop}[thm]{Proposition}
\newtheorem{lem}[thm]{Lemma}
\newtheorem{cor}[thm]{Corollary}
\theoremstyle{definition}
\newtheorem{eg-eg}[thm]{Example of Example}
\newtheorem{rem}[thm]{Remark}
\newmdenv[
  topline=false,
  bottomline=false,
  rightline=false,
  skipabove=\topsep,
  skipbelow=\topsep
]{siderules}
\numberwithin{equation}{section}
\title{Tame twistings and $\Theta$-data}
\author{Yifei Zhao}
\date{\today}
\email{yifei@math.harvard.edu}
\begin{document}
\maketitle

\begin{abstract}
The goal of this paper is to assign an intrinsic meaning to the space of quantum parameters $\tn{Par}_G$ appearing in the geometric Langlands program of Beilinson--Drinfeld. We introduce tame twistings, a variant of twisted differential operators (TDOs) for which regularity of twisted $\cal D$-modules is well-defined. Our main result is that for a proper curve $X$, $\tn{Par}_G$ is  precisely the moduli space of factorization tame twistings on the affine Grassmannian.
\end{abstract}

\setcounter{tocdepth}{1}
\tableofcontents

\section*{Introduction}

\subsection*{Quantum parameters}$ $

\smallskip

Not long since the origin of the geometric Langlands program concerning $\cal D$-modules on the moduli stack of $G$-bundles over a proper complex curve $X$ \cite{beilinson1991quantization}, it has been speculated that the entire program should have a deformation related to the $1$-parameter family of quantum groups $U_q(\fr g)$ deforming the universal enveloping algebra \cite{stoyanovsky2006quantum} \cite{gaitsgory2016quantum}. 

\smallskip

For a simple group $G$, there is a natural candidate for such a deformation. Namely, the stack $\tn{Bun}_G$ has a determinant line bundle $\det_{\fr g}$ and one may consider $\cal D$-modules twisted by any of its complex power $\det_{\fr g}^c$. The quantum Langlands program, therefore, asks for a spectral interpretation of the twisted category $\cal D\Mod^c(\tn{Bun}_G)$ in terms of the dual group $\check G$. As the category $\cal D\Mod^c(\tn{Bun}_G)$ receives a functor from reprensentations of the Kac--Moody Lie algebra at level $c$, it indeed relates to representations of $U_q(\fr g)$ for $q = \exp(2\pi i c)$ \cite{gaitsgory2008twisted}.

\smallskip

This paper is devoted to answering the following (apparently ill-posed) question: \emph{what does the parameter $c$ mean?}

\smallskip

To begin with, the relationship between $\det_{\fr g}$ and the Killing form suggests that for a reductive group $G$, the number $c$ should be replaced by a Weyl-invariant bilinear form $\kappa$ on the Cartan subalgebra $\fr t$. On the other hand, the study of parabolic induction indicates that $\kappa$ is not the only relevant part of quantum parameters---reduction from $G$ to $T$ acquires a shift by a sheaf of twisted differential operators (TDO) on $\tn{Bun}_T$ which can be attributed to an extension of $\cal O_X$-modules of the following form \cite[\S3.3]{gaitsgory2016eisenstein}:
$$
0 \rightarrow \omega_X \rightarrow E \rightarrow \fr t\otimes\cal O_X \rightarrow 0.
$$
Incorporating this ``quantum anomaly'' led to the definition of the parameter space $\tn{Par}_G$ as pairs $(\kappa, E)$ where $\kappa$ is as before, and $E$ is an $\omega_X$-extension of $\fr z_G\otimes\cal O_X$, for $\fr z_G$ being the center of the Lie algebra $\fr g$.

\smallskip

This definition of quantum parameters turns out to be quite convenient. In \cite{zhao2017quantum}, it is observed that each $(\kappa, E)$ gives rise directly to a TDO on $\tn{Bun}_G$ and thus to a category of twisted $\cal D$-modules, bypassing line bundles. However, it has been unclear what the nature of such pairs $(\kappa, E)$ is. The na\"ive guess that they parametrize all TDOs on $\tn{Bun}_G$ is already wrong for a torus: $\tn{Bun}_T$ has infinitely many connected components labeled by the cocharacter lattice $\Lambda_T$.

\subsection*{Factorization twistings}$ $

\smallskip

A more sensible guess is that $\tn{Par}_G$ parametrizes factorization twistings on the Beilinson--Drinfeld affine Grassmannian $\tn{Gr}_{G, \tn{Ran}}$. The object $\tn{Gr}_{G, \tn{Ran}}$ can be viewed as a local avatar of $\tn{Bun}_G$, attached to any collection of points $x^{(i)}$ in the base curve. The formal way to say this is that $\tn{Gr}_{G, \tn{Ran}}$ is a prestack over the \emph{Ran space} of $X$. In fact, the projection $\tn{Gr}_{G, \tn{Ran}}\rightarrow \tn{Ran}$ is a filtered colimit of schematic morphisms, though not smooth ones. The factorization structure on $\tn{Gr}_{G, \tn{Ran}}$ describes how its fibers merge as distinct points collide.

\smallskip

On the other hand, Gaitsgory--Rozenblyum \cite{gaitsgory2011crystals} introduced the notion of a \emph{twisting} as a natural generalization of TDOs to non-smooth schemes, so it makes sense to study twistings on $\tn{Gr}_{G, \tn{Ran}}$ which respect the factorization structure. They are called \emph{factorization twistings}. This discussion does not involve the global geometry of $X$, so one may even drop the assumption that $X$ is proper.

\smallskip

For a torus $T$, a twisting on $\tn{Gr}_{T}$ might appear differently on each connected component $\tn{Gr}_T^{\lambda}$, but factorization forces the distinct components to interact. On the other hand, imposing factorization is natural for the purpose of the Langlands program. In order to make contact with spectral data, $G(\cal O)$-equivariant twisted $\cal D$-modules on $\tn{Gr}_G$ should form a Tannakian category, where the symmetry constraint arises from the factorization structure \cite{mirkovic2007geometric}. This would not be possible if the twisting defining $\cal D$-modules itself lacked factorization.

\smallskip

We do not yet know whether $\tn{Par}_G$ parametrizes factorization twistings aside from the case of a semisimple, simply connected group $G$\footnote{where the answer is affirmative, see below; however, we suspect the answer to be false in general.}. In this paper, we show that when $X$ is proper, $\tn{Par}_G$ instead parametrizes a variant of twistings, called \emph{tame twistings}. The following result appears as Theorem \ref{thm-twisting-classification} in the main text.

\begin{thmx}
\label{thmx-twisting}
For a proper, smooth, connected curve $X$ and a reductive group $G$, the category of factorization tame twistings on $\tn{Gr}_{G, \tn{Ran}}$ is canonically equivalent to $\tn{Par}_G$.
\end{thmx}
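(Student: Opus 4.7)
My plan is to construct an explicit functor from $\tn{Par}_G$ to factorization tame twistings on $\tn{Gr}_{G,\tn{Ran}}$, and to show it is an equivalence by reducing the classification to the case of a torus. Given $(\kappa, E) \in \tn{Par}_G$, the Weyl-invariant form $\kappa$ should supply the quadratic part of the twisting, analogous to the central extension of the loop algebra $\fr g\loo{t}$ at level $\kappa$, while the $\omega_X$-extension $E$ contributes a linear correction supported on the center $\fr z_G$. The main technical content in this direction is to realize these contributions in a factorization-compatible way and to verify that the resulting twisting satisfies the tameness condition.

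For the reverse direction, the natural approach is to transfer factorization tame twistings across the diagram $\tn{Gr}_{T, \tn{Ran}} \leftarrow \tn{Gr}_{B, \tn{Ran}} \to \tn{Gr}_{G, \tn{Ran}}$ for a Borel $B \subset G$ with Levi quotient $T$. Pullback along $\tn{Gr}_B \to \tn{Gr}_G$ should be fully faithful after imposing Weyl invariance, while pullback along $\tn{Gr}_B \to \tn{Gr}_T$ detects the torus-level data and absorbs the unipotent direction. Establishing these descent statements for factorization tame twistings---in a way that interacts correctly with the factorization structure over $\tn{Ran}$---will occupy a substantial part of the argument.

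The heart of the proof is then the torus case. Here $\tn{Gr}_T$ splits into connected components indexed by $\Lambda_T$, and a factorization twisting amounts to a collection of line bundles on $\tn{Gr}_T^\lambda$ together with factorization isomorphisms dictating how they merge as points collide. Formally expanding this data around the small diagonal in $\tn{Ran}$ should extract (a) a symmetric bilinear form on $\Lambda_T$, extending $\bb C$-linearly to a form on $\fr t$ which becomes Weyl-invariant after descending to $G$; and (b) an extension of $\fr t \otimes \cal O_X$ by an $\cal O_X$-module determined by the factorization data at first order.

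The principal obstacle, and the decisive role of \emph{tameness}, will be identifying this extension as an $\omega_X$-extension rather than a more general $\cal O_X$-linear one. As noted in the introduction, without tameness $\tn{Par}_G$ is not the full moduli of factorization twistings, so the content of the theorem is that tameness precisely cuts out the $\omega_X$-extensions of $\fr z_G \otimes \cal O_X$. I expect this to follow from interpreting tameness as a regularity condition on the relevant connection, forcing at worst logarithmic behavior at collisions. Combining this with Weyl invariance, which restricts the $\fr t$-valued extension to its Weyl-fixed part $\fr z_G$, should then complete the identification with $\tn{Par}_G$.
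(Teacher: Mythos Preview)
Your proposal diverges substantially from the paper's argument, and the divergence is not merely cosmetic: the reduction you outline has a genuine gap.

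The paper does not pass through $\tn{Gr}_{B,\tn{Ran}}$. Instead it restricts directly along $\tn{Gr}_{T,\tn{Ran}} \to \tn{Gr}_{G,\tn{Ran}}$ and packages the result as an \emph{enhanced $\Theta$-datum} $(q, \cal G^{(\lambda)}, \varepsilon)$. The crucial mechanism for showing this restriction functor is an equivalence is the fiber sequence
\[
\mathbf{Pic} \longrightarrow \mathring{\mathbf{Tw}} \longrightarrow \mathring{\mathbf{Ge}},
\]
which reduces the problem to two separate classifications: factorization \emph{line bundles} on $\tn{Gr}_{G,\tn{Ran}}$ (taken as input from \cite{tao2019extensions}) and factorization \emph{tame gerbes} on $\tn{Gr}_{G,\tn{Ran}}$ (proved as an instance of a general theorem for any ``motivic'' theory of gerbes). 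The twisting case then follows by a 4-lemma argument plus essential surjectivity from the $k$-linear structure. Your plan has no analogue of this: you attempt to handle twistings directly, but twistings are not nil-invariant and do not satisfy the cohomological purity and base-change properties that make the gerbe classification tractable. Without the sandwiching between $\mathbf{Pic}$ and $\mathring{\mathbf{Ge}}$, it is unclear how you would establish full faithfulness of restriction to the torus, and the phrase ``fully faithful after imposing Weyl invariance'' hides exactly the step the paper works hardest to prove.

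Your account of tameness also misidentifies the target. Tameness does not directly force an $\omega_X$-extension; it forces an extension by the Zariski sheaf $\mathring{\Omega}^1_X$ of $1$-forms with logarithmic growth at infinity (this is the content of the identification $\mathring{\mathbf{Tw}}(X) \simeq \tau^{\le 0}\op R\Gamma(X,\mathring{\Omega}^1[1])$ over smooth $X$). The resulting parameter space is $\mathring{\tn{Par}}_G$, not $\tn{Par}_G$, and the two coincide only because $X$ is proper, via a separate Gersten-resolution computation showing $\op R\Gamma(X;\mathring{\Omega}^1) \xrightarrow{\sim} \op R\Gamma(X;\omega_X)$ in that case. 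Your heuristic ``regularity at collisions forces logarithmic behaviour'' is pointing at the right phenomenon but at the wrong locus: tameness concerns growth at the boundary of a good compactification of the base, not behaviour along the diagonals in $\tn{Ran}$.
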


Like usual twistings, tame twistings are objects of algebraic geometry and exist over any ground field $k=\bar k$ with $\tn{char}(k) = 0$. Before giving a precise definition, we mention several aspects of this notion that explain how it appears ``in nature.''
\begin{enumerate}[(a)]
	\item A usual twisting on a \emph{smooth} scheme $X$ is a torsor for the complex $\Omega_X^1 \rightarrow \Omega_X^{2,\tn{cl}}$, whereas a tame twisting is a torsor for the subsheaf $\mathring{\Omega}_X^1$ of $\Omega_X^1$ whose sections over $U$ consists of differentials with logarithmic growth along a good compactification $\overline U$ of $U$. 
	
	\smallskip
	
	In particular, the process of inducing twistings from line bundles factors through tame twistings by the map $d\log : \cal O_X^{\times} \rightarrow \mathring{\Omega}_X^1$.
	\smallskip
	\item In contrast to usual twistings, the category of $\cal D$-modules twisted by a tame twisting has a natural notion of \emph{regularity} generalizing the usual notion of regular $\cal D$-modules.
	\smallskip
	\item A tame twisting has an underlying \emph{tame gerbe}. Furthermore, when $k=\mathbb C$, tame gerbes on $X$ form a full subcategory of gerbes on the analytification $X^{\tn{an}}$ banded by the constant group $\mathbb C^{\times}$.
\end{enumerate}

\noindent
These properties suggest that tame twistings naturally arise when we consider twisted $\cal D$-modules in conjunction with complex constructible sheaves---this is, indeed, something one does for the purpose of the geometric Langlands program (see \cite{liu2019semi}, for example). We emphasize that tameness is not a condition of a twisting, but an additional piece of structure.

\smallskip

The properness hypothesis in the statment of Theorem \ref{thmx-twisting} is artificial in the following sense. The actual result we shall prove is that factorization tame twistings are paramterized by a modified groupoid $\mathring{\tn{Par}}_G$, regardless of properness of $X$. It consists of pairs $(\kappa, \mathring E)$ where $\kappa$ is as before and $\mathring E$ is an extension of Zariski sheaves valued in $k$-vector spaces:
$$
0 \rightarrow \mathring{\Omega}_X^1 \rightarrow \mathring E\rightarrow \fr z_G \rightarrow 0.
$$
It just so happens that when $X$ is proper, the datum of  $\mathring E$ is equivalent to that of $E$. In the non-proper case, there are advantages of taking $\mathring{\tn{Par}}_G$ as the definition of quantum parameters as opposed to $\tn{Par}_G$. Besides its closer relationship with analytic objects, $\mathring{\tn{Par}}_G$ has the structure of an algebraic stack with \emph{finite}-dimensional automorphism groups (hence $1$-affine, see \cite{gaitsgory2015sheaves}).

\smallskip

Tame twistings are $k$-linear objects, and the equivalence of Theorem \ref{thmx-twisting} which we shall produce respects $k$-linearity. Thus we automatically obtain an equivalence of $k$-linear stacks.

\smallskip

We also give a partial answer to the classification problem of usual factorization twistings on $\tn{Gr}_{G, \tn{Ran}}$, as it is interesting in its own right. The following result appears as Theorem \ref{thm-usual-twisting-classification}, where the curve $X$ is only assumed to be smooth and connected.

\begin{thmx}
\label{thmx-usual-twisting}
Suppose $G$ is semisimple and simply connected. Then the category of factorization twistings on $\tn{Gr}_{G, \tn{Ran}}$ is canonically equivalent to Weyl-invariant symmetric bilinear forms on $\fr t$.
\end{thmx}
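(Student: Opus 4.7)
My plan is to reduce to the torus case via a Borel subgroup, and then match factorization twistings on $\tn{Gr}_{T,\tn{Ran}}$ with bilinear forms on $\fr t$; Weyl invariance will come for free from the reduction. Throughout, I will leverage two facts special to the semisimple simply connected case: that $\tn{Gr}_G$ is connected, and that $\mathrm{Pic}(\tn{Gr}_G) \cong \bb Z^r$ where $r$ is the number of simple factors of $G$.

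I would first construct the functor from Weyl-invariant bilinear forms on $\fr t$ to factorization twistings on $\tn{Gr}_{G,\tn{Ran}}$. Integral forms correspond to generators of $\mathrm{Pic}(\tn{Gr}_G)$, namely the ample tautological line bundles, one per simple factor; each admits a canonical factorization structure on $\tn{Gr}_{G,\tn{Ran}}$ and so defines a factorization line bundle, hence a factorization twisting via $d\log:\cal O^\times \to \Omega^1$. Because factorization twistings form a $k$-linear stack---a twisting being a torsor under the sheaf of $k$-vector spaces $\Omega^1 \to \Omega^{2,\tn{cl}}$---one can $k$-linearize the integral Weyl-invariant lattice and thereby obtain the desired functor at the level of objects.

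To prove full faithfulness and essential surjectivity, I would restrict to a maximal torus. Choosing a Borel $B = TU$ gives the standard maps $\tn{Gr}_{T,\tn{Ran}} \leftarrow \tn{Gr}_{B,\tn{Ran}} \to \tn{Gr}_{G,\tn{Ran}}$, where the second is bijective on connected components under our hypothesis on $G$, while the first has unipotent and hence cohomologically trivial fibers. Pullback should therefore identify factorization twistings on $\tn{Gr}_{G,\tn{Ran}}$ with factorization twistings on $\tn{Gr}_{T,\tn{Ran}}$ that are invariant under the induced $W = N(T)/T$-action. For a torus $T$, the affine Grassmannian at a single point of $X$ is the discrete lattice $\Lambda_T$, and factorization across pairs of colliding points encodes a symmetric bimultiplicative structure from which one extracts a symmetric bilinear form on $\Lambda_T$; after $k$-linearization this delivers a bilinear form on $\fr t$, giving the torus-side equivalence.

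The main obstacle will be the torus computation itself: carefully identifying the category of factorization twistings on $\tn{Gr}_{T,\tn{Ran}}$ with bilinear forms on $\fr t$, and promoting this from a bijection on isomorphism classes to an equivalence of categories. This requires understanding how the picture at a single point of $X$ (where only the discrete lattice $\Lambda_T$ is visible) glues across finite subsets encoded by the Ran space, and how non-integral parameters enter via the $k$-linear structure of twistings. In parallel, rigidity---the statement that two factorization twistings inducing the same form are canonically isomorphic, with no residual automorphisms---should reduce to cohomological vanishing in low degrees on $\tn{Gr}_G$, available from the cellular structure of the affine Grassmannian when $G$ is semisimple and simply connected.
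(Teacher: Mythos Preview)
Your proposal has a genuine gap in the torus step, and this gap is exactly what makes the paper take a different route.

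You assert that factorization twistings on $\tn{Gr}_{T,\tn{Ran}}$ are classified by symmetric bilinear forms on $\fr t$. This is not correct. Unwinding the factorization structure over $X^{(\lambda,\mu)}$ as you describe does extract a form $\kappa$, but it also leaves behind a $\Lambda_T$-indexed family $\cal T^{(\lambda)}\in\mathbf{Tw}(X)$ with a multiplicative structure---the $\Theta$-datum $(q,\cal T^{(\lambda)})$ of \S\ref{sec-gerbe-theta}. For usual (non-tame) twistings, $\mathbf{Tw}(X)$ is an $\Omega^1_X$-torsor groupoid which is neither discrete nor trivial on a curve, so the residual family $\cal T^{(\lambda)}$ is genuine extra data. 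Your reduction via $B$ therefore lands in something strictly larger than $\cal Q(\Lambda_T;k)^W$, and nothing in the proposal explains why the surplus vanishes. (Relatedly, the map $\tn{Gr}_{B,\tn{Ran}}\to\tn{Gr}_{G,\tn{Ran}}$ is not of a kind that makes pullback of twistings an equivalence onto a $W$-invariant subcategory; the paper uses a contracting $\mathbb G_m$-action on $\tn{Gr}_{N_P}$ to pass information through parabolics, and that argument requires properties---notably $\mathbb A^1$-invariance---that usual twistings lack.)

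The paper bypasses this entirely. It writes $\mathbf{Tw}$ as the fiber of $\mathbf{Ge}^+_{\dR}\to\mathbf{Ge}_{\mathbb G_a}$, where $\mathbf{Ge}^+_{\dR}(Y)=\tn{Maps}(Y_{\dR},\op B^2\mathbb G_a)$ and $\mathbf{Ge}_{\mathbb G_a}(Y)=\tn{Maps}(Y,\op B^2\mathbb G_a)$. The first term \emph{is} a motivic $\tn h$-theory of gerbes (\S\ref{sec-add-dr-context}), so Theorem~\ref{thm-gerbe-classification} applies directly and gives $\mathbf{Ge}^{+,\tn{fact}}_{\dR}(\tn{Gr}_{G_{\tn{sc}},\tn{Ran}})\cong\cal Q(\Lambda_{T_{\tn{sc}}};k)^W$. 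The second term is killed by the affine Borel--Weil--Bott theorem: since $\op R\pi^{\le\lambda^I}_*\cal O\cong\cal O_{X^I}$, every $\mathbb G_a$-gerbe on $\tn{Gr}_{G_{\tn{sc}},\tn{Ran}}$ descends to $\tn{Ran}$, where factorization forces triviality. This is the step where simple connectedness is actually used, and it is the idea your proposal is missing.
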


\noindent
In particular, for a semisimple and simply connected group $G$, a usual factorization twisting on $\tn{Gr}_{G, \tn{Ran}}$ is canonically tame. This is not the case for more general $G$.

\subsection*{More on quantum paramters}$ $

\smallskip

From the perspective of the Langlands program, the role played by quantum parameters in the $\cal D$-module context is analogous to the Brylinski--Deligne data. The latter are central extensions $\mathbf E$ of $G$ by the big Zariski sheaf of the second algebraic K-group $\mathbf K_2$ and are used to produce metaplectic coverings of the ad\`elic group $G(\mathbb A_{\mathbf F})$ in the usual Langlands program \cite{brylinski2001central}.

\smallskip

By Gaitsgory \cite{gaitsgory2018parameterization}, the groupoid of Brylinski--Deligne data $\mathbf{CExt}(G, \mathbf K_2)$ admits a functor $\Xi_{\mathbf{Pic}}$ to factorization line bundles on $\tn{Gr}_{G, \Ran}$, which is futhermore an equivalence \cite{tao2019extensions}. We shall explicitly identify the composition:
\begin{align*}
	\mathbf{CExt}(G, \mathbf K_2) \underset{\sim}{\xrightarrow{\Xi_{\mathbf{Pic}}}} & \mathbf{Pic}^{\tn{fact}}(\tn{Gr}_{G, \Ran}) \\
	& \rightarrow \mathring{\mathbf{Tw}}{}^{\tn{fact}}(\tn{Gr}_{G, \Ran}) \underset{\sim}{\xrightarrow{\Psi_{\mathring{\mathbf{Tw}}}}} \mathring{\tn{Par}}_G.
\end{align*}
as a combination of the standard procedure of extracting a quadratic form from a central extension by $\mathbf K_2$ and the functor of ``taking the derivative'' of $\mathbf E$ when restricted to the center of $G$ (Corollay \ref{cor-compatibility-bd-data}). This expresses a kind of compatibility between the classification theorem of Brylinski--Deligne \cite{brylinski2001central} and our classification of factorization tame twistings.

\subsection*{Implementing tameness}$ $

\smallskip

Let us now give a precise definition of $\mathring{\mathbf{Tw}}(X)$ for an arbitrary finite type scheme $X$ over $k$. This turns out to be slightly technical, because we simultaneously want $\mathring{\mathbf{Tw}}$ to have strong descent properties (like the usual twistings) and to retain the explicit description as Zariski $\mathring{\Omega}^1$-torsors over a smooth scheme.

\smallskip

Concretely, we first define $\mathring{\mathbf{Ge}}$ as the $\eh$-sheafification of the classifying ($2$-)stack of the stack of rank--$1$ \emph{regular} local systems, in the sense of $\cal D$-modules. We call $\mathring{\mathbf{Ge}}$ the stack of \emph{tame gerbes}. There is a canonical map from $\mathring{\mathbf{Ge}}$ to the \emph{derived} $\bfeh$-sheafification\footnote{We use bold characters to emphasize topologies defined on derived schemes.} of $\op B^2\mathbb G_m$ and we let $\mathring{\mathbf{Tw}}$ be the fiber of this map. Analogous to their usual counterparts, we have a fiber sequence relating line bundles, tame twistings, and tame gerbes:
$$
\mathbf{Pic}(X) \rightarrow \mathring{\mathbf{Tw}}(X) \rightarrow \mathring{\mathbf{Ge}}(X).
$$
We are forced to work with derived schemes in defining $\mathring{\mathbf{Tw}}$, as even usual twistings satisfy derived $\mathbf h$-descent but fail classical $\tn h$-descent, a fact which ultimately boils down to the derived $\mathbf h$-descent of perfect complexes due to Halpern-Leistner--Preygel \cite{halpern2014mapping}. On the other hand, there is no problem in building $\mathring{\mathbf{Ge}}$ on classical schemes because the resulting stack is nil-invariant.

\smallskip

Instead of the $\tn h$-topology, we choose to work with the weaker $\eh$-topology because we need the restriction of $\mathring{\mathbf{Tw}}$ to smooth schemes to recover $\mathring{\Omega}^1$-torsors. This relies on an $\eh$-to-\'etale comparison theorem for the cohomology of $\mathbb G_m$ due to T.~Geisser \cite{geisser2006arithmetic}. We will also need to calculate the $\eh$-cohomology groups of the sheaf $\mathring{\Omega}^1$. These turn out to be very calculable after establishing the fact that $\mathring{\Omega}^1$ is an $\mathbb A^1$-invariant $\tn h$-sheaf with transfer.

\smallskip

In fact, $\mathring{\Omega}^1$ is just the first piece in a family of sheaves $\mathring{\Omega}^p$, for all $p\ge 0$, which we call ``differential forms of moderate growth.'' They are all $\mathbb A^1$-invariant $\tn h$-sheaves with transfer on the category of smooth schemes. Regarded as Zariski sheaves, they are related by a Gersten resolution:
	$$
	\mathring{\Omega}^p \rightarrow \bigoplus_{x\in X^{(0)}} (i_x)_* \mathring{\Omega}^p(x) \rightarrow \bigoplus_{x\in X^{(1)}} (i_x)_*\mathring{\Omega}^{p-1}(x) \rightarrow \cdots \rightarrow \bigoplus_{x\in X^{(p)}} (i_x)_*k,
	$$
whose existence can either be seen as a consequence of Mazza--Voevoedsky--Weibel \cite{mazza2011lecture} or the Bloch--Ogus theorem \cite{bloch1974gersten} combined with elementary facts from mixed Hodge theory.

\subsection*{From factorization to $\Theta$-data}$ $

\smallskip

We now sketch the proofs of Theorems \ref{thmx-twisting} and \ref{thmx-usual-twisting}.

\smallskip

The first step in our proof of Theorem \ref{thmx-twisting} is to recognize $\mathring{\tn{Par}}_G$ as a kind of ``enhanced $\Theta$-data.'' Let us explain what these are. Recall that Brylinski--Deligne \cite{brylinski2001central} classified central extensions of a torus $T$ by $\mathbf K_2$ over the base $X$ by the following groupoid. It consists of pairs $(q, \cal L^{(\lambda)})$ where $q$ is an integral quadratic form on $\Lambda_T$ and $\cal L^{(\lambda)}$ is a $\Lambda_T$-indexed system of line bundles over $X$. They are equipped with multiplicative structures:
$$
c_{\lambda,\mu} : \cal L^{(\lambda)} \otimes \cal L^{(\mu)} \rightarrow \cal L^{(\lambda + \mu)},
$$
which are associative, but only commutative up to a $\kappa$-twist, for $\kappa$ being the bilinear form assocaited to $q$. The same groupoid showed up in the study of chiral algebras \cite{beilinson2004chiral} and was called \emph{even $\Theta$-data}. Since we do not need $\mathbb Z/2\mathbb Z$-grading, we shall refer to this groupoid simply as \emph{$\Theta$-data} of the lattice $\Lambda_T$. The Brylinski--Deligne classification for a reductive group $G$ involves a $\Theta$-datum for the co-weight lattice as well as a certain isomorphism $\varepsilon$ of two $\Theta$-data for the co-root lattice. We call the groupoid of such gadgets \emph{enhanced $\Theta$-data}.

\smallskip

It is straightforward to see that $\mathring{\tn{Par}}_G$ identifies with enhanced $\Theta$-data when we replace the value group of $q$ by $k$, and the system of line bundles $\cal L^{(\lambda)}$ by a system of tame twistings. Moreover, we shall formalize a general \emph{theory of gerbes} to be an \'etale stack $\mathbf G$ valued in strictly commutative Picard $2$-groupoids, which receives a map (``first Chern class''):
$$
c_1 : \mathbf{Pic} \underset{\mathbb Z}{\otimes} A(-1) \rightarrow \mathbf G,\quad (\cal L, a)\leadsto \cal L^a,
$$
with $A(-1)$ being a certain coefficient group associated to $\mathbf G$. Then there is a sensible notion of enhanced $\Theta$-data for a theory of gerbes $\mathbf G$, denoted by $\Theta_G(\Lambda_T; \mathbf G)$. This paradigm applies to line bundles, twistings (tame or usual), as well as gerbes in various sheaf-theoretic contexts.

\smallskip

Roughly speaking, we will build a functor from various factorization gadgets to their corresponding groupoids of enhanced $\Theta$-data. The canonicity of the construction produces a morphism of fiber sequences of Picard 2-groupoids.
$$
\xysmall{
	\mathbf{Pic}^{\tn{fact}}(\op{Gr}_{G, \tn{Ran}}) \ar[r]\ar[d]^{\Psi_{\mathbf{Pic}}} & \mathring{\mathbf{Tw}}{}^{\tn{fact}}(\op{Gr}_{G,\tn{Ran}}) \ar[r]\ar[d]^{\Psi_{\mathring{\mathbf{Tw}}}} & \mathring{\mathbf{Ge}}{}^{\tn{fact}}(\tn{Gr}_{G, \tn{Ran}}) \ar[d]^{\Psi_{\mathring{\mathbf{Ge}}}} \\
	\Theta_G(\Lambda_T; \mathbf{Pic}) \ar[r] & \Theta_G(\Lambda_T; \mathring{\mathbf{Tw}}) \ar[r] & \Theta_G(\Lambda_T; \mathring{\mathbf {Ge}})
}
$$
Then we will prove that $\Psi_{\mathring{\mathbf{Tw}}}$ is an equivalence on all homotopy groups, thereby deducing Theorem \ref{thmx-twisting}. This will follow from showing that $\Psi_{\mathbf{Pic}}$ and $\Psi_{\mathring{\mathbf{Ge}}}$ are both equivalences and that $\Psi_{\mathring{\mathbf{Tw}}}$ is surjective on $\pi_0$. The joint work with J.~Tao \cite{tao2019extensions} shows that $\Psi_{\mathbf{Pic}}$ is an equivalence, so a significant step of the proof already exists. A direct argument exploiting the $k$-linear structure of tame twistings then shows that $\Psi_{\mathring{\mathbf{Tw}}}$ is essentially surjective.

\smallskip

At this point, it is tempting to use the aforementioned fact that tame gerbes form a full subcategory of analytic $\mathbb C^{\times}$-gerbes and reduce the statement about $\Psi_{\mathring{\mathbf{Ge}}}$ to Reich \cite[Theorem II.7.3]{reich2012twisted}. However, we avoid this input as the proof in \emph{loc.cit.}~relies on several errors and consequently yielded an incorrect classification statement.\footnote{Contrary to the assertion of \cite[Theorem II.7.3]{reich2012twisted}, the fiber sequence has no canonical splitting and its proof used an incorrectly defined splitting (Proposition II.3.6, Proposition II.7.5). Furthermore, two steps in the proof applied cohomological purity of divisors to non-(ind-)smooth schemes (Lemma II.7.6 and Proposition III.2.8).} Instead, we supply a proof using a different strategy.

\smallskip

In fact, we will provide a uniform proof for gerbes in various sheaf-theoretic contexts. We remove the restriction on $\tn{char}(k)$ but fix a sufficiently strong topology $t$ which allows for resolution of singularities. Then we characterize those theories of gerbes which are ``motivic.'' The properties included are purity, $\mathbb A^1$-homotopy invariance, $t$-descent, and a weak form of proper base change. The following result appears as Theorem \ref{thm-gerbe-classification}.

\begin{thmx}
\label{thmx-gerbe}
Let $\mathbf G$ be a motivic $t$-theory of gerbes. Then we have an equivalence of categories:
$$
\Psi_{\mathbf G} : \mathbf G^{\tn{fact}}(\tn{Gr}_{G, \tn{Ran}}) \xrightarrow{\sim} \Theta_G(\Lambda_T; \mathbf G).
$$
\end{thmx}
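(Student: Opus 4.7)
\emph{Step 1: torus case.} When $G = T$, the reduced affine Grassmannian $\Gr_{T, \Ran}$ is fiberwise a $\Lambda_T$-indexed disjoint union, and the $T(\mathcal{O})$-action on it is $\mathbb{A}^1$-contractible and hence invisible to $\mathbf{G}$. A factorization gerbe therefore restricts to a $\Lambda_T$-system $\mathcal{G}^{(\lambda)}\in\mathbf{G}(X)$, and factorization over the diagonal complements in $X^n$ provides multiplicative isomorphisms $\mathcal{G}^{(\lambda)}\boxtimes\mathcal{G}^{(\mu)}\simeq\mathcal{G}^{(\lambda+\mu)}|_{X^2\setminus\Delta}$. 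The obstruction to extending these across $\Delta$ is controlled by the motivic purity sequence for $\Delta\subset X^2$: the failure of $S_2$-equivariance records the commutativity constraint and extracts from $(\lambda,\mu)$ an $A(-1)$-valued bilinear form on $\Lambda_T$, which is precisely the datum $\kappa$. This parallels the line-bundle argument of Tao--Zhao, but is now carried out uniformly for every motivic $\mathbf{G}$ using only purity, $t$-descent, and $\mathbb{A}^1$-invariance.

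\emph{Step 2: coroot enhancement.} For each simple coroot $\alpha^\vee$, the inclusion $\mathrm{SL}_2\hookrightarrow G$ gives a map $\Gr_{\mathrm{SL}_2}\to\Gr_G$ over $\Ran$; the minimal $\mathrm{SL}_2(\mathcal{O})$-orbit is a $\mathbb{P}^1$ whose $\mathbf{G}$-value is pinned down by purity and $\mathbb{A}^1$-invariance. Applying Step 1 to the maximal torus of $\mathrm{SL}_2$ on the pullback of a factorization gerbe produces a $\Theta$-datum on the coroot lattice, and its comparison with the one obtained by restricting the $\Lambda_T$-$\Theta$-datum along coroots $\hookrightarrow$ coweights provides the isomorphism $\varepsilon$ of the enhanced $\Theta$-data.

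\emph{Step 3: reductive $G$.} Fix a Borel $B = TU$. The span $\Gr_T\xleftarrow{\pi}\Gr_B\xrightarrow{\iota}\Gr_G$ realizes $\Gr_G$ as stratified by the images under $\iota$ of the components of $\Gr_B$ (the semi-infinite Schubert cells), while $\pi$ is an ind-affine bundle. The motivic axioms---$t$-descent along the $\iota$-stratification, $\mathbb{A}^1$-invariance on the fibers of $\pi$, weak proper base change for factorization, and purity for gluing adjacent cells---should combine to identify $\mathbf{G}^{\mathrm{fact}}(\Gr_{G,\Ran})$ with the full subgroupoid of $\mathbf{G}^{\mathrm{fact}}(\Gr_{T,\Ran})$ consisting of Weyl-equivariant objects carrying the coroot enhancement of Step 2; Step 1 then rewrites this as $\Theta_G(\Lambda_T;\mathbf{G})$.

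I expect the main obstacle to be the diagonal computation in Step 1: extracting the quadratic form from the factorization commutativity constraint requires a clean purity calculation for $\Delta\subset X^2$ and a careful identification of the $S_2$-symmetry's contribution, performed uniformly across all motivic $t$-theories of gerbes. A secondary technical point is formalizing ``weak proper base change'' strongly enough to survive factorization over the non-quasi-compact Ran space in Step 3.
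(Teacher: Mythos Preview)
Your Step~1 is essentially the paper's approach: the paper introduces an auxiliary prestack $\Gr_{T,\mathrm{comb}}$ (a colimit of copies of $X^I$ indexed by $\Lambda_T$-tuples) and shows that $\Gr_{T,\mathrm{comb}}\to\Gr_{T,\Ran}$ is a $t$-local isomorphism, so that factorization gerbes on $\Gr_{T,\Ran}$ are computed by a direct purity calculation along diagonals in $X^2$, exactly as you describe.

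Step~3, however, has a genuine gap. The semi-infinite span $\Gr_T\leftarrow\Gr_B\to\Gr_G$ is not what the paper uses, and it is not clear that it can be made to work: the map $\iota$ is not proper, the semi-infinite orbit closures $\overline{S^\lambda}$ are neither smooth nor of ind-finite type in a way that the motivic axioms address, and none of (RP1), (RP2), (A), (B) gives you descent along such a stratification. ``$t$-descent along the $\iota$-stratification'' is simply not a well-posed operation in this axiomatic setup. Your Step~2 is also too thin: looking at a single $\mathbb P^1$ orbit does not suffice to classify gerbes on all of $\Gr_{G_{\mathrm{sc}}}$, and the paper's argument here is the most substantial part of the proof.

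What the paper does instead is the following. For semisimple simply connected $G_{\mathrm{sc}}$ it computes $\mathbf G^e(\Gr_{G_{\mathrm{sc}},x})$ at a single point by passing to the affine flag variety $\mathrm{Fl}_G$, using Faltings' description of $\mathring I^-$-orbits and a contracting $\mathbb G_m$-action on $\mathring I^-(n)$ to descend to a smooth finite-type quotient where purity identifies the answer with $\mathrm{Maps}(\Delta^{\mathrm{aff}},A(-1))$; a relative-Picard argument then shows restriction to $x$ is fully faithful. For general $G$, the paper first reduces via a $z$-extension to the case $G_{\mathrm{der}}$ simply connected, then uses that $\Gr_G\to\Gr_{G/G_{\mathrm{der}}}$ is an \'etale-locally trivial fiber bundle with fiber $\Gr_{G_{\mathrm{der}}}$ to obtain a fiber sequence linking $\mathbf G^{\mathrm{fact}}(\Gr_G)$ to the torus and simply connected cases already handled. $W$-invariance and the restricted condition on the quadratic form are proved separately by parabolic reduction to semisimple rank~$1$ (via the contracting $\mathbb G_m$-action on $\Gr_{N_P}$) and an explicit $\mathbb A^1$-path in $G$ joining $e$ to a lift of a simple reflection. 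So the architecture is ``torus $+$ simply connected $\Rightarrow$ reductive via $z$-extensions and an \'etale fiber bundle,'' not ``reduce to torus via semi-infinite cells.''
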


\smallskip

Besides tame gerbes and analytic $\mathbb C^{\times}$-gerbes, Theorem \ref{thmx-gerbe} also applies to \'etale gerbes valued in suitable torsion abelian groups. The latter has been used in Gaitsgory--Lysenko \cite{gaitsgory2018parameters} to define geometric metaplectic dual data.

\smallskip

Recently, various other sheaf theories have been studied in the context of the affine Grassmannian and the Satake equivalence---there are the perverse $\mathbb F_p$-sheaves of R.~Cass \cite{cass2019perverse}, the stratified mixed Tate motives of Richarz--Scholbach \cite{richarz2019motivic}, among others. We hope that our formulation would be useful for generalizing their results to the metaplectic setting.

\smallskip

In the case $k=\mathbb C$, we summarize the relationship between the various twisting agents in the following diagram on the left. In the special case where $G$ is a \emph{simple}, \emph{simply connected} group, their classifications are depicted in the diagram on the right.
$$
\xymatrix@C=-4em@R=1em{
	& \mathbf{Pic}^{\tn{fact}}(\tn{Gr}_{G, \tn{Ran}}) \ar[d] \\
	& \mathring{\mathbf{Tw}}{}^{\tn{fact}}(\tn{Gr}_{G, \tn{Ran}}) \ar[dl]\ar[dr] & \\
	\mathbf{Tw}^{\tn{fact}}(\tn{Gr}_{G, \tn{Ran}}) & & \mathring{\mathbf{Ge}}{}^{\tn{fact}}(\tn{Gr}_{G, \tn{Ran}}) \ar[dr] \\
	& & & \mathbf{Ge}_{\tn{an}}^{\tn{fact}}(\tn{Gr}_{G, \tn{Ran}})
}
\quad
\xymatrix@C=2em@R=1.3em{
	& \mathbb Z \ar[d] \\
	& \mathbb C \ar[dl]_{\tn{id}}\ar[dr] & \\
	\mathbb C & & \mathbb C/\mathbb Z \ar[dr]^{\exp(2\pi i-)} \\
	& & & \mathbb C^{\times}.
}
$$
The recent (unpublished) works of Chen--Fu and R.-T.~Yang on representations of $U_q(\fr g)$ suggest that there is a paradigm of equivalences of \emph{factorization categories}. Namely, one starts with an object $\cal T$ of $\mathring{\mathbf{Tw}}{}^{\tn{fact}}(\tn{Gr}_{G, \tn{Ran}})$ and tries to relate certain factorzation categories of $\cal T$-twisted crystals with certain factorization categories of $\cal G$-twisted constructible sheaves, for $\cal G$ being its image in $\mathbf{Ge}_{\tn{an}}^{\tn{fact}}(\tn{Gr}_{G, \tn{Ran}})$. Our classification of these gadgets by enhanced $\Theta$-data can hopefully contribute to their line of research.

\smallskip

Thus, Theorem \ref{thmx-gerbe} is in part motivated by a desire to perform ``community service.''

\smallskip

Having Theorem \ref{thmx-gerbe} at our disposal, we apply it to a theory of gerbes which is \emph{not} used to twist any category of sheaves. Namely, we consider the stack which associates to $X$ the groupoid of $\mathbb G_a$-gerbes on $X_{\dR}$. The fact that this theory of gerbes is motivic follows from usual facts about algebraic de Rham cohomology. Finally, Theorem \ref{thmx-usual-twisting} follows from this result combined with the Borel--Weil--Bott theorem on affine Schubert varieties.

\subsection*{Organization of the paper}$ $

\smallskip

The paper is roughly split into two parts. Sections \S\ref{sec-topology}-\ref{sec-ge-and-tw} are devoted to developping the notion of tame gerbes and tame twistings. These require the $\tn{char}(k)=0$ assumption to allow for Hironaka's resolution of singularities. Sections \S\ref{sec-motivic-gerbes}-\ref{sec-classification-proof} formulate and prove the main classification theorems, with applications to various sheaf-theoretic contexts.

\smallskip

We first record some preliminary facts about the $\eh$-topology and its derived analogue in \S\ref{sec-topology}. These do no go beyond the work of Geisser \cite{geisser2006arithmetic}, Friedlander--Voevodsky \cite{friedlander2000bivariant}, and Halpern-Leistner--Preygel \cite{halpern2014mapping}.

\smallskip

In \S\ref{sec-diff-forms}, we study the sheaves $\mathring{\Omega}^p$ systematically, for all $p\ge 0$. Their basic properties follow from mixed Hodge theory. The $\tn h$-descent is proved by comparing $\mathring{\Omega}^p$ with the $\tn h$-sheafification of $\Omega^p$ studied by H\"uber--Jorder \cite{huber2013differential}. Then a series of cohomological comparison results follow from the theorems of Voevodsky and Scholbach \cite{scholbach2012geometric}, so we end up only needing to calculate the Zariski cohomology of $\mathring{\Omega}^p$, where a Gersten resolution supplies the required tools.

\smallskip

We gather these ingredients to define tame gerbes and tame twistings in \S\ref{sec-ge-and-tw}. We prove that tame twistings satisfy various expected properties and can be used to form a twisted category of $\cal D$-modules, which possesses a notion of regularity.

\smallskip

In \S\ref{sec-motivic-gerbes}, we formulate a \emph{motivic $t$-theory of gerbes} for a sufficiently strong topology $t$. Then we verify that \'etale mod-$\ell$ gerbes, complex analytic gerbes, as well as tame gerbes are examples of such motivic theories. By contrast, tame twistings form a theory of gerbes according to our definition, but not a motivic one.

\smallskip

The next \S\ref{sec-fact-theta} contains all the main results of this paper.  We first define \emph{enhanced $\Theta$-data} $\Theta_G(\Lambda_T; \mathbf G)$ attached to a theory of gerbes $\mathbf G$. Then we recall the classification of factorization line bundles by integral enhanced $\Theta$-data, established in \cite{tao2019extensions}. Then we state Theorem \ref{thmx-gerbe} and deduce Theorems \ref{thmx-twisting} and \ref{thmx-usual-twisting} from it. The actual argument is less formal than what we sketched above, because to define the functor $\Psi_{\mathring{\mathbf{Tw}}}$ for an arbitrary reductive group $G$ requires knowledge about its behavior for tori and semisimple, simply connected groups. We prove the compatibility statement between quantum parameters and Brylinski--Deligne data alluded to above, although there seems to be more mathematics on this topic that remains to be explored.

\smallskip

Finally, we prove Theorem \ref{thmx-gerbe} in \S\ref{sec-classification-proof}. Roughly speaking, we use the classification of factorization line bundles to supply enough factorization gerbes, and appeal to the motivic properties of $\mathbf G$ to ensure that there are not too many of them.

\subsection*{Notations}$ $

\smallskip

Throughout the paper, we work over a ground field $k=\bar k$.

\smallskip

By a \emph{scheme} we shall always mean a separated (classical) scheme over $k$, and we denote by $\mathbf{Sch}_{/k}$ the category they form. The notation $\mathbf{Sch}^{\tn{ft}}_{/k}$ will mean (separated) schemes of finite type over $k$. We let $\mathbf{Sm}_{/k}$ denote its full subcategory consisting of smooth schemes.

\smallskip

Our convention on ind-schemes is as follows. We call an \emph{ind-scheme} a presheaf on $\mathbf{Sch}_{/k}$ which can be represented as a filtered colimit $\underset{\nu}{\tn{colim}}\;X^{(\nu)}$ where each $X^{(\nu)}$ belongs to $\mathbf{Sch}_{/k}$, each morphism $X^{(\nu)} \rightarrow X^{(\nu')}$ is a closed immersion, and the index category has cardinality $\le |\aleph_0|$. The category of ind-schemes is denoted by $\mathbf{IndSch}_{/k}$. It has a full subcategory $\mathbf{IndSch}^{\tn{ft}}_{/k}$, which consists of \emph{ind-finite type} ind-schemes, i.e., we can take each $X^{(\nu)}$ to lie in $\mathbf{Sch}^{\tn{ft}}_{/k}$ in a colimit presentation as above.

\smallskip

We will need to consider presheaves on $\mathbf{Sch}^{\tn{ft}}_{/k}$ valued in $2$-groupoids. However, we find it convenient to import the theory of $\infty$-groupoids and use the well-developped theory of algebras and modules in them \cite{lurie2009higher} \cite{lurie2016higher}. We will denote by $\mathbf{Spc}$ the $\infty$-category of $\infty$-groupoids in the sense of Lurie. By a \emph{presheaf} $\cal F$ on $\mathbf{Sch}^{\tn{ft}}_{/k}$, we will mean a $\mathbf{Spc}$-valued presheaf unless otherwise stated. The $\infty$-category they form is denoted by $\tn{PSh}(\mathbf{Sch}^{\tn{ft}}_{/k})$. For a topology $t$ on $\mathbf{Sch}^{\tn{ft}}_{/k}$, we denote by $\tn{Shv}_t(\mathbf{Sch}^{\tn{ft}}_{/k})$ the full subcategory of $t$-sheaves. Given $\cal F \in \tn{PSh}(\mathbf{Sch}^{\tn{ft}}_{/k})$, its $t$-sheafification is denote by $\cal F_t$.

\smallskip

Although most of this paper stays within classical algebraic geometry, for the definition of a tame twisting we will need derived schemes. Thus we let $\mathbf{DSch}_{/k}$ denote the $\infty$-category of (separated) derived schemes over $k$, locally modeled on simplicial commutative $k$-algebras. The full subcategory $\mathbf{DSch}^{\tn{ft}}_{/k}$ denotes finite type derived schemes, i.e., $X\in\mathbf{DSch}_{/k}$ whose underlying classical scheme is of finite type and $\cal O_X$ is a coherent $\pi_0\cal O_X$-module (in particular eventually coconnective). Tautologically, we have inclusions:
$$
\mathbf{Sm}_{/k} \subset \mathbf{Sch}^{\tn{ft}}_{/k} \subset \mathbf{DSch}^{\tn{ft}}_{/k},
$$
where neither functor preserves fiber products.

\smallskip

In fact, we only need derived schemes when $\tn{char}(k) = 0$, so one can take the equivalent theory modeled on connective commutative DG algebras over $k$, as is done in \cite{gaitsgory2017study}. The theory of ind-coherent sheaves as well as left and right crystals have been developed in this context \cite{gaitsgory2011ind} \cite{gaitsgory2011crystals}.

\smallskip

By a \emph{reductive} group $G$, we always refer to a connected reductive group defined over $k$. We will use $G_{\tn{der}}$ to denote its derived subgroup, and $\widetilde G_{\tn{der}}$ its universal cover. Thus $\widetilde G_{\tn{der}}$ is a semisimple, simply connected group. The letter $T$ denotes a maximal torus of $G$, and $B$ denotes a Borel with nilpotent radical $N$.

\smallskip

We use ``covariant notations'' for the root data of $G$. More precisely, $\Lambda_T := \tn{Hom}(\mathbb G_m, T)$ is the co-character lattice, whereas $\check{\Lambda}_T := \tn{Hom}(T, \mathbb G_m)$ is the character lattice. Let $\Lambda_T^r$ (resp.~$\check{\Lambda}_T^r$) denote the sublattice spanned by co-roots (resp.~roots). Then the algebraic fundamental group of $G$ is the quotient $\Lambda_T/\Lambda_T^r$. We use $\Phi$ and $\check{\Phi}$ to denote the co-root and root systems, and $\Delta$ and $\check{\Delta}$ to denote the choice of simple co-roots and roots determined by $B$.

\smallskip

The objects associated to $G_{\tn{der}}$ and $\widetilde G_{\tn{der}}$ are decorated in the same manner. For example, $\Lambda_{\widetilde T_{\tn{der}}}$ is the co-character lattice of the maximal torus $\widetilde T_{\tn{der}}\subset\widetilde G_{\tn{der}}$ corresponding to $T$. In fact, $\Lambda_{\widetilde T_{\tn{der}}}$ canonically identifies with $\Lambda_T^r$.

\subsection*{Acknowledgments} I thank Dennis Gaitsgory both for suggesting this problem in 2016 and for the numerous helpful conversations that followed. In fact, the notion of tame twistings emerged from one of these conversations.

\smallskip

I am grateful to James Tao for the collaboration \cite{tao2019extensions} as the classification theorems in the current paper can be seen as an outgrowth of \emph{loc.cit.}. Many of his ideas are thus present here.

\smallskip

I thank Ruotao Yang for pointing out an error in an earlier draft, and to Sasha Beilinson for relating $\mathring{\Omega}^p$ to Bloch--Ogus theory. I also thank Dori Bejleri, Lin Chen, Elden Elmanto, and Yuchen Fu for helpful conversations related to this work.

\bigskip

\section{Some topologies}
\label{sec-topology}

In this section, we recall the definition of the $\eh$-topology and introduce its analogue for derived schemes. The results which will be used in the sequal are the two comparison lemmas between $\eh$ and \'etale cohomology (Lemma \ref{lem-eh-to-etale-gm} and \ref{lem-eh-to-etale}) and interactions between the classical and derived $\eh$-topology in \S\ref{sec-derived-eh-topology}.

\subsection{Classical $\eh$-topology}

\subsubsection{} Recall the $\tn h$-topology on $\mathbf{Sch}_{/k}^{\tn{ft}}$ introduced by V.~Voevodsky \cite[\S 3]{voevodsky1996homology}. Its coverings are generated by universal topological epimorphisms. In fact, a presheaf $\cal F$ on $\mathbf{Sch}^{\tn{ft}}_{/k}$ is an $\tn h$-sheaf if and only if it satisfies descent with respect to Nisnevich (or \'etale) covers and proper surjections\footnote{D.~Gaitsgory has kindly pointed out that Nisnevich can be weakened to Zariski, thank to a theorem of Goodwillie--Lichtenbaum \cite[Theorem 4.1]{goodwillie2001cohomological}.}. By de Jong's alteration, every scheme $X\in\mathbf{Sch}^{\tn{ft}}_{/k}$ is $\tn h$-locally smooth.

\subsubsection{} In this paper, we will extensively use the $\eh$-topology on $\mathbf{Sch}^{\tn{ft}}_{/k}$ introduced by Geisser \cite{geisser2006arithmetic}. It is generated by \'etale coverings and abstract blow-up squares.

\smallskip

The following diagram summarizes its relationship to several other topologies on $\mathbf{Sch}^{\tn{ft}}_{/k}$, where $\preceq$ denotes the ``coarser than'' relation.
$$
\xysmall{
	\tn{cdh} \ar@{}[r]|{\preceq} & \eh \ar@{}[r]|{\preceq} & \tn{h} \\
	\tn{Nis} \ar@{}[r]|{\preceq} \ar@{}[u]|{\reflectbox{\rotatebox[origin=c]{90}{$\preceq$}}} & \et \ar@{}[u]|{\reflectbox{\rotatebox[origin=c]{90}{$\preceq$}}}
}
$$
In fact, the $\eh$ topology bears the same relationship to the \'etale topology as the $\tn{cdh}$ topology (c.f.~Voevodsky \cite{voevodsky1996homology}) does to the Nisnevich topology.

\subsubsection{} Let us recall the definition of $\eh$. A Cartesian square in $\mathbf{Sch}^{\tn{ft}}_{/k}$:
\begin{equation}
\label{eq-blowup}
\xysmall{
	E \ar[r]\ar[d] & Y \ar[d]^p \\
	Z \ar[r]^i & X
}
\end{equation}
is an \emph{abstract blow-up square} if $i$ is a closed immersion, $p$ is a proper morphism and induces an isomorphism $Y\backslash E\xrightarrow{\sim} X\backslash Z$. Let $t_0$ denote the coarsest topology on $\mathbf{Sch}^{\tn{ft}}_{/k}$ including the empty sieve of $\emptyset$ and the sieve generated by $\{p,i\}$ for every abstract blow-up square \eqref{eq-blowup} as coverings.

\subsubsection{} Abstract blow-up squares are obviously stable under pullback and given an abstract blow-up square \eqref{eq-blowup}, the induced square:
$$
\xysmall{
E \ar[d]\ar[r] & Y \ar[d]^{\Delta_p} \\
E\underset{Z}{\times}E \ar[r]^-{(i,i)} & Y\underset{X}{\times} Y
}
$$
is again an abstract blow-up square \cite[Lemma 2.14]{voevodsky2010unstable}. Thus the conditions of \cite[Theorem 3.2.5]{asok2017affine} are satisfied and one sees that a $\mathbf{Spc}$-valued presheaf $\cal F$ on $\mathbf{Sch}^{\tn{ft}}_{/k}$ is a $t_0$-sheaf if and only if $\cal F(\emptyset)$ is contractible and for every abstract blow-up square \eqref{eq-blowup}, the induced square is homotopy Cartesian:
	$$
	\xysmall{
		\cal F(E) & \cal F(Y) \ar[l] \\
		\cal F(Z) \ar[u] & \cal F(X) \ar[u]\ar[l]
	}
	$$

\subsubsection{} The $\eh$-topology on $\mathbf{Sch}^{\tn{ft}}_{/k}$ is defined as the coarsest topology contaning the \'etale topology and $t_0$. In the remainder of this section, we shall assume:

\smallskip
\emph{---The ground field $k$ has $\tn{char}(k) = 0$.}

\smallskip
\noindent
Then by Hironaka's resolution of singularities, every $X\in\mathbf{Sch}^{\tn{ft}}_{/k}$ is $\eh$-locally smooth.

\subsubsection{} We note that the \'etale covering sieves together with $t_0$ define a \emph{quasi-topology} on $\mathbf{Sch}^{\tn{ft}}_{/k}$, i.e., if $S$ is a covering sieve on $X$, then for every morphism $f : Y\rightarrow X$, the pullback $f^*S$ is again a covering sieve. The presheaves on $\mathbf{Sch}^{\tn{ft}}_{/k}$ satisfying descent with respect to this quasi-topology are precisely \'etale sheaves which turn every abstract blow-up square into a homotopy Cartesian square. According to \cite[Corollary C.2]{hoyois2015quadratic}, this condition precisely characterizes the $\eh$-sheaves in $\tn{PSh}(\mathbf{Sch}^{\tn{ft}}_{/k})$.

\subsubsection{} The following Lemma describes a ``normal form'' of \'eh covers of a smooth scheme.

\begin{lem}
\label{lem-eh-normal-form}
Let $X\in\mathbf{Sm}_{/k}$. Every $\eh$-cover of $X$ has a refinement of the form $\{U_i \rightarrow X'\rightarrow X\}$ where $\{U_i \rightarrow X'\}$ is an \'etale cover and $X'\rightarrow X$ is a composition of blow-ups along smooth centers.
\end{lem}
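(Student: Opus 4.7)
\emph{Plan.} The plan is to first refine an arbitrary $\eh$-cover to one of the form ``an \'etale cover over a proper birational morphism,'' and then to use Hironaka's theorems to dominate that proper birational morphism by a composition of blow-ups along smooth centers.

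\emph{Step 1 (Combinatorial reduction.)} Since the $\eh$-topology on $\mathbf{Sch}^{\tn{ft}}_{/k}$ is generated by \'etale covers and abstract blow-up squares, any $\eh$-cover of $X$ is refined by a finite alternation of \'etale refinements and abstract blow-up refinements. The base-change stability of both classes---\'etale morphisms pull back to \'etale morphisms, and the pullback of an abstract blow-up square is again an abstract blow-up square---allows one to push all \'etale refinements to the outer layer. What remains inside is a tower of proper birational morphisms, which composes to a single proper birational morphism $Y \to X$. The result is a refinement of the form $\{U_i \to Y \to X\}$ with $\{U_i \to Y\}$ \'etale. This step is the direct analogue for $\eh$ of the well-known normal-form result for the $\tn{cdh}$-topology of Voevodsky \cite{voevodsky1996homology}.

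\emph{Step 2 (Smoothing by Hironaka.)} Since $\tn{char}(k) = 0$, Hironaka's resolution of singularities provides a proper birational morphism $\tilde Y \to Y$ with $\tilde Y$ smooth, so that $\tilde Y \to X$ is a proper birational morphism between smooth $k$-schemes. The inverse rational map $X \dashrightarrow \tilde Y$ is defined on the open locus over which $\tilde Y \to X$ is an isomorphism. Hironaka's resolution of indeterminacies (equivalently, principalization of the ideal sheaf cutting out the indeterminacy locus) then yields a composition of blow-ups along smooth centers $X' \to X$ together with an extension $X' \to \tilde Y$ of this rational map. Pulling back the \'etale cover $\{U_i \to Y\}$ along $X' \to \tilde Y \to Y$ gives an \'etale cover $\{U_i \times_Y X' \to X'\}$, whose further composition with $X' \to X$ is the desired refinement.

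\emph{Main obstacle.} The most subtle point is the combinatorial reorganization in Step 1: one must verify that iterated alternation of \'etale and abstract blow-up covers can genuinely be collapsed into a single proper birational morphism followed by a single \'etale cover. This is entirely formal once the base-change stability of both classes is in place, and mirrors exactly the analogous statement for the $\tn{cdh}$-topology. Once Step 1 is granted, Step 2 is a direct application of Hironaka's theorems.
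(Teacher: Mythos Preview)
The paper's own proof is simply a citation to Geisser \cite[Corollary 2.6]{geisser2006arithmetic}, so your sketch already says more than the paper does. Your overall strategy---reduce to an \'etale cover sitting over a proper birational map, then invoke Hironaka---is the right one and is essentially Geisser's.

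However, Step 1 contains a real gap. An abstract blow-up cover $\{p: Y \to X,\ i: Z \hookrightarrow X\}$ has \emph{two} members, not one: the proper map $p$ and the closed immersion $i$. When you iterate abstract blow-up refinements you do not obtain ``a tower of proper birational morphisms''; you obtain a tree whose branches are either proper birational maps or closed immersions into proper closed subschemes. Discarding the closed-immersion branches and retaining only the birational spine $Y_k \to \cdots \to Y_1 \to X$ is legitimate only if $\{Y_k \to X\}$ alone already generates an $\eh$-covering sieve---and that is not a purely combinatorial fact. It is precisely here that smoothness of $X$ and Hironaka enter: one shows that any proper birational $Y_k \to X$ with $X$ smooth is dominated by a composition of blow-ups at smooth centers $X' \to X$ (your Step 2), and such an $X' \to X$ \emph{is} an $\eh$-cover because the exceptional divisor of each blow-up, being a projective bundle over its smooth center, admits Zariski-local sections. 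So Steps 1 and 2 are not sequential but intertwined: Hironaka is already needed to justify discarding the $Z$-parts. The clean fix is to argue by induction on the length of the refinement tower, absorbing one abstract blow-up layer at a time into the growing sequence of smooth blow-ups via the argument of your Step 2.
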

\begin{proof}
This is \cite[Corollary 2.6]{geisser2006arithmetic}.
\end{proof}

\subsection{Lemmas of Geisser and Friedlander--Voevodsky}
\label{sec-comparison-lemmas}

\subsubsection{} We note two results comparing cohomology groups calculated in $\eh$-versus-\'etale topologies. These results apply to sheaves valued in \emph{abelian groups}, so we temporarily assume the convention that presheaves are valued in sets instead of higher groupoids.

\subsubsection{} Let us consider the inclusing of sites:
$$
\rho : \mathbf{Sm}_{/k} \rightarrow \mathbf{Sch}^{\tn{ft}}_{/k}.
$$
The $\eh$-topology on $\mathbf{Sch}^{\tn{ft}}_{/k}$ induces an \emph{$\eh$-topology on $\mathbf{Sm}_{/k}$} in the sense of \cite[Expos\'e III, \S 3.1]{bourbaki2006theorie}, i.e., it is the finest topology for which presheaf restriction along $\rho$ takes sheaves to sheaves. Furthermore, since every $X\in\mathbf{Sch}^{\tn{ft}}_{/k}$ is $\eh$-locally smooth, restriction defines an equivalence $\tn{Shv}_{\eh}(\mathbf{Sch}^{\tn{ft}}_{/k}) \xrightarrow{\sim} \tn{Shv}_{\eh}(\mathbf{Sm}_{/k})$ (Th\'eor\`eme 4.1 of \emph{loc.cit.}). We can summarize the situation in the following commutative diagram:
$$
\xysmall{
	\tn{Shv}_{\eh}(\mathbf{Sch}^{\tn{ft}}_{/k}) \ar[r]^-{\sim}\ar@{^{(}->}[d] & \tn{Shv}_{\eh}(\mathbf{Sm}_{/k}) \ar@{^{(}->}[d] \\
	\tn{PSh}(\mathbf{Sch}^{\tn{ft}}_{/k}) \ar[r]^-{\tn{Res}} & \tn{PSh}(\mathbf{Sm}_{/k})
}
$$

\subsubsection{} Passing to left adjoints, we obtain a commutative diagram:
$$
\xysmall{
	\tn{Shv}_{\eh}(\mathbf{Sch}^{\tn{ft}}_{/k})  & \tn{Shv}_{\eh}(\mathbf{Sm}_{/k}) \ar[l]_-{\sim} \\
	\tn{PSh}(\mathbf{Sch}^{\tn{ft}}_{/k}) \ar[u]_L & \tn{PSh}(\mathbf{Sm}_{/k}) \ar[u]_L \ar[l]_-{\tn{LKE}}
}
$$
In particular, the functor of left Kan extension along $\rho$ followed by $\eh$-sheafification\footnote{This composition is denoted by $\rho^*_d$ in \cite{geisser2006arithmetic} (for $d=\infty$) and by $\cal F\leadsto \cal F_{\tn{cdh}}$ in \cite{friedlander2000bivariant} for its $\tn{cdh}$ version.} identifies with $\eh$-sheafification within the presheaf category on $\mathbf{Sm}_{/k}$:
$$
L : \tn{PSh}(\mathbf{Sm}_{/k}) \rightarrow \tn{Shv}_{\eh}(\mathbf{Sm}_{/k}).
$$

\smallskip

Analogously, starting with an \'etale sheaf on $\mathbf{Sm}_{/k}$ (or any topology weaker than $\eh$), left Kan extension along $\rho$ followed by $\eh$-sheafification identifies with the functor:
\begin{equation}
\label{eq-pullback-exact}
L : \tn{Shv}_{\et}(\mathbf{Sm}_{/k}) \rightarrow \tn{Shv}_{\eh}(\mathbf{Sm}_{/k}),
\end{equation}
which is, in particular, exact.

\subsubsection{} Let $\mathbb G_{m,\eh}$ be the \'eh-sheaf on $\mathbf{Sch}^{\tn{ft}}_{/k}$ associated to $\mathbb G_m$. The following Lemma is a special case of a theorem of Geisser \cite{geisser2006arithmetic}.

\begin{lem}
\label{lem-eh-to-etale-gm}
Suppose $X\in\mathbf{Sm}_{/k}$. Then the canonical map is an isomorphism for all $i\ge 0$:
$$
\op H^i_{\et}(X; \mathbb G_m) \xrightarrow{\sim} \op H^i_{\eh}(X; \mathbb G_{m, \eh}).
$$
\end{lem}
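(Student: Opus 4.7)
The plan is to verify that $\mathbb G_m$, viewed as an \'etale sheaf on $\mathbf{Sm}_{/k}$, satisfies $\eh$-descent on the smooth site. Granting this, the isomorphism $\op H^i_{\et}(X; \mathbb G_m) \xrightarrow{\sim} \op H^i_{\eh}(X; \mathbb G_{m,\eh})$ for smooth $X$ follows from the adjunction diagram at the end of \S\ref{sec-comparison-lemmas}: the left Kan extension of $\mathbb G_m$ from $\mathbf{Sm}_{/k}$ to $\mathbf{Sch}^{\tn{ft}}_{/k}$ followed by $\eh$-sheafification computes $\mathbb G_{m,\eh}$, and its restriction back to $\mathbf{Sm}_{/k}$ recovers $\mathbb G_m$ itself. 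The cohomological comparison is then the Leray spectral sequence for the morphism of sites $(\mathbf{Sm}_{/k})_{\eh} \to (\mathbf{Sm}_{/k})_{\et}$.

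By Lemma \ref{lem-eh-normal-form}, every $\eh$-cover of a smooth $X$ admits a refinement of the form $\{U_i \to X' \to X\}$ with $\{U_i \to X'\}$ \'etale and $X' \to X$ a finite composition of blow-ups along smooth centers (in particular $X'$ remains smooth). \'Etale descent for $\mathbb G_m$ is automatic, and the diagonal square of an abstract blow-up square is again an abstract blow-up square \cite[Lemma 2.14]{voevodsky2010unstable}, so by the criterion of \cite[Theorem 3.2.5]{asok2017affine} the required $\eh$-descent reduces to showing that for every abstract blow-up square
$$
\xysmall{
E \ar[r]\ar[d] & Y \ar[d]^p \\
Z \ar[r]^i & X
}
$$
with $X, Z \in \mathbf{Sm}_{/k}$ and $Y = \op{Bl}_Z X$, the induced square in $R\Gamma_{\et}(-; \mathbb G_m)$ is homotopy Cartesian.

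This is the classical blow-up formula for \'etale $\mathbb G_m$-cohomology. The key geometric input is that $E = \mathbb P(\cal N_{Z/X}) \to Z$ is a projective bundle of relative dimension $c - 1$ with $c = \op{codim}(Z, X)$, while $Y \setminus E \xrightarrow{\sim} X \setminus Z$. In degrees $0$ and $1$, the Mayer--Vietoris sequence follows by direct inspection from normality of $Y$ and the formula $\op{Pic}(Y) = p^* \op{Pic}(X) \oplus \mathbb Z \cdot [E]$, which matches $\op{Pic}(E) = \op{Pic}(Z) \oplus \mathbb Z$ under restriction. In degrees $\ge 2$, I would reduce via the Kummer sequence $1 \to \mu_n \to \mathbb G_m \to \mathbb G_m \to 1$ to the corresponding statement for $\mu_n$, which then follows from the projective bundle formula and absolute cohomological purity along the regular closed embedding $Z \hookrightarrow X$.

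The main obstacle is this last step for $i \ge 2$: the Kummer reduction only controls the torsion part of $\op H^i_{\et}(-; \mathbb G_m)$. For $i = 2$, Gabber's theorem on the torsion of Brauer groups of finite-type schemes over a field suffices. For $i \ge 3$ one needs a supplementary argument for the divisible non-torsion part, which can be extracted from comparison with algebraic de Rham cohomology (where the blow-up formula is elementary). The most efficient alternative is to invoke Geisser's general change-of-topology theorem \cite{geisser2006arithmetic}, in which the $\mathbb G_m$-case emerges as a direct consequence of a uniform descent statement, bypassing the torsion/non-torsion dichotomy entirely.
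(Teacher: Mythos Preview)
The paper's proof takes a different and much shorter route: it invokes Geisser's comparison theorem \cite[Theorem 4.3]{geisser2006arithmetic} for the motivic complexes $\mathbb Z(n)$, and then observes that $\mathbb G_{m,\eh}[-1]$ is quasi-isomorphic to $\mathbb Z(1)$ as a complex of $\eh$-sheaves (this identification holds already in $\tn{Shv}_{\et}(\mathbf{Sm}_{/k})$ and survives under the exact pullback \eqref{eq-pullback-exact}). So the paper never touches a blow-up square directly; everything is packaged into Geisser's general statement for motivic cohomology.

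Your direct approach---verifying the derived Mayer--Vietoris square for $\op R\Gamma_{\et}(-;\mathbb G_m)$ along smooth-center blow-ups---is a reasonable strategy and works cleanly through $i\le 2$. The difficulty you flag for $i\ge 3$ is genuine, and your proposed patch via algebraic de Rham cohomology does not actually close it: there is no exact sequence over a general field $k$ of characteristic zero that isolates the ``divisible non-torsion part'' of $\op H^i_{\et}(-;\mathbb G_m)$ in terms of de Rham cohomology. (The exponential sequence is analytic, and Hodge-theoretic comparisons do not see $\mathbb G_m$-coefficients directly.) So as written, the direct argument is incomplete in degrees $\ge 3$.

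Your closing sentence---falling back on Geisser's change-of-topology theorem---is exactly the paper's proof. If you take that route, the rest of the argument becomes unnecessary: Geisser's result already gives the full comparison $\op H^i_{\et}(X;\mathbb Z(n)) \xrightarrow{\sim} \op H^i_{\eh}(X;\mathbb Z(n)_{\eh})$ for smooth $X$, and specializing to $n=1$ via $\mathbb G_m[-1]\simeq\mathbb Z(1)$ is a one-line deduction.
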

\begin{proof}
Geisser \cite[Theorem 4.3]{geisser2006arithmetic} proves the comparison result for all motivic complexes $\mathbb Z(n)$. On the other hand, $\mathbb G_{m,\eh}[-1]$ is quasi-isomorphic to $\mathbb Z(1)$ as a complex of $\eh$-sheaves on $\mathbf{Sch}_{/k}^{\tn{ft}}$, as follows from the analogous fact for complexes in $\tn{Shv}_{\et}(\mathbf{Sm}_{/k})$ and the exactness of \eqref{eq-pullback-exact} (\cite[Lemma 4.1]{geisser2006arithmetic}).
\end{proof}

\subsubsection{}
\label{sec-corr} We now turn to a comparison result due to Friedlander--Voevodsky. Let $\mathbf{Sm}^{\tn{Cor}}_{/k}$ denote the category whose objects are the same as $\mathbf{Sm}_{/k}$, but a morphism $X\dashrightarrow Y$ is given by a $k$-linear combination of algebraic cycles $W\subset X\times Y$ which are finite over $X$. The graph construction gives a functor $\mathbf{Sm}_{/k} \rightarrow \mathbf{Sm}^{\tn{Cor}}_{/k}$, and a presheaf of abelian groups on $\mathbf{Sm}_{/k}$ has a \emph{transfer structure} if it comes equipped with an extension to $\mathbf{Sm}^{\tn{Cor}}_{/k}$. On the other hand, a presheaf $\cal F$ on $\mathbf{Sm}_{/k}$ is said to be $\mathbb A^1$-invariant, if the canonical map:
$$
\cal F(X) \rightarrow \cal F(X\times\mathbb A^1)
$$
is an isomorphism for all $X\in\mathbf{Sm}_{/k}$.

\subsubsection{} The following Lemma is the \'etale version of \cite[Theorem 5.5(1)]{friedlander2000bivariant}, whereas \emph{loc.cit.}~compares Nisnevich and $\tn{cdh}$ cohomology of an $\mathbb A^1$-invariant presheaf with transfer. Since the proofs are nearly identical, we only indicate the modifications needed.

\begin{lem}
\label{lem-eh-to-etale}
Let $\cal F$ be an $\mathbb A^1$-invariant $\eh$-sheaf with transfer on $\mathbf{Sm}_{/k}$ valued in $\mathbb Q$-vector spaces. Then for $X\in\mathbf{Sm}_{/k}$, the following canonical map is an isomorphism for all $i\ge 0$:
$$
\op H^i_{\et}(X; \cal F) \xrightarrow{\sim} \op H^i_{\eh}(X; \cal F).
$$
\end{lem}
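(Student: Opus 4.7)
The plan is to mimic the Friedlander--Voevodsky argument, with the Nisnevich/$\tn{cdh}$ pair replaced by \'etale/$\eh$; the rational coefficients hypothesis is precisely what permits this substitution, since \'etale sheaves with transfer behave as well as Nisnevich ones only after inverting torsion.

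It suffices to prove that the natural map of complexes $R\Gamma_{\et}(X;\cal F) \to R\Gamma_{\eh}(X;\cal F)$ is a quasi-isomorphism for every $X \in \mathbf{Sm}_{/k}$. Letting $\epsilon : (\mathbf{Sm}_{/k})_{\eh} \to (\mathbf{Sm}_{/k})_{\et}$ denote the canonical morphism of sites, this amounts to showing $R^j\epsilon_* \cal F = 0$ for $j \geq 1$. By Lemma \ref{lem-eh-normal-form}, every \'eh-cover of a smooth scheme is refined by the composition of an \'etale cover with a tower of blow-ups along smooth centers, so by a hypercover argument the problem reduces to verifying that $\cal F$ satisfies descent in the \'etale topology with respect to every smooth blow-up square
$$
\xysmall{E \ar[r]\ar[d] & Y \ar[d]^p \\ Z \ar[r]^i & X}
$$
in which $X \in \mathbf{Sm}_{/k}$, $Z \subset X$ is smooth, and $Y = \tn{Bl}_Z X$.

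The heart of the matter is proving this blow-up descent. Here I would invoke Voevodsky's formalism: over $\mathbb Q$-coefficients, the derived category of $\mathbb A^1$-invariant \'etale sheaves with transfer enjoys the Gysin distinguished triangle associated to the smooth pair $(X, Z)$ together with the projective bundle formula for $E = \mathbb P(N_{Z/X}) \to Z$. Combined, these yield canonical direct sum decompositions
$$
R\Gamma_{\et}(Y;\cal F) \simeq R\Gamma_{\et}(X;\cal F) \oplus \bigoplus_{j=1}^{c-1} R\Gamma_{\et}(Z;\cal F), \qquad R\Gamma_{\et}(E;\cal F) \simeq \bigoplus_{j=0}^{c-1} R\Gamma_{\et}(Z;\cal F),
$$
where $c = \tn{codim}_X(Z)$; a direct comparison of the two decompositions then shows that the resulting Mayer--Vietoris square for $R\Gamma_{\et}(-;\cal F)$ is homotopy Cartesian. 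Hypercover descent then gives the desired vanishing of $R^j\epsilon_*\cal F$ for $j \geq 1$, and hence the lemma.

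The principal obstacle is transferring the Gysin and projective bundle formulas from the Nisnevich to the \'etale $\mathbb Q$-linear setting. In the Nisnevich framework these are classical results of Voevodsky on homotopy invariant sheaves with transfer; their \'etale analogues require a rigidity-type input ensuring that \'etale sheafification preserves homotopy invariance and the transfer structure well enough for the Friedlander--Voevodsky argument to run verbatim. Once this input is in place, all other steps are formal reductions, exactly as in \emph{loc.cit.}
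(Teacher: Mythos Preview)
Your proposal is essentially correct and follows the same global strategy as the paper: reduce the \'etale-to-$\eh$ comparison to blow-up descent for smooth blow-ups (via Lemma \ref{lem-eh-normal-form}), and use the $\mathbb Q$-coefficients hypothesis to bridge the gap between the \'etale and Nisnevich settings.

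The difference lies in how blow-up descent itself is argued. You propose to re-establish it directly via the Gysin triangle and projective bundle formula for $\mathbb A^1$-invariant \'etale $\mathbb Q$-sheaves with transfer, and you correctly flag that this requires transferring those tools from the Nisnevich world. The paper takes a shorter route: it phrases the problem as $\tn{Ext}$-vanishing for $\tn{Coker}(\mathbb Z_{\et}(U')\to\mathbb Z_{\et}(U))$ against the given pretheory, then simply cites Friedlander--Voevodsky's Nisnevich result \cite[Lemma 5.3]{friedlander2000bivariant} and transfers it to \'etale using the exactness of the forgetful functor $\tn{Shv}_{\et}(\mathbf{Sm}_{/k};\mathbb Q)\to\tn{Shv}_{\tn{Nis}}(\mathbf{Sm}_{/k};\mathbb Q)$ \cite[Proposition 5.27]{voevodsky2000cohomological}. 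This sidesteps the need to re-derive Gysin/PBF in the \'etale setting: the hard geometric input is borrowed wholesale from the Nisnevich case, and the rational-coefficients hypothesis enters only through that single exactness statement. Your approach is more self-contained but does more work; the paper's is more economical since it leverages the existing Nisnevich machinery directly.
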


\noindent
The assumption on rational coefficients guarantees that the forgetful functor:
\begin{equation}
\label{eq-et-to-nis}
\tn{oblv} : \tn{Shv}_{\et}(\mathbf{Sm}_{/k}; \mathbb Q) \rightarrow \tn{Shv}_{\tn{Nis}}(\mathbf{Sm}_{/k}; \mathbb Q)
\end{equation}
is exact, c.f.~\cite[Proposition 5.27]{voevodsky2000cohomological}.

\begin{proof}
Arguing as in \cite[Theorem 5.5(1)]{friedlander2000bivariant}, the Lemma reduces to the following statement: given an \emph{\'etale} sheaf $\cal F_1$ of abelian groups on $\mathbf{Sm}_{/k}$ such that the $\eh$ sheafification $(\cal F_1)_{\eh} = 0$, then for any $\mathbb A^1$-invariant pretheory\footnote{We remind the reader that presheaves with transfers are pretheories (\cite[Proposition 3.1.11]{voevodsky2000triangulated}).} $\cal G$ satisfying \'etale descent, one has:
\begin{equation}
\label{eq-ext-vanishing-eh}
\tn{Ext}^i(\cal F_1, \cal G) = 0,\quad\text{for all }i\ge 0.
\end{equation}
Analogous to \cite[Lemma 5.4]{friedlander2000bivariant}, the proof consists of two steps:
\begin{enumerate}[(a)]
	\item Establish \eqref{eq-ext-vanishing-eh} for $\cal F_1 = \tn{Coker}(\mathbb Z_{\et}(U') \rightarrow \mathbb Z_{\et}(U))$, where $U' \rightarrow U$ is a composition of $n$ blow-ups with smooth centers. An induction argument reduces to $n=1$, where the result follows from the Nisnevich version \cite[Lemma 5.3]{friedlander2000bivariant} together with the exactness of \eqref{eq-et-to-nis}.
	\smallskip
	\item Reduction to case (a). Indeed, since $\cal F_1$ is already an \'etale sheaf. Lemma \ref{lem-eh-normal-form} shows that to each section $a\in\cal F_1(U)$, one can find a sequence of blow-ups with smooth centers $p : U' \rightarrow U$ such that $p^*a = 0$. Thus the same argument as in \cite[Lemma 5.4]{friedlander2000bivariant} applies. \qedhere
\end{enumerate}
\end{proof}

\subsection{Derived $\bfeh$-topology}
\label{sec-derived-eh-topology}

\subsubsection{} We introduce a variant of the $\eh$-topology for derived schemes, based on the modified version of abstract blow-up square introduced by Halpern-Leistner--Preygel \cite{halpern2014mapping}. We call a homotopy Cartesian square of derived prestacks:
	\begin{equation}
	\label{eq-derived-blowup}
	\xysmall{
		\cal E \ar[r]\ar[d] & Y \ar[d]^p \\
		\cal Z \ar[r]^i & X
	}
	\end{equation}
a \emph{derived} abstract blow-up square if $X,Y\in\mathbf{DSch}^{\tn{ft}}_{/k}$, $i$ is the formal completion along a closed subset in the topological space $|X|$, and $p$ is proper and induces an isomorphism $Y\backslash \cal E\xrightarrow{\sim} X\backslash \cal Z$. We note that $\cal Z$ and $\cal E$ are thus objects of $\tn{Ind}(\mathbf{DSch}^{\tn{ft}}_{/k})$ (\cite[Proposition 2.1.2]{halpern2014mapping}).

\subsubsection{} Let $\mathbf t_0$ denote the coarsest topology on $\mathbf{DSch}^{\tn{ft}}_{/k}$ such that the empty sieve covers $\emptyset$ and for every derived abstract blow-up square \eqref{eq-derived-blowup}, the sieve generated by $\{p, i\}$ is a covering sieve of $X$.

\smallskip

To give an alternative description, let $\mathbf S$ denote the set of morphisms from the geometric realization $|\tn{\v C}(\fr U)| \rightarrow X$ in $\tn{PSh}(\mathbf{DSch}^{\tn{ft}}_{/k})$, where $\tn{\v C}(\fr U)$ is the \v Cech nerve associated to $\fr U=\{p, i\}$ for any derived abstract blow-up square. Then $\mathbf F\in\tn{PSh}(\mathbf{DSch}^{\tn{ft}}_{/k})$ is a $\mathbf t_0$-sheaf if and only if it is $\mathbf S$-local. Indeed, the presheaf $|\tn{\v C}(\fr U)|$ is equivalent to the sieve generated by $\fr U$, so the result again follows from \cite[Corollary C.2]{hoyois2015quadratic}.

\subsubsection{} We note that derived abstract blow-up squares verify the ($\infty$-categorical version of the) conditions of \cite[Theorem 3.2.5]{asok2017affine}. More precisely:
\begin{enumerate}[(a)]
	\item every derived abstract blow-up square is homotopy Cartesian;
	\item derived abstract blow-up squares are stable under base change in $\mathbf{DSch}^{\tn{ft}}_{/k}$;
	\item for every \eqref{eq-derived-blowup}, $i$ is a monomorphism of presheaves;
	\item given \eqref{eq-derived-blowup}, the induced square is still a derived abstract blow-up:
	$$
	\xysmall{
		\cal E \ar[r]\ar[d] & Y \ar[d]^{\Delta_p} \\
		\cal E\underset{\cal Z}{\times}\cal E \ar[r]^-{(i,i)} & Y\underset{X}{\times} Y
	}
	$$
\end{enumerate}

\noindent
Thus we have the following analogus of \cite[Theorem 3.2.5]{asok2017affine}.

\begin{lem}
Let $\mathbf F$ be a presheaf on $\mathbf{DSch}^{\tn{ft}}_{/k}$. Then it is a $\mathbf t_0$-sheaf if and only if $\mathbf F(\emptyset)$ is contractible and for every derived abstract blow-up square \eqref{eq-derived-blowup}, the induced square is homotopy Cartesian:
	\begin{equation}
	\label{eq-derived-blowup-value}
	\xysmall{
		\op{Hom}(\cal E, \mathbf F) & \mathbf F(Y) \ar[l] \\
		\op{Hom}(\cal Z, \mathbf F) \ar[u] & \mathbf F(X) \ar[l]\ar[u]
	}
	\end{equation}
\end{lem}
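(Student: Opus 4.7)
The plan is to verify the $\infty$-categorical analog of \cite[Theorem 3.2.5]{asok2017affine}, using only the four properties (a)--(d) listed just before the statement. These properties are exactly what is needed to identify the sieve generated by $\{p, i\}$ on $X$, as a subobject of $h_X$ in $\tn{PSh}(\mathbf{DSch}^{\tn{ft}}_{/k})$, with the geometric realization of a particularly simple simplicial object, and to reduce the $\mathbf F$-locality condition to a homotopy pullback of values on the four corners of \eqref{eq-derived-blowup}.

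Concretely, for a derived abstract blow-up square with cover $\fr U = \{p, i\}$, property (a) says the square is already a pullback, and properties (b) and (d) inductively describe all iterated fiber products $Y\times_X \cdots \times_X Y$ as again coming from derived abstract blow-up squares. Combined with property (c), which forces $\cal Z\times_X \cal Z \simeq \cal Z$ (and all higher $i$-iterations to collapse), this shows that the \v Cech nerve $\tn{\v C}(\fr U)$ has, up to equivalence, colimit equal to the homotopy pushout $\cal Z \sqcup_{\cal E} Y$ in $\tn{PSh}(\mathbf{DSch}^{\tn{ft}}_{/k})$. Therefore a presheaf $\mathbf F$ is local with respect to the generator $|\tn{\v C}(\fr U)| \rightarrow X$ in $\mathbf S$ if and only if \eqref{eq-derived-blowup-value} is a homotopy Cartesian square.

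Given this reduction, both directions are immediate: the ``only if'' direction follows because $\mathbf t_0$-sheaves are $\mathbf S$-local by the definition recalled just above, so evaluating on the equivalence $|\tn{\v C}(\fr U)| \rightarrow X$ yields the required pullback square, and evaluation on $\emptyset$ yields contractibility. The ``if'' direction runs the equivalence in reverse, combining with \cite[Corollary C.2]{hoyois2015quadratic} to conclude that these two local conditions suffice to characterize $\mathbf t_0$-sheafyness. The main obstacle is a bookkeeping one: verifying in the $\infty$-categorical setting that the \v Cech nerve of $\fr U$ really does collapse to $\cal Z \sqcup_{\cal E} Y$. This is formal given (a)--(d) but requires care, as one must produce coherent simplicial identifications rather than mere $1$-categorical ones; the argument of \cite[Theorem 3.2.5]{asok2017affine} adapts verbatim once one works in the $\infty$-categorical presentation of $\tn{PSh}(\mathbf{DSch}^{\tn{ft}}_{/k})$, and alternatively can be subsumed into the general cd-structure framework of \cite{hoyois2015quadratic}.
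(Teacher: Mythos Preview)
Your proposal is correct and takes essentially the same approach as the paper: the paper's proof is simply ``The proof of \emph{loc.cit.}~applies verbatim,'' referring to \cite[Theorem 3.2.5]{asok2017affine}, and you have supplied the expanded explanation of why properties (a)--(d) make that result applicable. Your additional remark that the argument can alternatively be subsumed into the cd-structure framework of \cite{hoyois2015quadratic} is consistent with the paper's earlier use of that reference.
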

\begin{proof}
The proof of \emph{loc.cit.}~applies verbatim.
\end{proof}

We remark that Condition (c) would fail if $\cal Z$ was a closed subscheme of $X$ instead of a formal completion.

\subsubsection{} We define $\bfeh$ to be the coarsest topology on $\mathbf{DSch}^{\tn{ft}}_{/k}$ containing the \'etale topology, the topology generated by surjective closed immersions, and $\mathbf t_0$. Thus, a $\mathbf {Spc}$-valued presheaf $\mathbf F$ on $\mathbf{DSch}^{\tn{ft}}_{/k}$ is an $\bfeh$-sheaf if and only if it satisfies:
\begin{enumerate}[(a)]
	\item $\mathbf F$ is an \'etale sheaf;
	\item $\mathbf F$ satisfies descent along surjective closed immersions;
	\item $\mathbf F$ turns every derived abstract blow-up square into a homotopy Cartesian square.
\end{enumerate}

Given a derived abstract blow-up square \eqref{eq-derived-blowup}, the sieve generated by $\{p, i\}$ can be refined by a proper surjective cover (for instance, taking any closed subscheme $Z$ of $X$ with the same underlying set as $\cal Z$, we obtain a proper surjection $Z\sqcup Y\rightarrow X$). Therefore $\bfeh$ is coarser than the derived $\mathbf h$-topology (studied in \cite{halpern2014mapping}). We obtain relations analogous to the classical situation:
$$
\tn{\'etale} \preceq \bfeh \preceq \mathbf h.
$$
However, we caution the reader that the restriction of an $\bfeh$-sheaf to the full subcategory $\mathbf{Sch}^{\tn{ft}}_{/k}$ is not necessarily an $\eh$-sheaf in the classical sense.

\subsubsection{} We record some facts which will be used later.

\begin{lem}
\label{lem-perf-eh-descent}
The presheaf $\tn{Perf}$ is an $\mathbf h$-sheaf on $\mathbf{DSch}^{\tn{ft}}_{/k}$.
\end{lem}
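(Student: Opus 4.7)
The plan is to invoke, rather than reprove, the derived $\mathbf h$-descent theorem of Halpern-Leistner--Preygel, which is precisely the statement of the lemma and the reason the $\mathbf h$-topology on derived schemes was designed the way it is in \cite{halpern2014mapping}. Since the $\mathbf h$-topology on $\mathbf{DSch}^{\tn{ft}}_{/k}$ is generated by \'etale covers, surjective closed immersions, and derived abstract blow-up squares \eqref{eq-derived-blowup} (proper surjections being refinable by these, exactly as in the classical picture sketched before the lemma), it suffices to verify these three descent properties separately.

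First I would note that $\tn{Perf}$ satisfies \'etale (in fact flat) descent by the classical theory of perfect complexes on derived schemes; this is entirely standard and goes back to the foundations laid in \cite{lurie2016higher}. Next, descent along a surjective closed immersion $Z\hookrightarrow X$ in $\mathbf{DSch}^{\tn{ft}}_{/k}$ is a purely formal fact: since $Z$ and $X$ have the same underlying topological space, one reduces to showing that a perfect complex on $X$ is determined by its pullback to $Z$ together with descent data on the \v Cech nerve, which follows from nilpotent completeness of $\cal O_X$-modules and the fact that the ideal sheaf of $Z$ in $X$ is nilpotent on $\pi_0$.

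The serious content is descent for a derived abstract blow-up square \eqref{eq-derived-blowup}: one must show that the square
$$
\xysmall{
  \tn{Perf}(X) \ar[r]\ar[d] & \tn{Perf}(Y) \ar[d] \\
  \tn{Perf}(\cal Z) \ar[r] & \tn{Perf}(\cal E)
}
$$
is a pullback of $\infty$-categories, where $\cal Z$ and $\cal E$ are the formal completions of $X$ and $Y$ along the given closed subset. This is the principal theorem of \cite{halpern2014mapping}: it rests on a Beauville--Laszlo type gluing statement combined with formal GAGA for proper maps, allowing one to reconstruct a perfect complex on $X$ from its restriction to the open complement $X\setminus Z = Y\setminus\cal E$ and its completion along $\cal Z$, compatibly identified after pullback to $\cal E$. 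The hard part of that argument --- and hence the main obstacle were one to reprove it from scratch --- is establishing that formal completion behaves well on perfect complexes for non-lci closed subsets and verifying the proper base change required to identify the two descriptions along $\cal E$; I would simply cite the theorem.

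Granting these three descent properties, a formal argument (any presheaf satisfying descent for a generating collection of covering sieves of a topology is a sheaf for that topology) concludes that $\tn{Perf}$ is an $\mathbf h$-sheaf on $\mathbf{DSch}^{\tn{ft}}_{/k}$, which is what was to be shown.
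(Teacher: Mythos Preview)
Your bottom line --- cite \cite{halpern2014mapping} --- is exactly the paper's proof, which reads in its entirety ``This is \cite[Theorem 3.3.1]{halpern2014mapping}.'' So the approach is the same.

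That said, your elaboration contains a confusion worth flagging. The three classes of covers you list (\'etale covers, surjective closed immersions, derived abstract blow-up squares) generate the $\bfeh$-topology, not the $\mathbf h$-topology, which is strictly finer. The refinement discussed in the paper goes the \emph{other} way: derived abstract blow-up sieves are refinable by proper surjective covers, not vice versa. So your decomposition, taken literally, would only establish $\bfeh$-descent for $\tn{Perf}$, not $\mathbf h$-descent. The $\mathbf h$-topology of \cite{halpern2014mapping} is generated by flat covers together with proper surjections, and the substantive content of their theorem is descent along the latter; one does not reduce proper surjections to derived abstract blow-up squares in general. None of this affects the correctness of simply invoking the theorem, which is all that is needed here, but your sketch of how its proof is organized is not quite right.
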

\begin{proof}
This is \cite[Theorem 3.3.1]{halpern2014mapping}.
\end{proof}

\begin{lem}
\label{lem-nil-extend}
Let $\cal F$ (resp.~$\mathbf F$) be an $\eh$-sheaf on $\mathbf{Sch}^{\tn{ft}}_{/k}$ (resp.~\emph{$\bfeh$}-sheaf on $\mathbf{DSch}^{\tn{ft}}_{/k}$).
\begin{enumerate}[(a)]
	\item The tautological extension of $\cal F$ to $\mathbf{DSch}^{\tn{ft}}_{/k}$ is an \emph{$\bfeh$}-sheaf:
$$
(\mathbf{DSch}^{\tn{ft}}_{/k})^{\tn{op}} \rightarrow \mathbf{Spc},\quad X\leadsto \cal F(\pi_0X)
$$
	\item If $\mathbf F$ is nil-invariant, then its restriction to $\mathbf{Sch}^{\tn{ft}}_{/k}$ is an $\eh$-sheaf.
\end{enumerate}
\end{lem}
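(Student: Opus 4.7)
The plan is to verify the axioms of the two topologies directly, using as a bridge the observation that any classical $\eh$-sheaf is automatically nil-invariant. The latter follows by applying the $\eh$-sheaf condition to the degenerate abstract blow-up square with $Y=Z$, $E=Z$, and $p=i$, which is valid whenever $i:Z\hookrightarrow X$ is a surjective closed immersion since both $Y\backslash E$ and $X\backslash Z$ vanish. A consequence is that $\cal F(W_n)\xrightarrow{\sim} \cal F(W_{\tn{red}})$ for any finite-order thickening, so that for the formal completion $\cal Z = \hat X_{|W|}$ of a finite type scheme along a closed subset $|W|$, the inverse limit $\op{Hom}(\cal Z, \tilde{\cal F}) = \lim_n \cal F(W_n)$ collapses to $\cal F(W_{\tn{red}})$, the value of $\cal F$ on any classical scheme structure on $|W|$.

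For part (a), I would verify the three defining conditions of an $\bfeh$-sheaf for $\tilde{\cal F}(X):=\cal F(\pi_0 X)$. \'Etale descent is immediate from \'etale descent of $\cal F$, since $\pi_0$ commutes with \'etale base change and sends \'etale covers to \'etale covers. Descent along a surjective closed immersion $Z\hookrightarrow X$ in $\mathbf{DSch}^{\tn{ft}}_{/k}$ follows because $\pi_0 Z\hookrightarrow \pi_0 X$ is a surjective closed immersion of classical schemes, to which the nil-invariance of $\cal F$ applies. For a derived abstract blow-up square $(\cal E, Y, \cal Z, X)$, the induced square \eqref{eq-derived-blowup-value} for $\tilde{\cal F}$ reduces, via the collapsing observation above, to the square associated to the classical abstract blow-up $(E_{\tn{cl}}, \pi_0 Y, W_{\tn{cl}}, \pi_0 X)$, where $W_{\tn{cl}}$ is any classical closed subscheme structure on the closed subset being completed along and $E_{\tn{cl}}:=\pi_0 Y \underset{\pi_0 X}{\times} W_{\tn{cl}}$. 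Since $\pi_0 p$ is proper and restricts to an isomorphism $\pi_0 Y\backslash |E_{\tn{cl}}|\xrightarrow{\sim} \pi_0 X\backslash |W_{\tn{cl}}|$, this is a bona fide classical abstract blow-up square, and the $\eh$-sheaf property of $\cal F$ delivers the desired homotopy Cartesianness.

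For part (b), \'etale descent on classical schemes is inherited directly from \'etale descent on derived schemes. For a classical abstract blow-up square $(E, Y, Z, X)$ in $\mathbf{Sch}^{\tn{ft}}_{/k}$, I would promote it to a derived one by replacing $Z$ with its formal completion $\cal Z = \hat X_{|Z|}$ and $E$ with $\cal E = Y\underset{X}{\times}\cal Z$; this is a derived abstract blow-up square since $p$ remains proper and restricts to an isomorphism on open complements, which depend only on the underlying closed subset. The $\bfeh$-sheaf property of $\mathbf F$ then produces the homotopy Cartesian square \eqref{eq-derived-blowup-value}, and invoking nil-invariance of $\mathbf F$ collapses $\op{Hom}(\cal Z, \mathbf F)$ and $\op{Hom}(\cal E, \mathbf F)$ to $\mathbf F(Z)$ and $\mathbf F(E)$ respectively, yielding the classical $\eh$-sheaf condition.

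The one subtle point, which is also the linchpin of both directions, is the treatment of the formal completions $\cal Z$ and $\cal E$: as ind-derived-schemes they lie outside $\mathbf{DSch}^{\tn{ft}}_{/k}$, and the mapping spaces entering \eqref{eq-derived-blowup-value} must be unpacked as inverse limits over finite-order thickenings. Nil-invariance is precisely what makes those limits constant, and thus permits the passage between classical $\eh$ and derived $\bfeh$ sheaf conditions in either direction.
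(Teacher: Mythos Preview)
Your proposal is correct and follows essentially the same route as the paper. The only cosmetic difference is in the verification of the derived blow-up condition in part (a): the paper writes $\op{Hom}(\cal Z,\tilde{\cal F})$ and $\op{Hom}(\cal E,\tilde{\cal F})$ as limits $\lim_\alpha \cal F(\pi_0 Z_\alpha)$ and $\lim_\alpha \cal F(\pi_0 E_\alpha)$ and observes that the resulting square is a limit of homotopy Cartesian squares (one for each classical abstract blow-up $(\pi_0 E_\alpha,\pi_0 Y, Z_\alpha,\pi_0 X)$), whereas you first collapse these limits via nil-invariance of $\cal F$ and then appeal to a single classical abstract blow-up square; both arguments are equivalent.
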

\noindent
In particular, given a nil-invariant presheaf $\mathbf F$ on $\mathbf{DSch}^{\tn{ft}}_{/k}$, satisfying $\bfeh$ descent is equivalent to its restriction to $\mathbf{Sch}^{\tn{ft}}_{/k}$ satisfying $\eh$ descent.
\begin{proof}
The \'etale descent is clear is both statements. To prove (a), we note that $\cal F$ is nil-invariant so its extension has descent along surjective closed immersions. Let us now be given a derived abstract blow-up square \eqref{eq-derived-blowup} where $\cal Z$ is the formal completion of $Z \subset |X|$. We represent $\cal Z$ as a filtered colimit of $Z_{\alpha}$, where each $Z_{\alpha}$ is a closed subscheme of $X$ with underlying set $Z$. Then $\cal E$ identifies with $ \underset{\alpha}{\tn{colim}}\, E_{\alpha}$ for $E_{\alpha} := Z_{\alpha} \underset{X}{\times} Y$. The square \eqref{eq-derived-blowup-value} is equivalent to:
$$
\xysmall{
	\lim_{\alpha} \cal F(\pi_0E_{\alpha}) & \cal F(\pi_0Y) \ar[l] \\
	\lim_{\alpha} \cal F(\pi_0Z_{\alpha}) \ar[u] & \cal F(\pi_0X) \ar[l]\ar[u]
}
$$
which is a limit of homotopy Cartesian diagrams. To prove (b), let us be given an abstract blow-up square \eqref{eq-blowup}. Let $\cal Z$ (resp.~$\cal E$) be the completion of $Z$ inside $X$ (resp.~$E$ inside $Y$). Then we obtain a derived abstract blow-up square, so the following square is homotopy Cartesian:
$$
\xysmall{
	\tn{Hom}(\cal E, \mathbf F) & \mathbf F(Y) \ar[l] \\
	\tn{Hom}(\cal Z, \mathbf F) \ar[u] & \mathbf F(X) \ar[u]\ar[l]
}
$$
Since $\mathbf F$ is nil-invariant, the left vertical map identifies with $\mathbf F(Z)\rightarrow \mathbf F(E)$.
\end{proof}

\begin{lem}
\label{lem-nil-sheafify}
Suppose $\mathbf F$ is an $n$-truncated presheaf on $\mathbf{DSch}^{\tn{ft}}_{/k}$ for some $n\ge 0$, i.e. ~$\pi_i\mathbf F(X) = 0$ for all $i > n$ and $X\in\mathbf{DSch}^{\tn{ft}}_{/k}$. Then \emph{$\mathbf F_{\bfeh}$} is nil-invariant.
\end{lem}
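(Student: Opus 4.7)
The plan is to show that every surjective closed immersion (i.e., nil-thickening) $i\colon X_0 \hookrightarrow X$ in $\mathbf{DSch}^{\tn{ft}}_{/k}$ becomes an $\infty$-connective morphism in the $\bfeh$-topos $\cal X := \tn{Shv}_{\bfeh}(\mathbf{DSch}^{\tn{ft}}_{/k})$, and then invoke the $n$-truncatedness of $\mathbf F_{\bfeh}$ to deduce that it is inverted. Applying this to the canonical thickening $\pi_0 X \hookrightarrow X$ yields the desired nil-invariance.

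First I would verify that all iterated derived diagonals of $i$ are again surjective closed immersions, hence $\bfeh$-covers. Working affine-locally, if $i$ corresponds to a surjection $A\twoheadrightarrow B$ of simplicial commutative $k$-algebras, the diagonal $\Delta_i\colon X_0 \to X_0 \times_X X_0$ is dual to the multiplication $B\otimes^{L}_A B \to B$, whose effect on $\pi_0$ is the identity $\pi_0 B\otimes_{\pi_0 A}\pi_0 B = \pi_0 B \to \pi_0 B$, hence a surjection; the underlying topological map is surjective because $|X_0 \times_X X_0| = |X_0| \times_{|X|} |X_0| = |X_0|$. Iterating yields surjective closed immersions at every level of the diagonal tower. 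Consequently, in $\cal X$ the morphism $i$ and all its iterated diagonals are effective epimorphisms, which is the standard criterion for $i$ to be $\infty$-connective in higher topos theory \cite{lurie2009higher}.

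Finally, $\bfeh$-sheafification is a left-exact localization of presentable $\infty$-categories, hence preserves $n$-truncated objects; in particular $\mathbf F_{\bfeh}$ is $n$-truncated. A basic fact is that $n$-truncated objects in an $\infty$-topos are local with respect to $\infty$-connective morphisms, so the restriction map $\mathbf F_{\bfeh}(X) \to \mathbf F_{\bfeh}(X_0)$ is an equivalence for every nil-thickening $i$. The main obstacle will be the middle step, verifying that the derived self-intersection of a nil-thickening remains a nil-thickening, so that the entire diagonal chain stays within $\bfeh$-covers; the remainder is formal higher topos theory.
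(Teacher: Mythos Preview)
Your argument is correct and is essentially the same as the paper's proof, which observes that the constant simplicial object $X_{\tn{red}}$ is an $\bfeh$-hypercover of $X$ (equivalently, your statement that the map is $\infty$-connective) and then invokes that for $n$-truncated presheaves the $\bfeh$-sheafification coincides with the hypersheafification. You make explicit the verification that iterated diagonals of a nil-thickening remain nil-thickenings, which the paper's one-line proof leaves implicit.
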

\begin{proof}
Any $\bfeh$-hypersheaf is nil-invariant since the constant simplicial system $X_{\tn{red}}$ is an $\bfeh$-hypercover of $X\in\mathbf{DSch}^{\tn{ft}}_{/k}$. The $n$-truncation hypothesis implies that the $\bfeh$-sheafification and hypersheafification agree.
\end{proof}

We let $\tn{PSh}^{\tn{nil}, \le n}(\mathbf{DSch}^{\tn{ft}}_{/k})$ denote the $\infty$-category of nil-invariant, $n$-truncated presheaves on $\mathbf{DSch}^{\tn{ft}}_{/k}$. Combining Lemma \ref{lem-nil-extend} and Lemma \ref{lem-nil-sheafify}, we have commutative diagrams:
\begin{equation}
\label{eq-eh-derived-comparison}
\xysmall{
	\tn{PSh}^{\tn{nil}, \le n}(\mathbf{DSch}^{\tn{ft}}_{/k}) \ar[r]^-{\sim} \ar@<-0.5ex>[d]_L & \tn{PSh}^{\tn{nil}, \le n}(\mathbf{Sch}^{\tn{ft}}_{/k}) \ar@<-0.5ex>[d]_L \\
	\tn{Shv}_{\bfeh}^{\tn{nil}, \le n}(\mathbf{DSch}^{\tn{ft}}_{/k}) \ar@<-0.5ex>[u]\ar[r]^-{\sim} & \tn{Shv}_{\eh}^{\tn{nil}, \le n}(\mathbf{Sch}^{\tn{ft}}_{/k}) \ar@<-0.5ex>[u]
}
\end{equation}
In other words, for $n$-truncated nil-invariant presheaves, the $\bfeh$ and $\eh$-topologies give rise to the same sheaf theory with the same functorialities.

\bigskip

\section{Differential forms of moderate growth}
\label{sec-diff-forms}

In this section, the ground field $k$ is assumed algebraically closed with $\tn{char}(k)=0$.

\smallskip

Its purpose is to introduce another ingredient in the construction of tame twistings, namely ``differential forms of moderate growth.'' We introduce the sheaves $\mathring{\Omega}^p$ for $p\ge 0$ on the category of (classical) finite type schemes $\mathbf{Sch}^{\tn{ft}}_{/k}$, study their descent properties, and finally calculate their cohomology groups over a smooth curve in \S\ref{sec-curve-coh}.

\subsection{Point of departure}

\subsubsection{} An effective Cartier divisor $D$ in a smooth scheme $X$ is said to be of \emph{normal crossing} if, \'etale locally on $X$, $D$ is defined by the vanising of $x_1\cdots x_k$ ($k\le n$) where $x_1,\cdots, x_n$ is a system of coordinates on $X$. Although globally, $D$ may not be a union of smooth divisors, the normalization $\nu : \widetilde D\rightarrow D$ always produces a smooth $\widetilde D$. In the situation of a normal crossing divisor with complement $\mathring X$:
$$
\mathring X \xrightarrow{j} X \xleftarrow{i} D,
$$
one may define a locally free $\cal O_X$-module $\Omega_X^p(\log D)$ for each $p\ge 0$. We refer the reader to \cite[\S II.3]{deligne2006equations} for its basic properties.

\subsubsection{} Let $X\in\mathbf{Sm}_{/k}$. A \emph{good compactification} of $X$ is an open immersion $X\hookrightarrow\overline X$, where $\overline X$ is proper, smooth, and $D:=\overline X\backslash X$ is a normal crossing divisor. Hironaka's desingularization shows that a good compactification always exists. The complex $\Omega_{\overline X}^{\bullet}(\log D)$ equipped with the Hodge filtration (i.e., stupid truncation) yields a spectral sequence:
\begin{equation}
\label{eq-hodge-dR}
{}_F\tn E_1^{p, q} = \op H^q(\overline X; \Omega_{\overline X}^p(\log D)) \implies \mathbb H^{p+q}(\overline X; \Omega^{\bullet}_{\overline X}(\log D)),
\end{equation}
which degenerates at $\tn E_1$ (\cite[Corollaire 3.2.13(ii)]{deligne1971theorie}). Since $\mathbb H^p(\overline X; \Omega_{\overline X}^{\bullet}(\log D))$ and the Hodge filtration it carries are canonically independent of the good compactification (\cite[Th\'eor\`eme 3.2.5(ii)]{deligne1971theorie}), so must be its $p$th graded piece
$$
{}_F\op{Gr}^p\mathbb H^p(\overline X; \Omega^{\bullet}_{\overline X}(\log D))\xrightarrow{\sim} \op H^0(\overline X; \Omega_{\overline X}^p(\log D)).
$$

\subsubsection{} We are thus led to the following definition. For $p\ge 0$, define $\mathring{\Omega}^p$ as the subpresheaf of $\Omega^p$ on $\mathbf{Sm}_{/k}$, consisting of those differential forms $\omega\in\Omega^p(X)$ which extend to $\op H^0(\overline X; \Omega^p_{\overline X}(\log D))$ for a good compactification $X\hookrightarrow \overline X$. The above observation implies that $\mathring{\Omega}^p$ is a well-defined presheaf on $\mathbf{Sm}_{/k}$. We extend $\mathring{\Omega}^p$ to $\mathbf{Sch}^{\tn{ft}}_{/k}$ by the procedure of right Kan extension:
$$
\mathring{\Omega}^p(X) := \underset{\substack{Y\rightarrow X \\ Y\in\mathbf{Sm}_{/k}}}{\lim} \mathring{\Omega}^p(Y).
$$

\subsubsection{} Let us note some quick consequences of the definition:

\begin{lem}
\label{lem-basic-properties}
The presheaves $\mathring{\Omega}^p$ ($p\ge 0$) satisfy:
\begin{enumerate}[(a)]
	\item $\mathring{\Omega}^0$ is canonically the constant sheaf $\underline k$;
	\smallskip
	\item $\mathring{\Omega}^p$ takes values in finite-dimensional $k$-vector spaces;
	\smallskip
	\item For $X\in\mathbf{Sm}_{/k}$, the subspace $\mathring{\Omega}^p(X)\subset \Omega^p(X)$ belongs to closed $p$-forms;
	\smallskip
	\item $\mathring{\Omega}^p$ is a sheaf in the Zariski topology on $\mathbf{Sm}_{/k}$.
\end{enumerate}
\end{lem}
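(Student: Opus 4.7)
For (a)--(c), I would invoke the identification $\mathring{\Omega}^p(X)=H^0(\overline X;\Omega^p_{\overline X}(\log D))$ valid for any $X\in\mathbf{Sm}_{/k}$ with good compactification $\overline X\supset X$ and boundary $D$, which is well-defined by the Hodge-theoretic independence result just above the lemma. Part (a) follows since $\Omega^0(\log D)=\cal O_{\overline X}$ gives $H^0(\overline X;\cal O_{\overline X})=k^{\pi_0(X)}$ by properness of $\overline X$ and $k=\bar k$, and the constant sheaf is preserved under the right Kan extension to $\mathbf{Sch}^{\tn{ft}}_{/k}$. Part (b) is immediate on $\mathbf{Sm}_{/k}$ as global sections of a coherent sheaf on a proper scheme; on $\mathbf{Sch}^{\tn{ft}}_{/k}$, a smooth resolution $\widetilde X\twoheadrightarrow X$ exhibits $\mathring{\Omega}^p(X)$ as a subspace of the finite-dimensional $\mathring{\Omega}^p(\widetilde X)$. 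Part (c) reads off from the $E_1$-degeneration in \eqref{eq-hodge-dR}, which kills the differential $d\colon H^0(\overline X;\Omega^p(\log D))\to H^0(\overline X;\Omega^{p+1}(\log D))$ and forces any $\omega\in\mathring{\Omega}^p(X)$ to be closed on $X$.

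Part (d) is the substantive claim, which I plan to prove by sheaf-theoretic gluing ``from above.'' Fix a good compactification $\overline X\supset X$ with boundary $D$, and to each Zariski open $U\subset X$ attach
\[V_U\;:=\;\overline X\,\setminus\,\overline{X\setminus U}\;\subset\;\overline X,\]
an open of $\overline X$ satisfying $V_U\cap X=U$, smooth, with $V_U\cap D$ normal crossing. The assignment respects intersections, $V_{U_i\cap U_j}=V_{U_i}\cap V_{U_j}$, and a Zariski cover $\{U_i\}$ of $X$ yields a cover $\{V_{U_i}\}$ of $\overline X\setminus S$, where $S:=\bigcap_i\overline{X\setminus U_i}$. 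The key claim is that each $\omega\in\mathring{\Omega}^p(U)$ restricts canonically to a section of $\Omega^p_{V_U}(\log V_U\cap D)$. Granting this, a matching family $\omega_i\in\mathring{\Omega}^p(U_i)$ glues over $\{V_{U_i}\}$ to a section of $\Omega^p_{\overline X}(\log D)$ on $\overline X\setminus S$; since no irreducible component of $D$ is contained in any $\overline{X\setminus U_i}$ (whose components are closures of subvarieties of $X$, which meet $X$, while $D\cap X=\emptyset$), we have $\tn{codim}_{\overline X}S\ge 2$, and Hartogs's theorem for the locally free $\Omega^p_{\overline X}(\log D)$ extends the section to a global one in $\mathring{\Omega}^p(X)$ restricting to $\omega$.

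To prove the key claim, I would choose a good compactification $\overline U$ of $U$ together with a proper birational morphism $\pi\colon\overline U\to\overline X$ extending $U\hookrightarrow\overline X$; this is constructed by blowing up $\overline X$ along a resolution of $\overline{X\setminus U}$ and further resolving to restore normal crossings of the total boundary. Since the blow-up center lies entirely inside $\overline X\setminus V_U$, the map $\pi$ restricts to an isomorphism over $V_U$, identifying the boundary $\overline U\setminus U$ inside $\pi^{-1}(V_U)$ with $V_U\cap D$. The log extension $\omega\in H^0(\overline U;\Omega^p(\log\overline U\setminus U))$ supplied by the hypothesis restricts to the desired section on $V_U$, its independence of the choice of $\overline U$ being guaranteed by the Hodge-theoretic invariance already invoked in the definition. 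The main obstacle I anticipate is engineering this dominating $\overline U$ so that $\pi$ is an isomorphism over the entire $V_U$ with a normal-crossing total boundary, achieved by performing all blow-up and resolution steps along centers contained in $\overline X\setminus V_U$.
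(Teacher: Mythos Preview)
Your treatment of (a)--(c) matches the paper's. For (d) you take a genuinely different route.

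The paper's argument for (d) is Hodge-theoretic: separatedness is clear since $\mathring{\Omega}^p\subset\Omega^p$, and for gluing one invokes the Mayer--Vietoris sequence
\[
\mathbb H^p(X)\to\mathbb H^p(U)\oplus\mathbb H^p(V)\to\mathbb H^p(U\cap V),
\]
which is an exact sequence of mixed Hodge structures and hence \emph{strictly} compatible with the Hodge filtration (Deligne, Hodge II, Th\'eor\`eme 1.2.10(iii)); applying ${}_F\tn{Gr}^p$ then preserves exactness. This is a two-line appeal to Deligne's formalism.

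Your argument is instead a direct geometric construction: fix one good compactification $\overline X$, attach to each open $U\subset X$ the partial compactification $V_U=\overline X\setminus\overline{X\setminus U}$, show that moderate-growth forms on $U$ extend to log forms on $V_U$, glue over $\overline X\setminus S$, and finish by Hartogs across the codimension-$\ge 2$ locus $S$. The key step---producing a good compactification $\overline U\to\overline X$ which is an isomorphism over $V_U$---is achieved by embedded resolution with centers in the non-SNC locus of $D\cup\overline{X\setminus U}$, which indeed lies entirely inside $\overline X\setminus V_U$ since $D\cap V_U$ is already normal crossing and $\overline{X\setminus U}\cap V_U=\emptyset$. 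Compatibility on overlaps is automatic because $\Omega^p_{\overline X}(\log D)\hookrightarrow j_*\Omega^p_X$ is injective. Your codimension bound on $S$ is correct: $S\subset D$, and no component of $D$ can lie inside any $\overline{X\setminus U_i}$ (whose components are closures of subvarieties of $X$, hence meet $X$), so $S$ has codimension $\ge 2$.

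Both proofs are valid. The paper's is shorter and exploits that the machinery of mixed Hodge structures has already been set up; yours is more elementary in that it avoids the strictness theorem and works directly with sections, at the cost of a careful construction of $\overline U$. Your approach has the mild advantage of making the actual section explicit, while the paper's makes functoriality in $X$ more transparent.
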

\begin{proof}
(a) is immediate. (b) follows from the smooth case by taking a smooth hypercover. (c) is a consequence of the degeneration of \eqref{eq-hodge-dR} at $\tn E_1$ (\cite[Corollaire 3.2.14]{deligne1971theorie}). For (d), it is clear that $\mathring{\Omega}^p$ is a separated presheaf. To check gluing, we cover $X \in \mathbf{Sm}_{/k}$ by opens $U$ and $V$, the Mayer-Vietoris sequence on de Rham cohomology:
$$
\mathbb H^p(X) \rightarrow \mathbb H^p(U) \oplus \mathbb H^p(V) \rightarrow \mathbb H^p(U\cap V)
$$
is exact and strictly compatible with the Hodge filtration (\cite[Th\'eor\`eme 1.2.10(iii)]{deligne1971theorie}), so it remains exact after applying ${}_F\tn{Gr}^p$ (\cite[Proposition 1.1.11(ii)]{deligne1971theorie}).
\end{proof}

\subsection{$\tn h$-descent}

\subsubsection{} In this section, we shall prove:

\begin{prop}
\label{prop-h-descent}
For all $p\ge 0$, the presheaf $\mathring{\Omega}^p$ on $\mathbf{Sch}^{\tn{ft}}_{/k}$ satisfies $\tn h$-descent.
\end{prop}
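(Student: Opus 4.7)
The plan is to identify $\mathring{\Omega}^p$ with an explicit description of the $\tn h$-sheafification of the usual K\"ahler differentials, thereby inheriting $\tn h$-descent from sheafification. The essential input is the theorem of H\"uber--Jorder \cite{huber2013differential}: for a smooth scheme $X\in\mathbf{Sm}_{/k}$ with good compactification $X\hookrightarrow\overline X$ and normal crossing boundary $D$, they compute
$$
(\Omega^p)_{\tn h}(X) \;\xrightarrow{\sim}\; \Gamma(\overline X,\Omega^p_{\overline X}(\log D)).
$$
This formula is precisely our definition of $\mathring{\Omega}^p(X)$ on smooth $X$ (using that the restriction map from $\overline X$ to the dense open $X$ is injective on sections of the locally free sheaf $\Omega^p_{\overline X}(\log D)$), yielding a canonical isomorphism $\mathring{\Omega}^p|_{\mathbf{Sm}_{/k}} \xrightarrow{\sim} (\Omega^p)_{\tn h}|_{\mathbf{Sm}_{/k}}$. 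In particular, $\mathring{\Omega}^p$ already satisfies $\tn h$-descent when restricted to the smooth site.

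To promote this to descent on all of $\mathbf{Sch}^{\tn{ft}}_{/k}$, I would invoke the fact that every finite-type scheme is $\tn h$-locally smooth (by Hironaka's resolution in characteristic zero). As in the analogous argument for $\eh$ recalled in \S\ref{sec-comparison-lemmas}, this implies that restriction along $\rho:\mathbf{Sm}_{/k}\hookrightarrow\mathbf{Sch}^{\tn{ft}}_{/k}$ defines an equivalence
$$
\tn{Shv}_{\tn h}(\mathbf{Sch}^{\tn{ft}}_{/k}) \xrightarrow{\sim} \tn{Shv}_{\tn h}(\mathbf{Sm}_{/k}),
$$
whose inverse is given by right Kan extension along $\rho$. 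This is exactly the defining formula for $\mathring{\Omega}^p$ on non-smooth schemes, so the $\tn h$-sheaf property transfers automatically.

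The main technical obstacle is verifying that right Kan extension along $\rho$ genuinely realizes the inverse to restriction on $\tn h$-sheaves --- equivalently, that $\mathring{\Omega}^p(X)=\lim_{Y\to X,\,Y\in\mathbf{Sm}_{/k}}\mathring{\Omega}^p(Y)$ computes the value of the unique $\tn h$-sheaf extension at $X$. Concretely, one needs to show that for any $X\in\mathbf{Sch}^{\tn{ft}}_{/k}$, a smooth $\tn h$-hypercover $Y_{\bullet}\to X$ is cofinal in the indexing diagram of the Kan extension. This cofinality follows from Hironaka's theorem (every $\tn h$-cover of $X$ admits a refinement by smooth schemes) together with the $\tn h$-descent of $\mathring{\Omega}^p$ already established on $\mathbf{Sm}_{/k}$: the two facts together force the limit over all smooth $Y\to X$ to collapse to the limit over any single smooth $\tn h$-hypercover.
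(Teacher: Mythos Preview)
Your proposal rests on a misreading of Huber--J\"order. Their theorem does \emph{not} say that $(\Omega^p)_{\tn h}(X)\cong\Gamma(\overline X,\Omega^p_{\overline X}(\log D))$ for smooth $X$; it says that $(\Omega^p)_{\tn h}$ is the right Kan extension of the \emph{ordinary} K\"ahler forms $\Omega^p$ from $\mathbf{Sm}_{/k}$, so in particular $(\Omega^p)_{\tn h}(X)=\Omega^p(X)$ whenever $X$ is smooth. A quick sanity check: $\Omega^1(\mathbb A^1)=k[x]\,dx$ is infinite-dimensional, whereas $\mathring{\Omega}^1(\mathbb A^1)=\Gamma(\mathbb P^1,\Omega^1_{\mathbb P^1}(\log\infty))=0$. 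So $\mathring{\Omega}^p$ and $(\Omega^p)_{\tn h}$ are genuinely different presheaves already on smooth schemes, and your claimed identification $\mathring{\Omega}^p|_{\mathbf{Sm}_{/k}}\cong(\Omega^p)_{\tn h}|_{\mathbf{Sm}_{/k}}$ is false. Once that equivalence collapses, nothing in the rest of the argument survives.

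The paper uses Huber--J\"order in a different way: it observes that $\mathring{\Omega}^p$ is a \emph{subpresheaf} of $\Omega^p_{\tn h}$ (since both are right Kan extended from smooth schemes and $\mathring{\Omega}^p(Y)\subset\Omega^p(Y)$ there). Because $\Omega^p_{\tn h}$ is already an $\tn h$-sheaf, the separatedness of $\mathring{\Omega}^p$ is automatic, and the entire content of the proposition reduces to showing that membership in the subpresheaf is $\tn h$-local: if $\omega\in\Omega^p_{\tn h}(X)$ pulls back to $\mathring{\Omega}^p(\widetilde X)$ along an $\tn h$-cover $\widetilde X\to X$, then $\omega\in\mathring{\Omega}^p(X)$. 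After reducing to a proper surjection of smooth schemes fitting into compatible good compactifications, this becomes Lemma~\ref{lem-log-local}: a meromorphic $p$-form on $X$ extends to $\Omega^p_X(\log D)$ if and only if its pullback extends to $\Omega^p_Y(\log E)$. That lemma is the genuine work --- a local calculation in coordinates near the boundary --- and it is exactly the step your proposal tries to bypass.
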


\noindent
Instead of giving a direct argument, we compare $\mathring{\Omega}^p$ to the $\tn h$-sheafification $\Omega^p_h$ of the usual differential $p$-forms, studied by Huber--J\"order \cite{huber2013differential}. Their theorem is that $\Omega_{\tn h}^p$ identifies with the right Kan extension of $\Omega^p$ from $\mathbf{Sm}_{/k}$:
$$
\Omega^p_{\tn h}(X) \xrightarrow{\sim} \underset{\substack{Y\rightarrow X \\ Y\in\mathbf{Sm}_{/k}}}{\lim} \Omega^p(Y).
$$
This implies that $\mathring{\Omega}^p$ can be regarded as a subpresheaf of $\Omega_{\tn h}^p$, characterized by the property that a section $\omega\in\Omega_{\tn h}^p(X)$ belongs to $\mathring{\Omega}^p(X)$ if and only if its pullback to any smooth scheme $Y\rightarrow X$ belongs to $\mathring{\Omega}^p(Y)$.

\subsubsection{} Therefore, in order to prove Proposition \ref{prop-h-descent}, we only need to show that for $\pi : \widetilde X\rightarrow X$ an $\tn h$-covering in $\mathbf{Sch}^{\tn{ft}}_{/k}$, if $\omega\in\Omega^p_{\tn h}(X)$ has the property that $\pi^*\omega$ belongs to $\mathring{\Omega}^p(\widetilde X)$, then $\omega\in\mathring{\Omega}^p(X)$. By mapping a smooth scheme $Y$ to $X$ and considering a further smooth $\tn h$-cover of $Y\underset{X}{\times}\widetilde X$, we may assume that $\widetilde X$ and $X$ are both smooth. Fitting $\widetilde X\rightarrow X$ into a map between good compactifications, the Proposition follows from the Lemma below.

\begin{lem}
\label{lem-log-local}
Suppose there is a commutative diagram in $\mathbf{Sm}_{/k}$:
$$
\xysmall{
	\mathring Y \ar@{^{(}->}[r]\ar[d] & Y \ar[d]^{\pi} \\
	\mathring X \ar@{^{(}->}[r] & X
}
$$
where $\mathring X\hookrightarrow X$ (resp.~$\mathring Y\hookrightarrow Y$) is an open immersion whose boundary is a normal crossing divisor $D$ (resp.~$E$). Assume furthermore that $\pi$ is a proper surjection. Then given any $\omega\in\Omega^p(\mathring X)$, it extends to $\Omega^p_X(\log D)$ if and only if $\pi^*\omega$ extends to $\Omega_Y^p(\log E)$.
\end{lem}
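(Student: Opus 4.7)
The plan is to handle the two directions separately, with only $(\Leftarrow)$ requiring work. For $(\Rightarrow)$, the hypothesis that $\mathring Y \to \mathring X$ lifts to a commuting square implies $\pi^{-1}(D) \subseteq E$ set-theoretically, and under this condition the universal property of logarithmic differentials produces a canonical $\cal O_Y$-linear map $\pi^*\Omega^p_X(\log D) \to \Omega^p_Y(\log E)$ extending the pullback $\pi^*\Omega^p_{\mathring X} \to \Omega^p_{\mathring Y}$; a global logarithmic extension of $\omega$ thus induces a global logarithmic extension of $\pi^*\omega$.

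For $(\Leftarrow)$, I would first reduce to a codimension-$1$ check on $X$. Since $\Omega^p_X(\log D)$ is locally free on the smooth scheme $X$, it is reflexive, so the criterion for extension is controlled by its stalks at codim-$1$ generic points. At generic points of $\mathring X$ the form $\omega$ is already regular, so only the generic points $\xi_i$ of the components $D_i$ of $D$ are at issue; locally near $\xi_i$ only $D_i$ passes through, reducing to the case of a single smooth divisor. Since $\pi$ is proper surjective and $X$ is smooth, $\pi^*D_i$ is a Cartier divisor in $Y$ and, by surjectivity, one of its irreducible components dominates $D_i$; this component lies in $\pi^{-1}(D) \subseteq E$, so it is some $E_j$, whose generic point $\eta_j$ lies over $\xi_i$.

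The crux is then to deduce the logarithmic growth of $\omega$ at $\xi_i$ from that of $\pi^*\omega$ near $\eta_j$. I would pass to the generically finite case by slicing: using Bertini in characteristic zero (and quasi-projectivity after shrinking $X$ to a Zariski neighborhood of $\xi_i$), take a smooth complete intersection $Y_0 \subset Y$ of dimension $\dim X$ meeting $E$ transversally, dominating $X$ via $\pi$, and meeting $E_j$. Then $\pi|_{Y_0} : Y_0 \to X$ is proper, surjective, and generically finite, and the restriction $\pi^*\omega|_{Y_0}$ remains logarithmic along the NCD $E \cap Y_0$. For a proper generically finite morphism of smooth varieties with compatible NCD structures, the logarithmic trace $\tn{tr}_{\pi|_{Y_0}} : (\pi|_{Y_0})_* \Omega^p_{Y_0}(\log(E \cap Y_0)) \to \Omega^p_X(\log D)$ preserves global sections and satisfies $\tn{tr}(\pi^*\omega) = \deg(\pi|_{Y_0}) \cdot \omega$ on rational forms; applying it produces a global logarithmic extension of $\deg(\pi|_{Y_0}) \cdot \omega$, and we divide by this integer to conclude.

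The hard part will be verifying the existence of the logarithmic trace and its compatibility with pullback; this is a standard fact in the theory of log de Rham cohomology (cf.~Esnault--Viehweg or the Hodge-theoretic framework of \cite{deligne1971theorie}), and in characteristic zero there are no tameness obstructions to address. An alternative hands-on route avoiding the trace is to pass to formal completions $\widehat{\cal O}_{X, \xi_i} \cong k(\xi_i)[[t]]$ and $\widehat{\cal O}_{Y, \eta_j} \cong k(\eta_j)[[s]]$ via Cohen's structure theorem, write $\omega$ and $\pi^*\omega$ as Laurent-type expansions in $t$ and $s$, and directly compare their leading terms using $\pi^*t = u s^{e_j}$ for a unit $u \in \widehat{\cal O}_{Y, \eta_j}$ and $e_j \ge 1$; the separability of the residue field extension $k(\xi_i) \hookrightarrow k(\eta_j)$ in characteristic zero makes the relevant pullback maps on K\"ahler differentials injective, which rules out the cancellations that would be needed for a non-logarithmic $\omega$ to produce a logarithmic $\pi^*\omega$.
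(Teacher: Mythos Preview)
Your proposal is sound. Your alternative route via formal completions is essentially the paper's own argument: the paper also reduces to a single smooth component of $D$ by reflexivity of $\Omega^p_X(\log D)$, then completes on the $Y$-side and carries out the direct Laurent-type comparison you describe. The only cosmetic difference is that the paper completes at a closed $k$-point on the smooth locus of a dominating component $E_1\subset E$ rather than at the generic point $\eta_j$; your ``no-cancellation'' input (injectivity of pullback on K\"ahler differentials of residue fields) appears in the paper as the injectivity $\Omega^p(D)\to\Omega^p(E_1)$ of global forms, justified by proper surjectivity of $\widetilde E_1\to D$, after which an explicit decomposition $\omega=\omega_1+\omega_2\wedge\tfrac{dx_1}{x_1}$ is analyzed term by term.

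Your main route through Bertini slicing and the logarithmic trace is a genuinely different argument. It is cleaner once the log trace is granted---the identity $\tn{tr}(f^*\omega)=\deg(f)\cdot\omega$ on rational forms finishes in one line---whereas the paper's hands-on calculation is more self-contained but less conceptual. Two small refinements: first, shrinking $X$ does not by itself make $Y$ quasi-projective, so to run Bertini you should first pass via Chow's lemma and a log resolution to a projective smooth replacement of $Y$ (harmless by the easy direction). Second, the requirement that $Y_0$ meet $E_j$ is unnecessary: since $\omega$ is already regular on $\mathring X$, the form $(\pi|_{Y_0})^*\omega$ has poles only along the sub-NCD $(\pi|_{Y_0})^{-1}(D)_{\tn{red}}\subset E\cap Y_0$, and the log trace for a generically finite proper map with this NCD preimage of $D$ already lands in $\Omega^p_X(\log D)$.
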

\begin{proof}
The ``only if'' direction is clear as $\pi^{-1}D$ is set-theoretically contained in $E$. Let us argue the converse. The question is \'etale local on $X$. Since $\Omega^p_X(\log D)$ is locally free, it suffices to show that $\omega$ extends to $\Omega_X^p(\log D)$ away from codimension $\ge 2$. Thus we will choose coordinates $x_1,\cdots, x_n\in\cal O_X$ such that $D$ is defined by $x_1 = 0$ and $\Omega_X^1$ is free on $dx_1,\cdots, dx_n$.

\smallskip

We will also replace $Y$ by its formal neighborhood around some $y\in Y$ contained in the smooth locus of an irreducible component $E_1$ of $E$ which dominates $D$. Since the normalization $\widetilde E_1\rightarrow D$ is a proper surjection and $\widetilde E_1$ is connected and smooth, we see that $\Omega^p(D) \rightarrow \Omega^p(E_1)$ is injective. In other words, we shall assume:
\begin{enumerate}[(a)]
	\item $Y = \Spec(k\arc{y_1, \cdots, y_m})$, $E_1$ is defined by $y_1 = 0$, and $\mathring Y = Y\backslash E_1$ is the preimage of $\mathring X$;
	\smallskip
	\item The map $\Omega^p(D) \rightarrow \Omega^p(E_1)$ is injective.
\end{enumerate}
Thus $\pi^*x_1 = u y_1^e$ for some $e\ge 1$ and $u\in\cal O_Y^{\times}$. Hensel's lemma finds an $e$th root of $u$, so after an automorphism on $Y$ fixing $E_1$, we may further assume:
\begin{enumerate}[(c)]
	\item $\pi^*x_1 = y_1^e$.
\end{enumerate}

\smallskip

Let us now consider a meromorphic form $\omega \in \Omega^p(X)[x_1^{-1}]$ such that $\pi^*\omega \in \Omega^p(Y)[y_1^{-1}]$ is logarithmic along $E_1$. Write
$$
\omega = \omega_1 + \omega_2 \wedge \frac{dx_1}{x_1},
$$
where $\omega_1, \omega_2 \in \Omega^p(X)[x_1^{-1}]$ do not feature $dx_1$. In what follows we assume $\omega_1, \omega_2$ are both nonzero (the case where either is zero being similar but simpler). Write $\omega_1 = x^{d_1}_1\tilde{\omega}_1$ and $\omega_2 = x_1^{d_2}\tilde{\omega}_2$ where $\tilde{\omega}_1$ and $\tilde{\omega}_2$ are holomorphic and not divisible by $x_1$. Then:
\begin{align}
\pi^*\omega &= (y_1^e)^{d_1}\pi^*\tilde{\omega}_1 + (y_1^e)^{d_2}\pi^*\tilde{\omega}_2 \wedge e\frac{dy_1}{y_1}\notag \\
&= (y_1^e)^{d_1}(\eta^{(1)}_1 + \eta^{(2)}_1\wedge dy_1) + (y_1^e)^{d_2}\eta^{(1)}_2 \wedge e\frac{dy_1}{y_1} \label{eq-omega-decomp}
\end{align}
where $\pi^*\tilde{\omega}_1 = \eta^{(1)}_1 + \eta^{(2)}_1\wedge dy_1$ is its decomposition into parts where $\eta_1^{(1)}$, $\eta_1^{(2)}$ do \emph{not} feature $dy_1$ (and analogously for $\pi^*\omega_2$). Then assumption (b) implies that $\pi^*\tilde{\omega}_1, \pi^*\tilde{\omega}_2$ are \emph{nonzero} after pulling back to $E_1$. Thus $\eta_1^{(1)}$ and $\eta_2^{(1)}$ are \emph{not} divisible by $y_1$. Now, analyzing the part of the expression \eqref{eq-omega-decomp} \emph{not} featuring $dy_1$, we see that $d_1\ge 0$. Hence the first term is holomorphic, so the second term is necessarily logarithmic along $y_1$. Since $\eta_2^{(1)}$ is not divisible by $y_1$, we see that $d_2\ge 0$ as well.
\end{proof}

\qed(Proposition \ref{prop-h-descent})

\subsubsection{} A particular consequence of the $\tn h$-descent of $\mathring{\Omega}^p$ is a canonical transfer structure on the restriction of $\mathring{\Omega}^p$ to $\mathbf{Sm}_{/k}$. We recall the category of correspondences $\mathbf{Sm}^{\tn{Cor}}_{/k}$ mentioned in \S\ref{sec-corr}. According to J.~Scholbach \cite[Lemma 2.1]{scholbach2012geometric}, the representable presheaf $\mathbb Z_{\tn{tr}}(X)$ on $\mathbf{Sm}_{/k}^{\tn{Cor}}$ for any $X\in\mathbf{Sm}_{/k}$ has the property that its $\tn h$-sheafification identifies with that of $\mathbb Z(X)$ on $\mathbf{Sm}_{/k}$:
$$
\mathbb Z_{\tn h}(X) \xrightarrow{\sim} (\mathbb Z_{\tn{tr}}(X)|_{\mathbf{Sm}_{/k}})_{\tn h}.
$$
Consequently, for any $\tn h$-sheaf of abelian groups $\cal F$ on $\mathbf{Sm}_{/k}$ there is an isomorphism:
$$
\cal F(X) \xrightarrow{\sim} \op{Hom}_{\tn{PSh}(\mathbf{Sm}_{/k})}(\mathbb Z_{\tn{tr}}(X), \cal F),
$$
so $\cal F$ acquires a canonical transfer structure.

\begin{lem}
\label{lem-transfer}
The restriction of $\mathring{\Omega}^p$ ($p\ge 0$) to $\mathbf{Sm}_{/k}$ is an $\mathbb A^1$-invariant sheaf with a canonical transfer structure.
\end{lem}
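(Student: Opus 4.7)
The plan is to handle the two assertions separately, as they are of distinct flavors. The transfer structure is essentially formal: since $\mathring{\Omega}^p$ is an $\tn h$-sheaf of $k$-vector spaces on $\mathbf{Sch}^{\tn{ft}}_{/k}$ by Proposition \ref{prop-h-descent}, its restriction to $\mathbf{Sm}_{/k}$ remains an $\tn h$-sheaf (as the $\tn h$-topology on $\mathbf{Sm}_{/k}$ is the one induced by restriction), and the canonical transfer structure is then provided by the isomorphism
$$
\mathring{\Omega}^p(X) \xrightarrow{\sim} \op{Hom}_{\tn{PSh}(\mathbf{Sm}_{/k})}(\mathbb Z_{\tn{tr}}(X), \mathring{\Omega}^p)
$$
recalled immediately before the statement.

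The real content is the $\mathbb A^1$-invariance. Given $X\in\mathbf{Sm}_{/k}$, I would choose a good compactification $X\hookrightarrow\overline X$ with boundary divisor $D$ and observe that $\overline X\times\mathbb P^1$ is then a good compactification of $X\times\mathbb A^1$ whose boundary $E := (D\times\mathbb P^1)\cup(\overline X\times\{\infty\})$ is again a normal crossing divisor. Since $E$ is pulled back from a normal crossing divisor on each factor, the log cotangent bundle splits as a product, yielding
$$
\Omega^p_{\overline X\times\mathbb P^1}(\log E) \cong \bigoplus_{a+b=p} \pi_1^*\Omega^a_{\overline X}(\log D) \otimes \pi_2^*\Omega^b_{\mathbb P^1}(\log\{\infty\}).
$$
Taking $\op H^0$ and invoking the K\"unneth formula for coherent sheaves on smooth proper schemes, the invariance reduces to the elementary computation $\op H^0(\mathbb P^1; \Omega^0_{\mathbb P^1}(\log\{\infty\})) = k$ together with $\op H^0(\mathbb P^1; \Omega^1_{\mathbb P^1}(\log\{\infty\})) = 0$, the latter vanishing because $\Omega^1_{\mathbb P^1}(\log\{\infty\}) \cong \mathcal O_{\mathbb P^1}(-1)$. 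Only the summand with $(a,b) = (p,0)$ survives, producing $\mathring{\Omega}^p(X) \otimes k \cong \mathring{\Omega}^p(X\times\mathbb A^1)$.

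The main point requiring verification is that this K\"unneth isomorphism is genuinely implemented by the pullback along the projection $\pi_1 : X\times\mathbb A^1\to X$; this is automatic once one observes that $\pi_1$ extends to a morphism of good compactifications and that pullback preserves log-extensibility of differential forms. An alternative, conceptually cleaner route, would be to identify $\mathring{\Omega}^p(Y)$ with $F^p\mathbb H^p_{\dR}(Y)$ via the degeneration of \eqref{eq-hodge-dR} at $\tn E_1$ and invoke the $\mathbb A^1$-homotopy invariance of algebraic de Rham cohomology strictly respecting the Hodge filtration; however, the direct argument above has the advantage of keeping the proof self-contained, which fits the elementary style of this section.
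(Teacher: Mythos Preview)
Your proof is correct. The transfer-structure argument is identical to the paper's: both invoke the $\tn h$-descent of $\mathring{\Omega}^p$ (Proposition \ref{prop-h-descent}) and the Scholbach isomorphism $\mathring{\Omega}^p(X)\xrightarrow{\sim}\op{Hom}_{\tn{PSh}(\mathbf{Sm}_{/k})}(\mathbb Z_{\tn{tr}}(X),\mathring{\Omega}^p)$ recorded just before the lemma.

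For $\mathbb A^1$-invariance, however, you and the paper take different routes. The paper argues exactly via what you label the ``alternative, conceptually cleaner route'': it identifies $\mathring{\Omega}^p(X)$ with ${}_F\tn{Gr}^p\,\mathbb H^p(\overline X;\Omega^{\bullet}_{\overline X}(\log D))$ and then appeals (implicitly) to the $\mathbb A^1$-homotopy invariance of algebraic de Rham cohomology together with strictness of the Hodge filtration. Your primary argument is instead a direct K\"unneth computation on the product compactification $\overline X\times\mathbb P^1$, reducing to the elementary vanishing $\op H^0(\mathbb P^1;\Omega^1_{\mathbb P^1}(\log\{\infty\}))=\op H^0(\mathbb P^1;\cal O(-1))=0$. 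Your approach is more self-contained and avoids any appeal to mixed Hodge theory beyond what is already used to define $\mathring{\Omega}^p$; the paper's approach is shorter on the page but leans on Deligne's machinery. Both are valid, and it is worth noting that you already anticipated the paper's argument in your final paragraph.
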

\begin{proof}
The $\mathbb A^1$-invariance is a direct consequence of the identification of $\mathring{\Omega}^p(X)$ with the $p$th graded piece of $\mathbb H^p(\overline X, \Omega_{\overline X}^{\bullet}(\log D))$ with respect to the Hodge filtration. The canonical transfer structure has just been noted above.
\end{proof}

By construction, the transfer structure on $\mathring{\Omega}^p$ is compatible with that of $\Omega^p$. For an explicit formula of the latter, we refer the reader to the trace construction of Lecomte--Wach \cite{lecomte2009complexe}. In particular, the morphism $d\log : \mathbb G_m \rightarrow \mathring{\Omega}^1$ commutes with transfer.

\subsection{Cohomological properties}

\subsubsection{} Suppose $\cal F$ is a presheaf on $\mathbf{Sm}_{/k}$ valued in abelian groups. Following Voevodsky \cite[\S3.1]{voevodsky2000cohomological}, we define $\cal F_{-1}$ to be the presheaf:
$$
\cal F_{-1} : X\leadsto \tn{Coker}(\cal F(X\times\mathbb A^1) \rightarrow \cal F(X\times(\mathbb A^1\backslash\mathbf 0))).
$$
The presheaf $\cal F_{-n}$ is then defined iteratively.

\begin{lem}
\label{lem-omega-minus}
There holds:
\begin{enumerate}[(a)]
	\item The sheaf $(\mathring{\Omega}^0)_{-1}$ is identically zero;
	\item For any $p\ge 1$, there is a canonical isomorphism $(\mathring{\Omega}^p)_{-1} \xrightarrow{\sim} \mathring{\Omega}^{p-1}$.
\end{enumerate}
\end{lem}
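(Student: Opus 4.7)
The plan is as follows. Part (a) is immediate: since $\mathring{\Omega}^0 \cong \underline k$ by Lemma \ref{lem-basic-properties}(a) and both $\mathbb A^1$ and $\mathbb G_m$ are geometrically connected, the restriction map $\mathring{\Omega}^0(X \times \mathbb A^1) \to \mathring{\Omega}^0(X \times \mathbb G_m)$ is an isomorphism, and its cokernel vanishes.

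For part (b), my strategy is to compute both $\mathring{\Omega}^p(X \times \mathbb A^1)$ and $\mathring{\Omega}^p(X \times \mathbb G_m)$ directly from the definition, using log differentials on a good compactification, and then read off the cokernel as a residue. Fix a good compactification $X \hookrightarrow \overline X$ with boundary divisor $D$. Then $\overline X \times \mathbb P^1$ is simultaneously a good compactification of $X \times \mathbb A^1$ (with boundary $D \times \mathbb P^1 + \overline X \times \{\infty\}$) and of $X \times \mathbb G_m$ (with boundary $D \times \mathbb P^1 + \overline X \times \{0\} + \overline X \times \{\infty\}$). On such a product, I would use the logarithmic K\"unneth decomposition
$$
\Omega^p_{\overline X \times \mathbb P^1}(\log B) \;=\; \bigoplus_{a+b=p} \pi_1^*\Omega^a_{\overline X}(\log D) \otimes \pi_2^*\Omega^b_{\mathbb P^1}(\log Z),
$$
where $Z$ is either $\{\infty\}$ or $\{0,\infty\}$; this is a local statement, verified by choosing adapted coordinates around any boundary point of $\overline X \times \mathbb P^1$.

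Combining with K\"unneth for coherent cohomology and the elementary computations
$$
H^0(\mathbb P^1, \cal O) = k, \quad H^0(\mathbb P^1, \Omega^1_{\mathbb P^1}(\log\{\infty\})) = 0, \quad H^0(\mathbb P^1, \Omega^1_{\mathbb P^1}(\log\{0,\infty\})) = k \cdot \tfrac{dt}{t}
$$
(the middle vanishing from $\Omega^1_{\mathbb P^1}(\log\{\infty\}) \cong \cal O(-1)$, the right one via the global generator $\tfrac{dt}{t}$), I obtain
$$
\mathring{\Omega}^p(X \times \mathbb A^1) \cong \mathring{\Omega}^p(X), \qquad \mathring{\Omega}^p(X \times \mathbb G_m) \cong \mathring{\Omega}^p(X) \;\oplus\; \mathring{\Omega}^{p-1}(X) \cdot \tfrac{dt}{t}.
$$
Under these identifications, the restriction along $X \times \mathbb G_m \hookrightarrow X \times \mathbb A^1$ is precisely the inclusion of the first summand, so $(\mathring{\Omega}^p)_{-1}(X) \cong \mathring{\Omega}^{p-1}(X)$ canonically, the isomorphism being the residue $\omega_0 + \omega_1 \wedge \tfrac{dt}{t} \mapsto \omega_1$.

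The only step requiring genuine attention is the logarithmic K\"unneth decomposition, which is standard but depends on recognizing $\overline X \times \mathbb P^1$ as a product of log smooth pairs; once it is in hand, the rest is a direct cohomological computation, and the identifications are manifestly functorial in $X$, with independence of the chosen compactification guaranteed by the independence already built into the definition of $\mathring{\Omega}^p$.
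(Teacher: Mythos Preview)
Your proposal is correct and is precisely the ``direct calculation making use of the product formula for logarithmic forms'' that the paper cites (Deligne, \S II, Proposition 3.2(iii)): the logarithmic K\"unneth decomposition on $\overline X \times \mathbb P^1$ together with the elementary $H^0$ computations on $\mathbb P^1$ yields the residue isomorphism. The paper also mentions an alternative route via the Hodge-theoretic interpretation of $\mathring{\Omega}^p$ (i.e., reading off the graded pieces of the Hodge filtration on $\mathbb H^p$ and using the known mixed Hodge structure of $X \times \mathbb G_m$), but your argument is the one it actually points to.
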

\begin{proof}
Part (a) is tautological. Part (b) follows either from the Hodge-theoretic interpretation of $\mathring{\Omega}^p$ or a direct calculation making use of the product formula for logarithmic forms \cite[\S II, Proposition 3.2(iii)]{deligne2006equations}.
\end{proof}

\subsubsection{} For notational convenience, we extend $\mathring{\Omega}^p$ to smooth local schemes (i.e., localizations of smooth schemes at a point) by the formula:
$$
\mathring{\Omega}^p(\eta) := \underset{U_{\alpha}}{\tn{colim}}\,\mathring{\Omega}^p(U_{\alpha}),
$$
where $U_{\alpha}$ is a cofiltered limit presentation of $\eta$ with each $U_{\alpha}$ smooth, affine and each $U_{\alpha}\rightarrow U_{\beta}$ an open immersion. The following Theorem summarizes the cohomological properties of $\mathring{\Omega}^p$:

\begin{thm}
\label{thm-coh-properties}
Let $p\ge 0$ and $\tau$ be one of the following Grothendieck topologies on $\mathbf{Sm}_{/k}$: Zariski, Nisnevich, \'etale, \tn{cdh}, \eh, \tn{qfh}, \tn{h}. There holds:
\begin{enumerate}[(a)]
	\item For all $n\ge 0$, the presheaf $X \leadsto \op H^n_{\tau}(X; \mathring{\Omega}^p)$ on $\mathbf{Sm}_{/k}$ is an $\mathbb A^1$-invariant presheaf with transfer, and is canonically independent of the choice of $\tau$;
	\smallskip
	\item For $X\in\mathbf{Sm}_{/k}$, the Zariski sheaf $\mathring{\Omega}^p_X$ is quasi-isomorphic to the following complex concentrated in degrees $[0, p]$:
	$$
	\bigoplus_{x\in X^{(0)}} (i_x)_* \mathring{\Omega}^p(x) \rightarrow \bigoplus_{x\in X^{(1)}} (i_x)_*\mathring{\Omega}^{p-1}(x) \rightarrow \cdots \rightarrow \bigoplus_{x\in X^{(p)}} (i_x)_*k.
	$$
\end{enumerate}
\end{thm}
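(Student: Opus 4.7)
By Lemma \ref{lem-transfer}, the restriction of $\mathring{\Omega}^p$ to $\mathbf{Sm}_{/k}$ is an $\mathbb A^1$-invariant Nisnevich sheaf with transfers, valued in $k$-vector spaces (in particular in $\mathbb Q$-vector spaces). This single input feeds into Voevodsky's general machinery of presheaves with transfers and yields both parts of the theorem.

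For part (a), the plan is to show that all seven cohomology presheaves canonically agree with the Nisnevich one, via a chain of pairwise comparisons. The equality Zariski $=$ Nisnevich is Voevodsky's homotopy invariance theorem for Nisnevich sheaves with transfers. The equality cdh $=$ Nisnevich is the Nisnevich analog of Lemma \ref{lem-eh-to-etale}, due to Friedlander--Voevodsky. The equality \'etale $=$ Nisnevich uses the exactness of the forgetful functor \eqref{eq-et-to-nis} on $\mathbb Q$-coefficients, which applies since $\mathring{\Omega}^p$ is $k$-valued. The equality $\eh = $ \'etale is exactly Lemma \ref{lem-eh-to-etale}. Finally, $\tn h = \tn{qfh} = $ \'etale follows since $\mathring{\Omega}^p$ is already an $\tn h$-sheaf by Proposition \ref{prop-h-descent}, so one appeals to the standard comparison for $\tn h$- and qfh-cohomology of $\mathbb Q$-valued Nisnevich sheaves with transfers, via Scholbach \cite{scholbach2012geometric} and Voevodsky \cite{voevodsky1996homology}. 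Once all topologies are identified with Nisnevich, the $\mathbb A^1$-invariance and canonical transfer structure on the cohomology presheaves are transported from the Nisnevich version via Voevodsky's theorem on homotopy invariant Nisnevich sheaves with transfers.

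For part (b), I will invoke the Gersten conjecture for $\mathbb A^1$-invariant Nisnevich sheaves with transfers on $\mathbf{Sm}_{/k}$, established in \cite{mazza2011lecture}. Applied to $F = \mathring{\Omega}^p$, it yields a canonical resolution of Zariski sheaves
$$
0 \rightarrow \mathring{\Omega}^p_X \rightarrow \bigoplus_{x\in X^{(0)}} (i_x)_* F_{-0}(k(x)) \rightarrow \bigoplus_{x\in X^{(1)}} (i_x)_* F_{-1}(k(x)) \rightarrow \cdots,
$$
whose differentials are the Voevodsky residue maps. Lemma \ref{lem-omega-minus} identifies $F_{-i}$ with $\mathring{\Omega}^{p-i}$ for $0 \le i \le p$ and with $0$ for $i > p$, so the complex terminates in degree $p$ with $\bigoplus_{x\in X^{(p)}} (i_x)_* k$, matching the complex of the statement.

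The main obstacle I anticipate is matching the abstract Voevodsky differentials with the classical Poincar\'e residue maps on logarithmic $p$-forms at the level of $\mathring{\Omega}^{\bullet}$ itself, rather than only at the level of the contractions $F_{-i}$. This should be checked on the generic stalk of each codimension--$1$ stratum, using the product formula for logarithmic forms \cite[\S II, Proposition 3.2(iii)]{deligne2006equations} together with Lemma \ref{lem-omega-minus}(b); alternatively, one may bypass this identification via the Bloch--Ogus route \cite{bloch1974gersten} combined with the mixed Hodge-theoretic interpretation of $\mathring{\Omega}^p$ as a graded piece of the Hodge filtration, as foreshadowed in the introduction.
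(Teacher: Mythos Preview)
Your proposal is correct and follows essentially the same approach as the paper. For part (a), the paper invokes Scholbach \cite[Theorem 2.11]{scholbach2012geometric} as a single package covering all topologies except $\eh$ (which is handled by Lemma \ref{lem-eh-to-etale}), whereas you unpack this into a chain of pairwise comparisons; the content is the same. For part (b), both proofs cite the Gersten resolution from \cite[Theorem 24.11]{mazza2011lecture} and identify the terms via Lemma \ref{lem-omega-minus}; your additional worry about matching the abstract Voevodsky differentials with Poincar\'e residues is not needed for the statement as written, since the theorem only asserts a quasi-isomorphism with a complex whose terms are specified but whose differentials are left implicit.
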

\noindent
Here, $X^{(n)}$ denotes the set of codimension-$n$ points of $X$.
\begin{proof}
Statement (a) is valid for any $\mathbb A^1$-invariant $\tn h$-sheaf of $\mathbb Q$-vector spaces, by Scholbach \cite[Theorem 2.11]{scholbach2012geometric}; the only choice of $\tau$ not covered in \emph{loc.cit.}~is the $\eh$-topology, which follows from Lemma \ref{lem-eh-to-etale}. For statement (b), Mazza--Voevodsky--Weibel \cite[Theorem 24.11]{mazza2011lecture} shows that an $\mathbb A^1$-invariant pretheory $\cal F$ satisfying Zariski descent admits a Gersten resolution with terms given by $\bigoplus_{x\in X^{(n)}} (i_x)_*F_{-n}(x)$. We are done by the calculation of $(\mathring{\Omega}^p)_{-n}$ in Lemma \ref{lem-omega-minus}.
\end{proof}

\begin{rem}
A.~Beilinson has kindly pointed out that the Gersten resolution in (b) also follows directly from applying ${}_F\tn{Gr}^p$ to the Gersten resolution of algebraic de Rham cohomology obtained from the Bloch--Ogus theorem.
\end{rem}

\subsubsection{Example}
\label{sec-curve-coh} We calculate the cohomology of $\mathring{\Omega}^1$ on a smooth curve $X$. Since the cohomology groups will be independent of the chosen Grothendieck topology (Theorem \ref{thm-coh-properties}(a)), we may as well calculate them in the Zariski topology using the Gersten resolution (Theorem \ref{thm-coh-properties}(b)). The answer is as follows:
\begin{enumerate}[(a)]
	\item if $X$ is affine, then $\op H^1(X; \mathring{\Omega}^1) = 0$;
	\smallskip
	\item if $X$ is proper, then the canonical map $\op R\Gamma(X; \mathring{\Omega}^1) \rightarrow \op R\Gamma(X; \Omega^1)$ is an isomorphism.
\end{enumerate}
\noindent
Indeed, the affine case amounts to the problem of contructing $\omega$ with prescribed poles and follows from $\op H^1(\overline X; \Omega^1(E)) = 0$ for the boundary divisor $E:=\overline X\backslash X$ in a smooth completion $\overline X$.

\smallskip

For the proper case, the nontrivial part is cohomology in degree $1$. We reduce to $X$ connected (with generic point $\eta$) and remove one closed point $\mathring X:= X\backslash x$. The sum-of-residue formula and the vanishing of $\op H^1(\mathring X, \mathring{\Omega}^1)$ shows that the cokernel of $d$ is indeed identified with $k$:
$$
\xysmall{
	& & \mathring{\Omega}(\eta) \ar[d]_d \ar[dr] & \\
	0 \ar[r] & k \ar[r] & \bigoplus_{x\in X^{(1)}}k \ar[r] & \bigoplus_{x\in\mathring X^{(1)}} k \ar[r] & 0.
}
$$

\subsubsection{Tangential remarks} We conclude this section with some remarks concerning the interaction between $\mathring{\Omega}^p$ and algebraic cycles. These facts will not play a role in this paper.

\smallskip

Let $\mathbf K_p^{\tn M}$ denote the Zariski sheaf of the $p$th Milnor $\tn K$-theory group on $\mathbf{Sm}_{/k}$. For a field $F$, $\mathbf K_p^{\tn M}(F)$ is the $p$th graded piece of the tensor algebra $T^{\otimes}(F^{\times})$ modulo $u\otimes v$ for $u + v = 1$. More generally, $\mathbf K_p^{\tn M}$ is given by a Gersten resolution. When $X$ is furthermore projective, $\op H^p(X, \mathbf K_p^{\tn M})$ identifies with the Chow group $\tn{CH}^p(X)$ of codimension-$p$ cycles \cite[Th\'eor\`eme 5]{soule1985operations}. In particular, the construction:
$$
d\log : \mathbf K_p^{\tn M}(\eta) \rightarrow \mathring{\Omega}^p(\eta),\quad f_1\otimes\cdots\otimes f_n\leadsto \frac{df_1}{f_1}\wedge\cdots\wedge\frac{df_n}{f_n}
$$
for points $\eta$ on $X\in\mathbf{Sm}_{/k}$ defines a morphism of Zariski sheaves on $\mathbf{Sm}_{/k}$:
\begin{equation}
\label{eq-milnor-to-tame}
d\log : \mathbf K_p^{\tn M}\underset{\mathbb Z}{\otimes} k \rightarrow \mathring{\Omega}^p.
\end{equation}

We obtain the following factorization of the algebraic de Rham cycle class map:
$$
\xysmall{
	\tn{CH}^p(X) \underset{\mathbb Z}{\otimes} k \ar[r]^-{\sim}\ar[drr]_{\tn{cl}} & \op H^p(X; \mathbf K^{\tn M}_p\underset{\mathbb Z}{\otimes} k) \ar[r]^-{d\log} & \op H^p(X; \mathring{\Omega}^p) \ar[d]^{\tn{can}} \\
	 & & \op H^p(X; \Omega^p)
}
$$
Indeed, its factorization through $d\log : \op H^p(X; \mathbf K^{\tn M}_p\underset{\mathbb Z}{\otimes} k) \rightarrow \op H^p(X; \Omega^p)$ is already observed in \cite{esnault1994remarks} and the further factorization through $\op H^p(X, \mathring{\Omega}^p)$ is tautological. The Gersten resolution of $\mathring{\Omega}^p$ (Theorem \ref{thm-coh-properties}(b)) implies that the composition $\tn{CH}^p(X) \underset{\mathbb Z}{\otimes} k \rightarrow \op H^p(X; \mathring{\Omega}^p)$ is surjective. Thus the image of $\op H^p(X; \mathring{\Omega}^p)$ in $\op H^p(X; \Omega^p)$ is precisely the span of cycle classes.

\bigskip

\section{Tame gerbes and twistings}
\label{sec-ge-and-tw}

We continue to assume $k=\bar k$ with $\tn{char}(k)=0$.

\smallskip

The purpose of this section is to define tame gerbes and tame twistings. They will be constructed as derived $\bfeh$-stacks valued in strict (i.e., strictly commutative) Picard groupoids. We also compare tame gerbes with analytic $\mathbb C^{\times}$-gerbes when the ground field is $\mathbb C$ (\S\ref{sec-analytic-comparison}). This section contains mostly definitions and very few statements that require proofs.

\subsection{Picard $n$-groupoids}
\label{sec-picard-groupoids}

\subsubsection{} In this paper, we refer to commutative group objects of $\mathbf{Spc}$ as \emph{Picard groupoids}. More precisely, Picard groupoids $\mathbf A$ form the full subcategory of $\mathbb E_{\infty}$-spaces characterized by the property of being \emph{grouplike}, i.e., $\pi_0\mathbf A$ is a group under the commutative multiplication. A Picard groupoid $\mathbf A \in \tn{ComGrp}(\mathbf{Spc})$ with $\pi_i\mathbf A=0$ for $i > 1$ is thus a Picard groupoid in the classical sense (c.f.~\cite[Expos\'e XVIII]{bourbaki2006theorie}).

\smallskip

The $\infty$-category $\tn{ComGrp}(\mathbf{Spc})$ is also equivalent to that of connective spectra:
$$
\tn{ComGrp}(\mathbf{Spc}) \xrightarrow{\sim} \mathbf{Sptr}_{\ge 0}.
$$
We note that the forgetful functor from $\tn{ComGrp}(\mathbf{Spc})$ to $\mathbf{Spc}$, which passes to $\Omega^{\infty}$ on the level of spectra, preserves limits and filtered colimits.

\subsubsection{} We will also need to consider the more restricted notion of \emph{strict Picard groupoids}. These are the $\op H\mathbb Z$-module objects in $\tn{ComGrp}(\mathbf{Spc})$. The Dold--Kan correspondence identifies the following $\infty$-categories:
\begin{enumerate}[(a)]
	\item Nonpositively graded cochain complexes of abelian groups $\mathbb Z\Mod^{\le 0}$;
	\smallskip
	\item $\op H\mathbb Z$-module objects in $\tn{ComGrp}(\mathbf{Spc})$.
\end{enumerate}
\noindent
Under this correspondence, the $\op H^{-i}$ of a cochain complex identifies with $\pi_i$ of the $\tn H\mathbb Z$-module, for all $i\ge 0$. We will denote this $\infty$-category by $\tn{ComGrp}^{\tn{st}}(\mathbf{Spc})$, often passing without mention the Dold--Kan correspondence.

\begin{rem}
For every $\mathbf A\in\tn{ComGrp}^{\tn{st}}(\mathbf{Spc})$, the commutativity constraint $\mathbf A\otimes\mathbf A\xrightarrow{\sim} \mathbf A\otimes\mathbf A$ is homotopy equivalent to $\tn{id}_{\mathbf A\otimes\mathbf A}$. For $\mathbf A\in\tn{ComGrp}(\mathbf{Spc})$ with $\pi_i\mathbf A = 0$ for $i > 1$, being strict is a condition but this will no longer be the case in general.
\end{rem}

We shall call a (resp.~strict) Picard groupoid $\mathbf A$ with $\pi_i\mathbf A = 0$ for $i > n$ a (resp.~\emph{strict}) \emph{Picard $n$-groupoid}. One of the main objects we shall be concerned with---gerbes---form a strict Picard $2$-groupoid.

\subsubsection{}
\label{sec-complex-to-space}
Let us note the sheaf-theoretic analogue of the above discussion. For $X\in\mathbf{Sch}_{/k}^{\tn{ft}}$, there is a functor from the $\infty$-category of complexes of \'etale sheaves of abelian groups on $X$ to the $\infty$-category of $\tn{ComGrp}^{\tn{st}}(\mathbf{Spc})$-valued \'etale sheaves:
\begin{equation}
\label{eq-complex-to-space}
\cal F^{\bullet}\leadsto\mathbf F,\quad \mathbf F(U) := \tn{DK}(\tau^{\le 0} \op R\Gamma(U, \cal F^{\bullet})).
\end{equation}
Here $\tau^{\le 0}$ denotes cohomological trunction and $\tn{DK}$ is the Dold--Kan correspondence. The fact that $\mathbf F$ is again a sheaf follows from the preservation of limits under $\tau^{\le 0}$ and $\tn{DK}$. We say that the \'etale sheaf of strict Picard groupoids $\mathbf F$ is \emph{represented by} the complex $\cal F^{\bullet}$.

\begin{lem}
Under the functor $\cal F^{\bullet}\leadsto \mathbf F$ \eqref{eq-complex-to-space}, there holds:
\begin{enumerate}[(a)]
	\item For any $x\in X$, we have an isomorphism of stalks $\mathbf F_x \xrightarrow{\sim} \tn{DK}(\tau^{\le 0} \cal F^{\bullet}_x)$;
	\smallskip
	\item Suppose $f : X\rightarrow Y$ is a morphism in $\mathbf{Sch}^{\tn{ft}}_{/k}$, then $f_*\mathbf F$ identifies with the $\tn{ComGrp}^{\tn{st}}(\mathbf{Spc})$-valued sheaf associated to $\op Rf_*\cal F^{\bullet}$.
\end{enumerate}
\end{lem}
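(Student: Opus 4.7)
The plan is that both parts reduce to standard commutativity properties of the functor \eqref{eq-complex-to-space}: namely, that $\tn{DK}$ is an equivalence of $\infty$-categories (so preserves everything in sight), that $\tau^{\le 0}$ is truncation in the standard $t$-structure on $\mathbb Z\Mod$ (so preserves all limits and filtered colimits), and that derived global sections over étale neighborhoods of $X$ interact well with stalks and derived pushforward. Under Dold--Kan, the grading convention $\pi_i \mathbf F = \op H^{-i}\cal F^{\bullet}$ matches the $t$-structures on the two sides, so truncation, $\pi_i$, and filtered colimits are compatible throughout.

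For part (a), I would write
$$
\mathbf F_x \;=\; \underset{U \ni x}{\tn{colim}}\,\tn{DK}\bigl(\tau^{\le 0}\op R\Gamma(U, \cal F^{\bullet})\bigr)
$$
and pull the filtered colimit through $\tn{DK}$ and $\tau^{\le 0}$, which is legitimate by the preservation properties above. The remaining step is to identify $\underset{U \ni x}{\tn{colim}}\,\op R\Gamma(U, \cal F^{\bullet})$ with the derived stalk $\cal F^{\bullet}_x$. Since $X \in \mathbf{Sch}^{\tn{ft}}_{/k}$ is qcqs, $\op R\Gamma$ commutes with cofiltered limits of qcqs étale $U$'s, so the colimit computes $\op R\Gamma(\Spec \cal O^{\tn{sh}}_{X,x}, \cal F^{\bullet})$; and over a strict henselian local scheme, étale cohomology of any abelian sheaf vanishes in positive degrees, so $\op R\Gamma$ collapses to the naive stalk functor on each $\cal F^n$, yielding $\cal F^{\bullet}_x$.

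For part (b), I would fix an étale $V \rightarrow Y$ and expand both sides. On the one hand,
$$
(f_*\mathbf F)(V) \;=\; \mathbf F\bigl(X\underset{Y}{\times}V\bigr) \;=\; \tn{DK}\bigl(\tau^{\le 0}\op R\Gamma(X\underset{Y}{\times}V, \cal F^{\bullet})\bigr).
$$
On the other hand, applying \eqref{eq-complex-to-space} to $\op Rf_*\cal F^{\bullet}$ produces the sheaf $V \leadsto \tn{DK}(\tau^{\le 0}\op R\Gamma(V, \op Rf_*\cal F^{\bullet}))$. The classical Leray identity $\op R\Gamma(V, \op Rf_*\cal F^{\bullet}) \simeq \op R\Gamma(X\underset{Y}{\times}V, \cal F^{\bullet})$, functorial in $V$, matches the two expressions. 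The only (mild) obstacle in either part is bookkeeping the interaction of $\tau^{\le 0}$ with the $\infty$-categorical structure on $\tn{ComGrp}^{\tn{st}}(\mathbf{Spc})$, which is handled once and for all by the Dold--Kan equivalence.
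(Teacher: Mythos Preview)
Your proposal is correct and follows essentially the same approach as the paper's proof: part (a) via commutation of $\tau^{\le 0}$ and $\tn{DK}$ with filtered colimits together with the identification $\underset{U}{\tn{colim}}\,\op R\Gamma(U,\cal F^{\bullet}) \simeq \cal F^{\bullet}_x$, and part (b) via the Leray identity $\op R\Gamma(V, \op Rf_*\cal F^{\bullet}) \simeq \op R\Gamma(X\underset{Y}{\times}V, \cal F^{\bullet})$. Your version simply spells out in more detail why the filtered colimit of derived sections computes the stalk (via passage to the strict henselization and vanishing of higher \'etale cohomology there), whereas the paper takes this identification as known.
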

\begin{proof}
Part (a) follows from the identification of $\cal F^{\bullet}_x$ with $\underset{U}{\tn{colim}} \op R\Gamma(U, \cal F^{\bullet})$, where $U$ ranges over \'etale neighborhoods of $x$, and the commutation of $\tau^{\le 0}$ with filtered colimits. Part (b) follows from the fact that for every \'etale $V\rightarrow Y$, the complex $\op R\Gamma(V, \op Rf_*\cal F^{\bullet})$ identifies with $\op R\Gamma(V \underset{Y}{\times}X, \cal F^{\bullet})$.
\end{proof}

\subsection{Local systems}

\subsubsection{} Let $X\in\mathbf{Sch}^{\tn{ft}}_{/k}$. The de Rham prestack $X_{\dR}$ is the prestack whose value on $S\in\mathbf{Sch}^{\tn{ft}}_{/k}$ is given by $\op{Maps}(S_{\tn{red}}, X)$. By a \emph{rank--$1$ local system} on $X$, we will mean a line bundle on $X_{\dR}$. Denote by $\mathbf{Loc}_1$ the prestack which associates to $X\in\mathbf{Sch}_{/k}^{\tn{ft}}$ the strict Picard ($1$-)groupoid of rank--$1$ local systems on $X$.

\begin{lem}
The prestack $\mathbf{Loc}_1$ satisfies $\tn h$-descent.
\end{lem}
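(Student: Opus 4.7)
The plan is to identify $\mathbf{Loc}_1$ with a functor extracted from perfect complexes on de Rham prestacks, and to deduce $\tn h$-descent from the $\mathbf h$-descent of $\tn{Perf}$ recalled in Lemma \ref{lem-perf-eh-descent}. First I would unpack the definition: $\mathbf{Loc}_1(X) = \mathbf{Pic}(X_{\dR})$, i.e., a rank-$1$ local system is a compatible assignment, to each test scheme $S$ with a map $S_{\tn{red}} \to X$, of a line bundle on $S$. Since line bundles are precisely the invertible objects of Tor-amplitude $[0,0]$ inside $\tn{Perf}$, and these conditions are pointwise detectable on test schemes, it suffices to show that the presheaf $X \mapsto \tn{Perf}(X_{\dR})$ on $\mathbf{Sch}^{\tn{ft}}_{/k}$ satisfies $\tn h$-descent; the groupoid $\mathbf{Loc}_1(X)$ is then obtained by restricting to the appropriate subgroupoid.

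Next I would factor $\tn h$-descent into two pieces: descent along Nisnevich (or \'etale) covers and descent along abstract blow-up squares as in \eqref{eq-blowup}. The first is routine: Nisnevich covers of $X$ induce Nisnevich covers of $X_{\dR}$ as prestacks, and perfect complexes satisfy Nisnevich descent on any prestack. For the blow-up square piece, I would apply $(-)_{\dR}$; using the nil-invariance of the de Rham construction, the resulting diagram of prestacks is equivalent to the one obtained from the analogous square of derived finite-type schemes, and Lemma \ref{lem-perf-eh-descent} provides the required homotopy Cartesian square at each level of the \v Cech nerve.

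The main obstacle is bridging the derived $\mathbf h$-topology of Halpern-Leistner--Preygel, which lives on $\mathbf{DSch}^{\tn{ft}}_{/k}$, with the purely classical $\tn h$-topology of the statement. The bridge is that every test map $S \to X_{\dR}$ factors through $S_{\tn{red}}$, so $X_{\dR}$ does not distinguish between classical and derived structure on $X$. Consequently, a classical $\tn h$-cover $\widetilde X \to X$ can be promoted, without affecting the resulting prestack, to a derived $\mathbf h$-cover, and Lemma \ref{lem-perf-eh-descent} applies directly to the associated \v Cech nerve. This mechanism explains why $\mathbf{Loc}_1$ enjoys classical $\tn h$-descent, even though $\mathbf{Pic}$ on ordinary schemes only satisfies the weaker derived $\mathbf h$-descent.
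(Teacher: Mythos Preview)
Your overall approach matches the paper's: embed $\mathbf{Loc}_1$ in a larger sheaf-theoretic category with known descent and then cut down to invertible objects in the heart. The paper uses $\tn{Crys}^l(X)=\QCoh(X_{\dR})$ and cites Gaitsgory--Rozenblyum for its derived $\mathbf h$-descent; you substitute $\tn{Perf}(X_{\dR})$ and Lemma~\ref{lem-perf-eh-descent}. Either variant works.

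There is, however, a genuine error in your second paragraph. \'Etale (or Nisnevich) covers together with abstract blow-up squares generate the $\eh$ (resp.\ $\tn{cdh}$) topology, \emph{not} the $\tn h$-topology; the latter requires descent along all proper surjections (see the opening of \S\ref{sec-topology}). Your proposed factorization would therefore only establish $\eh$-descent, which is strictly weaker than what is claimed.

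Fortunately your third paragraph contains the correct direct argument, and it is exactly what the paper does: the presheaf $X\mapsto\tn{Perf}(X_{\dR})$ is nil-invariant, so for a classical $\tn h$-cover $\widetilde X\to X$ the derived and classical \v Cech nerves induce the same tower of de Rham prestacks, and one may invoke derived $\mathbf h$-descent. You should drop the second paragraph entirely and instead spell out the one step still missing in the third: Lemma~\ref{lem-perf-eh-descent} concerns $\tn{Perf}$ on derived \emph{schemes}, whereas $X_{\dR}$ is a prestack. The passage requires observing that $\widetilde X_{\dR}\to X_{\dR}$ is an epimorphism of $\mathbf h$-sheaves (since $(-)_{\dR}$ preserves surjectivity and $S_{\tn{red}}\to S$ is itself an $\mathbf h$-cover), so that any $\mathbf h$-sheaf---in particular $\tn{Perf}$---satisfies descent along it. This is how the analogous Lemma~\ref{lem-tw-descent} is argued later in the paper.
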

\begin{proof}
The $\infty$-prestack $\tn{Crys}^l$ which associates $\QCoh(X_{\dR})$ to $X\in\mathbf{DSch}^{\tn{ft}}_{/k}$ satisfies (derived) $\mathbf h$-descent \cite[Proposition 3.2.2, Proposition 2.4.4]{gaitsgory2011crystals}. Since $\tn{Crys}^l$ is nil-invariant, its restriction to $\mathbf{Sch}^{\tn{ft}}_{/k}$ satisfies (usual) $\tn h$-descent. We observe that $\mathbf{Loc}_1(X)$ is the full subcategory of $\tn{Crys}^l(X)$ consisting of invertible objects lying in the heart of the $t$-structure as an object of $\QCoh(X)$. Thus $\mathbf{Loc}_1$ inherits $\tn h$-descent from $\tn{Crys}^l$.
\end{proof}

Every object in $\mathbf{Loc}_1(X)$ can be viewed as a line bundle $\cal L$ on $X$ equipped with an isomorphism $\op{pr}_1^*\cal L\xrightarrow{\sim}\op{pr}_2^*\cal L$ on the completion of the diagonal in $X\times X$, satisfying a cocycle condition \cite[Proposition 3.4.3]{gaitsgory2011crystals}. When $X$ is smooth, this is equivalent to a connection $\nabla : \cal L\rightarrow \cal L\otimes\Omega^1_X$, but not in general.

\subsubsection{}
It is clear that over $\mathbf{Sm}_{/k}$, the strict Picard groupoid $\mathbf{Loc}_1$ is represented by the complex of \'etale sheaves concentrated in degrees $[-1, 0]$ (in the sense of \S\ref{sec-complex-to-space}):
$$
d\log : \cal O^{\times} \rightarrow \Omega^{1, \op{cl}}.
$$
We recall the subsheaf $\mathring{\Omega}_X^1\hookrightarrow \Omega_X^{1, \tn{cl}}$ of differential forms of moderate growth from \S\ref{sec-diff-forms}.

\begin{lem}
\label{lem-classical-reg}
Let $X\in\mathbf{Sm}_{/k}$, the following conditions are equivalent for any $\sigma\in\mathbf{Loc}_1(X)$.
\begin{enumerate}[(a)]
	\item $\sigma$ belongs to the subcomplex $d\log : \cal O_X^{\times} \rightarrow \mathring{\Omega}_X^1$;
	\item $\sigma$ is regular as a $\cal D_X$-module.
\end{enumerate}
\end{lem}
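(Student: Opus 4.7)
The plan is to work Zariski-locally and reduce both conditions to a statement about the local connection form. Both (a) and (b) are Zariski-local on $X$: (a) by the very definition of membership in a subcomplex of Zariski sheaves, and (b) because regularity of $\cal D_X$-modules is preserved and reflected by open immersions. Hence we may assume $X$ is affine and $\cal L$ is trivial, and write $\sigma = (\cal O_X, d - \omega)$ for a closed $1$-form $\omega \in \Omega^{1,\tn{cl}}(X)$. Gauge transformations by units send $\omega$ to $\omega - d\log f$; since $d\log$ factors through $\mathring{\Omega}^1$, condition (a) is equivalent to: on a sufficiently fine Zariski cover, each local connection form can be chosen in $\mathring{\Omega}^1$.

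For (a) $\Rightarrow$ (b), I would verify regularity by the curve test. Suppose locally $\sigma = (\cal O_U, d - \omega)$ with $\omega \in \mathring{\Omega}^1(U)$. Given any morphism $c : C \to X$ from a smooth curve, functoriality of the presheaf $\mathring{\Omega}^1$ yields $c^*\omega \in \mathring{\Omega}^1(c^{-1}(U))$; by definition of $\mathring{\Omega}^1$, this pullback extends to a logarithmic $1$-form on a good compactification $\overline C$. Hence $c^*\sigma$ has regular singularities at infinity, so $\sigma$ itself is regular.

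For (b) $\Rightarrow$ (a), I would invoke the rank-$1$ case of Deligne's theorem on logarithmic extensions \cite{deligne2006equations}. Choose a good compactification $X \hookrightarrow \overline X$ with boundary a normal-crossing divisor $D$. Regularity of the rank-$1$ flat bundle $(\cal L, \nabla)$ implies that it extends to a line bundle $\overline{\cal L}$ on $\overline X$ together with a flat logarithmic connection $\overline\nabla : \overline{\cal L} \to \overline{\cal L} \otimes \Omega^1_{\overline X}(\log D)$. Choosing a Zariski cover $\{V_i\}$ of $\overline X$ trivializing $\overline{\cal L}$, one obtains forms $\tilde\omega_i \in \Omega^1_{V_i}(\log D \cap V_i)$ and transition units $\overline f_{ij} \in \cal O_{\overline X}^{\times}(V_i \cap V_j)$. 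Restricting to $U_i := V_i \cap X$ and invoking the definition of $\mathring{\Omega}^1$, we obtain $\omega_i := \tilde\omega_i|_{U_i} \in \mathring{\Omega}^1(U_i)$, which assembles into a cocycle in the subcomplex lifting $\sigma$.

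The substantive step is Deligne's extension in (b) $\Rightarrow$ (a); in the rank-$1$ setting, however, it reduces to the elementary classification of regular formal connections on a punctured disk, which are gauge-equivalent to $d + \alpha\,dt/t$ for some $\alpha \in k$, so that the connection form is automatically logarithmic. All remaining steps are bookkeeping about how cohomology classes of two-term complexes translate into local gauge presentations of flat line bundles, together with the basic properties of $\mathring{\Omega}^1$ established in Lemma~\ref{lem-basic-properties}.
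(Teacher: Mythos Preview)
Your overall strategy matches the paper's: both directions ultimately rest on Deligne's theory of regular singular connections. The (a) $\Rightarrow$ (b) direction via the curve test and functoriality of $\mathring\Omega^1$ is fine; the paper simply calls this direction ``clear''.

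There is, however, a genuine gap in your (b) $\Rightarrow$ (a) argument. You assert that restricting $\tilde\omega_i \in \Gamma(V_i, \Omega^1_{\overline X}(\log D))$ to $U_i = V_i \cap X$ yields $\omega_i \in \mathring\Omega^1(U_i)$ ``by definition''. But the definition of $\mathring\Omega^1(U_i)$ requires extension to a \emph{proper} good compactification of $U_i$, whereas $V_i$ is only an open subset of $\overline X$. This inference fails in general: take $\overline X = \mathbb P^1$, $D = \{\infty\}$, $V_i = X = \mathbb A^1$; then the closed form $x\,dx$ lies in $\Gamma(V_i, \Omega^1_{\overline X}(\log D)) = \Gamma(\mathbb A^1, \Omega^1)$ but is not in $\mathring\Omega^1(\mathbb A^1)$, since it acquires a pole of order $3$ at $\infty$. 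In your situation the $\tilde\omega_i$ are of course more constrained (they arise from a regular connection), but your argument does not use this, and the Zariski sheaf property of $\mathring\Omega^1$ you cite at the end does not close the gap either.

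The fix is to avoid the cover altogether, and this is precisely what the paper does. Since you have already arranged $\cal L = \cal O_X$, any extension $\overline{\cal L}$ of $\cal O_X$ to $\overline X$ is isomorphic to $\cal O_{\overline X}(\sum n_i D_i)$ for some integers $n_i$; transporting the logarithmic connection across this isomorphism shifts the connection form by a log form. Equivalently, extend $\cal O_X$ \emph{trivially} to $\cal O_{\overline X}$ and cite \cite[II, Th\'eor\`eme 4.1, (ii) $\Rightarrow$ (iv)]{deligne2006equations} (or the rank-$1$ formal-disk argument you sketch) to conclude $\omega \in \Gamma(\overline X, \Omega^1_{\overline X}(\log D)) = \mathring\Omega^1(X)$ directly. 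The cover of $\overline X$ and the local cocycle bookkeeping then become unnecessary.
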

\noindent
Being \emph{regular} as a $\cal D_X$-module means for any smooth curve $f : C\rightarrow X$, the pullback $f^*\sigma$ acquires a connection with at most logarithmic poles at points of $\overline C\backslash C$.
\begin{proof}
The implication (a) $\implies$ (b) is clear. Conversely, suppose $\sigma$ is regular. To check that it belongs to the subcomplex $\cal O_X^{\times} \xrightarrow{d\log} \mathring{\Omega}_X^1$, it suffices to do so locally on $X$, so we may assume that the underlying line bundle of $\sigma$ is trivial. Thus the connection $1$-form is given by $d + \omega$ for some $\omega\in \Omega^{1,\tn{cl}}(X)$. We need to argue $\omega\in\mathring{\Omega}(X)$. Consider a good compactification $X\hookrightarrow\overline X$. The line bundle extends trivially to $\overline X$. The Lemma thus becomes the implicaion (ii) $\implies$ (iv) in \cite[\S II, Th\'eor\`eme 4.1]{deligne2006equations}.
\end{proof}

\subsubsection{} Let $X\in\mathbf{Sch}_{/k}^{\tn{ft}}$. Then a local system $\sigma\in\mathbf{Loc}_1(X)$ is said to be \emph{tame} if for all morphisms $f : Y\rightarrow X$ with $Y$ smooth, the pullback $f^*\sigma$ satisfies the conditions of Lemma \ref{lem-classical-reg}. We let $\mathring{\mathbf{Loc}}_1$ denote the prestack of tame rank--$1$ local systems on $\mathbf{Sch}_{/k}^{\tn{ft}}$.

\begin{lem}
\label{lem-loc-h-descent}
The prestack $\mathring{\mathbf{Loc}}_1$ satisfies $\tn h$-descent.
\end{lem}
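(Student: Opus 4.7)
The plan is to leverage the $\tn h$-descent of both $\mathbf{Loc}_1$ (just established) together with the $\tn h$-descent of $\mathring{\Omega}^1$ (Proposition \ref{prop-h-descent}). Since tameness is a property of an object in $\mathbf{Loc}_1$, the inclusion $\mathring{\mathbf{Loc}}_1 \hookrightarrow \mathbf{Loc}_1$ is a full sub-prestack; so it is enough to prove the following \emph{stability} statement: given any $\tn h$-cover $\pi : \widetilde X \to X$ in $\mathbf{Sch}^{\tn{ft}}_{/k}$ and any $\sigma \in \mathbf{Loc}_1(X)$ such that $\pi^*\sigma$ is tame, the local system $\sigma$ is itself tame. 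Granting this, any descent datum for $\mathring{\mathbf{Loc}}_1$ glues via $\mathbf{Loc}_1$ to an object of $\mathbf{Loc}_1(X)$ which is then forced to lie in $\mathring{\mathbf{Loc}}_1(X)$.

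To prove the stability statement, I would test tameness against an arbitrary morphism $f : Y \to X$ with $Y \in \mathbf{Sm}_{/k}$ and show that $f^*\sigma$ is regular as a $\cal D_Y$-module. First, form the pullback $\widetilde Y := Y \times_X \widetilde X$, which is an $\tn h$-cover of $Y$ but generally singular. Using Hironaka's resolution of singularities on $\widetilde Y_{\tn{red}}$, I would choose a smooth $\tn h$-cover $Y' \to \widetilde Y$, so that the composition $\phi : Y' \to Y$ is an $\tn h$-cover with $Y'$ smooth. The factorization $Y' \to \widetilde X$ together with tameness of $\pi^*\sigma$ immediately yields that $\phi^*(f^*\sigma)$ is regular on $Y'$.

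Second, I would descend regularity from $Y'$ back to $Y$. Working Zariski-locally on $Y$ to trivialize the underlying line bundle of $f^*\sigma$, its connection takes the form $d + \omega$ for a closed $1$-form $\omega$ on some open $U \subset Y$; by Lemma \ref{lem-classical-reg}, what must be verified is precisely $\omega \in \mathring{\Omega}^1(U)$. Since $\phi^{-1}(U) \to U$ is an $\tn h$-cover, $\phi^{-1}(U)$ is smooth, and $\phi^*\omega$ lies in $\mathring{\Omega}^1(\phi^{-1}(U))$ by regularity on $Y'$, the $\tn h$-descent of $\mathring{\Omega}^1$ (Proposition \ref{prop-h-descent}), viewed as a sub-$\tn h$-sheaf of $\Omega^1_{\tn h}$, forces $\omega$ to come from a section of $\mathring{\Omega}^1(U)$, which must coincide with $\omega$ itself. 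Hence $f^*\sigma$ is regular, proving tameness of $\sigma$.

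The only substantive input is the $\tn h$-descent of $\mathring{\Omega}^1$, which is the real obstacle and constitutes the main technical result of \S\ref{sec-diff-forms}; conditional on that, together with the existing $\tn h$-descent of $\mathbf{Loc}_1$, the present Lemma is a formal consequence via smoothification of a chosen $\tn h$-cover by Hironaka.
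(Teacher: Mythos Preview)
Your argument is correct and follows the same overall strategy as the paper: both observe that $\mathring{\mathbf{Loc}}_1 \hookrightarrow \mathbf{Loc}_1$ is a full subfunctor, so $\tn h$-descent reduces to the stability statement that tameness of $\pi^*\sigma$ for an $\tn h$-cover $\pi$ forces tameness of $\sigma$. The difference lies in how stability is verified. You test against an arbitrary smooth $Y \to X$, smoothify the pulled-back cover via Hironaka, and invoke the full $\tn h$-descent of $\mathring{\Omega}^1$ (Proposition~\ref{prop-h-descent}) to descend the condition $\omega \in \mathring{\Omega}^1$. The paper instead notes that regularity is \emph{defined} via curves, so one may immediately assume $\widetilde X \to X$ is a dominant morphism of smooth curves and then appeal to (a special case of) Lemma~\ref{lem-log-local} directly. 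Your route is slightly longer but entirely valid; since Proposition~\ref{prop-h-descent} is itself proved via Lemma~\ref{lem-log-local}, the two arguments rest on the same technical input, and yours has the virtue of making the role of $\mathring{\Omega}^1$'s $\tn h$-descent explicit.
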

\begin{proof}
Since $\mathring{\mathbf{Loc}}_1$ is a full subfunctor of $\mathbf{Loc}_1$, we only need to prove the following: for an $\tn h$-cover $\pi : \widetilde X\rightarrow X$, if $\sigma\in\mathbf{Loc}_1(X)$ has the property that $\pi^*\sigma \in \mathbf{Loc}_1(\widetilde X)$ is tame, then so is $\sigma$. By definition, we may assume $\widetilde X\rightarrow X$ is a dominant morphism of smooth curves, and the result is straightforward (in fact, a special case of Lemma \ref{lem-log-local}).
\end{proof}

\subsection{Gerbes}
\label{sec-gerbes}

\subsubsection{} We define $\mathring{\mathbf{Ge}}$ as the $\eh$-sheafification of the classifying prestack of $\mathring{\mathbf{Loc}}_1$:
$$
\mathring{\mathbf{Ge}} := \op B_{\eh}\mathring{\mathbf{Loc}}_1.
$$
Informally, a tame gerbe $\cal G$ on a scheme $X$ can be described by \v Cech data as follows. For some $\eh$-cover $\widetilde X\rightarrow X$, we are a given transition tame local system $\sigma$ on the double overlap $\widetilde X\underset{X}{\times}\widetilde X$. On triple overlaps, we are supplied with isomorphisms relating distinct pullbacks of $\sigma$. These isomorphisms must satisfy a cocycle condition on quadruple overlaps.

\smallskip

For $X\in\mathbf{Sch}^{\tn{ft}}_{/k}$, we call $\mathring{\mathbf{Ge}}(X):=\op{Maps}(X, \mathring{\mathbf{Ge}})$ the category of \emph{tame gerbes} on $X$. It has the structure of a strict Picard $2$-groupoid. Lemma \ref{lem-loc-h-descent} guarantees that the loop prestack $\tn{pt} \underset{\mathring{\mathbf{Ge}}}{\times}\op{pt}$ identifies with $\mathring{\mathbf{Loc}}_1$.

\subsubsection{} The following result shows that tame gerbes on a smooth scheme can be defined using the weaker \'etale topology.

\begin{lem}
\label{lem-gerbe-etale-h}
Suppose $X\in\mathbf{Sm}_{/k}$. Then the following canonical map is an isomorphism:
$$
\op{Maps}(X, \op B_{\et}\mathring{\mathbf{Loc}}_1) \xrightarrow{\sim} \mathring{\mathbf{Ge}}(X).
$$
\end{lem}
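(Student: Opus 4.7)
The plan is to realize both sides as spaces associated, via the Dold--Kan/complex-to-space dictionary of \S\ref{sec-complex-to-space}, to cohomology complexes of the two-term complex
$$
\cal F^\bullet \;:=\; \bigl[\,\cal O^\times \xrightarrow{d\log} \mathring{\Omega}^1\,\bigr]
$$
placed in degrees $[-1,0]$, and then to invoke the \'etale-to-$\eh$ comparison lemmas of \S\ref{sec-comparison-lemmas} to conclude. By Lemma \ref{lem-classical-reg}, on $\mathbf{Sm}_{/k}$ the strict Picard groupoid $\mathring{\mathbf{Loc}}_1$ is represented by $\cal F^\bullet$. Hence the classifying prestack $\op B\mathring{\mathbf{Loc}}_1$ is represented by the shift $\cal F^\bullet[1]$, concentrated in degrees $[-2,-1]$ and therefore $2$-truncated.

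For $X\in\mathbf{Sm}_{/k}$ and for either topology $\tau\in\{\et,\eh\}$, the value of the $\tau$-sheafified classifying prestack at $X$ is computed, via \eqref{eq-complex-to-space}, by the Dold--Kan space associated to $\tau^{\le 0}\op R\Gamma_\tau(X;\cal F^\bullet[1])$; concretely, its homotopy groups are $\op{\mathbb H}^{1-i}_\tau(X;\cal F^\bullet)$ for $i=0,1,2$. Because $\op B\mathring{\mathbf{Loc}}_1$ is $2$-truncated, $\eh$-sheafification and $\eh$-hypersheafification coincide on it (c.f.~Lemma \ref{lem-nil-sheafify}), so there is no obstacle to computing $\mathring{\mathbf{Ge}}(X)$ as the Dold--Kan space of $\tau^{\le 0}\op R\Gamma_{\eh}(X;\cal F^\bullet[1])$. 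The canonical map in the lemma is then induced by the change-of-topology morphism on these hypercohomology complexes.

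Thus the statement reduces to verifying that the canonical maps
$$
\op H^j_{\et}(X;\cal O^\times)\longrightarrow \op H^j_{\eh}(X;\cal O^\times_{\eh}),\qquad \op H^j_{\et}(X;\mathring{\Omega}^1)\longrightarrow \op H^j_{\eh}(X;\mathring{\Omega}^1_{\eh})
$$
are isomorphisms for $j=0,1,2$. The first is precisely Lemma \ref{lem-eh-to-etale-gm}. For the second, Lemma \ref{lem-transfer} exhibits $\mathring{\Omega}^1$ as an $\mathbb A^1$-invariant $\tn h$-sheaf (hence an $\eh$-sheaf) with transfer, whose values are $k$-vector spaces and hence $\mathbb Q$-vector spaces; so Lemma \ref{lem-eh-to-etale} applies. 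A five-lemma argument on the hypercohomology spectral sequences (or, equivalently, on the long exact sequence attached to the triangle $\cal O^\times\to\cal F^\bullet\to\mathring{\Omega}^1[0]$) then upgrades these sheaf-level comparisons to an equivalence at the level of the two-step complex $\cal F^\bullet$, completing the proof.

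The one point requiring a bit of care --- and the nearest thing here to a genuine obstacle --- is that the complex-to-space functor \eqref{eq-complex-to-space} interacts correctly with the two sheafifications; this is really a formality once one observes that \eqref{eq-complex-to-space} is a right adjoint (hence commutes with the truncated limits computing $\tau$-sheafification on $2$-truncated objects), but it is what legitimizes the translation between the ``geometric'' statement of the lemma and the ``cohomological'' comparisons used to prove it.
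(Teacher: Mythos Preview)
Your proof is correct and follows essentially the same strategy as the paper: both reduce the statement to the \'etale-versus-$\eh$ comparison for $\mathbb G_m$ (Lemma \ref{lem-eh-to-etale-gm}) and for $\mathring{\Omega}^1$ (Lemma \ref{lem-eh-to-etale}, or equivalently Theorem \ref{thm-coh-properties}(a)), the only difference being that you package the reduction via the explicit two-term complex and a hypercohomology five-lemma, whereas the paper phrases the same reduction through the fiber sequence $\mathring{\mathbf{Loc}}_1 \to \op B_{\eh}\mathbb G_m \to \op B_{\eh}\mathring{\Omega}^1$ and its delooping. One small caveat: your closing remark that sheafification is ``computed by truncated limits'' is not accurate (sheafification is a left adjoint), but this aside is inessential---the identification of $\op B_{\eh}\mathring{\mathbf{Loc}}_1$ with the $\eh$-sheaf represented by $\cal F^\bullet[1]$ follows simply because $\mathring{\mathbf{Loc}}_1$ is already an $\eh$-sheaf represented on $\mathbf{Sm}_{/k}$ by $\cal F^\bullet$.
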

\noindent
In particular, $\mathring{\mathbf{Ge}}$ is represented by the complex $d\log : \cal O_X^{\times} \rightarrow \mathring{\Omega}^1_X$ in degrees $[-2,-1]$.
\begin{proof}
Let $\mathbf F_{\eh}$ denote the fiber of $\eh$-sheaves $\mathring{\mathbf{Loc}}_1 \rightarrow \op B_{\eh}\mathbb G_m$ on $\mathbf{Sm}_{/k}$. Evaluating at $X \in \mathbf{Sm}_{/k}$ produces a fiber sequence:
$$
\mathbf F_{\eh}(X) \rightarrow \mathring{\mathbf{Loc}}_1(X) \rightarrow \op{Maps}(X, \op B_{\eh}\mathbb G_m).
$$
The comparison Lemma \ref{lem-eh-to-etale-gm} shows that $\op{Maps}(X, \op B_{\eh}\mathbb G_m)$ identifies with $\op{Maps}(X, \op B_{\et}\mathbb G_m)$. Thus Lemma \ref{lem-classical-reg} implies that $\mathbf F_{\eh}$ identifies with $\mathring{\Omega}^1$. On the other hand, $\mathring{\mathbf{Loc}}_1 \rightarrow \op B_{\eh}\mathbb G_m$ is a surjection of $\eh$-sheaves, so $\mathring{\mathbf{Loc}}_1$ is an $\eh$ $\mathring{\Omega}^1$-torsor over $\tn B_{\eh}\mathbb G_m$. This gives us another fiber sequence:
$$
\mathring{\mathbf{Loc}}_1 \rightarrow \op B_{\eh}\mathbb G_m \rightarrow \op B_{\eh}\mathring{\Omega}^1.
$$
Delooping and taking sections over $X\in\mathbf{Sm}_{/k}$, we obtain a fiber sequence:
$$
\op{Maps}(X, \op B_{\eh}\mathring{\mathbf{Loc}}_1) \rightarrow \op{Maps}(X, \op B^2_{\eh}\mathbb G_m) \rightarrow \op{Maps}(X, \op B^2_{\eh} \mathring{\Omega}^1).
$$
Thus the desired result follows from the comparison Lemma \ref{lem-eh-to-etale-gm} for $\mathbb G_m$ and Theorem \ref{thm-coh-properties}(a) for $\mathring{\Omega}^1$.
\end{proof}

\subsubsection{}
\label{sec-div-class-map}
Note that there is a morphism of sheaves of strict Picard groupoids on $\mathbf{Sch}^{\tn{ft}}_{/k}$:
\begin{equation}
\label{eq-funct-loc}
\mathbb G_m \underset{\mathbb Z}{\otimes} k/\mathbb Z \rightarrow \mathring{\mathbf{Loc}},\quad (f, a) \leadsto f^a.
\end{equation}
Indeed, given $f\in\cal O_X^{\times}$ and $a\in k/\mathbb Z$, we will construct a tame local system $f^{a}$ on each smooth $Y$ mapping to $X$ in a compatible way. This process will construct an object of $\mathring{\mathbf{Loc}}(X)$ by Lemma \ref{lem-loc-h-descent}. We choose a lift $\bar{a}\in k$ of $a$. The local system $f^{a}$ on $Y$ is set to be
$$
f^{\bar{a}} := (\cal O_Y, d + \bar{a}d\log f).
$$
Indeed, another choice of the lift $\bar{a}'$ must differ from $\bar a$ by an integer $n$, and the local systems $f^{\bar{a}}$ and $f^{\bar{a}'}$ are canonically isomorphic via multiplication by $f^n\in\cal O_Y^{\times}$. This shows that $f^{a}\in\mathring{\mathbf{Loc}}(Y)$ is well-defined. It is obviously compatible with change of $Y$.

\subsubsection{} From \eqref{eq-funct-loc}, we obtain a morphism of sheaves of strict Picard $2$-groupoids on $\mathbf{Sch}^{\tn{ft}}_{/k}$:
\begin{equation}
\label{eq-div-class-map-tame}
\mathbf{Pic} \underset{\mathbb Z}{\otimes} k/\mathbb Z \rightarrow \mathring{\mathbf{Ge}},\quad (\cal L, a) \leadsto \cal L^a.
\end{equation}
We call \eqref{eq-div-class-map-tame} the \emph{divisor class map} for tame gerbes.

\subsubsection{}
\label{sec-analytic-comparison}
When the ground field $k=\mathbb C$, there is a Riemann--Hilbert correspondence relating tame gerbes to analytic $\mathbb C^{\times}$-gerbes. Given a scheme $X\in\mathbf{Sch}^{\tn{ft}}_{/k}$, we let $X^{\tn{an}}$ denote its analytification. Let $\mathbf{An}_{/\mathbb C}^{\tn{ft}}$ denote the category of separated analytic spaces of finite type over $\mathbb C$. We write $\textbf{Tors}_{\mathbb C^{\times}}$ (resp.~$\textbf{Ge}_{\mathbb C^{\times}}$) for the presheaf of strict Picard $1$-groupoid of analytic $\mathbb C^{\times}$-torsors (resp.~$2$-groupoid of $\mathbb C^{\times}$-gerbes) on $\mathbf{An}_{\mathbb C}^{\tn{ft}}$.

\begin{lem}
\label{lem-analytic-comparison}
Let $X\in\mathbf{Sch}^{\tn{ft}}_{/\mathbb C}$. Then,
\begin{enumerate}[(a)]
	\item there is an equivalence of Picard $1$-groupoids
	$$
	\mathring{\mathbf{Loc}}_1(X)\xrightarrow{\sim}\mathbf{Tors}_{\mathbb C^{\times}}(X^{\tn{an}});
	$$
	\item there is a fully faithful functor of strict Picard $2$-groupoids whose image consists of those $\mathbb C^{\times}$-gerbes trivialized over $\widetilde X^{\tn{an}} \rightarrow X^{\tn{an}}$ for an $\eh$-cover $\widetilde X\rightarrow X$:
	$$
	\mathring{\mathbf{Ge}}(X) \hookrightarrow \mathbf{Ge}_{\mathbb C^{\times}}(X^{\tn{an}}).
	$$
\end{enumerate}
\end{lem}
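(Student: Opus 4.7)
The plan is to reduce part (a) to the rank--$1$ case of the classical Riemann--Hilbert correspondence, and then to obtain part (b) formally by delooping. For part (a) on smooth $X$, Lemma \ref{lem-classical-reg} identifies $\mathring{\mathbf{Loc}}_1(X)$ with the groupoid of line bundles with integrable connection having regular singularities along the boundary of any good compactification of $X$. Deligne's theorem \cite[\S II.5]{deligne2006equations} then provides the desired equivalence with the groupoid of rank--$1$ $\mathbb C$-local systems on $X^{\tn{an}}$, via the analytic horizontal--sections functor. For general $X\in \mathbf{Sch}^{\tn{ft}}_{/\mathbb C}$, I would take a smooth $\tn h$-hypercover $X_{\bullet}\to X$ via Hironaka, apply $\tn h$-descent on the algebraic side through Lemma \ref{lem-loc-h-descent}, and match it with descent of analytic $\mathbb C^{\times}$-torsors along $X_{\bullet}^{\tn{an}}\to X^{\tn{an}}$. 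The latter splits into \'etale coverings (which analytify to local homeomorphisms) and proper surjections (which analytify to proper topological surjections, along which $\mathbb C^{\times}$-torsors descend).

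For part (b), delooping gives $\op B_{\eh}\mathring{\mathbf{Loc}}_1 = \mathring{\mathbf{Ge}}$ on the algebraic side by definition and $\op B\mathbf{Tors}_{\mathbb C^{\times}} = \mathbf{Ge}_{\mathbb C^{\times}}$ for the analytic topology on the right, and part (a) induces the functor $\mathring{\mathbf{Ge}}(X)\rightarrow \mathbf{Ge}_{\mathbb C^{\times}}(X^{\tn{an}})$. Full faithfulness is automatic from (a): for any two tame gerbes $\cal G_1, \cal G_2$, the mapping $2$-groupoid is either empty or a torsor for $\mathring{\mathbf{Loc}}_1(X)$, and analogously a torsor for $\mathbf{Tors}_{\mathbb C^{\times}}(X^{\tn{an}})$ on the analytic side, so part (a) identifies the $\pi_1$ and $\pi_2$; $\pi_0$-injectivity follows from the same torsor comparison applied to isomorphism classes. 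For the image characterization, the $\eh$-sheafification built into $\mathring{\mathbf{Ge}}$ forces every tame gerbe to become trivial after pulling back along the analytification of some $\eh$-cover $\widetilde X\to X$. Conversely, given an analytic $\mathbb C^{\times}$-gerbe $\cal G$ trivialized over $\widetilde X^{\tn{an}}$ for such a cover, the transition datum is a rank--$1$ $\mathbb C^{\times}$-torsor on the double overlap $\widetilde X^{\tn{an}}\underset{X^{\tn{an}}}{\times}\widetilde X^{\tn{an}}$ together with a cocycle on triple overlaps, which by part (a) lifts uniquely to a tame local system on $\widetilde X\underset{X}{\times}\widetilde X$ with matching cocycle, producing an algebraic tame gerbe whose image is $\cal G$.

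The principal obstacle I foresee is the analytic descent in part (a) along proper surjections appearing in an $\tn h$-hypercover (for instance blow-ups), as these are not \'etale and may have nontrivial fibers. The cleanest approach is to pass through $\op H^1(-;\mathbb C^{\times})$, comparing it to singular cohomology via the exponential sequence and the analytic Poincar\'e lemma, and then to invoke proper descent for locally constant sheaves on the underlying topological spaces. A secondary, technical matter is compatibility of the descent data on the two sides with Deligne's equivalence in the smooth case; this should follow from the naturality of Riemann--Hilbert under pullback along arbitrary morphisms of smooth schemes, so no further work beyond bookkeeping is expected.
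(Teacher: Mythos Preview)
Your proposal is correct and follows essentially the same approach as the paper: for (a), both reduce to smooth $X$ via $\tn h$-descent on each side (the paper invokes cohomological descent for proper surjections of topological spaces, exactly as you anticipate), and then appeal to Riemann--Hilbert in rank one; for (b), both deloop the equivalence of (a) and read off full faithfulness and the $\eh$-local triviality characterization from the resulting map of classifying stacks. The only cosmetic difference is that the paper phrases the smooth case via invertible objects in the constructible derived category and runs (b) at the level of $\eh$-sheafifications of $i_*\op B\mathbf{Tors}_{\mathbb C^{\times}}$, whereas you work directly with Deligne's theorem and \v Cech cocycle data; your slightly informal ``$\pi_0$-injectivity follows from the same torsor comparison'' is in fact supplied by your converse construction, so the argument is complete.
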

\begin{proof}
(a) Recall that $\mathring{\mathbf{Loc}}_1$ satisfies $\tn h$-descent (Lemma \ref{lem-loc-h-descent}). On the other hand, the association $X\leadsto \tn R\Gamma(X^{\tn{an}}; \mathbb C^{\times})$ is also an $\tn h$-sheaf by cohomological descent of proper surjections of topological spaces \cite[Theorem 7.7]{conrad2003cohomological}. Since $\mathbf{Tors}_{\mathbb C^{\times}}(X^{\tn{an}})$ is the groupoid corresponding to $\tau^{\le 0}(\tn R\Gamma(X^{\tn{an}}; \mathbb C^{\times})[1])$, the association $X \leadsto \mathbf{Tors}_{\mathbb C^{\times}}(X^{\tn{an}})$ is also an $\tn h$-sheaf.\footnote{More generally, for an $\tn h$-hypercovering $\widetilde X^{\bullet} \rightarrow X$, the geometric realization of $(\widetilde X^{\bullet})^{\tn{an}}$ has homotopy type equivalent to $X^{\tn{an}}$, by a theorem of Blanc \cite[Proposition 3.21]{blanc2016topological}.} Thus the problem reduces to the case of smooth $X$. There, $\mathring{\mathbf{Loc}}_1(X)$ is the category of invertible objects inside regular, holonomic $\cal D$-modules on $X$, which lie in the heart when considered as objects of $\QCoh(X)$.

\smallskip

The Riemann--Hilbert correspondence is symmetric monoidal with respect to the $!$-monoidal structure on the constructible derived category $\tn{Shv}_c(X^{\tn{an}})$. In particular, it preserves invertible objects. On the other hand, the invertible objects in $\tn{Shv}_c(X^{\tn{an}})$ with respect to $!$ and $*$-monoidal structures agree via tensoring with the dualizing complex. Thus, we see that $\mathring{\mathbf{Loc}}_1(X)$ identifies with $*$-invertible objects in $\tn{Shv}(X^{\tn{an}})$ lying in the heart. The latter category identifies with $\mathbf{Tors}_{\mathbb C^{\times}}(X^{\tn{an}})$.

\smallskip

(b) The analytification functor $\mathbf{Sch}^{\tn{ft}}_{/\mathbb C} \rightarrow \mathbf{An}^{\tn{ft}}_{/\mathbb C}$ defines a map
$$
i_* : \tn{PSh}(\mathbf{An}^{\tn{ft}}_{/\mathbb C}) \rightarrow \tn{PSh}(\mathbf{Sch}^{\tn{ft}}_{/\mathbb C}).
$$
By the observation above, $i_*\mathbf{Tors}_{\mathbb C^{\times}}$ and $i_*\mathbf{Ge}_{\mathbb C^{\times}}$ are $\tn h$-sheaves on $\mathbf{Sch}^{\tn{ft}}_{/\mathbb C}$, so in particular are $\eh$-sheaves. On the other hand, part (a) gives an equivalence:
$$
\mathring{\mathbf{Loc}}_1 \xrightarrow{\sim} i_*\mathbf{Tors}_{\mathbb C^{\times}}.
$$
By delooping, we obtain a sequence of functors:
$$
\op B_{\eh}\mathring{\mathbf{Loc}}_1 \xrightarrow{\sim} (i_*\op B\mathbf{Tors}_{\mathbb C^{\times}})_{\eh} \hookrightarrow (i_*\op B_{\tn{an}}\mathbf{Tors}_{\mathbb C^{\times}})_{\eh} \xrightarrow{\sim} i_*\mathbf{Ge}_{\mathbb C^{\times}}.
$$
The middle functor is fully faithful and its image consists of $\eh$-locally trivial objects.
\end{proof}

\subsection{Twistings}
\label{sec-twistings}
\subsubsection{} The definition of tame twistings require us to work with the $\infty$-category $\mathbf{DSch}^{\tn{ft}}_{/k}$. We first extend $\mathring{\mathbf{Loc}}_1$ and $\mathring{\mathbf{Ge}}$ to $\mathbf{DSch}^{\tn{ft}}_{/k}$ by evaluation on the underlying classical scheme. By the commutative diagram \eqref{eq-eh-derived-comparison}, we see that $\mathring{\mathbf{Ge}}$ is the $\bfeh$-sheafification of $\op B\mathring{\mathbf{Loc}}_1$, regarded as a presheaf on $\mathbf{DSch}^{\tn{ft}}_{/k}$. Next, we consider the $\bfeh$-sheafification $\op B^2_{\bfeh}\mathbb G_m$. Define $\mathring{\mathbf{Tw}}$ as the fiber:
$$
\mathring{\mathbf{Tw}} := \op{Fib}(\mathring{\mathbf{Ge}} \rightarrow \op B^2_{\bfeh}\mathbb G_m).
$$
Thus $\mathring{\mathbf{Tw}}$ is an $\bfeh$-sheaf of strict Picard groupoids on $\mathbf{DSch}^{\tn{ft}}_{/k}$ whose sections are called \emph{tame twistings}. Furthermore, since $\mathring{\mathbf{Tw}}$ identifies with $\op B_{\bfeh}$ applied to:
$$
\tn{Fib}(\mathring{\mathbf{Loc}_1} \rightarrow \op B_{\et}\mathbb G_m) \hookrightarrow \tn{Fib}(\mathbf{Loc}_1 \rightarrow \op B_{\et}\mathbb G_m),
$$
which admits a $k$-linear structure \cite[\S6]{gaitsgory2011crystals}, we see that $\mathring{\mathbf{Tw}}$ is in fact valued in $\op Hk$-module objects in $\tn{ComGrp}(\mathbf{Spc})$. Furthermore, the fiber of the canonical map $\mathring{\mathbf{Tw}} \rightarrow \mathring{\mathbf{Ge}}$ identifies with $\op B_{\bfeh}\mathbb G_m$, but the tautological map $\op B_{\et}\mathbb G_m \rightarrow \op B_{\bfeh}\mathbb G_m$ is an equivalence by the $\bfeh$-descent of line bundles (Lemma \ref{lem-perf-eh-descent}). We thus obtain a fiber sequence:
\begin{equation}
\label{eq-pic-tw-ge}
\mathbf{Pic} \rightarrow \mathring{\mathbf{Tw}} \rightarrow \mathring{\mathbf{Ge}}.
\end{equation}

\subsubsection{} Extension by scalar defines the \emph{divisor class map} of tame twistings:
\begin{equation}
\label{eq-div-class-map-tw}
\mathbf{Pic} \underset{\mathbb Z}{\otimes} k \rightarrow \mathring{\mathbf{Tw}},\quad (\cal L, a) \leadsto \cal L^a.
\end{equation}
This map can also be constructed in a way analogous to \S\ref{sec-div-class-map} by first building a map:
$$
\mathbb G_m \underset{\mathbb Z}{\otimes} k \rightarrow \op{Fib}(\mathring{\mathbf{Loc}}_1 \rightarrow \op B_{\et}\mathbb G_m),\quad (f, a)\leadsto f^a
$$
using the $d\log$ construction over smooth schemes. Consequently, \eqref{eq-div-class-map-tw} is compatible with the divisor class map of tame gerbes \eqref{eq-div-class-map-tame} in the sense that the following diagram canonically commutes:
\begin{equation}
\label{eq-div-class-map-compatible}
\xymatrix@C=1.5em@R=0.3em{
	& \mathbf{Pic} \underset{\mathbb Z}{\otimes} k \ar[r]\ar[dd] & \mathbf{Pic} \underset{\mathbb Z}{\otimes} k/\mathbb Z \ar[dd] \\
	\mathbf{Pic} \ar[ur]\ar[dr] \\
	& \mathring{\mathbf{Tw}} \ar[r] & \mathring{\mathbf{Ge}}
}
\end{equation}

\subsubsection{} We now give an explicit description of tame twistings over a smooth scheme.

\begin{lem}
\label{lem-twisting-coh-interpretation}
Suppose $X\in\mathbf{Sm}_{/k}$. There is an equivalence:
$$
\op{DK}(\tau^{\le 0} \op R\Gamma_{\et}(X, \mathring{\Omega}^1[1])) \xrightarrow{\sim}\mathring{\mathbf{Tw}}(X).
$$
\end{lem}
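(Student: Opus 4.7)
The plan is to compute $\mathring{\mathbf{Tw}}(X)$ for smooth $X$ directly from the defining fiber sequence $\mathring{\mathbf{Tw}} \to \mathring{\mathbf{Ge}} \to \op B^2_{\bfeh}\mathbb G_m$ by identifying both terms on the right---together with the map between them---as strict Picard groupoids associated via \eqref{eq-complex-to-space} to explicit two-term complexes of \'etale sheaves on $X$, and then reading off the fiber at the level of complexes. The first ingredient is Lemma \ref{lem-gerbe-etale-h}: the sheaf $\mathring{\mathbf{Ge}}|_{\mathbf{Sm}_{/k}}$ is represented by the complex $[d\log : \cal O^{\times} \to \mathring{\Omega}^1]$ in degrees $[-2,-1]$, so
$$
\mathring{\mathbf{Ge}}(X) \simeq \op{DK}\bigl(\tau^{\le 0}\op R\Gamma_{\et}(X, [\cal O^{\times} \to \mathring{\Omega}^1][2])\bigr).
$$

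Next I would identify $\op B^2_{\bfeh}\mathbb G_m(X)$ with $\op{DK}(\tau^{\le 0}\op R\Gamma_{\et}(X, \mathbb G_m[2]))$ and realize the map $\mathring{\mathbf{Ge}} \to \op B^2_{\bfeh}\mathbb G_m$ at the complex level as the projection $[\cal O^{\times} \to \mathring{\Omega}^1][2] \twoheadrightarrow \cal O^{\times}[2]$. Since $\op B^2\mathbb G_m$, extended tautologically to $\mathbf{DSch}^{\tn{ft}}_{/k}$ by nil-invariance, is $2$-truncated, the commutative diagram \eqref{eq-eh-derived-comparison} identifies its derived $\bfeh$-sheafification with the classical $\eh$-sheafification of its restriction; Lemma \ref{lem-eh-to-etale-gm} then gives the promised identification on smooth $X$. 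The map itself arises by delooping the forgetful morphism $\mathring{\mathbf{Loc}}_1 \to \op B_{\et}\mathbb G_m$ (line bundle with regular connection $\mapsto$ underlying line bundle), whose complex-level realization was already exhibited in the proof of Lemma \ref{lem-gerbe-etale-h} as the projection $[\cal O^{\times} \to \mathring{\Omega}^1][1] \twoheadrightarrow \cal O^{\times}[1]$ with fiber $\mathring{\Omega}^1$.

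The fiber of the projection $[\cal O^{\times} \to \mathring{\Omega}^1][2] \twoheadrightarrow \cal O^{\times}[2]$ in the derived category of \'etale sheaves is $\mathring{\Omega}^1[1]$. Because $\op R\Gamma_{\et}(X,-)$, $\tau^{\le 0}$, and Dold--Kan all preserve fibers---being right adjoints or equivalences between the appropriate stable/pointed $\infty$-categories---taking fibers of the map of spaces produces
$$
\mathring{\mathbf{Tw}}(X) \simeq \op{DK}\bigl(\tau^{\le 0}\op R\Gamma_{\et}(X, \mathring{\Omega}^1[1])\bigr),
$$
as desired.

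The main subtlety is the second step: confirming that the derived $\bfeh$-construction of $\op B^2\mathbb G_m$ on $\mathbf{DSch}^{\tn{ft}}_{/k}$ agrees, when evaluated at a smooth classical $X$, with its classical \'etale incarnation. This rests on both the nil-invariance and the truncation hypotheses required to invoke \eqref{eq-eh-derived-comparison}, and on the Geisser comparison Lemma \ref{lem-eh-to-etale-gm}. Once these identifications are in place, the remainder of the argument is formal.
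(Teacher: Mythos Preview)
Your argument has a gap in the second step. The presheaf $\op B^2_{\bfeh}\mathbb G_m$ appearing in the definition of $\mathring{\mathbf{Tw}}$ is the $\bfeh$-sheafification of the \emph{derived} $\op B^2\mathbb G_m$ on $\mathbf{DSch}^{\tn{ft}}_{/k}$, not of its nil-invariant extension from classical schemes. On a derived scheme $X$ the mapping space $\mathbb G_m(X)$ carries higher homotopy (for $i\ge 1$, $\pi_i\mathbb G_m(X)\cong \pi_i\cal O_X$), so $\op B^2\mathbb G_m$ is neither nil-invariant nor uniformly $n$-truncated, and Lemma~\ref{lem-nil-sheafify} together with \eqref{eq-eh-derived-comparison} simply do not apply. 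Your identification $\op B^2_{\bfeh}\mathbb G_m(X)\simeq \op B^2_{\et}\mathbb G_m(X)$ for smooth $X$ is therefore unjustified. Indeed, were $\op B^2_{\bfeh}\mathbb G_m$ nil-invariant, the fiber $\mathring{\mathbf{Tw}}=\op{Fib}(\mathring{\mathbf{Ge}}\to \op B^2_{\bfeh}\mathbb G_m)$ of two nil-invariant sheaves would itself be nil-invariant, contradicting the very reason the paper passes to derived schemes.

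The paper sidesteps this by never computing $\op B^2_{\bfeh}\mathbb G_m(X)$. It rotates the defining sequence once to obtain $\mathbf{Pic}\to\mathring{\mathbf{Tw}}\to\mathring{\mathbf{Ge}}$, which only requires $\op B_{\bfeh}\mathbb G_m\simeq\mathbf{Pic}$---a consequence of Halpern-Leistner--Preygel $\mathbf h$-descent for perfect complexes---and compares with the purely \'etale sequence $\mathbf{Pic}\to\mathring{\mathbf{Tw}}_{\et}\to\op B_{\et}\mathring{\mathbf{Loc}}_1$, where $\mathring{\mathbf{Tw}}_{\et}:=\op{Fib}(\op B_{\et}\mathring{\mathbf{Loc}}_1\to\op B^2_{\et}\mathbb G_m)$. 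Lemma~\ref{lem-gerbe-etale-h} matches the right-hand terms and the left-hand terms coincide tautologically, so a five-lemma argument gives $\mathring{\mathbf{Tw}}_{\et}(X)\simeq\mathring{\mathbf{Tw}}(X)$; the representation of $\mathring{\mathbf{Tw}}_{\et}$ by $\mathring{\Omega}^1[1]$ is then immediate from Lemma~\ref{lem-classical-reg}. If you want to repair your route, you would need an independent argument that $\op B^2_{\et}\mathbb G_m(X)\to\op B^2_{\bfeh}\mathbb G_m(X)$ is an equivalence for smooth $X$, which does not follow from the tools assembled in \S\ref{sec-topology}.
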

\begin{proof}
Write provisionally $\mathring{\mathbf{Tw}}_{\et}$ for the sheaf on $\mathbf{Sm}_{/k}$ defined by $\tn{Fib}(\op B_{\et}\mathring{\mathbf{Loc}}_1 \rightarrow \op B^2_{\et}\mathbb G_m)$. Then we have a canonical map $\mathring{\mathbf{Tw}}_{\et} \rightarrow \mathring{\mathbf{Tw}}$ making the following diagram commute:
$$
\xysmall{
	\mathbf{Pic} \ar[r]\ar[d]^{\cong} & \mathring{\mathbf{Tw}}_{\et} \ar[r]^-{\alpha}\ar[d]^{\gamma_1} & \op B_{\et}\mathring{\mathbf{Loc}}_1 \ar[d]^{\gamma_2} \\
	\mathbf{Pic} \ar[r] & \mathring{\mathbf{Tw}} \ar[r] & \mathring{\mathbf{Ge}}
}
$$
The comparison Lemma \ref{lem-gerbe-etale-h} for tame gerbes shows that $\gamma_2$ is an equivalence. Since $\alpha$ is an \'etale local surjection, we see that $\gamma_1$ must also be an equivalence. The fact that $\mathring{\mathbf{Tw}}_{\et}$ is represented by the complex $\mathring{\Omega}^1[1]$ is a direct consequence of Lemma \ref{lem-classical-reg}.
\end{proof}

\subsubsection{}
\label{sec-map-to-usual-tw}
We now produce a morphism from $\mathring{\mathbf{Tw}}$ to the usual presheaf of twistings $\mathbf{Tw}$ defined in \cite{gaitsgory2011crystals}. Recall that the value of $\mathbf{Tw}$ on $X\in\mathbf{DSch}^{\tn{ft}}_{/k}$ can be given equivalently as:
\begin{align*}
	\mathbf{Tw}(X) := & \tn{Fib}(\op{Maps}(X_{\dR}, \op B^2_{\et}\mathbb G_m) \rightarrow \op{Maps}(X, \op B^2_{\et}\mathbb G_m)) \\
	& \xrightarrow{\sim} \tn{Fib}(\op{Maps}(X_{\dR}, \op B^2_{\et}\mathbb G_a) \rightarrow \op{Maps}(X, \op B^2_{\et}\mathbb G_a)).
\end{align*}

\begin{lem}
\label{lem-tw-descent}
The presheaf $\mathbf{Tw}$ on $\mathbf{DSch}^{\tn{ft}}_{/k}$ satisfies \emph{$\mathbf h$}-descent.
\end{lem}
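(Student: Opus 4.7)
My approach is to work with the $\mathbb G_a$-reformulation of $\mathbf{Tw}$ given in the excerpt, reducing the problem to two independent and well-known $\mathbf h$-descent statements. First, I would rewrite both terms in the fiber sequence using that $\mathbb G_a = \cal O$ is quasi-coherent, so étale and Zariski cohomology agree, giving
\begin{align*}
\op{Maps}(X, \op B^2_{\et}\mathbb G_a) &\simeq \tau^{\le 0}\bigl(\op R\Gamma(X, \cal O_X)[2]\bigr), \\
\op{Maps}(X_{\dR}, \op B^2_{\et}\mathbb G_a) &\simeq \tau^{\le 0}\bigl(\op R\Gamma(X_{\dR}, \cal O_{X_{\dR}})[2]\bigr).
\end{align*}
Since both $\tau^{\le 0}$ and $\tn{Fib}$ are right adjoints, and hence preserve limits, it suffices to verify that the two presheaves of spectra $X \leadsto \op R\Gamma(X, \cal O_X)$ and $X \leadsto \op R\Gamma(X_{\dR}, \cal O_{X_{\dR}})$ are $\mathbf h$-sheaves on $\mathbf{DSch}^{\tn{ft}}_{/k}$.

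Next I would dispatch each descent statement separately. For the presheaf $X \leadsto \op R\Gamma(X, \cal O_X)$, I would invoke Lemma \ref{lem-perf-eh-descent} (the Halpern-Leistner--Preygel result that $\tn{Perf}$ is an $\mathbf h$-sheaf of stable $\infty$-categories): the endomorphism spectrum of the distinguished object $\cal O_X\in\tn{Perf}(X)$ inherits $\mathbf h$-descent, and that endomorphism spectrum is exactly $\op R\Gamma(X, \cal O_X)$. For the presheaf $X \leadsto \op R\Gamma(X_{\dR}, \cal O_{X_{\dR}})$, I would use the derived $\mathbf h$-descent of $\tn{Crys}^l$ due to Gaitsgory--Rozenblyum, which was already recalled in the proof of $\tn h$-descent of $\mathbf{Loc}_1$. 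The target is then again an endomorphism spectrum, this time of $\cal O_{X_{\dR}}\in\QCoh(X_{\dR})$, so it too inherits $\mathbf h$-descent.

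The hard part, to the extent there is one, is not any calculation but rather choosing the right avatar of $\mathbf{Tw}$ to work with. A direct attack on the $\mathbb G_m$-formulation would force a delicate interaction between étale sheafification and the derived $\mathbf h$-topology for $\op B^2\mathbb G_m$ over non-smooth (and derived) schemes, where the $d\log$-style resolution that justifies the $\mathbb G_m$-to-$\mathbb G_a$ comparison is not readily accessible. Taking the $\mathbb G_a$-avatar as the starting point collapses the proof to quasi-coherent and crystalline cohomology, both of which have off-the-shelf derived $\mathbf h$-descent by the two cited theorems. The conclusion is then purely formal: $\tau^{\le 0}$ and $\tn{Fib}$ commute with limits, so $\mathbf{Tw}$ inherits $\mathbf h$-descent from its two constituents.
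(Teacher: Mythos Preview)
Your proof is correct and shares the paper's overall strategy: pass to the $\mathbb G_a$-avatar, identify $\op{Maps}(-,\op B^2_{\et}\mathbb G_a)$ with $\tau^{\le 0}\op R\Gamma(-,\cal O)[2]$, and invoke Lemma~\ref{lem-perf-eh-descent} for the non-de Rham term. The one genuine difference is in the de Rham term. You treat it by a second black-box descent input, the $\mathbf h$-descent of $\tn{Crys}^l$ from \cite{gaitsgory2011crystals}, and then read off $\op R\Gamma(X_{\dR},\cal O)$ as an endomorphism spectrum in the limit category. The paper instead observes that the functor $(-)_{\dR}$ commutes with limits and sends $\mathbf h$-covers to $\mathbf h$-surjections of prestacks, so the Čech nerve of $\widetilde X_{\dR}\to X_{\dR}$ is again an $\mathbf h$-cover and descent for $X\leadsto\op{Maps}(X_{\dR},\op B^2_{\et}\mathbb G_a)$ reduces to the already-established descent of $\op B^2_{\et}\mathbb G_a$ itself. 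The paper's route thus needs only one external descent theorem rather than two; your route avoids having to unwind what ``$\mathbf h$-surjective'' means for maps of non-representable prestacks. Both are short and valid.
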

\begin{proof}
Since the formation of de Rham prestack commutes with limits and given any $\mathbf h$-cover $\widetilde X\rightarrow X$ in $\mathbf{DSch}^{\tn{ft}}_{/k}$, the induced map $\widetilde X_{\dR} \rightarrow X_{\dR}$ is surjective in the $\mathbf h$-topology, it suffices to show that $\op B^2_{\et} \mathbb G_a$ satisfies $\mathbf h$-descent. On the other hand, $\tn{Maps}(X, \op B^2_{\et}\mathbb G_a)$ identifies with $\tau^{\le 0}\tn{Hom}(\cal O_X, \cal O_X[2])$ calculated in $\tn{Perf}(X)$, so the result follows from Lemma \ref{lem-perf-eh-descent}.
\end{proof}

Let us now construct the promised morphism:
\begin{equation}
\label{eq-tw-de-rham}
\mathring{\mathbf{Tw}} \rightarrow \mathbf{Tw}.
\end{equation}
We let $\mathbf{Ge}_{\dR}$ denote the \'etale stack which associates to $X\in\mathbf{DSch}^{\tn{ft}}_{/k}$ the strict Picard groupoid $\op{Maps}(X_{\dR}, \op B^2_{\et}\mathbb G_m)$. Taking the fibers along the vertical maps in the following commutative diagram:
$$
\xysmall{
	\op B_{\et}\mathring{\mathbf{Loc}}_1 \ar[r]\ar[d] & \op B_{\et}\mathbf{Loc}_1 \ar[r] & \mathbf{Ge}_{\dR} \ar[d] \\
	\op B^2_{\et}\mathbb G_m \ar[rr]^-{\sim} & & \op B^2_{\et}\mathbb G_m
}
$$
one obtains a morphism from $\op B_{\et}\op{Fib}(\mathring{\mathbf{Loc}}_1 \rightarrow \op B_{\et}\mathbb G_m)$ to $\mathbf{Tw}$. One then obtains \eqref{eq-tw-de-rham} by noting that $\mathbf{Tw}$ satisfies derived $\bfeh$-descent (Lemma \ref{lem-tw-descent}).

\subsubsection{}
Finally, we note that tame twistings can be used to produce a twisted category of $\cal D$-modules equipped with a forgetful functor to ind-coherent sheaves (as studied in \cite{gaitsgory2011ind}). Note that any object $\cal L \in \mathring{\mathbf{Loc}}(X)$ acts as automorphism on $\tn{Crys}^r(X)$:
\begin{equation}
\label{eq-crys-twisted}
\cal M \leadsto \cal M\otimes \cal L,
\end{equation}
and if the object in $\mathbf{Pic}(X)$ induced by $\cal L$ is trivialized, the underlying ind-coherent sheaves of $\cal M$ and $\cal M\otimes\cal L$ become canonically isomorphic.

\smallskip

Since both $\tn{Crys}^r$ and $\tn{IndCoh}$ are $\bfeh$-sheaves on $\mathbf{DSch}^{\tn{ft}}_{/k}$ \cite[Theorem 8.2.2]{gaitsgory2011ind}, the procedure of \cite[\S1.7.2]{gaitsgory2018parameters} defines for every $\cal T\in\mathring{\mathbf{Tw}}(X)$ a twisted category $\tn{Crys}^r_{\cal T}(X)$ equipped with a forgetful functor:
$$
\tn{oblv} : \tn{Crys}^r_{\cal T}(X) \rightarrow \tn{IndCoh}(X).
$$
This construction agrees with the usual twisted category defined by the twisting attached to $\cal T$ under the map \eqref{eq-tw-de-rham}. On the other hand, the full subcategory $\mathring{\tn{Crys}}{}^r(X) \subset \tn{Crys}^r(X)$ of regular $\cal D$-modules form an $\bfeh$-subsheaf. Since \eqref{eq-crys-twisted} preserves regularity (thank to tameness of $\cal L$), the same construction produces a full subcategory:
$$
\mathring{\tn{Crys}}{}_{\cal T}^r(X) \hookrightarrow \tn{Crys}^r_{\cal T}(X).
$$
In other words, the notion of \emph{regularity} makes sense for a crystal twisted by a tame twisting (or even a tame gerbe.)

\bigskip

\section{Motivic theory of gerbes}
\label{sec-motivic-gerbes}

In this section, we assume $k=\bar k$ but we remove the restriction on $\tn{char}(k)$.

\smallskip

We define the notion of a ``motivic theory of gerbes'' and note some consequences of the definition. Then we verify that \'etale, analytic, and tame gerbes are examples of such. We also include the example of ``additive'' de Rham gerbes which will be used in studying usual factorization twistings on the affine Grassmannian.

\subsection{Definitions}

\subsubsection{} Let $\mathbf G$ be an \'etale stack on $\mathbf{Sch}^{\tn{ft}}_{/k}$ valued in \emph{strict} Picard $2$-groupoids (c.f.~\S\ref{sec-picard-groupoids}). We write $A(-1)$ for the fiber of the restriction map $\mathbf G(\mathbb A^1) \rightarrow \mathbf G(\mathbb A^1\backslash\{\mathbf 0\})$ and think of it as a ``Tate twist'' of some coefficient group $A$ (although we do not define $A$). Note that \emph{a priori} $A(-1)$ is a strict Picard $2$-groupoid as opposed to an abelian group. We define a \emph{theory of gerbes} to be such $\mathbf G$, equipped with a map of stacks of strict Picard groupoids:
\begin{equation}
\label{eq-div-class-map-general}
\mathbf{Pic} \underset{\mathbb Z}{\otimes} A(-1) \rightarrow \mathbf G,\quad (\cal L, \lambda)\leadsto \cal L^{\lambda},
\end{equation}
which we shall call a \emph{divisor class map}. We will often refer to $\mathbf G$ as a theory of gerbes, the datum of \eqref{eq-div-class-map-general} being tacitly included. For $X\in\mathbf{Sch}^{\tn{ft}}_{/k}$, the notation $\mathbf G_X$ means the restriction of $\mathbf G$ to the small \'etale site of $X$.

\subsubsection{}
\label{sec-setup-topology} Let us fix a topology $t$ on $\mathbf{Sch}^{\tn{ft}}_{/k}$ which is finer than the \'etale topology and such that every $X\in\mathbf{Sch}^{\tn{ft}}_{/k}$ is $t$-locally smooth. Examples of $t$ include the $\eh$-topology when $\tn{char}(k) = 0$ and the $h$-topology in the general case. We call a theory of gerbes $\mathbf G$ a \emph{$t$-theory of gerbes} if $\mathbf G$ furthermore satisfies $t$-descent.

\subsubsection{} Here is a list of properties that we shall consider for a theory of gerbes $\mathbf G$.

\begin{enumerate}
	\item[(RP1)] $A(-1)$ is discrete, and for any $X\in\mathbf{Sm}_{/k}$ and $i : Z\hookrightarrow X$ a smooth divisor, the map of \'etale stacks induced from the divisor class map is an equivalence:
	$$
	\underline A(-1) \xrightarrow{\sim} \tn{Fib}(\mathbf G_X \rightarrow j_*\mathbf G_{X\backslash Z}),\quad a\leadsto \cal O_X(Z)^a.
	$$
	
	\smallskip
	
	\item[(RP2)] For any $X\in\mathbf{Sm}_{/k}$ and $i : Z\hookrightarrow X$ a closed subscheme of pure codimension $\ge 2$, the morphism is an equivalence:
	$$
	\mathbf G_X \xrightarrow{\sim} j_*\mathbf G_{X\backslash Z}.
	$$
	
	\smallskip
	
	\item[(A)] For any $X\in\mathbf{Sm}_{/k}$, the pullback morphism is an equivalence:
	$$
	\mathbf G(X) \xrightarrow{\sim} \mathbf G(X\times\mathbb A^1).
	$$
	
	\smallskip
	
	\item[(B)] For any proper morphism $p : Y\rightarrow X$ in $\mathbf{Sch}^{\tn{ft}}_{/k}$ and every $k$-point $x\in X$, the \'etale stalk $(p_*\mathbf G_Y)_x$ maps fully faithfully to the fiber $\mathbf G(Y\underset{X}{\times}\{x\})$. 
\end{enumerate}

\noindent
The names of these properties are \emph{relative purity in codimension $1$} (RP1), \emph{relative purity in codimension $\ge 2$} (RP2), \emph{$\mathbb A^1$-invariance} (A), and \emph{weak proper base change} (B). We call a $t$-theory of gerbes $\mathbf G$ satisfying all the above properties a \emph{motivic $t$-theory of gerbes}.

\subsubsection{} We note that although property (B) refers only to $k$-points, the assumption $k=\bar k$ guarantees that we have enough of them.

\begin{lem}
\label{lem-stalks-k-points}
Let $\mathbf F$ be an \'etale sheaf on $X\in\mathbf{Sch}^{\tn{ft}}_{/k}$ valued in strict Picard $n$-groupoids. If the stalk $\mathbf F_x = 0$ for all $k$-points $x\in X$. Then $\mathbf F=0$.
\end{lem}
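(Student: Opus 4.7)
The plan is to reduce to the case of an \'etale sheaf of abelian groups via the Dold--Kan correspondence, then run a covering argument exploiting that $X$ is Jacobson and $k=\bar k$.

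First, since $\mathbf F$ is an \'etale sheaf of strict Picard $n$-groupoids, by \S\ref{sec-complex-to-space} it is represented by a complex $\mathcal F^\bullet$ of \'etale sheaves of abelian groups concentrated in degrees $[-n, 0]$. The formation of stalks commutes with the functor \eqref{eq-complex-to-space} (Lemma part (a) of \S\ref{sec-complex-to-space}), so $\mathbf F_x = 0$ for a $k$-point $x\in X$ is equivalent to $\op H^i(\mathcal F^\bullet_{\bar x}) = 0$ for all $-n\le i\le 0$, which is to say that each cohomology sheaf $\mathcal H^i := \op H^i(\mathcal F^\bullet)$ has vanishing stalk $\mathcal H^i_{\bar x}$ at every $k$-point of $X$. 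Thus it suffices to prove: \emph{an \'etale sheaf of abelian groups $\mathcal H$ on $X$ with $\mathcal H_{\bar x} = 0$ for all $k$-points $x\in X$ is identically zero.}

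To prove this reduced statement, let $U\rightarrow X$ be \'etale of finite type and let $s\in\mathcal H(U)$. Since $U$ is of finite type over $k=\bar k$, its closed points coincide with its $k$-points. For each such closed point $u\in U$, the image of $u$ in $X$ is again a closed $k$-point (by the Jacobson property), so $\mathcal H_{\bar u} = 0$. Hence there exists an \'etale neighborhood $V_u\rightarrow U$ of $u$ on which $s$ restricts to zero. The image of $V_u$ in $U$ is an open subset $W_u\ni u$, and the union $W:=\bigcup_u W_u$ is an open subset of $U$ containing every closed point. Because $U$ is Jacobson, any non-empty closed subset contains a closed point, so $U\setminus W$ must be empty, i.e.~$W = U$. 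Therefore $\{V_u\rightarrow U\}_u$ is an \'etale cover of $U$ on which $s$ restricts to zero, and the sheaf property forces $s = 0$. Since $U$ and $s$ were arbitrary, $\mathcal H = 0$.

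The main content is really the Jacobson/density input, namely that every non-empty open of a finite-type $k$-scheme contains a $k$-point; the rest is formal sheaf-theoretic bookkeeping and the standard translation between strict Picard $n$-groupoids and complexes. No step should pose a genuine obstacle.
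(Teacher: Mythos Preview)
Your proof is correct and follows essentially the same route as the paper: reduce to an \'etale sheaf of abelian groups, then use that closed points of a finite-type $k$-scheme over $k=\bar k$ are $k$-points and are dense (the Jacobson property). The only cosmetic difference is that the paper phrases the reduction via the homotopy sheaves $\pi_i\mathbf F$ rather than via a representing complex; since these coincide with the cohomology sheaves $\mathcal H^i$ of any representing complex, the two reductions are the same, and using $\pi_i\mathbf F$ sidesteps the need to justify that the functor \eqref{eq-complex-to-space} is essentially surjective.
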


\noindent
Thus a morphism $\mathbf F\rightarrow\mathbf G$ is an isomorphism if and only if its stalks at all $k$-points are.

\begin{proof}
It suffices to show $\pi_i\mathbf F$, the sheafification of $U\leadsto\mathbf F(U)$, vanishes. Since $(\pi_i\mathbf F)_{\bar\eta} = \pi_i(\mathbf F_{\bar{\eta}})$ for every geometric point $\bar{\eta} \rightarrow X$, the problem reduces to the case where $\mathbf F$ is valued in abelian groups. The problem then reduces to the fact that the \'etale neighborhood of any geometric point $\bar{\eta} \rightarrow \eta\in X$ contains a $k$-point in the closure of $\eta$.
\end{proof}

\subsection{Immediate consequences}

\subsubsection{} Relative purity in codimension $1$ can be generalized to the situation of multiple divisors.

\begin{lem}
\label{lem-purity-multiple}
Let $\mathbf G$ be a theory of gerbes satisfying \tn{(RP1)}. Then for any $X\in\mathbf{Sm}_{/k}$ together with a closed immersion $i : Z\hookrightarrow X$ where $Z$ is a finite union of smooth divisors $i_{\alpha} : Z_{\alpha}\hookrightarrow X$. Then the following map is an equivalence:
$$
\bigoplus_{\alpha} (i_{\alpha})_*\underline A(-1) \xrightarrow{\sim} \tn{Fib}(\mathbf G_X\rightarrow j_*\mathbf G_{X\backslash Z}), \quad (a_{\alpha})\leadsto \bigotimes_{\alpha}\cal O_X(Z_{\alpha})^a
$$
\end{lem}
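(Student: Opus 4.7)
The plan is induction on the number $n$ of divisors, with $n=1$ being (RP1) itself. After decomposing each $Z_\alpha$ into irreducible components and using multiplicativity of $\cal O_X(Z_\alpha)$ in the divisor, one may assume the $Z_\alpha$ are irreducible and pairwise distinct, whence none is contained in another (as irreducible codimension-one subvarieties of an irreducible variety). Write $\mathbf F := \tn{Fib}(\mathbf G_X \to j_*\mathbf G_{X\backslash Z})$, set $U := X\backslash Z_n$, and let $j_n : U\hookrightarrow X$, $j' : X\backslash Z\hookrightarrow U$ so $j = j_n\circ j'$. Factoring $\mathbf G_X \to j_*\mathbf G_{X\backslash Z}$ through $(j_n)_*\mathbf G_U$ produces a fiber-of-fibers sequence
\[
\tn{Fib}(\mathbf G_X \to (j_n)_*\mathbf G_U) \to \mathbf F \to \tn{Fib}((j_n)_*\mathbf G_U \to j_*\mathbf G_{X\backslash Z}).
\]

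I will identify the outer terms. The left-hand term equals $(i_n)_*\underline A(-1)$ by (RP1) for $Z_n$. For the right-hand term, $(j_n)_*$ is a right adjoint and commutes with fibers, so it equals $(j_n)_*\tn{Fib}(\mathbf G_U \to (j')_*\mathbf G_{X\backslash Z})$; applying the inductive hypothesis to $U$ with the $n-1$ smooth divisors $Z_\alpha \cap U$ (smooth and nonempty because $Z_\alpha\ne Z_n$), this becomes $\bigoplus_{\alpha<n} (i_\alpha)_*(j'_\alpha)_*\underline A(-1)$, where $j'_\alpha : Z_\alpha\backslash(Z_\alpha\cap Z_n) \hookrightarrow Z_\alpha$. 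Because $A(-1)$ is discrete by (RP1) and $Z_\alpha$ is smooth (hence normal) and irreducible, constant \'etale sheaves extend uniquely across the dense open $j'_\alpha$, giving $(j'_\alpha)_*\underline A(-1) = \underline A(-1)$. We thus obtain a fiber sequence
\[
(i_n)_*\underline A(-1) \to \mathbf F \to \bigoplus_{\alpha<n} (i_\alpha)_*\underline A(-1).
\]

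To conclude, I construct $\phi : \bigoplus_\alpha (i_\alpha)_*\underline A(-1) \to \mathbf F$ by $(a_\alpha)\leadsto\bigotimes_\alpha \cal O_X(Z_\alpha)^{a_\alpha}$; this is well-defined since each $\cal O_X(Z_\alpha)$ is trivialized on $X\backslash Z \subset X\backslash Z_\alpha$. Fitting $\phi$ into the commutative diagram whose top row is the split sequence $(i_n)_*\underline A(-1) \hookrightarrow \bigoplus_\alpha (i_\alpha)_*\underline A(-1) \twoheadrightarrow \bigoplus_{\alpha<n} (i_\alpha)_*\underline A(-1)$ and whose bottom row is the fiber sequence above, the left square commutes because $\phi|_{\alpha=n}$ is precisely the isomorphism of (RP1) for $Z_n$, and the right square commutes because restricting $\cal O_X(Z_\alpha)$ to $U$ yields $\cal O_U(Z_\alpha\cap U)$ for $\alpha<n$ (matching the inductive equivalence) and is trivial for $\alpha=n$. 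Two-out-of-three in fiber sequences then forces $\phi$ to be an equivalence.

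The sole nonformal input is the identity $(j'_\alpha)_*\underline A(-1) = \underline A(-1)$, which rests on the discreteness clause in (RP1) together with normality of the smooth irreducible divisor $Z_\alpha$; all remaining steps are diagram chases in the stable setting of strict Picard groupoids.
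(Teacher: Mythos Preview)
Your proof is correct and follows essentially the same approach as the paper: factor the restriction in stages, apply (RP1) at each stage, use that the constant sheaf $\underline A(-1)$ extends across the removed locus (the paper phrases this as the pushforward of a constant sheaf along an open immersion in a normal scheme being constant), and then argue that the resulting extension splits. The only cosmetic difference is that the paper does just $n=2$ and invokes ``the symmetry of the situation'' to split the extension, whereas you write out the induction in general and split explicitly via the map $\phi$ together with two-out-of-three; your version is slightly more detailed but not genuinely different.
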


\noindent
The conclusion is, of course, trivial if $\mathbf G$ also satisfies (RP2).

\begin{proof}
For notational simplicity, we only prove the case $Z = Z_1\cup Z_2$. Factor the open immersion $j : X\backslash Z\hookrightarrow X$ as such:
$$
X\backslash Z \xrightarrow{j_2} X\backslash Z_1 \xrightarrow{j_1} X,
$$
where the complement of $j_2$ is the locally closed subscheme $\mathring Z_2:=Z_2\backslash Z_1$. Applying relative purity to the open immersion $j_2$, we obtain a fiber sequence:
$$
(i_{\mathring Z_2})_*\underline A(-1) \rightarrow \mathbf G_{X\backslash Z_1} \rightarrow (j_2)_*\mathbf G_{X\backslash Z}.
$$
Applying $(j_1)_*$ to this fiber sequence. Using the fact that $\underline A(-1)$ is a constant sheaf so its pushforward under $j_1\circ i_{\mathring{Z}_2}$ identifies with $(i_2)_*\underline A(-1)$, we find a fiber sequence:
\begin{equation}
\label{eq-fiber-seq-1}
(i_2)_*\underline A(-1) \rightarrow (j_1)_*\mathbf G_{X\backslash Z_1} \rightarrow j_*\mathbf G_{X\backslash Z}.
\end{equation}
On the other hand, relative purity applied to the open immersion $j_1$ yields:
\begin{equation}
\label{eq-fiber-seq-2}
(i_1)_*\underline A(-1) \rightarrow \mathbf G_X\rightarrow (j_1)_*\mathbf G_{X\backslash Z_1}.
\end{equation}
Combining \eqref{eq-fiber-seq-1} and \eqref{eq-fiber-seq-2}, we see that the fiber of $\mathbf G_X\rightarrow j_*\mathbf G_{X\backslash Z}$ is an extension of $(i_2)_*\underline A(-1)$ by $(i_1)_*\underline A(-1)$. The symmetry of the situation implies that this extension canonically splits.
\end{proof}

\subsubsection{}
\label{sec-contraction}
We now explain that property (A) can be enhanced in the presence of $t$-descent. Namely, $\mathbf G$ is trivial on ``$\mathbb A^1$-contractible'' ind-schemes of ind-finite type. Note that by our convention, $X\in\mathbf{IndSch}^{\tn{ft}}_{/k}$ has the property that $X\rightarrow X\times X$ is a schematic closed immersion.

\smallskip

Given $X\in\mathbf{IndSch}^{\tn{ft}}_{/k}$ equipped with a $\mathbb G_m$-action, the action is called \emph{contracting} if it extends to an action of the multiplicative monoid $\mathbb A^1$. Such an extension is unique if it exists. Indeed, given two action maps $\xysmall{\mathbb A^1\times X\ar@<0.5ex>[r]^-{\tn{act}_1}\ar@<-0.5ex>[r]_-{\tn{act}_2} & X}$, the locus on which they agree maps to $\mathbb A^1\times X$ via a schematic closed immersion. Therefore, if the locus contains $\mathbb G_m\times X$, it is all of $\mathbb A^1\times X$.

\smallskip

Let $X^0 \hookrightarrow X$ be the fixed-point locus of a contracting $\mathbb G_m$-action. Then $X^0$ is again an ind-scheme of ind-finite type. We have a commutative diagram:
$$
\xysmall{
	\{\mathbf 0\} \times X \ar[r]^-q\ar[d] & X^0 \ar[d]^i \\
	\mathbb A^1 \times X \ar[r]^-{\tn{act}} & X
}
$$
Furthermore, the composition $X^0 \xrightarrow{i} \{\mathbf 0\}\times X \xrightarrow{q} X^0$ is the identity map. This is because $\mathbb G_m$ acts trivially on $X^0$, so it extends uniquely to the trivial $\mathbb A^1$-action.

\begin{lem}
\label{lem-contraction}
Suppose $\mathbf G$ is a motivic $t$-theory of gerbes satisfying $\tn{(A)}$.
\begin{enumerate}[(a)]
	\item For any $X\in\mathbf{IndSch}^{\tn{ft}}_{/k}$, the pullback morphism is an equivalence:
	$$
	\mathbf G(X) \xrightarrow{\sim} \mathbf G(X\times\mathbb A^1).
	$$
	\item Suppose $X\in\mathbf{IndSch}^{\tn{ft}}_{/k}$ is equipped with a contracting $\mathbb G_m$-action. Then restriction to the fixed-point locus is an equivalence:
	$$
	i^* : \mathbf G(X) \xrightarrow{\sim} \mathbf G(X^0).
	$$
\end{enumerate}
\end{lem}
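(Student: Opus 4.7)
The plan is to establish (a) first, reducing via descent to the smooth case covered by property (A), and then derive (b) from (a) by a retraction argument.

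For (a), write $X = \underset{\nu}{\tn{colim}}\, X^{(\nu)}$. Since $\mathbf G$ as a presheaf takes colimits to limits, $\mathbf G(X) = \underset{\nu}{\lim}\, \mathbf G(X^{(\nu)})$ and likewise for $X \times \mathbb A^1$, so it suffices to treat the case $X \in \mathbf{Sch}^{\tn{ft}}_{/k}$. By the hypothesis on $t$ recalled in \S\ref{sec-setup-topology}, we may choose a $t$-hypercover $X_\bullet \to X$ with each $X_n \in \mathbf{Sm}_{/k}$ (constructed by iteratively refining $t$-covers by smooth schemes and resolving the resulting fiber products). Pulling back along $\mathbb A^1$ yields a $t$-hypercover $X_\bullet \times \mathbb A^1 \to X \times \mathbb A^1$ with the same property. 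Because $\mathbf G$ is valued in strict Picard $2$-groupoids (hence $2$-truncated), its $t$-descent automatically upgrades to $t$-hyperdescent. Applying this and property (A) level-wise then yields the conclusion.

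For (b), we exploit the two identities recorded in \S\ref{sec-contraction}: $\tn{act} \circ (\mathbf 1 \times \tn{id}) = \tn{id}_X$ (a tautology), and $\tn{act} \circ (\mathbf 0 \times \tn{id}) = i \circ q$ (from the contracting action). By part (a), $\op{pr}_X^* : \mathbf G(X) \xrightarrow{\sim} \mathbf G(\mathbb A^1 \times X)$ is an equivalence. Since $\op{pr}_X \circ (\mathbf 0 \times \tn{id}) = \tn{id}_X = \op{pr}_X \circ (\mathbf 1 \times \tn{id})$, the two pullback functors $(\mathbf 0 \times \tn{id})^*$ and $(\mathbf 1 \times \tn{id})^*$ are both inverse to $\op{pr}_X^*$, and so they are canonically equivalent as functors $\mathbf G(\mathbb A^1 \times X) \to \mathbf G(X)$. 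Applying this common value to $\tn{act}^*\alpha$ for $\alpha \in \mathbf G(X)$: the $\mathbf 1$-side gives $\alpha$, while the $\mathbf 0$-side gives $q^* i^*\alpha$. Hence $q^* \circ i^* \simeq \tn{id}_{\mathbf G(X)}$. On the other hand, $q \circ i = \tn{id}_{X^0}$ gives $i^* \circ q^* \simeq \tn{id}_{\mathbf G(X^0)}$, so $i^*$ and $q^*$ are mutually inverse equivalences.

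The main obstacle is the descent step in (a): one must verify both that every $X \in \mathbf{Sch}^{\tn{ft}}_{/k}$ admits a $t$-hypercover whose terms are smooth, and that $\mathbf G$ views such a hypercover as a colimit diagram. The former is an iterative consequence of the standing hypothesis that smooth schemes form a $t$-basis; the latter follows from the standard fact that truncated $t$-sheaves automatically satisfy hyperdescent. Granted (a), the proof of (b) is a purely formal diagram chase.
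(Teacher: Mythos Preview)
Your proof is correct and follows essentially the same approach as the paper's. For (a), both reduce to the schematic case via the ind-presentation and then use a smooth $t$-hypercover; your explicit remark that $2$-truncatedness upgrades $t$-descent to $t$-hyperdescent is a useful clarification the paper leaves implicit. For (b), the paper phrases the key step as an identification $\op{pr}^* \simeq \op{act}^*$ and then restricts to $\{\mathbf 0\}\times X$, while you phrase it as $(\mathbf 0\times\tn{id})^* \simeq (\mathbf 1\times\tn{id})^*$ and apply this to $\op{act}^*$; these are the same retraction argument in different packaging.
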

\begin{proof}
For part (a), we first prove the result for $X\in\mathbf{Sch}^{\tn{ft}}_{/k}$. Indeed, take a $t$-hypercovering of $X$ consisting of smooth schemes $\widetilde X^{\bullet}$, the pullback $\widetilde X^{\bullet}\times\mathbb A^1$ is a $t$-hypercovering of $X\times \mathbb A^1$, so we win by $t$-descent. For the general case, we represent $X$ by $\underset{\nu}{\tn{colim}}\,X^{(\nu)}$ with $X^{(\nu)}\in\mathbf{Sch}^{\tn{ft}}_{/k}$. Then $X\times\mathbb A^1$ agrees with $\underset{\nu}{\tn{colim}}\,(X^{(\nu)}\times\mathbb A^1)$, so the result follows from the schematic case.

\smallskip

For part (b), we note that $\mathbb A^1$-invariance gives a canonical isomorphism of functors:
$$
\op{pr}^* \xrightarrow{\sim} \op{act}^* : \mathbf G(X) \rightarrow \mathbf G(\mathbb A^1\times X).
$$
Composing with the pullback to $\{\mathbf 0\} \times X$, we find that the identity functor on $\mathbf G(X)$ is equivalent to $q^*\circ i^*$. On the other hand, $i^*\circ q^*$ is the identity functor on $\mathbf G(X^0)$ as observed above, so the result follows.
\end{proof}

\subsubsection{} We now show that property (B) implies a K\"unneth type formula when some rigidity is assumed of one of the factors. For any $X\in \mathbf{Sch}^{\tn{ft}}_{/k}$, write $\mathbf G(X/\tn{pt})$ as the cofiber of $\mathbf G(\tn{pt}) \rightarrow \mathbf G(X)$ calculated in the $\infty$-category of strict Picard groupoids. Any choice of a $k$-point $x\in X$ identifies $\mathbf G(X/\tn{pt})$ with the fiber $\mathbf G(X;x)$ of $x^* : \mathbf G(X)\rightarrow \mathbf G(\tn{pt})$, i.e., gerbes rigidified at $x$. In particular, $\mathbf G(X/\tn{pt})$ is still a $2$-groupoid.

\begin{lem}
\label{lem-product-decomposition}
Let $\mathbf G$ be a theory of gerbes satisfying $\tn{(B)}$. Let $X_1, X_2\in\mathbf{Sch}^{\tn{ft}}_{/k}$ be connected schemes and furthermore suppose:
\begin{enumerate}[(a)]
	\item $X_1$ is proper, and
	\item $\mathbf G(X_1/\tn{pt})$ is discrete.
\end{enumerate}
Then the external product defines an equivalence:
\begin{equation}
\label{eq-kunneth-formula}
\boxtimes : \mathbf G(X_1/\tn{pt}) \times \mathbf G(X_2/\tn{pt}) \xrightarrow{\sim} \mathbf G(X_1\times X_2/\tn{pt}).
\end{equation}
\end{lem}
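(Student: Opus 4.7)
The plan is to recast the problem as a statement about étale sheaves on $X_2$ and then deploy property $(B)$ to compute stalks. First, I fix $k$-points $x_i \in X_i$ and identify $\mathbf G(X_i/\tn{pt})$ with gerbes on $X_i$ rigidified at $x_i$. Restrictions along $\iota_2 : X_1 \times \{x_2\} \hookrightarrow X_1 \times X_2$ and $\iota_1 : \{x_1\} \times X_2 \hookrightarrow X_1 \times X_2$ yield a functor $(\iota_2^*, \iota_1^*)$ that is a retraction of $\boxtimes$. Viewing both as a morphism of fiber sequences with common base $\mathbf G(X_2; x_2)$ (via the second projection on the source and $\iota_1^*$ on the target), the assertion reduces to showing that the induced map on fibers
$$
\pi_1^* : \mathbf G(X_1/\tn{pt}) \rightarrow \mathbf F(X_2), \qquad \mathbf F(U) := \tn{Fib}(\mathbf G(X_1 \times U) \rightarrow \mathbf G(\{x_1\}\times U)),
$$
is an equivalence, where $U$ ranges over the étale site of $X_2$.

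The next step is to treat $\mathbf F$ as an étale sheaf on $X_2$ valued in strict Picard $2$-groupoids. By $(B)$ applied to both $\pi_2 : X_1 \times X_2 \rightarrow X_2$ and $\tn{id}_{X_2}$, for any $k$-point $u \in X_2$ the stalk $\mathbf F_u$ embeds fully faithfully into
$$
\tn{Fib}(\mathbf G(X_1 \times \{u\}) \rightarrow \mathbf G(\{u\})) \cong \mathbf G(X_1/\tn{pt}),
$$
which is discrete by assumption (b). Full faithfulness then forces $\mathbf F_u$ itself to be discrete, and Lemma \ref{lem-stalks-k-points} applied to $\pi_i \mathbf F$ for $i\ge 1$ implies that $\mathbf F$ is a sheaf of discrete abelian groups on $X_2$.

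The construction $\cal G_1 \mapsto \pi_1^* \cal G_1$ globalizes to a morphism of étale sheaves $\underline{\mathbf G(X_1/\tn{pt})} \rightarrow \mathbf F$ on $X_2$. Restricting $\pi_1^* \cal G_1$ back to $X_1 \times \{u\}$ returns $\cal G_1$, so the composition $\mathbf G(X_1/\tn{pt}) \rightarrow \mathbf F_u \hookrightarrow \mathbf G(X_1/\tn{pt})$ is the identity at each $k$-point $u$. The $(B)$-embedding is therefore an isomorphism, making $\underline{\mathbf G(X_1/\tn{pt})} \rightarrow \mathbf F$ an isomorphism on all stalks at $k$-points. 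A second application of Lemma \ref{lem-stalks-k-points}, to the cofiber, promotes this to an isomorphism of sheaves, and evaluating global sections on the connected scheme $X_2$ yields the desired equivalence $\mathbf G(X_1/\tn{pt}) \xrightarrow{\sim} \mathbf F(X_2)$.

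The crux of the argument is the identification $\mathbf F_u \cong \mathbf G(X_1/\tn{pt})$. Property $(B)$ alone supplies only a fully faithful embedding; the discreteness hypothesis (b) is precisely what allows the retraction furnished by $\pi_1^*$ to promote this embedding into a two-sided inverse. Everything else is a formal consequence of Lemma \ref{lem-stalks-k-points} and the universal property of the fiber.
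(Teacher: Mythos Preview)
Your proof is correct and follows essentially the same strategy as the paper's: recast the problem as a statement about \'etale sheaves on $X_2$, use property (B) to identify stalks at $k$-points, and invoke the discreteness hypothesis together with connectedness of $X_2$ to compute global sections of a constant sheaf. The only organizational difference is that the paper works with the full pushforward $p_*\mathbf G_{X_1\times X_2}$ and compares it to the pushout $\underline{\mathbf G(X_1)} \sqcup_{\underline{\mathbf G(\tn{pt})}} \mathbf G_{X_2}$, rigidifying at the very end, whereas you rigidify from the outset by passing to the fiber sheaf $\mathbf F$; this makes your version slightly more direct but the two arguments are interchangeable.
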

\begin{proof}
We let $\underline{\mathbf G(X_1)}$ be the \'etale sheafification of the constant presheaf with value $\mathbf G(X_1)$ on $X_2$ (and similarly for $\underline{\mathbf G(\tn{pt})}$). Let $p : X_1\times X_2\rightarrow X_2$ denote the projection map. External product defines a morphism:
\begin{equation}
\label{eq-external-from-pushout}
\boxtimes : \underline{\mathbf G(X_1)} \underset{\underline{\mathbf G(\tn{pt})}}{\sqcup} \mathbf G_{X_2} \rightarrow p_*\mathbf G_{X_1\times X_2}.
\end{equation}
Here, the push-out is calculated in the $\infty$-category of \'etale sheaves valued in strict Picard groupoids. We claim that \eqref{eq-external-from-pushout} is an equivalence. Indeed, it suffices to check that the stalks at every $k$-point $x_2 \in X_2$ agree (Lemma \ref{lem-stalks-k-points}).

\smallskip

Consider the stalk $\mathbf G_{X_2, x_2}$ of $\mathbf G_{X_2}$ at $x_2$. We first note that $\mathbf G(\tn{pt})\rightarrow\mathbf G_{X_2, x_2}$ is an equivalence since the restriction $\mathbf G_{X_2,x_2}\rightarrow\mathbf G(x_2)$ is fully faithful (Property (B)). Thus the composition:
$$
\mathbf G(X_1) \underset{\mathbf G(\tn{pt})}{\sqcup} \mathbf G_{X_2, x_2} \rightarrow (p_*\mathbf G_{X_1\times X_2})_{x_2} \rightarrow \mathbf G(X_1 \times\{x_2\})
$$
is an equivalence. Since the second map is fully faithful (Property (B)), the first map is an equivalence. This proves that \eqref{eq-external-from-pushout} is indeed an equivalence.

\smallskip

To prove that \eqref{eq-kunneth-formula} is an equivalence, we can fix points $x_1\in X_1$ and $x_2\in X_2$ and instead prove that the external product is an equivalence for rigidified gerbes:
\begin{equation}
\label{eq-kunneth-formula-rigidified}
\boxtimes : \mathbf G(X_1; x_1) \times \mathbf G(X_2; x_2) \rightarrow \mathbf G(X_1\times X_2; (x_1,x_2)).
\end{equation}
The splitting of $\mathbf G(X_1)$ as the bi-product\footnote{Recall: $\mathbf G$ takes values in connective $\tn H\mathbb Z$-module spectra, where bi-products make sense.} $\mathbf G(X_1; x_1) \times \mathbf G(\tn{pt})$ implies that $\underline{\mathbf G(X_1)} \underset{\underline{\mathbf G(\tn{pt})}}{\sqcup} \mathbf G_{X_2}$ is isomorphic to $\underline{\mathbf G(X_1; x_1)} \times \mathbf G_{X_2}$. Since $\mathbf G(X_1; x_1)$ is discrete and $X_2$ is connected, the global section of \eqref{eq-external-from-pushout} yields an equivalence:
$$
\mathbf G(X_1; x_1) \times \mathbf G(X_2) \xrightarrow{\sim} \mathbf G(X_1\times X_2).
$$
Adding the rigidification at $x_2$, respectively $(x_1, x_2)$, implies the equivalence \eqref{eq-kunneth-formula-rigidified}.
\end{proof}

\subsection{\'Etale context}

\subsubsection{} In this subsection, we fix a torsion abelian group $A$ the order of whose elements are indivisible by $p:=\tn{char}(k)$. We shall describe a motivic $\tn h$-theory of gerbes with coefficients in $A$. In practice, this gerbe theory can be used to twist the DG category of constructible \'etale $\overline{\mathbb Q}_{\ell}$-sheaves and $A$ will be a subgroup of $\overline{\mathbb Q}_{\ell}^{\times}$ (well chosen so that $A$ has no $p$-torsion). In the context of metaplectic Langlands program, this gerbe theory has been considered by Gaitsgory--Lysenko \cite{gaitsgory2018parameters}.

\subsubsection{} We define the sheaf $\mathbf{Ge}_{\et}$ of strict Picard $2$-groupoids on $\mathbf{Sch}^{\tn{ft}}_{/k}$ by:
$$
\mathbf{Ge}_{\et}(X) := \op{Maps}(X, \op B^2_{\et}A).
$$
Thus the fiber of $\mathbf{Ge}_{\et}(\mathbb A^1) \rightarrow \mathbf{Ge}_{\et}(\mathbb A^1\backslash\{\mathbf 0\})$ identifies with the usual Tate twist:
$$
A(-1) \xrightarrow{\sim} \underset{n\mid n'}{\tn{colim}}\;\tn{Hom}(\mu_n(k), A),
$$
where for $n\mid n'$, the transition map $\mu_{n'}(k) \rightarrow \mu_n(k)$ is given by raising to $(n'/n)$th power. As $A$ has no $p$-torsion, we may take $n$ to be indivisible by $p$ in this colimit. Since $k = \bar k$, the map $\underline{\mu_n(k)} \rightarrow \mu_n$ is an isomorphism of \'etale sheaves on $\mathbf{Sch}_{/k}^{\tn{ft}}$. Therefore $A(-1)$ is also the colimit of Hom-groups of \'etale sheaves $\underset{n\mid n'}{\tn{colim}}\;\tn{Hom}(\mu_n, \underline A)$.

\subsubsection{} The divisor class map:
$$
\mathbf{Pic} \underset{\mathbb Z}{\otimes} A(-1) \rightarrow \mathbf{Ge}_{\et},\quad (\cal L, a)\leadsto \cal L^a
$$
can be constructed as follows (c.f.~\cite[\S1.4]{gaitsgory2018parameters}). The Kummer exact sequence gives rise to  a map $\theta_n : \mathbf{Pic} \rightarrow \op B_{\et}^2\mu_n$ for each $n$ indivisible by $p$, such that for $n\mid n'$ the following diagram commutes:
$$
\xysmall{
& \op B^2_{\et}\mu_{n'} \ar[dd]^{(-)^{n'/n}} \\
\mathbf{Pic} \ar[ur]^-{\theta_{n'}}\ar[dr]^-{\theta_n} &  \\
& \op B^2_{\et}\mu_{n}
}
$$
Therefore, a pair $(\cal L, a)$ gives rise to a section of $\op B^2_{\et}A$, to be denoted by $\cal L^a$.

\subsubsection{} The properties (RP1), (RP2), (A), and (B) are all standard facts of \'etale cohomology. Finally, we claim that $\mathbf{Ge}_{\et}$ satisfies $\tn h$-descent. This follows from the fact that proper coverings satisfy cohomological descent for \'etale sheaves of $A$-modules \cite[Theorem 7.7]{conrad2003cohomological}.

\smallskip

Alternatively, by a theorem of Suslin--Voevodsky \cite{suslin1996singular}, $\underline A$ is a sheaf in the $\tn h$-topology and one has canonical isomorphisms:
$$
\op H^i_{\et}(X; A) \xrightarrow{\sim} \op H^i_{\tn h}(X; A),\quad\text{for all }i\ge 0.
$$
In particular, this shows that \'etale $A$-gerbes agree with $A$-gerbes in the $\tn h$-topology. In conclusion, $\mathbf{Ge}_{\et}$ is a motivic $\tn h$-theory of gerbes.

\subsection{Analytic context}
\label{sec-an-context}

\subsubsection{} We now fix $k=\mathbb C$. Let $\mathbf{Ge}_{\tn{an}}$ denote the presheaf of strict Picard $2$-groupoids on $\mathbf{Sch}^{\tn{ft}}_{/k}$ which associates $\mathbb C^{\times}$-gerbes over the analytification:
$$
\mathbf{Ge}_{\tn{an}}(X) := \mathbf{Ge}_{\mathbb C^{\times}}(X^{\tn{an}}).
$$
Equivalently, $\mathbf{Ge}_{\tn{an}}(X)$ is the space of maps from the homotopy type of $X^{\tn{an}}$ to the Eilenberg--MacLane space $K(2; \mathbb C^{\times})$. We have already noted (in \S\ref{sec-analytic-comparison}) that $\mathbf{Ge}_{\tn{an}}$ is an $\tn h$-sheaf on $\mathbf{Sch}^{\tn{ft}}_{/k}$. Its coefficient group $A(-1)$ identifies with $\mathbb C^{\times}$.

\subsubsection{} The properties (RP1), (RP2), and (A) are standard facts. To verify the weak proper base change property (B), we shall show that the restriction map:
\begin{equation}
\label{eq-restriction-analytic}
\underset{U}{\tn{colim}} \op H^i(Y^{\tn{an}}\underset{X^{\tn{an}}}{\times} U^{\tn{an}}; \mathbb C^{\times}) \rightarrow \op H^i(Y^{\tn{an}} \underset{X^{\tn{an}}}{\times}\{x\}; \mathbb C^{\times}),\quad i\ge 0,
\end{equation}
where $U$ ranges over \'etale neighborhoods of $x\in X$, is in fact an isomorphism. Note that there is an exact sequence of abelian groups:
$$
0 \rightarrow \mathbb C^{\times}_{\tn{tors}} \rightarrow \mathbb C^{\times} \rightarrow \mathbb C/\mathbb Q \rightarrow 0,
$$
where $\mathbb C^{\times}_{\tn{tors}}$ denotes the torsion subgroup of $\mathbb C^{\times}$. By Artin's comparison theorem, the map \eqref{eq-restriction-analytic} is an isomorphism for coefficients in $\mathbb C^{\times}_{\tn{tors}}$ and $\mathbb Q_{\ell}$ for any prime $\ell$. The same statement must also be true for coefficients in $\mathbb Q$ as the operation $-\underset{\mathbb Q}{\otimes}\mathbb Q_{\ell}$ is conservative. Thus it remains true for $\mathbb C/\mathbb Q$ as it is a direct sum of copies of $\mathbb Q$. This implies the result for coefficients in $\mathbb C^{\times}$. We conclude that $\mathbf{Ge}_{\tn{an}}$ is a motivic $\tn h$-theory of gerbes.

\subsection{De Rham context}
\label{sec-tame-gerbe-motivic}

\subsubsection{} Fix $k = \bar k$ with $\tn{char}(k) = 0$. The na\"ive theory of de Rham gerbes sending $X\in\mathbf{Sch}^{\tn{ft}}_{/k}$ to $\op{Maps}(X_{\dR}, \op B^2_{\et}\mathbb G_m)$ is not motivic; it fails, for instance, $\mathbb A^1$-invariance. This can be seen as the \emph{raison d'\^{e}tre} of the theory of tame gerbes.

\subsubsection{} We shall verify that the sheaf of tame gerbes $\mathring{\mathbf{Ge}}$ defined in \S\ref{sec-gerbes} is a motivic $\eh$-theory of gerbes. In fact, the properties (RP1), (A), and (B) follow immediately from the analytic comparison Lemma \ref{lem-analytic-comparison} and the corresponding properties of $\mathbf{Ge}_{\tn{an}}$. Indeed, take $k$ to be $\mathbb C$ and (RP1) is verified because $\mathring{\mathbf{Ge}}$ is a full subfunctor of $\mathbf{Ge}_{\tn{an}}$. To see (A), we consider the commtutive square when $k=\mathbb C$:
$$
\xysmall{
	\mathring{\mathbf{Ge}}(X) \ar@{^{(}->}[r]\ar[d] & \mathbf{Ge}_{\tn{an}}(X) \ar[d]^{\cong} \\
	\mathring{\mathbf{Ge}}(X\times\mathbb A^1) \ar@{^{(}->}[r] & \mathbf{Ge}_{\tn{an}}(X\times\mathbb A^1)
}
$$
Thus $\mathring{\mathbf{Ge}}(X)\rightarrow\mathring{\mathbf{Ge}}(X\times\mathbb A^1)$ is fully faithful. It is essentially surjective since there is a retraction $\mathring{\mathbf{Ge}}(X\times\mathbb A^1) \rightarrow \mathring{\mathbf{Ge}}(X)$ and two objects $\cal G_1, \cal G_2 \in \mathring{\mathbf{Ge}}(X\times\mathbb A^1)$ are identified once they are identified in $\mathbf{Ge}_{\tn{an}}(X\times\mathbb A^1)$. To prove (B), we observe that the commutative diagram below consists of fully faithful embeddings:
$$
\xysmall{
	(p_*\mathring{\mathbf{Ge}}_Y)_x \ar[r]\ar@{^{(}->}[d] & \mathring{\mathbf{Ge}}(Y \underset{X}{\times}\{x\}) \ar@{^{(}->}[d] \\
	(p_*\mathbf{Ge}_{\tn{an}, Y})_x \ar[r]^-{\sim} & \mathbf{Ge}_{\tn{an}}(Y\underset{X}{\times}\{x\})
}
$$
Below, we shall present algebraic proofs of (RP1), (RP2), and (A). They are based on the calculation of cohomology of $\mathring{\Omega}^1$ and several known facts about the Brauer group. Unfortunately, we have not found an algebraic proof of (B).

\subsubsection{$\tn{(RP1)}$} Over $X\in\mathbf{Sm}_{/k}$, the \'etale sheaf $\mathring{\mathbf{Ge}}_X$ is represented by the complex
\begin{equation}
\label{eq-tame-gerbe-complex}
\cal G_X := \tn{Cofib}(\cal O_X^{\times} \rightarrow \mathring{\Omega}_X^1)[1].
\end{equation}
It suffices calculate the (derived) restriction $\tau^{\le 0}i^!\cal G$. Note that $i^!$ is a left-exact functor on \'etale sheaves, so \eqref{eq-tame-gerbe-complex} gives rise to a long exact sequence:
\begin{align}
0 \rightarrow \op H^{-2}i^!\cal G \rightarrow &\op H^0i^! \cal O_X^{\times} \rightarrow \op H^0i^! \mathring{\Omega}_X^1 \rightarrow \op H^{-1}i^! \cal G \notag \\
&\rightarrow \op H^1i^!\cal O_X^{\times} \xrightarrow{\beta} \op H^1i^!\mathring{\Omega}_X^1 \rightarrow \op H^0i^!\cal G \rightarrow \op H^2i^!\cal O_X^{\times}. \label{eq-long-restr-seq}
\end{align}
We make the following observations based on the tautological triangle for an \'etale sheaf $\cal F$:
$$
i_* i^!\cal F \rightarrow \cal F \rightarrow \op Rj_*(\cal F\big|_{X\backslash Z}).
$$
\begin{enumerate}[(a)]
	\item $\op H^0i^!\cal O_X^{\times}=0$ and $\op H^0i^!\mathring{\Omega}_X^1 = 0$;
	\smallskip
	\item $\op H^1i^!\cal O_X^{\times} \xrightarrow{\sim} \underline{\mathbb Z}$, since this group identifies as the cokernel of $\cal O_X^{\times} \rightarrow j_*\cal O_{X\backslash Z}^{\times}$; the analogous consideration gives $\op H^1i^!\mathring{\Omega}_X^1 \xrightarrow{\sim} \underline k$, and the morphism $\beta$ passes to the tautological inclusion $\underline{\mathbb Z}\rightarrow \underline k$.
	\smallskip
	\item $\op H^2i^!\cal O_X^{\times} = 0$, since this group identifies with $\op R^1j_*\cal O_{X\backslash Z}^{\times}$, which vanishes because every line bundle on $X\backslash Z$ extends across $Z$.
\end{enumerate}
Combining the above observations, we obtain $\op H^{-2}i^!\cal G = 0$, $\op H^{-1}i^!\cal G=0$, and $\op H^0i^!\cal G\xrightarrow{\sim} k/\mathbb Z$. It is straghtforward to see that this isomorphism agrees with \eqref{eq-div-class-map-tame}.

\subsubsection{$\tn{(RP2)}$}
The descent property of $\mathring{\mathbf{Ge}}$ allows to assume $X$ is affine. We again use the complex $\cal G_X$ \eqref{eq-tame-gerbe-complex}, and the result reduces to the following calculations of cohomology groups:
\begin{enumerate}[(a)]
	\item $\op H^i_{\et}(X; \cal O^{\times}_X) \xrightarrow{\sim}  \op H^i_{\et}(X\backslash Z; \cal O^{\times}_X)$ for $i=0,1,2$. The nontrivial part is $i=2$ which follows from purity of the Brauer group for smooth schemes over a field (see Gabber \cite[\S2]{gabber1993injectivity});
	\smallskip
	\item $\op H^i_{\et}(X; \mathring{\Omega}^1_X) \xrightarrow{\sim} \op H^i_{\et}(X\backslash Z; \mathring{\Omega}_X^1)$ for $i=0,1,2$. This follows from the \'etale-to-Zariski comparison and the Gersten resolution (Theorem \ref{thm-coh-properties}).
\end{enumerate}

\subsubsection{$\tn{(A)}$} Proceeding as above, it suffices to establish $\mathbb A^1$-invariance of the following groups:
\begin{enumerate}[(a)]
	\item $\op H^i_{\et}(X; \cal O_X^{\times})$ for $i=0,1,2$. The case for $i=0$ is immediate. For $i=1$, this is the $\mathbb A^1$-invariance of the Picard group over a regular base. For $i=2$, one first identifies $\op H^2_{\et}(X; \cal O_X^{\times})$ with the Brauer group using Gabber's theorem \cite{de2003result}, and then appeals to the theorem of Auslander--Goldman \cite[Proposition 7.7]{auslander1960brauer} (this requires $\op{char}(k)=0$.)
	
	\smallskip
	
	\item $\op H^i_{\et}(X; \mathring{\Omega}^1_X)$ for $i=0,1$. These have been established in Theorem \ref{thm-coh-properties}.
\end{enumerate}

\subsubsection{}
\label{sec-twisting-as-gerbe} Finally, we remark that the additional player in the de Rham context---tame twistings---is a theory of gerbes by construction (c.f.~\S\ref{sec-twistings}). Its coefficient group $A(-1)$ identifies with $k$. However, $\mathring{\mathbf{Tw}}$ does not satisfy $\eh$-descent since it is not nil-invariant. On the other hand, $\mathring{\mathbf{Tw}}$ verifies properties (RP1), (RP2), and (A). Indeed, by the fiber sequence \eqref{eq-pic-tw-ge} and its compatibility with the divisor class maps \eqref{eq-div-class-map-compatible}, these properties follow from the corresponding ones for $\mathbf{Pic}$ and $\mathring{\mathbf{Ge}}$.\footnote{In fact, the previous discussion already includes a direct proof of these facts for $\mathring{\mathbf{Tw}}$.} It is worth pointing out that $\mathbf{Pic}$ is also theory of gerbes according to our definition, with $A(-1) = \mathbb Z$. It satisfies properties (RP1), (RP2), and (A).

\subsection{Additive de Rham context}
\label{sec-add-dr-context}

\subsubsection{} We remark on another theory of gerbes supplied by algebraic de Rham cohomology valued in $\mathbb G_a$. These gerbes are not used to form any twisted category of sheaves.

\smallskip

We remain in the setting where $k = \bar k$ with $\tn{char}(k) = 0$.

\subsubsection{} We define $\mathbf{Ge}_{\dR}^+$ as a presheaf of strict Picard $2$-groupoids on $\mathbf{Sch}^{\tn{ft}}_{/k}$ by:
$$
\mathbf{Ge}_{\dR}^+(X) := \tn{Maps}(X_{\dR}, \op B^2_{\tn{Zar}}\mathbb G_a).
$$
Therefore, $\mathbf{Ge}_{\dR}^+(X)$ is calculated by the truncated complex $\tau^{\le 0}\tn R\Gamma_{\tn{Zar}}(X_{\dR}, \cal O[2])$. The $\mathbf h$-descent of perfect complexes (Lemma \ref{lem-perf-eh-descent}) implies that $\mathbf{Ge}_{\dR}^+$ is an $\tn h$-stack. Indeed, for every $\tn h$-cover $\widetilde X\rightarrow X$, the \v Cech complex of $\widetilde X_{\dR} \rightarrow X_{\dR}$ is canonically the same whether formed as classical or derived prestacks.\footnote{In particular, we can replace the Zariski topology in the definition of $\mathbf{Ge}^+_{\tn{dR}}$ by the \'etale topology.}

\subsubsection{} The value group $A(-1)$ canonically identifies with $k$. The divisor class map:
$$
\mathbf{Pic} \underset{\mathbb Z}{\otimes} k \rightarrow \mathbf{Ge}_{\dR}^+, \quad (\cal L, a) \leadsto \cal L^a
$$
is the ``first Chern class'' construction. Over a smooth scheme $X$, it is induced from $d\log : \cal O_X^{\times} \rightarrow \tau^{\le 2}\Omega_X^{\bullet}$. For general $X\in\mathbf{Sch}^{\tn{ft}}_{/k}$, there is a morphism from $\mathbf{Pic}(X)$ to usual twistings $\mathbf{Tw}(X)$ (c.f.~\S\ref{sec-map-to-usual-tw}) which has an underlying $\mathbb G_a$-gerbe on $X_{\dR}$. One then extends the construction by $k$-linearity.

\subsubsection{} For $k = \mathbb C$, the theory of gerbes $\mathbf{Ge}_{\dR}^+$ is equivalent to analytic $\mathbb C$-gerbes, up to a Tate twist of the divisor class map. More precisely, we let $\mathbf{Ge}_{\tn{an}}^+$ denote the presheaf on $\mathbf{Sch}^{\tn{ft}}_{/k}$ which associates to $X$ the strict Picard $2$-groupoid of $\mathbb C$-gerbes on $X^{\tn{an}}$. In other words, $\mathbf{Ge}^+_{\tn{an}}(X)$ is calculated by the truncated complex $\tau^{\le 0} \tn C^{\bullet}(X^{\tn{an}}; \mathbb C[2])$ of topological cochains valued in $\mathbb C$. The same argument as for $\mathbb C^{\times}$ shows that $\mathbf{Ge}^+_{\tn{an}}$ is an $\tn h$-sheaf.

\smallskip

The coefficient group $A(-1)$ is easily seen to be $\mathbb C$. There is a topological Chern class map:
$$
\mathbf{Pic} \underset{\mathbb Z}{\otimes} \mathbb C \rightarrow \mathbf{Ge}_{\tn{an}}^+,\quad (\cal L, a)\leadsto \cal L^a,
$$
where the image of $\mathbf{Pic}$ lies in $\tau^{\le 0}\tn C^{\bullet}(X^{\tn{an}}; \mathbb Z[2])$.

\subsubsection{} Applying Grothendieck's comparison theorem in the smooth case and using $\tn h$-descent, we find an equivalence making the following diagram commute.
$$
\xysmall{
\mathbf{Pic}\underset{\mathbb Z}{\otimes}\mathbb C \ar[r]\ar[d]^{\tn{id}} & \mathbf{Ge}^+_{\dR} \ar[d]^{\cong} \\
\mathbf{Pic}\underset{\mathbb Z}{\otimes}\mathbb C \ar[r]^-{2\pi i\cdot } & \mathbf{Ge}^+_{\tn{an}}
}
$$
i.e., the divisor class map for $\mathbf{Ge}^+_{\tn{an}}$ has to be multiplied by a factor of $2\pi i$.

\subsubsection{} The theories of gerbes $\mathbf{Ge}_{\dR}^+$ and $\mathbf{Ge}^+_{\tn{an}}$ satisfy the properties (RP1), (RP2), (A), and (B). One can either prove these properties directly for algebraic de Rham cohomology, or use the argument in \S\ref{sec-an-context} for $\mathbf{Ge}^+_{\tn{an}}$ and transfer the results to $\mathbf{Ge}_{\dR}^+$. In summary, they are both motivic $\tn h$-theories of gerbes.

\bigskip

\section{Factorization structure and $\Theta$-data}
\label{sec-fact-theta}

In this section, we assume $k=\bar k$. We further fix a smooth, connected curve $X$ over $k$.

\smallskip

After a review of factorization structures and the affine Grassmannian $\tn{Gr}_{G, \Ran}$, our first goal will be to define the combinatorial gadget of ``enhanced $\Theta$-data'' (\S\ref{sec-enh-theta-data}.) Then we state the classification of factorization gerbes on $\tn{Gr}_{G, \Ran}$ (for any motivic theory of gerbes) and deduce from it the classification of factorization tame twistings (Theorem \ref{thm-twisting-classification}). This fulfills the task of assigning an intrinsic meaning to quantum parameters.

\smallskip

Finally, we address the question of factorization (usual) twistings on $\tn{Gr}_{G,\Ran}$ and classify them for semisimple, simply connected $G$.

\subsection{Factorization gerbes}

\subsubsection{} Let $\tn{Ran}$ denote the prestack on $\mathbf{Sch}^{\tn{ft}}_{/k}$ whose $S$-points are finite sets of maps $x^{(i)} : S\rightarrow X$. Write $\mathbf{fSet}^{\tn{surj}}$ for the category of finite nonempty sets $I$ together with surjective maps $I\twoheadrightarrow J$. The the canonical map $\underset{I \in \mathbf{fSet}^{\tn{surj}}}{\tn{colim}} X^I \rightarrow \tn{Ran}$ is an equivalence of prestacks.

\subsubsection{} For $n\ge 1$, we let $\tn{Ran}^{\times n}_{\tn{disj}}$ denote the open sub-prestack of $\tn{Ran}^{\times n}$ consisting of points $\{x^{(i)}\}_{i\in I_k, 1\le k\le n}$ such that $x^{(i)}$ and $x^{(j)}$ are disjoint as long as $i,j$ belong to $I_k$ and $I_{k'}$ for $k\neq k'$. There is a morphism of ``disjoint union'':
$$
\sqcup_{(n)} : \tn{Ran}^{\times n}_{\tn{disj}} \rightarrow \tn{Ran}.
$$
We shall only be concerned with classical (i.e., non-derived) factorization prestacks valued in \emph{sets}. Let us recall that a \emph{factorization prestack} over $X$ is a prestack $\cal Y$ over $\tn{Ran}$ equipped with the additional data, called a \emph{factorization isomorphism} over $\tn{Ran}^{\times 2}_{\tn{disj}}$:
$$
f_{(2)} : \sqcup_{(2)}^* \cal Y \xrightarrow{\sim} (\cal Y \times \cal Y)_{\tn{disj}}.
$$

\smallskip
The isomorphism $f_{(2)}$ is required to satisfy a coherence condition over $\tn{Ran}^{\times 3}_{\tn{disj}}$ expressing that the three ways on can form an isomorphism $\sqcup^*_{(3)}\cal Y \xrightarrow{\sim} (\cal Y\times\cal Y\times\cal Y)_{\tn{disj}}$ out of $f_{(2)}$ are identical. A convenient way to express this is as follows. We assume to be given:
$$
f_{(3)} : \sqcup_{(3)}^*\cal Y \xrightarrow{\sim} (\cal Y \times \cal Y \times\cal Y)_{\tn{disj}},
$$
such that for each surjection $\varphi : \{1,2,3\} \rightarrow \{1,2\}$, the map $\sqcup_{\varphi} : \tn{Ran}^{\times 3}_{\tn{disj}} \rightarrow \tn{Ran}^{\times 2}_{\tn{disj}}$ of taking unions along each of $\varphi$ makes the following diagram commute.
$$
\xysmall{
	\sqcup^*_{(3)}\cal Y \ar[r]_-{\sim}^-{f_{(3)}}\ar[d]_{\cong} & (\cal Y\times \cal Y\times\cal Y)_{\tn{disj}} \\
	\sqcup^*_{\varphi}\sqcup^*_{(2)}\cal Y \ar[r]_-{\sim}^-{\sqcup_{\varphi}^*f_{(2)}} & \sqcup^*_{\varphi}(\cal Y\times\cal Y)_{\tn{disj}} \ar[u]^{\cong}_{f_{(2), \varphi}}
}
$$
Here, $f_{(2), \varphi}$ means applying $f_{(2)}$ on the factor corresponding to the element of $\{1,2\}$ with two preimages. Clearly, $f_{(3)}$ is not an additional piece of structure.

\subsubsection{} Let us be given a presheaf $\mathbf F$ on $\mathbf{Sch}^{\tn{ft}}_{/k}$ valued in strict Picard $2$-groupoids. We extend $\mathbf F$ to prestacks by the process of right Kan extension:
$$
\mathbf F(\cal Y) = \lim_{\substack{X\rightarrow\cal Y \\ X\in\mathbf{Sch}^{\tn{ft}}_{/k}}} \mathbf F(X).
$$
Suppose $\cal Y$ is a factorization prestack over $X$. Then a \emph{factorization section} $\cal S\in \mathbf F^{\tn{fact}}(\cal Y)$ is a section $\cal S \in \mathbf F(\cal Y)$ equipped with \emph{factorization isomorphisms}:
$$
\sqcup^*_{(n)} \cal S \xrightarrow{\sim} \cal S^{\boxtimes n}\text{ in }\mathbf F(\sqcup^*_{(n)}\cal Y \xrightarrow{\sim} (\cal Y^{\times n})_{\tn{disj}}),
$$
for $n=2,3$. Furthermore, for each surjection $\varphi : \{1,2,3\}\rightarrow\{1,2\}$, we are supplied a $2$-isomorphism witnessing the commutativity of the following diagram:
$$
\vcenter{
\xysmall{
	\sqcup^*_{(3)}\cal S \ar[r]^-{\sim}\ar[d]_{\cong} & \cal S \boxtimes\cal S \boxtimes\cal S \ar@{=>}[dl] \\
	\sqcup_{\varphi}^*\sqcup_{(2)}^*\cal S \ar[r]^-{\sim} & \sqcup_{\varphi}^*(\cal S\boxtimes\cal S) \ar[u]^{\cong}
}}
\quad\text{in}\quad
\mathbf F\left(\vcenter{
\xysmall{
	\sqcup^*_{(3)}\cal Y \ar[r]^-{\sim}\ar[d]_{\cong} & (\cal Y\times \cal Y\times\cal Y)_{\tn{disj}} \\
	\sqcup^*_{\varphi}\sqcup^*_{(2)}\cal Y \ar[r]^-{\sim} & \sqcup^*_{\varphi}(\cal Y\times\cal Y)_{\tn{disj}} \ar[u]^{\cong}
}
}\right).
$$
These $2$-isomorphisms are required to satisfy a coherence condition over $\tn{Ran}^{\times 4}_{\tn{disj}}$ which we shall not specify.

\smallskip

Thus $\mathbf F^{\tn{fact}}(\cal Y)$ naturally forms a strict Picard $2$-groupoid, and the forgetful map $\mathbf F^{\tn{fact}}(\cal Y) \rightarrow \mathbf F(\cal Y)$ is a morphism of such. In the particular case where $\mathbf G$ is a theory of gerbes, we call sections of $\mathbf G^{\tn{fact}}(\cal Y)$ \emph{factorization gerbes} on $\cal Y$.

\subsection{The affine Grassmannian}

\subsubsection{} We shall now introduce the main example of a factorization prestack: the \emph{affine Grassmannian} $\tn{Gr}_{H,\tn{Ran}}$ associated to $X$ and a linear algebraic group $H$.

\smallskip

It is defined as the (classical) prestack over $\tn{Ran}$ whose fiber at an $S$-point $x^{(i)} : S\rightarrow X$ is the set of pairs $(\cal P_H, \alpha)$ where $\cal P_H$ is an \'etale $H$-torsor over $S\times X$ and $\alpha$ is a trivialization of $\cal P_H$ on the complement of the graphs:
$$
\alpha : \cal P_H \xrightarrow{\sim} \cal P_H^0 \big|_{S\times X\backslash\bigcup_{i\in I}\Gamma_{x^{(i)}}}.
$$
The Beauville--Laszlo lemma shows that $\tn{Gr}_{H, \tn{Ran}}$ has the structure of a factorization prestack over $X$ (c.f.~\cite{zhu2016introduction}).

\smallskip

Furthermore, the projection:
\begin{equation}
\label{eq-gr-projection}
\pi : \tn{Gr}_{H, \tn{Ran}} \rightarrow \tn{Ran}
\end{equation}
is ind-schematic and ind-finite type, i.e., for every $S\in\tn{Ran}$ with $S\in\mathbf{Sch}^{\tn{ft}}_{/k}$, the fiber product $\tn{Gr}_{H, \tn{Ran}} \underset{\tn{Ran}}{\times} S$ is representable by an ind-scheme of ind-finite type. When $H$ is reductive, $\pi$ is furthermore ind-proper \cite[Theorem 3.1.3]{zhu2016introduction}. For a finite set $I$, we will denote by $\tn{Gr}_{H, X^I}$ the fiber product:
$$
\tn{Gr}_{H, X^I} := \tn{Gr}_{H, \tn{Ran}} \underset{\tn{Ran}}{\times} X^I.
$$
The morphism \eqref{eq-gr-projection} admits a \emph{unit} section, defined by sending $x^{(i)}$ to the trivial $H$-torsor $\cal P_H^0$ equipped with the tautological trivialization:
$$
e : \tn{Ran} \rightarrow \op{Gr}_{H, \tn{Ran}}.
$$

\subsubsection{} Fixing a $k$-point $x\in X$ and a uniformizer $t$ of the completed local ring $\widehat{\cal O}_{X,x}$, the fiber of $\tn{Gr}_{H, \tn{Ran}}$ at $x$ identifies with the \'etale quotient of the loop group by the arc group  $H\loo{t}/H\arc{t}$. This is the ``classical version'' of the affine Grassmannian.

\smallskip

For $G$ reductive, let $I\subset G\arc{t}$ denote the Iwahori subgroup associated to the Borel $B$. Then the quotient $G\loo{t}/I$ is the affine flag variety $\tn{Fl}_G$. The projection:
$$
\tn{Fl}_G \rightarrow \tn{Gr}_{G, x}
$$
is an \'etale locally trivial fiber bundle with typical fiber $G/B$.

\subsubsection{}
\label{sec-schubert}
For $G$ semisimple and simply connected, $\tn{Gr}_{G, X^I}$ admits a well-behaved Schubert stratification. It is a closed subscheme $\tn{Gr}_{G, X^I}^{\le \lambda^I}$ associated to any $I$-tuple $\lambda^I := (\lambda^{(i)})$ of dominant cocharacters $\lambda^{(i)}\in\Lambda_T^+$. The ind-scheme $\tn{Gr}_{G, X^I}$ identifies with the colimit of $\tn{Gr}^{\le\lambda^I}_{G, X^I}$ over $\lambda^I$.

\smallskip

The Schubert varieties $\tn{Gr}_{G, X^I}^{\le\lambda^I}$ are flat over $X^I$. Furthermore, for every $\varphi : I\twoheadrightarrow J$, the restriction of $\tn{Gr}_{G, X^I}^{\le\lambda^I}$ to the diagonal $\Delta_{I\twoheadrightarrow J}$ identifies with $\tn{Gr}_{G, X^J}^{\le\lambda^J}$ where $\lambda^{(j)}:= \sum_{i\in\varphi^{-1}(j)}\lambda^{(i)}$ (see \cite[Proposition 1.2.4]{zhu2009affine} for the case $I=\{1,2\}$; the general case is similar).

\subsubsection{} Let $\mathbf{Pic}_{\tn{Gr}_{G, X^I}}^e$ denote the \'etale sheaf on $X^I$ which associates to $S\rightarrow X^I$ the abelian group of line bundles on $\tn{Gr}_{G, X^I}\underset{X^I}{\times}S$ trivialized over the unit section $e$. The following exact sequence is a mild generalization of \cite[Lemma 3.4.3]{zhu2016introduction} to $G$ semisimple, simply connected:
\begin{equation}
\label{eq-pic-exact-seq}
0\rightarrow \mathbf{Pic}^e_{\tn{Gr}_{G, X^I}} \rightarrow \boxtimes_{i\in I} \underline{B}_X \rightarrow \bigoplus_{\substack{I\twoheadrightarrow J\\ |J| = |I|-1}} (\Delta_{I\twoheadrightarrow J})_* \boxtimes_{j\in J}\underline{B}_X,
\end{equation}
where $B$ is the abelian group $\op{Maps}(\mathbf S, \mathbb Z)$ for $\mathbf S$ the set of simple factors of $G$.

\subsubsection{}
We will also mention the construction of determinant line bundles on $\tn{Gr}_{G, \tn{Ran}}$. Let $\mathbf S$ denote the set of simple factors of $\widetilde G_{\tn{der}}$. Then for each $s\in\mathbf S$, the corresponding Lie algebra $\fr g_s$ can be regarded as a $G$-representation. Consequently, we may define a line bundle $\det_{\fr g_s}$ over $\tn{Gr}_{G,\tn{Ran}}$ by specifying its fiber at an $S$-point $(x^{(i)}, \cal P_G, \alpha)$ to be the relative determinant of the vector bundles $(\fr g_s)_{\cal P_G}$ and $(\fr g_s)_{\cal P_G^0}$, identified outisde $\bigcup_{i\in I}\Gamma_{x^{(i)}}$. Then $\det_{\fr g_s}$ has the canonical structure of a factorization line bundle over $\tn{Gr}_{G, \tn{Ran}}$ (c.f.~\cite[\S5.2]{gaitsgory2018parameters}). Thus we have a map:
\begin{equation}
\label{eq-det-line-bundle}
\det : \bigoplus_{s\in\mathbf S} \mathbb Z \rightarrow \mathbf{Pic}^{\tn{fact}}(\tn{Gr}_{G, \tn{Ran}}),\quad (a_s) \leadsto \bigotimes_{s\in\mathbf S}\det{}_{\fr g_s}^{a_s}.
\end{equation}

\subsection{Enhanced $\Theta$-data}
\label{sec-enh-theta-data}

\subsubsection{}
\label{sec-setup-classification} Suppose we are given the following data:
\begin{enumerate}[(a)]
	\item a smooth, connected algebraic curve $X$;
	\item a reductive group $G$ over $k$ with maximal torus $T\subset G$;
	\item a theory of gerbes $\mathbf G$ such that $A(-1)$ is a \emph{divisible} abelian group (in particular, discrete).
\end{enumerate}
Then we shall attach a strict Picard $2$-groupoid $\Theta_G(\Lambda_T; \mathbf G)$ called \emph{enhanced $\Theta$-data}. It will consist of triples $(q, \cal G^{(\lambda)}, \varepsilon)$ to be specified below.

\subsubsection{Quadratic form}
\label{sec-quad-restr}
Let $W$ denote the Weyl group of $(G,T)$. It acts on the cocharacter lattice $\Lambda_T$. Let $\cal Q(\Lambda_T; A(-1))^W$ denote the abelian group of $W$-invariant $A(-1)$-valued quadratic forms on $\Lambda_T$. Any such quadratic form gives rise to a $W$-invariant bilinear form $\kappa$ defined by:
$$
\kappa(\lambda,\mu) := q(\lambda+\mu) - q(\lambda) - q(\mu).
$$
In particular, $\kappa(\lambda, \lambda) = 2q(\lambda)$.

\smallskip

Following Gaitsgory--Lysenko \cite{gaitsgory2018parameters}, we shall specify a subgroup$$
\cal Q(\Lambda_T; A(-1))_{\tn{restr}}^W \subset \cal Q(\Lambda_T; A(-1))^W,
$$
called \emph{restricted} quadratic forms, by the property that $q\in \cal Q(\Lambda_T; A(-1))_{\tn{restr}}^W$ if:
\begin{equation}
\label{eq-restr-defn}
\kappa(\alpha, \lambda) = \langle\check{\alpha}, \lambda\rangle q(\alpha),\quad\text{for all }\alpha\in\check{\Phi}, \lambda\in\Lambda_T,
\end{equation}
where $\check{\alpha}$ denotes the root associated to $\alpha$. We note that there always holds $2\kappa(\alpha, \lambda) = 2\langle\check{\alpha}, \lambda\rangle q(\alpha)$; indeed, this is because $\kappa(\alpha, \lambda) = \kappa(-\alpha, s_{\alpha}(\lambda))$ by $W$-invariance, where $s_{\alpha}(\lambda) = \lambda - \langle\check{\alpha}, \lambda\rangle\alpha$. Analogously, if each co-root is twice a co-character (e.g.~$G=\op{GL}_2$, $\op{PGL}_2$), then \eqref{eq-restr-defn} always holds. Let $\Lambda_T^r\subset\Lambda_T$ denote the co-root lattice and $\pi_1G:=\Lambda_T/\Lambda_T^r$ be the algebraic fundamental group of $G$.

\smallskip

We note an elementary fact.

\begin{lem}
\label{lem-quad-form}
Suppose $q\in \cal Q(\Lambda_T; A(-1))^W_{\tn{restr}}$. Then there is a (non-canonical) decomposition $q = q_1 + q_2$ where:
\begin{enumerate}[(a)]
	\item $q_1$ is an $A(-1)$-linear sum of Killing forms $q_{s,\tn{Kil}}$, attached to each irreducible component $\Phi_s$ ($s\in\mathbf S$) of the coroot system $\Phi_{(G,T)}$ by the formula:
	$$
	q_{s, \tn{Kil}}(\lambda) := \frac{1}{2}\sum_{\alpha\in\Phi_s}\langle\check{\alpha},\lambda\rangle^2.
	$$
	\item $q_2$ descends to a quadratic form on $\pi_1G$.
\end{enumerate}
\end{lem}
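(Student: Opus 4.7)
My plan is to reformulate the descent condition and then solve one linear equation per simple factor using divisibility of $A(-1)$.

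First, I claim that a restricted $W$-invariant quadratic form $q'$ on $\Lambda_T$ descends to $\pi_1 G = \Lambda_T/\Lambda_T^r$ if and only if $q'(\alpha) = 0$ for every coroot $\alpha \in \Phi$. Only the ``if'' direction is nontrivial, and it follows from
\[
q'(\lambda + \alpha) - q'(\lambda) = q'(\alpha) + \kappa'(\alpha, \lambda) = q'(\alpha)\bigl(1 + \langle\check{\alpha}, \lambda\rangle\bigr),
\]
where the second equality uses the restricted condition \eqref{eq-restr-defn}. Iterating the resulting translation-invariance over generators of $\Lambda_T^r$ shows that $q'$ factors through $\pi_1 G$. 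The task therefore reduces to producing constants $c_s \in A(-1)$ with the property that $q - \sum_s c_s q_{s,\tn{Kil}}$ vanishes on every coroot.

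Next I want the following rigidity on each simple factor $\Phi_s$: a restricted $W$-invariant scalar function on $\Phi_s$ is determined, up to an $A(-1)$-multiple, by its value on a single coroot. Indeed, $W_s$-invariance forces $q$ to take at most two values on $\Phi_s$, one per root length. When $\Phi_s$ is non-simply-laced, the symmetry of $\kappa$ together with restrictedness yields
\[
\langle\check{\alpha}, \beta\rangle q(\alpha) = \kappa(\alpha, \beta) = \kappa(\beta, \alpha) = \langle\check{\beta}, \alpha\rangle q(\beta)
\]
for any two coroots $\alpha, \beta \in \Phi_s$, which fixes the ratio between the long and short values (for instance, $q(\alpha^{\tn{short}}) = 2\,q(\alpha^{\tn{long}})$ in type $B_2$). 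The Killing form $q_{s,\tn{Kil}}$ is itself restricted and $W$-invariant, so the same identities apply to it, with positive integer values on coroots.

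To conclude, choose a reference coroot $\alpha_s \in \Phi_s$ for each $s \in \mathbf S$ and set $N_s := q_{s,\tn{Kil}}(\alpha_s) \in \mathbb Z_{>0}$. Divisibility of $A(-1)$ produces $c_s \in A(-1)$ with $c_s N_s = q(\alpha_s)$; then the form $q - c_s q_{s,\tn{Kil}}$ is restricted and $W$-invariant, vanishes at $\alpha_s$, and so by the rigidity vanishes on every coroot of $\Phi_s$. Because $q_{s,\tn{Kil}}$ vanishes identically on coroots of $\Phi_{s'}$ for $s' \neq s$, the decomposition $q_1 := \sum_{s \in \mathbf S} c_s q_{s,\tn{Kil}}$ and $q_2 := q - q_1$ satisfies the required properties. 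The principal subtlety lies in the rigidity step for non-simply-laced factors; divisibility of $A(-1)$ is used only to invert the integer $N_s$, and the propagation from $q'(\alpha_s) = 0$ to vanishing on coroots of the other length runs in the ``forward'' direction of the ratio identity, avoiding any torsion obstruction that might arise from the two-torsion or three-torsion of $A(-1)$.
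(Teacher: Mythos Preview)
Your proof is correct and follows the same strategy as the paper: choose a coroot $\alpha_s$ per simple factor, use divisibility of $A(-1)$ to solve $c_s\, q_{s,\tn{Kil}}(\alpha_s)=q(\alpha_s)$, and verify that the difference $q_2$ descends. In fact your write-up is more careful than the paper's, which simply asserts that \eqref{eq-restr-defn} forces $\Lambda_T^r$ into the kernel of $\kappa_2$ without explaining why $q_2$ vanishes on \emph{long} coroots; your rigidity step via $\langle\check\alpha,\beta\rangle q(\alpha)=\langle\check\beta,\alpha\rangle q(\beta)$ is precisely what fills that gap.

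One small sharpening: your ``forward direction'' remark is only valid if $\alpha_s$ is chosen \emph{short}, since then the coefficient $\langle\check\beta,\alpha_s\rangle$ appearing in front of $q_2(\beta)$ is $\pm 1$. The paper makes this choice explicitly; you should too, rather than leaving ``reference coroot'' unspecified and relying on the reader to infer it from the final sentence.
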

\begin{proof}
For each $s\in\mathbf S$, let $\alpha_s$ be a short coroot of $\Phi_s$. Since $A(-1)$ is divisible, there exists some $b_s\in A(-1)$ such that $q(\alpha_s) = b_sq_{s,\op{Kil}}(\alpha_s)$. We set $q_1 := \sum_{s\in\mathbf S} b_s q_{s, \op{Kil}}$ and $q_2 := q - q_1$. Thus $q_2$ still belongs to $\cal Q(\Lambda_T; A(-1))^W_{\tn{restr}}$. The identity \eqref{eq-restr-defn} implies that the $\Lambda_T^r$ lies in the kernel of the bilinear form attached to $q_2$, so it descends to a quadratic form on $\pi_1G$.
\end{proof}

Consider the injective map:
\begin{equation}
\label{eq-tensor-to-restr}
\cal Q(\Lambda; \mathbb Z)^W \underset{\mathbb Z}{\otimes} A(-1) \hookrightarrow \cal Q(\Lambda; A(-1))^W_{\tn{restr}}.
\end{equation}

\begin{lem}
\label{lem-sc-restr}
Suppose $G_{\tn{der}}$ is simply connected. Then \eqref{eq-tensor-to-restr} is bijective.
\end{lem}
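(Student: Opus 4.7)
The strategy is to leverage the decomposition $q = q_1 + q_2$ provided by Lemma \ref{lem-quad-form} and show that each summand lies in the image of \eqref{eq-tensor-to-restr}.

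First I would dispense with $q_1 = \sum_{s\in\mathbf S} b_s\, q_{s,\tn{Kil}}$ tautologically: each Killing form $q_{s,\tn{Kil}}$ is a $\mathbb Z$-valued $W$-invariant quadratic form on $\Lambda_T$ (it is a positive integer combination of $\langle \check\alpha, -\rangle^2$ since coroots in $\Phi_s$ pair up as $\pm \alpha$), and a direct check using \eqref{eq-restr-defn} shows $q_{s,\tn{Kil}}$ is restricted. Thus $q_1$ is the image of $\sum_s q_{s,\tn{Kil}}\otimes b_s$ under \eqref{eq-tensor-to-restr}.

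The crux is the second summand $q_2$, which descends to a quadratic form on $\pi_1G = \Lambda_T/\Lambda_T^r$. Here the hypothesis that $G_{\tn{der}}$ is simply connected enters: it guarantees that $\Lambda_T^r = \Lambda_{\widetilde T_{\tn{der}}}$ is saturated in $\Lambda_T$ (being the kernel of the surjection $\Lambda_T \twoheadrightarrow \Lambda_{T/Z(G_{\tn{der}})\text{-part}}$ onto the cocharacter lattice of the abelianization torus $G/G_{\tn{der}}$), so $\pi_1G$ is a free abelian group of finite rank. I would then appeal to the general elementary fact that for any free abelian group $L$ of finite rank and any abelian group $A$, the natural map
$$
\cal Q(L;\mathbb Z)\underset{\mathbb Z}{\otimes} A \xrightarrow{\sim} \cal Q(L; A)
$$
is an isomorphism, as both sides are canonically identified with $A^{\tn{rank}(L)} \oplus A^{\binom{\tn{rank}(L)}{2}}$ via the values on a basis and the off-diagonal entries of the associated bilinear form. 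Applying this to $L = \pi_1G$ writes $q_2 = \sum_k \tilde q_k \otimes a_k$ with $\tilde q_k \in \cal Q(\pi_1G;\mathbb Z)$ and $a_k\in A(-1)$.

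Finally I would verify that pulling each $\tilde q_k$ back to $\Lambda_T$ lands in $\cal Q(\Lambda_T;\mathbb Z)^W$: $W$-invariance is automatic because $W$ acts trivially on $\pi_1G$ (simple reflections $s_\alpha(\lambda) = \lambda - \langle\check\alpha,\lambda\rangle\alpha$ differ by an element of $\Lambda_T^r$), and the pulled-back form is restricted because it vanishes on $\Lambda_T^r$, so the condition \eqref{eq-restr-defn} reduces to $0=0$. Adding the contribution from $q_1$ exhibits $q$ as the image of an element of $\cal Q(\Lambda_T;\mathbb Z)^W\otimes A(-1)$, proving surjectivity. I do not foresee a serious obstacle; the only substantive input beyond bookkeeping is the freeness of $\pi_1G$ when $G_{\tn{der}}$ is simply connected, which is where the hypothesis is used in an essential way.
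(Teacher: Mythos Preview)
Your proposal is correct and follows essentially the same approach as the paper: use the decomposition $q = q_1 + q_2$ from Lemma~\ref{lem-quad-form}, handle $q_1$ via the Killing forms, and reduce $q_2$ to the observation that $\pi_1G$ is torsion-free (hence free of finite rank), so $\cal Q(\pi_1G;\mathbb Z)\otimes A(-1)\to\cal Q(\pi_1G;A(-1))$ is an isomorphism. The paper's two-line proof leaves the $q_1$ part and the $W$-invariance verification implicit, but the substance is identical.
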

\begin{proof}
The hypothesis shows that $\pi_1G$ is torsion-free. Hence every $A(-1)$-valued quadratic form on $\pi_1G$ lives in $\cal Q(\pi_1G; \mathbb Z)\underset{\mathbb Z}{\otimes}A(-1)$.
\end{proof}

\subsubsection{Integral $\Theta$-data}
Given a lattice $\Lambda$, we let $\Theta(\Lambda; \mathbf{Pic})$ denote the strict Picard $1$-groupoid consisting of an integral quadratic form $q\in\cal Q(\Lambda; \mathbb Z)$, and a $\Lambda$-indexed system of line bundles $\cal L^{(\lambda)}$ over $X$ equipped with multiplicative structures:
\begin{equation}
\label{eq-line-bundle-mult}
c_{\lambda,\mu} : \cal L^{(\lambda)} \otimes \cal L^{(\mu)} \xrightarrow{\sim} \cal L^{(\lambda + \mu)} \otimes \omega_X^{\kappa(\lambda,\mu)},
\end{equation}
satisfying associativity and the following $\kappa$-twisted commutativity condition:
$$
(-1)^{\kappa(\lambda,\mu)} c_{\lambda,\mu}(a \otimes b) =  c_{\mu,\lambda}(b\otimes a).
$$
Objects of $\Theta(\Lambda; \mathbf{Pic})$ are called \emph{integral $\Theta$-data}.

\subsubsection{Integral enhanced $\Theta$-data} For a semisimple, simply connected group $G$ with split maximal torus $T$, we have a morphism (c.f.~\cite[\S2.4.7]{tao2019extensions}) which attaches a $\Lambda_T$-indexed system of line bundles to a $W$-invariant form:
\begin{equation}
\label{eq-reversed-map-theta}
\cal Q(\Lambda_T; \mathbb Z)^W \rightarrow \Theta(\Lambda_T; \mathbf{Pic}),\quad q\leadsto (q, \cal L^{(\lambda)}).
\end{equation}
For a reductive group $G$, we shall use construction \eqref{eq-reversed-map-theta} for the simply connected cover of its derived subgroup $\widetilde G_{\tn{der}}$ (with maximal torus $\widetilde T_{\tn{der}}$). The \emph{integral enhanced $\Theta$-data} $\Theta_G(\Lambda_T; \mathbf{Pic})$ are defined to be the strict Picard $1$-groupoid of triples $(q, \cal L^{(\lambda)}, \varepsilon)$ where:
\begin{enumerate}[(a)]
	\item $q\in \cal Q(\Lambda_T; \mathbb Z)^W$, whose bilinear form is denoted $\kappa$;
	\smallskip
	\item $\cal L^{(\lambda)}$ is a $\Lambda_T$-indexed system of line bundles, equipped with multiplicative structure \eqref{eq-line-bundle-mult} which makes $(q, \cal L^{(\lambda)})$ an object of $\Theta(\Lambda_T; \mathbf{Pic})$;
	\smallskip
	\item $\varepsilon$ is an isomorphism between the restriction of $\cal L^{(\lambda)}$ to $\Lambda_{\widetilde T_{\tn{der}}}$ and the system of line bundles attached to the restriction of $q$ to $\Lambda_{\widetilde T_{\tn{der}}}$ via \eqref{eq-reversed-map-theta}.
\end{enumerate}

\subsubsection{$\Theta$-data for $\mathbf G$}
\label{sec-gerbe-theta}
We temporarily relax the condition: $A(-1)$ is only assumed discrete in this paragraph. Given a lattice $\Lambda$, we let $\Theta(\Lambda; \mathbf G)$ denote the strict Picard $2$-groupoid consisting of a quadratic form $q\in \cal Q(\Lambda; A(-1))$, and a $\Lambda$-indexed system of gerbes $\cal G^{(\lambda)} \in \mathbf G(X)$ equipped with multiplicative structures:
\begin{equation}
\label{eq-gerbe-mult}
c_{\lambda,\mu} : \cal G^{(\lambda)} \otimes \cal G^{(\mu)} \xrightarrow{\sim} \cal G^{(\lambda + \mu)} \otimes \omega_X^{\kappa(\lambda,\mu)},
\end{equation}
together with associativity \emph{constraint} and $\kappa$-twisted commutativity \emph{constraint}, i.e., a homotopy $h_{\lambda,\mu}$ witnessing the commutative diagram:
\begin{equation}
\label{eq-gerbe-twisted-commutativity}
\xysmall{
	\cal G^{(\lambda)} \otimes \cal G^{(\mu)} \ar[d] \ar[r]^-{c_{\lambda,\mu}} & \cal G^{(\lambda + \mu)} \otimes \omega_X^{\kappa(\lambda,\mu)} \ar[d]^{(-1)^{\kappa(\lambda,\mu)}}\ar@{=>}[dl]_{h_{\lambda,\mu}} \\
	\cal G^{(\mu)} \otimes \cal G^{(\lambda)} \ar[r]^-{c_{\mu,\lambda}} & \cal G^{(\mu + \lambda)} \otimes \omega_X^{\kappa(\mu,\lambda)}
}
\end{equation}
satisfying the usual coherence conditions for every triple $\cal G^{(\lambda)}$, $\cal G^{(\mu)}$, $\cal G^{(\nu)}$, as well as an additional condition expressing that \emph{strictness} ought to be respected. Namely, for $\lambda = \mu$, as the automorphism:
$$
(-1)^{\kappa(\lambda,\lambda)} \xrightarrow{\sim} (-1)^{2q(\lambda)} \xrightarrow{\sim} ((-1)^2)^{q(\lambda)}
$$
is canonically trivialized, we require that $h_{\lambda, \lambda}$ be the identity $2$-homotopy. The strict Picard $2$-groupoid $\Theta(\Lambda; \mathbf G)$ is called \emph{$\Theta$-data for $\mathbf G$}.

\begin{rem}
\label{rem-homotopy-to-square-root}
In fact, given a commutative diagram \eqref{eq-gerbe-twisted-commutativity} for $\lambda = \mu$, the $2$-homotopy $h_{\lambda,\lambda}$ determines conversely a square root of $\kappa(\lambda,\lambda)\in A(-1)$. Indeed, $h_{\lambda,\lambda}$ defines a trivialization of $(-1)^{\kappa(\lambda,\lambda)}$ whose square is the tautological trivialization of $(-1)^{2\kappa(\lambda,\lambda)}$.

\smallskip

On the other hand, for any $a\in A(-1)$, a trivialization of $(-1)^a$ which squares to the tautological trivialization of $(-1)^{2a}$ is equivalent to the choice of a square root of $a$, since both data are torsors for the $2$-torsion subgroup of $A(-1)$ and there is an obvious map from the latter to the former (c.f.~\cite[\S4.2]{gaitsgory2018parameters}).
\end{rem}

\subsubsection{Enhanced $\Theta$-data for $\mathbf G$}
For $G$ semisimple, simply connected, the morphism \eqref{eq-reversed-map-theta} coupled with the divisor class map for $\mathbf G$ gives rise to a morphism:
\begin{equation}
\label{eq-gerbe-reversed-map}
\cal Q(\Lambda_T; \mathbb Z)^W \underset{\mathbb Z}{\otimes} A(-1) \rightarrow \Theta(\Lambda_T; \mathbf G),\quad (q, a)\leadsto (q, (\cal L^{(\lambda)})^a).
\end{equation}
We reinstall the assumption that $A(-1)$ be divisible. For a reductive group $G$, define the \emph{enhanced $\Theta$-data $\Theta_G(\Lambda_T; \mathbf G)$ for $\mathbf G$} as the strict Picard $2$-groupoid of triples $(q, \cal G^{(\lambda)}, \varepsilon)$ where:
\begin{enumerate}[(a)]
	\item $q\in\cal Q(\Lambda_T; A(-1))^W_{\tn{restr}}$ is a \emph{restricted} quadratic form in the sense of \S\ref{sec-quad-restr}, whose bilinear form is denoted $\kappa$;
	\smallskip
	\item $\cal G^{(\lambda)}$ is a $\Lambda_T$-indexed system in $\mathbf G(X)$, equipped with multiplicative structure \eqref{eq-gerbe-mult}, associativity constraint, and $\kappa$-twisted commutativity constraint, making $(q, \cal G^{(\lambda)})$ an object of $\Theta(\Lambda_T; \mathbf G)$;
	\smallskip
	\item $\varepsilon$ is an isomorphism between the restriction of $\cal G^{(\lambda)}$ to $\Lambda_{\widetilde T_{\tn{der}}}$ and the system of gerbes $\cal G_q^{(\lambda)}$ attahced to the restriction of $q$ to $\Lambda_{\widetilde T_{\tn{der}}}$ via \eqref{eq-gerbe-reversed-map}, compatible with the associativity and $\kappa$-twisted commutativity constraints.
\end{enumerate}

\noindent
Therefore, we have a fiber sequence of strict Picard $2$-groupoids:
\begin{equation}
\label{eq-theta-fiber-seq}
\mathbf{Hom}(\pi_1G, \mathbf G(X)) \rightarrow \Theta_G(\Lambda_T; \mathbf G) \rightarrow \cal Q(\Lambda_T; A(-1))^W_{\tn{restr}},
\end{equation}
where $\mathbf{Hom}(\pi_1G, \mathbf G(X))$ denotes the groupoid of morphisms $\pi_1G\rightarrow\mathbf G(X)$ as \emph{strict} Picard $2$-groupoids.

\subsubsection{$\omega$-shift}
\label{sec-omega-shift}
We note a variant in the definition of enhanced $\Theta$-data where we incorporate shifts by a power of $\omega_X$. Define the \emph{shifted} enhanced $\Theta$-data $\Theta_G^+(\Lambda_T; \mathbf G)$ for $\mathbf G$ to be the strict Picard $2$-groupoid of triples $(q, \cal G^{(\lambda)}, \varepsilon)$ where:
\begin{enumerate}[(a)]
	\item $q\in \cal Q(\Lambda_T; A(-1))^W_{\tn{restr}}$ is as before;
	\smallskip
	\item $\cal G^{(\lambda)}$ is a $\Lambda_T$-indexed system in $\mathbf G(X)$, equipped with multiplicative structures:
	$$
	c_{\lambda,\mu}^+ : \cal G^{(\lambda)} \otimes \cal G^{(\mu)} \xrightarrow{\sim} \cal G^{(\lambda + \mu)},
	$$
	together with associativity constraint and $\kappa$-twisted commutativity constraint:
	$$
\xysmall{
	\cal G^{(\lambda)} \otimes \cal G^{(\mu)} \ar[d] \ar[r]^-{c^+_{\lambda,\mu}} & \cal G^{(\lambda + \mu)} \ar[d]^{(-1)^{\kappa(\lambda,\mu)}}\ar@{=>}[dl]_{h^+_{\lambda,\mu}} \\
	\cal G^{(\mu)} \otimes \cal G^{(\lambda)} \ar[r]^-{c^+_{\mu,\lambda}} & \cal G^{(\mu + \lambda)}
}
$$
	satisfying coherence conditions for every triple $\cal G^{(\lambda)}$, $\cal G^{(\mu)}$, $\cal G^{(\nu)}$ and respects strictness.
	\smallskip
	\item $\varepsilon$ is an isomorphism between the restriction of $\cal G^{(\lambda)}$ to $\Lambda_{\widetilde T_{\tn{der}}}$ and the system of gerbes $\cal G_q^{(\lambda)} \otimes \omega_X^{q(\lambda)}$, where $\cal G_q^{(\lambda)}$ is the system attahced to the restriction of $q$ to $\Lambda_{\widetilde T_{\tn{der}}}$ via \eqref{eq-gerbe-reversed-map}, compatible with the associativity and $\kappa$-twisted commutativity constraints.
\end{enumerate}

\noindent
Clearly, there is an equivalence between the two kinds of enhanced $\Theta$-data:
$$
\Theta_G(\Lambda_T; \mathbf G) \xrightarrow{\sim} \Theta_G^+(\Lambda_T; \mathbf G),\quad (q, \cal G^{(\lambda)}, \varepsilon) \leadsto (q, \cal G^{(\lambda)}\otimes\omega_X^{q(\lambda)}, \varepsilon).
$$

\subsection{Classification: statements}

\subsubsection{} We continue to fix $X$, $G$ as in \S\ref{sec-setup-classification}. The basis of our classification theorem is the equivalence between factorization line bundles over $\tn{Gr}_{G, \tn{Ran}}$ and integral enhanced $\Theta$-data, established in \cite{tao2019extensions}. We let $N_G\ge 1$ be the integer attached to $G$ as in \cite[\S0.1.8]{gaitsgory2018parameterization}.\footnote{The Lemma will only be used when $\tn{char}(k) = 0$, where the hypothesis $\tn{char}(k)\nmid N_G$ is trivially satisfied.}

\begin{lem}
\label{lem-pic-classification}
There is a canonical functor:
$$
\Psi_{\mathbf{Pic}} : \mathbf{Pic}^{\tn{fact}}(\tn{Gr}_{G, \tn{Ran}}) \rightarrow \Theta_G(\Lambda_T; \mathbf{Pic})
$$
with the following properties:
\begin{enumerate}[(a)]
	\item $\Psi_{\mathbf{Pic}}$ is an equivalence for $G$ a torus or a semisimple, simply connected group;
	\item For any reductive group $G$ with $\tn{char}(k)\nmid N_G$, the functor $\Psi_{\mathbf{Pic}}$ is an equivalence.
\end{enumerate}
\end{lem}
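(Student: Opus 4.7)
The plan is to construct $\Psi_{\mathbf{Pic}}$ canonically for any reductive $G$, verify the equivalence assertion (a) by direct arguments in the two base cases, and then deduce (b) by descent along a $z$-extension under the hypothesis $\tn{char}(k) \nmid N_G$. The core of the argument is to be imported from the joint work \cite{tao2019extensions}.

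First, to construct $\Psi_{\mathbf{Pic}}$: given $\cal L \in \mathbf{Pic}^{\tn{fact}}(\tn{Gr}_{G, \tn{Ran}})$, the $\lambda$-translate of the unit section $e : \tn{Ran} \rightarrow \tn{Gr}_{G, \tn{Ran}}$ restricted to $X \hookrightarrow \tn{Ran}$ produces a line bundle $\cal L^{(\lambda)}$ on $X$ for each $\lambda \in \Lambda_T$. The factorization isomorphism over $(X \times X) \setminus \Delta_X$ supplies a multiplicative structure, and extending this structure across the diagonal forces the $\omega_X^{\kappa(\lambda,\mu)}$-shift of \eqref{eq-line-bundle-mult}, with $\kappa(\lambda,\mu)$ recording the required pole order; the associated quadratic form $q(\lambda) := \tfrac{1}{2}\kappa(\lambda,\lambda)$ is integer-valued and $W$-invariant, the latter reflecting that the Weyl action preserves the unit section. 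Pulling back along $\tn{Gr}_{\widetilde G_{\tn{der}}, \tn{Ran}} \rightarrow \tn{Gr}_{G, \tn{Ran}}$ and comparing with the analogous construction on the simply connected side furnishes the isomorphism $\varepsilon$.

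For statement (a), the case of a torus $T$ is essentially classical: $\tn{Gr}_{T, \tn{Ran}}$ is ind-\'etale over $\tn{Ran}$ with geometric fibers the discrete set $\Lambda_T$, and $\Psi_{\mathbf{Pic}}$ recovers Beilinson--Drinfeld's description of factorization line bundles. For $G$ semisimple simply connected, one uses the Schubert stratification of \S\ref{sec-schubert} together with \eqref{eq-pic-exact-seq}, which identifies the Picard group of $\tn{Gr}_{G, X^I}^{\le \lambda^I}$ rigidified along $e$ with a direct sum of copies of $\mathbb Z$ indexed by simple factors, subject to explicit diagonal compatibilities. The flatness of Schubert varieties over $X^I$ then allows one to propagate factorization isomorphisms across diagonals and to pin down the data up to an integral enhanced $\Theta$-datum.

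For statement (b), fix a $z$-extension $1 \rightarrow Z \rightarrow \widetilde G_{\tn{der}} \times T' \rightarrow G \rightarrow 1$, with $T'$ an induced torus and $Z$ a finite central subgroup whose order divides $N_G$. Since $\tn{char}(k) \nmid N_G$, the group $Z$ is \'etale, so the induced morphism $\tn{Gr}_{\widetilde G_{\tn{der}} \times T', \tn{Ran}} \rightarrow \tn{Gr}_{G, \tn{Ran}}$ is a $Z$-torsor locally in the \'etale topology; descent then identifies $\mathbf{Pic}^{\tn{fact}}(\tn{Gr}_{G, \tn{Ran}})$ with $Z$-equivariant factorization line bundles on the cover, bringing us back to (a). The matching on the $\Theta$-data side follows from the induced short exact sequence of cocharacter lattices. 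I expect the main obstacle here to be propagating the $W$-invariance and the enhancement datum $\varepsilon$ faithfully through this descent, since the Weyl action only moves the $\widetilde G_{\tn{der}}$-factor while $\varepsilon$ must remain compatible with both factors of the $z$-extension simultaneously; this is where the technical work of \cite{tao2019extensions} is deployed.
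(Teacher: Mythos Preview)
Your construction of $\Psi_{\mathbf{Pic}}$ and the sketch for part (a) are broadly compatible with the paper, which defers the details to \cite{tao2019extensions}; the mechanism you describe is essentially that of Lemma~\ref{lem-grT-comb-classification} and \S\ref{sec-nature-of-classification-functor}. One quibble: $q$ is not obtained as $\tfrac12\kappa(\lambda,\lambda)$ but is extracted from the commutativity $2$-homotopy as in Remark~\ref{rem-homotopy-to-square-root}; with your formula, the integrality of $q$ would be left unexplained.

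Your argument for (b), however, contains a genuine gap. The sequence $1 \to Z \to \widetilde G_{\tn{der}} \times T' \to G \to 1$ with $Z$ \emph{finite} is not a $z$-extension in the standard sense, and more importantly the induced map on affine Grassmannians is \emph{not} a $Z$-torsor: it fails to be surjective. For such a sequence to be a group extension, $T'$ must map into $Z_G$, so on $\pi_0$ of $\tn{Gr}$ over a point the map is $\Lambda_{T'} \to \pi_1(G)$ factoring through $\Lambda_{Z_G^\circ} \to \pi_1(G)$. Already for $G=\tn{GL}_2$ this last map is $\mathbb Z \xrightarrow{\times 2} \mathbb Z$, so $\tn{Gr}_{\tn{SL}_2\times\mathbb G_m,\tn{Ran}} \to \tn{Gr}_{\tn{GL}_2,\tn{Ran}}$ misses half the connected components. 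Descent along this cover therefore cannot recover all of $\mathbf{Pic}^{\tn{fact}}(\tn{Gr}_{G,\tn{Ran}})$.

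The paper's own proof of (b) is a citation to \cite[Theorem 3.1]{tao2019extensions}, but the technique visible in \S\ref{sec-classification-proof} for the parallel gerbe statement shows the correct reduction: one uses a $z$-extension $1 \to T_2 \to G' \to G \to 1$ with \emph{torus} kernel and $G'_{\tn{der}}$ simply connected. In that situation $\tn{Gr}_{G',\tn{Ran}} \to \tn{Gr}_{G,\tn{Ran}}$ \emph{is} surjective in the $t$-topology (Lemma~\ref{lem-tau-surj}), precisely because the obstruction to lifting lives in a torus-valued $\op H^1$ which vanishes $t$-locally. The hypothesis $\tn{char}(k)\nmid N_G$ is then consumed elsewhere in \cite{tao2019extensions}, not via \'etaleness of a finite kernel.
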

\noindent
We shall refer to $\Psi_{\mathbf{Pic}}$ as the classification functor for factorization line bundles on $\tn{Gr}_{G,\tn{Ran}}$. Sometimes we denote it by $\Psi_{\mathbf{Pic}, G}$ to emphasize the group $G$.
\begin{proof}
The functor $\Psi_{\mathbf{Pic}, T}$ for the torus $T$ is constructed and proved to be an equivalence in \cite[\S1]{tao2019extensions}. For $G_{\tn{sc}}$ semisimple and simply connected, with maximal torus $T_{\tn{sc}}$, $\Psi_{\mathbf{Pic}, G_{\tn{sc}}}$ is constructed and proved to be an equivalence in \cite[Proposition 2.5]{tao2019extensions}. Since the composition:
$$
\cal Q(\Lambda_{T_{\tn{sc}}}; \mathbb Z)^W \xrightarrow{\Psi_{\mathbf{Pic}, G_{\tn{sc}}}^{-1}} \mathbf{Pic}^{\tn{fact}}(\tn{Gr}_{G_{\tn{sc}}}) \rightarrow \mathbf{Pic}^{\tn{fact}}(\tn{Gr}_{T_{\tn{sc}}}) \xrightarrow{\Psi_{\mathbf{Pic}, {T_{\tn{sc}}}}} \Theta(\Lambda_{T_{\tn{sc}}}; \mathbf{Pic})
$$
identifies with \eqref{eq-reversed-map-theta}, one constructs the functor $\Psi_{\mathbf{Pic}}$ for any reductive group $G$.\footnote{The definition of $\Psi_{\mathbf{Pic}}$ will be explained below in \S\ref{sec-nature-of-classification-functor}.} Finally, statement (b) is \cite[Theorem 3.1]{tao2019extensions}.
\end{proof}

\subsubsection{} Let us now also fix a topology $t$ on $\mathbf{Sch}^{\tn{ft}}_{/k}$ as in \S\ref{sec-setup-topology}. The following classification statement will be proved in \S\ref{sec-classification-proof}.

\begin{thm}
\label{thm-gerbe-classification}
Let $\mathbf G$ be a motivic $t$-theory of gerbes whose coefficient $A(-1)$ is a divisible abelian group. Then there is a canonical equivalence between strict Picard $2$-groupoids:
$$
\Psi_{\mathbf G} : \mathbf G^{\tn{fact}}(\tn{Gr}_{G,\tn{Ran}}) \xrightarrow{\sim} \Theta_G(\Lambda_T; \mathbf G),
$$
which makes the following diagram commute:
$$
\xysmall{
	\mathbf{Pic}^{\tn{fact}}(\tn{Gr}_{G, \tn{Ran}}) \underset{\mathbb Z}{\otimes} A(-1) \ar[d]^{\Psi_{\mathbf {Pic}}}\ar[r] & \mathbf G^{\tn{fact}}(\tn{Gr}_{G, \tn{Ran}}) \ar[d]^{\Psi_{\mathbf G}} \\
	\Theta_G(\Lambda_T; \mathbf{Pic})\underset{\mathbb Z}{\otimes} A(-1) \ar[r] & \Theta_G(\Lambda_T; \mathbf G)
}
$$
\end{thm}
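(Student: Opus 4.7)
The functor $\Psi_{\mathbf{G}}$ will be constructed in strict parallel with the Picard-case construction from the joint work with Tao. Given $\cal S \in \mathbf{G}^{\tn{fact}}(\tn{Gr}_{G, \tn{Ran}})$, restrict along $T \hookrightarrow G$ to get a factorization gerbe on $\tn{Gr}_{T, \tn{Ran}}$, then restrict further to the fiber over $X \hookrightarrow \tn{Ran}$; its value on the component $\tn{Gr}_{T, X}^{\lambda}$ (canonically isomorphic to $X$ via the unit section, since $T$ is abelian) gives the gerbe $\cal G^{(\lambda)} \in \mathbf{G}(X)$. The multiplicative structure $c_{\lambda, \mu}$ together with associativity and $\kappa$-twisted commutativity are induced by the factorization isomorphism over $(X \times X)_{\tn{disj}}$ combined with its canonical swap symmetry; the restricted quadratic form $q$ is extracted from this commutativity datum via the mechanism of Remark \ref{rem-homotopy-to-square-root}. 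Finally, $\varepsilon$ is obtained by further restricting along $\widetilde G_{\tn{der}} \to G$ and invoking the semisimple, simply connected case handled below. Canonicity of these restrictions guarantees the compatibility square with $\Psi_{\mathbf{Pic}}$.

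I plan to prove that $\Psi_{\mathbf{G}}$ is an equivalence by reducing to the cases $G = T$ and $G = \widetilde G_{\tn{der}}$ and assembling along the fiber sequence \eqref{eq-theta-fiber-seq}. For the torus case, each component $\tn{Gr}_{T, X^I}^{\lambda^I}$ is canonically isomorphic to $X^I$, which reduces $\mathbf{G}^{\tn{fact}}(\tn{Gr}_{T, \tn{Ran}})$ to $\Lambda_T$-indexed systems of gerbes on $X$ equipped with factorization compatibilities on $(X^n)_{\tn{disj}}$; tracing through the definitions identifies this with $\Theta(\Lambda_T; \mathbf{G})$. For $G = \widetilde G_{\tn{der}}$, Lemma \ref{lem-sc-restr} identifies $\cal Q(\Lambda_T; A(-1))^W_{\tn{restr}}$ with $\cal Q(\Lambda_T; \mathbb{Z})^W \underset{\mathbb{Z}}{\otimes} A(-1)$, which reduces essential surjectivity to the integral case: expressing an enhanced $\Theta$-datum as a sum of Killing form contributions (Lemma \ref{lem-quad-form}), one applies the divisor class map $\mathbf{Pic} \underset{\mathbb Z}{\otimes} A(-1) \to \mathbf{G}$ to the factorization line bundles $\det_{\fr g_s}$ of \eqref{eq-det-line-bundle}, whose Picard $\Theta$-images are supplied by Lemma \ref{lem-pic-classification}(a).

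The main obstacle will be full faithfulness of $\Psi_{\mathbf{G}, \widetilde G_{\tn{der}}}$, that is, ruling out ``extra'' factorization gerbes on $\tn{Gr}_{\widetilde G_{\tn{der}}, \tn{Ran}}$ not in the image of the divisor class map. The plan is to exploit the Schubert stratification of \S\ref{sec-schubert}: by relative purity in codimension $\geq 2$ (property RP2), a gerbe on $\tn{Gr}_{\widetilde G_{\tn{der}}, X^I}^{\le \lambda^I}$ is controlled by its restriction to codimension-$1$ Schubert strata (indexed by simple coroots), where relative purity in codimension $1$ (property RP1) identifies the local contributions with $A(-1)$-valued coefficients matching exactly the Killing form coefficients appearing in Lemma \ref{lem-quad-form}; $\mathbb{A}^1$-invariance (property A, via Lemma \ref{lem-contraction}) will absorb the unipotent directions along each Schubert cell. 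Finally, for general reductive $G$ one glues the torus and simply connected cases via the central isogeny $Z(G)^{\circ} \times \widetilde G_{\tn{der}} \to G$, using weak proper base change (property B) to ensure the fiberwise matching over $k$-points of $\tn{Ran}$ and the compatibility $\varepsilon$ to encode the gluing along the $\pi_1 G$-extension.
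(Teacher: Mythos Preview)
Your construction of $\Psi_{\mathbf G}$ and the torus case are broadly in line with the paper (which phrases the torus argument via the auxiliary prestack $\tn{Gr}_{T,\tn{comb}}$ and $t$-descent, to absorb the non-reduced structure of $\tn{Gr}_{T,\tn{Ran}}$). The real gap is in your full-faithfulness argument for $G_{\tn{sc}}$: properties (RP1) and (RP2) are stated only for \emph{smooth} ambient $X\in\mathbf{Sm}_{/k}$, and the Schubert varieties $\tn{Gr}_{G_{\tn{sc}},X^I}^{\le\lambda^I}$ are singular in general, so you cannot apply purity to that stratification. This is exactly the misstep the paper flags (in a footnote in the introduction) as an error in Reich's approach. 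The paper's fix is to pass to the affine flag variety $\tn{Fl}_G$ and invoke Faltings' theorem that, for the opposite-Iwahori orbit unions $\Omega_A$, the quotient $\Omega_A/\mathring I^-(n)$ is represented by a \emph{smooth} scheme for $n\gg 0$; on this smooth quotient (RP1), (RP2), and (A) legitimately apply, and a contracting $\mathbb G_m$-action on $\mathring I^-(n)$ (Lemma~\ref{lem-contraction}) transfers the computation back to $\tn{Fl}_G$. This yields the pointwise identification $\mathbf G^e(\tn{Gr}_{G_{\tn{sc}},x})\cong\cal Q(\Lambda_{T_{\tn{sc}}};\mathbb Z)^W\otimes A(-1)$, which is then globalized over $X^I$ using (B) and the exact sequence \eqref{eq-pic-exact-seq}.

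Two smaller issues. First, your reduction for general $G$ via the isogeny $Z(G)^\circ\times\widetilde G_{\tn{der}}\to G$ has \emph{finite} kernel, and descent for $\mathbf G$ along the induced map of Grassmannians is not available from the axioms; the paper instead takes a $z$-extension $G'\to G$ whose kernel is a \emph{torus}, shows $\tn{Gr}_{G'}\to\tn{Gr}_G$ is a $t$-surjection, and thereby reduces to the case $G_{\tn{der}}$ simply connected, where a fiber sequence over $\tn{Gr}_{T_1}$ (with $T_1=G/G_{\tn{der}}$) organizes the assembly. Second, you do not explain why the quadratic form attached to an arbitrary factorization gerbe lands in $\cal Q(\Lambda_T;A(-1))^W_{\tn{restr}}$; this is a separate step in the paper, proved by parabolic reduction to semisimple rank~$1$ together with an explicit $\mathbb A^1$-path in $G$ connecting $e$ to a lift of a simple reflection.
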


\noindent
We call $\Psi_{\mathbf G}$ the \emph{classification functor} for factorization gerbes on $\tn{Gr}_{G,\tn{Ran}}$. As before, we denote it by $\Psi_{\mathbf G, G}$ sometimes to emphasize the role of the reductive group $G$.

\subsubsection{}
\label{sec-nature-of-classification-functor}
Let us first clarify the nature of the functor $\Psi_{\mathbf G}$. In fact, we shall consider an arbitrary theory of gerbes $\mathbf G$ satisfying property (RP1), so it includes $\mathbf G=\mathbf{Pic}$ as a special case. The upshot will be that as long as $\Psi_{\mathbf G, G_{\tn{sc}}}$ is an equivalence for semisimple, simply connected $G_{\tn{sc}}$, the functor $\Psi_{\mathbf G, G}$ can be defined for general $G$.

\begin{rem}
Since $\mathbf{Pic}$ is not motivic in the sense of \S\ref{sec-motivic-gerbes}, Theorem \ref{thm-gerbe-classification} does not imply Lemma \ref{lem-pic-classification}. The proof of Lemma \ref{lem-pic-classification} in \cite{tao2019extensions} uses nontrivial input from K-theory.
\end{rem}

\subsubsection{} For a torus $T$, we introduce an auxiliary object $\op{Gr}_{T, \tn{comb}}$. As a prestack, it is defined as the colimit:
$$
\op{Gr}_{T,\op{comb}} := \underset{(I, \lambda^{(I)})}{\op{colim}} X^I,
$$
where the index category consists of pairs $(I,\lambda^{(I)})$ for $I$ a finite set, $\lambda^{(I)}=(\lambda^{(i)})_{i\in I}$ an $I$-family of elements in $\Lambda_T$, and a morphism $(I,\lambda^{(I)}) \rightarrow (J, \lambda^{(J)})$ in this category consists of a surjection $\varphi : I\rightarrow J$ such that $\lambda^{(j)} = \sum_{i\in\varphi^{-1}(j)} \lambda^{(i)}$. Then $\op{Gr}_{T, \tn{comb}}$ has the structure of a factorization prestack over $X$. It is equipped with a map:
\begin{equation}
\label{eq-grT-comb-to-ran}
\tn{Gr}_{T, \tn{comb}} \rightarrow \op{Gr}_{T, \tn{Ran}},\quad x^{(i)} \leadsto (x^{(i)}, \otimes_{i\in I}\cal O(\lambda^i\Gamma_{x^{(i)}}), \alpha),
\end{equation}
where $\alpha$ is the tautological trivialization. The closed subscheme $X^I\hookrightarrow\tn{Gr}_{T, \tn{comb}}$ corresponding to $(I, \lambda^{(I)})$ will be denoted by $X^{\lambda^I}$.

\begin{lem}
\label{lem-grT-comb-classification}
Suppose $\mathbf G$ is a theory of gerbes satisfying property $\tn{(RP1)}$, then we have an equivalence of strict Picard $2$-groupoids:
$$
\mathbf G^{\tn{fact}}(\tn{Gr}_{T, \tn{comb}}) \xrightarrow{\sim} \Theta(\Lambda_T; \mathbf G).
$$
\end{lem}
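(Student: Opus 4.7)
The plan is to construct functors $\Psi$ and $\Phi$ in both directions and verify they are mutually inverse. Since $\tn{Gr}_{T,\tn{comb}}=\underset{(I,\lambda^{(I)})}{\op{colim}}X^I$ as a prestack, a factorization gerbe on $\tn{Gr}_{T,\tn{comb}}$ is equivalent to a compatible system of gerbes on the strata $X^{\lambda^I}=X^I$ together with factorization isomorphisms over the disjoint loci.

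For the forward functor $\Psi$, given $\cal S\in\mathbf G^{\tn{fact}}(\tn{Gr}_{T,\tn{comb}})$, first set $\cal G^{(\lambda)}:=\cal S|_{X^\lambda}$ for the one-point stratum indexed by $(\{*\},\lambda)$. Over the two-point stratum $X^{\lambda,\mu}\cong X^2$, the gerbe $\cal S|_{X^{\lambda,\mu}}$ is constrained on both sides of the diagonal $\Delta\subset X^2$: factorization identifies it with $\cal G^{(\lambda)}\boxtimes\cal G^{(\mu)}$ over $X^2\setminus\Delta$, while the morphism $(\{1,2\},(\lambda,\mu))\to(\{*\},\lambda+\mu)$ in the indexing category identifies its restriction to $\Delta$ with $\cal G^{(\lambda+\mu)}$. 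Property $\tn{(RP1)}$ applied to the smooth divisor $\Delta$ then produces a canonical element $\kappa(\lambda,\mu)\in A(-1)$ and an isomorphism
$$
\cal S|_{X^{\lambda,\mu}}\ \cong\ \bigl(\cal G^{(\lambda)}\boxtimes\cal G^{(\mu)}\bigr)\otimes\cal O_{X^2}(\Delta)^{\kappa(\lambda,\mu)},
$$
whose restriction to $\Delta$ (using $\cal O(\Delta)|_\Delta\cong\omega_X^{-1}$) yields the multiplicative structure $c_{\lambda,\mu}$. The swap action on the two factors is intertwined by factorization, and the resulting descent datum furnishes the $\kappa$-twisted commutativity homotopy $h_{\lambda,\mu}$. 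Applied to $\lambda=\mu$ and combined with Remark \ref{rem-homotopy-to-square-root}, this extracts a quadratic refinement $q(\lambda)$ with $2q(\lambda)=\kappa(\lambda,\lambda)$. Associativity and higher coherence relations come from the analogous analysis of $X^{\lambda,\mu,\nu}$.

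For the inverse functor $\Phi$, given $(q,\cal G^{(\lambda)},c,h)\in\Theta(\Lambda_T;\mathbf G)$, define on each stratum
$$
\cal S|_{X^{\lambda^I}}\ :=\ \bigl(\boxtimes_{i\in I}\cal G^{(\lambda_i)}\bigr)\otimes\bigotimes_{i<j}\cal O_{X^I}(\Delta_{ij})^{\kappa(\lambda_i,\lambda_j)},
$$
after fixing a total ordering of $I$; independence of the ordering up to coherent isomorphism is supplied by $h$. Under a surjection $\varphi:I\twoheadrightarrow J$ with associated closed immersion $\Delta_\varphi:X^{\lambda^J}\hookrightarrow X^{\lambda^I}$, iterated use of $c_{\lambda,\mu}$ on pairs $i,i'\in\varphi^{-1}(j)$ identifies the pullback with $\cal S|_{X^{\lambda^J}}$; over disjoint loci all divisor terms trivialize, which yields the factorization isomorphism.

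The main obstacle will be the internal consistency check for $\Phi$: that the iterated multiplicative isomorphisms along $\Delta_\varphi$ agree for all ways of collapsing $I$ to $J$ and are compatible with reorderings of factors through $h$. This is a coherence argument which reduces, via the strict-Picard-$2$-groupoid structure on $\mathbf G(X^I)$, to the coherences already encoded in $\Theta(\Lambda_T;\mathbf G)$. Once $\Phi$ is constructed, the verification that $\Psi$ and $\Phi$ are mutually inverse proceeds stratum by stratum: on the one-point strata both functors agree tautologically; on the two-point strata the uniqueness clause of $\tn{(RP1)}$ forces agreement; higher strata follow by induction from the factorization axiom and the colimit presentation of $\tn{Gr}_{T,\tn{comb}}$.
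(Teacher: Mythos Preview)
Your proposal is correct and follows essentially the same approach as the paper: both construct the forward functor by restricting to the one- and two-point strata, invoking (RP1) along the diagonal to extract $\kappa(\lambda,\mu)$, then reading off $c_{\lambda,\mu}$, $h_{\lambda,\mu}$, and $q$ via the swap compatibility and Remark~\ref{rem-homotopy-to-square-root} exactly as you describe. The paper omits the inverse construction as ``straightforward, hence omitted,'' so your explicit formula for $\Phi$ and the stratum-by-stratum verification simply fill in what the paper leaves to the reader.
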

\begin{proof}
The $\Lambda_T$-indexed family of gerbes $\cal G^{(\lambda)}$ will be the restriction of $\cal G\in\mathbf G^{\tn{fact}}(\tn{Gr}_{T, \tn{comb}})$ to the closed subscheme $X^{(\lambda)}$ of $\tn{Gr}_{T, \tn{comb}}$.

\smallskip

We construct a bilinear form $\kappa : \Lambda_T \underset{\mathbb Z}{\otimes} \Lambda_T \rightarrow A(-1)$ as follows. Given $\lambda,\mu\in\Lambda_T$, we consider the subscheme $X^{(\lambda,\mu)}$ of $\tn{Gr}_{T, \tn{comb}}$, and denote by $\cal G^{(\lambda,\mu)}$ the restriction of $\cal G$. Then factorization isomorphism together with (RP1) shows that we have an isomorphism:
\begin{equation}
\label{eq-purity-isom-lambda-mu}
\cal G^{(\lambda)} \boxtimes \cal G^{(\mu)} \xrightarrow{\sim} \cal G^{(\lambda, \mu)} \otimes \cal O(-\Delta)^{\kappa(\lambda,\mu)}
\end{equation}
for a unique element $\kappa(\lambda,\mu)\in A(-1)$. The compatibility between factorization and the swapping map $X^{(\lambda,\mu)}\xrightarrow{\sim} X^{(\mu,\lambda)}$ shows that \eqref{eq-purity-isom-lambda-mu} is $\Sigma_2$-equivariant, in the sense that the following diagram commutes.
\begin{equation}
\label{eq-fact-compatible-with-swap}
\xysmall{
	\cal G^{(\lambda)} \boxtimes \cal G^{(\mu)} \ar[r]^-{\sim}\ar[d] & \cal G^{(\lambda, \mu)} \otimes \cal O(-\Delta)^{\kappa(\lambda, \mu)} \ar[d] \\
	\sigma^*(\cal G^{(\mu)} \boxtimes \cal G^{(\lambda)}) \ar[r]^-{\sim} & \sigma^*\cal G^{(\mu,\lambda)} \otimes \sigma^*\cal O(-\Delta)^{\kappa(\mu,\lambda)}
}
\end{equation}
This already implies that $\kappa(\lambda,\mu) = \kappa(\mu,\lambda)$. Considering the restriction of $\cal G$ to $X^{(\lambda,\mu,\nu)}$ for a triple $(\lambda,\mu,\nu)$ then establishes the bilinearity of $\kappa$.

\smallskip

Finally, restriction of \eqref{eq-fact-compatible-with-swap} to the diagonal produces a commutative diagram:
$$
\xysmall{
	\cal G^{(\lambda)} \otimes \cal G^{(\mu)} \ar[r]^-{\sim}\ar[d] & \cal G^{(\lambda + \mu)} \otimes \omega_X^{\kappa(\lambda, \mu)} \ar[d]^{(-1)^{\kappa(\lambda,\mu)}} \ar@{=>}[dl]_{h_{\lambda,\mu}} \\
	\cal G^{(\mu)} \otimes \cal G^{(\lambda)} \ar[r]^-{\sim} & \cal G^{(\mu + \lambda)} \otimes \omega_X^{\kappa(\mu,\lambda)}
}
$$
We note that for $\lambda = \mu$, the $2$-homotopy $h_{\lambda,\lambda}$ amounts to a trivialization of $(-1)^{\kappa(\lambda, \lambda)}$ whose square identifies with the tautological trivialization of $(-1)^{2\kappa(\lambda,\lambda)}$. Hence $h_{\lambda,\lambda}$ defines an element $q(\lambda)\in A(-1)$ with $2q(\lambda) = \kappa(\lambda, \lambda)$ (see Remark \ref{rem-homotopy-to-square-root}). With respect to the resulting trivialization of $(-1)^{\kappa(\lambda,\lambda)}$ afforded by $q(\lambda)$, we see that $h_{\lambda,\lambda}$ is the identity $2$-homotopy.

\smallskip

This completes the definition of a functor from $\mathbf G^{\tn{fact}}(\tn{Gr}_{T, \tn{comb}})$ to $\Theta(\Lambda_T; \mathbf G)$. Checking that it is an equivalence is straightforward, hence omitted.
\end{proof}

\subsubsection{}
\label{sec-classification-functor-strategy} Therefore, for $\mathbf G$ any theory of gerbes satisfying (RP1), pulling back along $\tn{Gr}_{T, \tn{Ran}} \rightarrow \tn{Gr}_{G, \tn{Ran}}$ and then along \eqref{eq-grT-comb-to-ran} defines a map from:
\begin{equation}
\label{eq-grG-to-theta}
\mathbf G^{\tn{fact}}(\tn{Gr}_G) \rightarrow \Theta(\Lambda_T; \mathbf G).
\end{equation}
Below, we summarize the definition of the classification functor $\Psi_{\mathbf G}$ of Theorem \ref{thm-gerbe-classification}, relying on results to be established in \S\ref{sec-classification-proof}. The purpose of doing so now is to make various compatibility statements apparent, so we may deduce corollaries from Theorem \ref{thm-gerbe-classification}.
\begin{enumerate}[(a)]
	\item For $G = T$ a torus, $\Psi_{\mathbf G}$ is precisely \eqref{eq-grG-to-theta};
	\smallskip
	\item For $G = G_{\tn{sc}}$ a semisimple, simply connected group, $\Psi_{\mathbf G}$ is the composition of \eqref{eq-grG-to-theta} with the forgetful functor to $\cal Q(\Lambda_T; A(-1))$;
	\smallskip
	\item For $G$ a reductive group, in order to construct $\Psi_{\mathbf G}$ we will need the \emph{conclusion} of Theorem \ref{thm-gerbe-classification} to hold for the case (b), i.e., for $\widetilde G_{\tn{der}}$ the functor above defines an equivalence:
	\begin{equation}
	\label{eq-step-2-to-3}
	\Psi_{\mathbf G, \widetilde G_{\tn{der}}} : \mathbf{G}^{\tn{fact}}(\tn{Gr}_{\widetilde G_{\tn{der}}, \tn{Ran}}) \xrightarrow{\sim} \cal Q(\Lambda_T; \mathbb Z)^W\underset{\mathbb Z}{\otimes}A(-1),
	\end{equation}
	which is furthermore compatible with $\Psi_{\mathbf{Pic}, \widetilde G_{\tn{der}}}$. Then the functor
	$$
	\Psi_{\mathbf G} : \cal G \leadsto (q, \cal G^{(\lambda)}, \varepsilon)
	$$
	is specified by the application of $\Psi_{\mathbf G}$ to the restriction of $\cal G$ to $\tn{Gr}_{T, \tn{Ran}}$, obtaining $(q, \cal G^{(\lambda)})\in \Theta(\Lambda_T; \mathbf G)$, and then using the inverse of \eqref{eq-step-2-to-3} to obtain the identification $\varepsilon$.
\end{enumerate}

\subsection{Application to quantum parameters}
\label{sec-application-quantum}

\subsubsection{} Suppose $k=\bar k$ with $\tn{char}(k) = 0$. Thus Theorem \ref{thm-gerbe-classification} classifies factorization tame gerbes on $\tn{Gr}_{G, \tn{Ran}}$ by its enhanced $\Theta$-data, as $\mathring{\mathbf{Ge}}$ is a motivic $\eh$-theory of gerbes (\S\ref{sec-tame-gerbe-motivic}). We shall use this result to obtain a classification of factorization tame twistings and explain their relations to quantum parameters.

\subsubsection{}
\label{sec-quantum-par}
Let $\mathring{\tn{Par}}_G$ denote the $k$-linear groupoid consisting of:
\begin{enumerate}[(a)]
	\item a $W$-invariant bilinear form $\kappa : \fr t\underset{k}{\otimes}\fr t \rightarrow k$;
	\item an extension $\mathring E$ of $\underline{\fr z}$ by $\mathring{\Omega}_X^1$ as Zariski sheaves of $k$-vector spaces on $X$.
\end{enumerate}
\noindent
Let $\tn{Par}_G$ denote the analogously defined $k$-linear groupoid where we replace $\mathring E$ by an extension $E$ of $\fr z\otimes\cal O_X$ by $\omega_X$ as coherent sheaves. The $k$-linear stack associated to $\tn{Par}_G$ is the non-compact space of quantum parameters studied in \cite{zhao2017quantum}.

\subsubsection{} The following Theorem summarizes the relationship between factorization tame twistings and quantum parameters.

\begin{thm}
\label{thm-twisting-classification}
There are three canonical equivalences between $k$-linear groupoids:
$$
\xysmall{
\mathring{\mathbf{Tw}}^{\tn{fact}}(\tn{Gr}_{G, \tn{Ran}}) \ar[r]^-{\Psi_{\mathring{\mathbf{Tw}}}}_-{\sim} & \Theta_G(\Lambda_T; \mathring{\mathbf{Tw}}) \ar[d]^{\otimes\omega_X^{q(\lambda)}}_{\cong} \\
& \Theta_G^+(\Lambda_T; \mathring{\mathbf{Tw}}) \ar[r]^-{\sim} & \mathring{\tn{Par}}_G \ar[r]^-{\fr j} & \tn{Par}_G,
}
$$
and the last functor $\fr j$ is an equivalence if and only if $X$ is proper.
\end{thm}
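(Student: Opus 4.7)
The plan is to invoke the fiber sequence $\mathbf{Pic} \to \mathring{\mathbf{Tw}} \to \mathring{\mathbf{Ge}}$ from \eqref{eq-pic-tw-ge}, together with the associated fiber sequence $\mathbb Z \to k \to k/\mathbb Z$ on the coefficient groups $A(-1)$, to produce a fiber sequence of $\Theta$-data $\Theta_G(\Lambda_T; \mathbf{Pic}) \to \Theta_G(\Lambda_T; \mathring{\mathbf{Tw}}) \to \Theta_G(\Lambda_T; \mathring{\mathbf{Ge}})$. By Lemma \ref{lem-pic-classification} the functor $\Psi_{\mathbf{Pic}}$ is an equivalence, and by \S\ref{sec-tame-gerbe-motivic} the stack $\mathring{\mathbf{Ge}}$ is a motivic $\eh$-theory of gerbes with divisible coefficient group $k/\mathbb Z$, so Theorem \ref{thm-gerbe-classification} gives $\Psi_{\mathring{\mathbf{Ge}}}$ as an equivalence. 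Canonicity of the construction in \S\ref{sec-classification-functor-strategy} in the divisor class maps then produces a morphism between this fiber sequence and the corresponding fiber sequence on factorization sections, whose middle arrow is $\Psi_{\mathring{\mathbf{Tw}}}$; a formal five lemma then gives the first equivalence. The subtlety lies in the mere \emph{definition} of $\Psi_{\mathring{\mathbf{Tw}}}$ for general reductive $G$ (which, per \S\ref{sec-classification-functor-strategy}(c), requires the equivalence for the semisimple simply connected cover $\widetilde G_{\tn{der}}$): in that case the target simplifies to $\cal Q(\Lambda_T; \mathbb Z)^W \underset{\mathbb Z}{\otimes} k$ by Lemma \ref{lem-sc-restr}, and essential surjectivity on $\pi_0$ follows by $k$-linearity of $\mathring{\mathbf{Tw}}$ from the image of $\Psi_{\mathbf{Pic}, \widetilde G_{\tn{der}}}$ under the divisor class map $\mathbf{Pic} \underset{\mathbb Z}{\otimes} k \to \mathring{\mathbf{Tw}}$.

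\textbf{Second and third equivalences.} The equivalence $\Theta_G(\Lambda_T; \mathring{\mathbf{Tw}}) \xrightarrow{\sim} \Theta_G^+(\Lambda_T; \mathring{\mathbf{Tw}})$ is the formal one of \S\ref{sec-omega-shift}, namely $(q, \cal T^{(\lambda)}, \varepsilon) \leadsto (q, \cal T^{(\lambda)} \otimes \omega_X^{q(\lambda)}, \varepsilon)$; this uses the $k$-linearity of $\mathring{\mathbf{Tw}}$ to interpret $\omega_X^{q(\lambda)}$ for $q(\lambda) \in k$. For the third equivalence $\Theta_G^+(\Lambda_T; \mathring{\mathbf{Tw}}) \xrightarrow{\sim} \mathring{\tn{Par}}_G$, I will unpack both sides: in characteristic zero, a restricted $W$-invariant quadratic form $q \in \cal Q(\Lambda_T; k)^W_{\tn{restr}}$ is equivalent to a $W$-invariant symmetric bilinear form $\kappa$ on $\fr t$ via $\kappa(\lambda, \mu) = q(\lambda + \mu) - q(\lambda) - q(\mu)$. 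Meanwhile, the strictly multiplicative $\Lambda_T$-system $(\cal T^{(\lambda)}, c^+)$ together with the trivialization $\varepsilon$ of its restriction to $\Lambda_{\widetilde T_{\tn{der}}} = \Lambda_T^r$ determines, by $k$-linearity, a morphism of $k$-linear strict Picard groupoids $\fr z \to \mathring{\mathbf{Tw}}(X)$ from the Lie algebra center; invoking Lemma \ref{lem-twisting-coh-interpretation} to represent $\mathring{\mathbf{Tw}}(X)$ on smooth $X$ by the shifted two-term complex $\mathring{\Omega}^1_X[1]$, this is precisely the datum of an extension $\mathring E$ of $\underline{\fr z}$ by $\mathring{\Omega}^1_X$ in Zariski sheaves of $k$-vector spaces. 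The most delicate check, and the main obstacle of this step, is that the $\kappa$-twisted commutativity $2$-homotopy $h^+$ is identified with $\kappa$ viewed as the commutator governing the extension $\mathring E$ (cf.~Remark \ref{rem-homotopy-to-square-root}).

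\textbf{The map $\fr j$.} Finally, the functor $\fr j$ sends $(\kappa, \mathring E)$ to $(\kappa, E)$, where $E$ is the coherent $\cal O_X$-module extension obtained from $\mathring E \underset{\underline k}{\otimes} \cal O_X$ by pushing out along the multiplication map $\mathring{\Omega}^1_X \underset{\underline k}{\otimes} \cal O_X \to \omega_X$. Since $\mathring{\tn{Par}}_G$ and $\tn{Par}_G$ share the $\kappa$-component, $\fr j$ being an equivalence is tantamount to the canonical map $\op R\Gamma(X, \mathring{\Omega}^1_X) \to \op R\Gamma(X, \omega_X)$ being a quasi-isomorphism (after tensoring with $\fr z^{\vee}$) in degrees $0$ and $1$. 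By \S\ref{sec-curve-coh}(b), this is the case whenever $X$ is proper. Conversely, if $X$ is smooth connected and non-proper (hence affine), then $\mathring{\Omega}^1(X)$ is finite-dimensional whereas $\Omega^1(X)$ is not, so the comparison fails on automorphism groups and $\fr j$ is not fully faithful; this completes the proof.
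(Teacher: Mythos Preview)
Your overall architecture for the first equivalence matches the paper's: sandwich $\Psi_{\mathring{\mathbf{Tw}}}$ between $\Psi_{\mathbf{Pic}}$ and $\Psi_{\mathring{\mathbf{Ge}}}$ via the fiber sequence \eqref{eq-pic-tw-ge}. But the ``formal five lemma'' does \emph{not} deliver what you claim. With both rows only known to be fiber sequences of connective objects (strict Picard $2$-groupoids), the long exact sequence terminates at $\pi_0$ of the base without a further term, so the five lemma yields isomorphisms on $\pi_1,\pi_2$ and \emph{injectivity} on $\pi_0$ for the middle map---nothing more. Essential surjectivity of $\Psi_{\mathring{\mathbf{Tw}}}$ must be established directly, and this is exactly where the paper does real work. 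For tori it shows that the divisor class map $\mathbf{Pic}(X)\underset{\mathbb Z}{\otimes}k\to\mathring{\mathbf{Tw}}(X)$ is essentially surjective (using the curve calculation of \S\ref{sec-curve-coh}) and that every $q\in\cal Q(\Lambda_T;k)$ is a $k$-combination of even integral forms; for general $G$ it produces preimages via determinant line bundles for the Killing part of $q$ (Lemma~\ref{lem-quad-form}) and via the section coming from the isogeny $Z_G^{\circ}\to G/G_{\tn{der}}$ for the part descending to $\pi_1G$. You handle only the semisimple simply connected step; the torus and general reductive steps are missing from your argument, and without them $\Psi_{\mathring{\mathbf{Tw}}}$ is not even defined for arbitrary $G$, let alone shown to be an equivalence.

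Your treatment of the third equivalence is also off. You describe the passage from $(q,\cal T^{(\lambda)},\varepsilon)$ to a map $\fr z\to\mathring{\mathbf{Tw}}(X)$ as though the $\kappa$-twisted commutativity were encoded as a ``commutator governing the extension $\mathring E$''. That is not how the paper proceeds, and it cannot be: $\mathring E$ is an extension of \emph{abelian} sheaves, with no commutator in sight. The paper's key observation is that the automorphism $(-1)^{\kappa(\lambda,\mu)}$ is \emph{canonically trivial} in $\mathring{\mathbf{Tw}}(X)$, because $d\log$ annihilates constants. Hence the $\kappa$-twisted commutativity constraint collapses to genuine commutativity, the multiplicative system becomes an honest $k$-linear map $\fr t\to\mathring{\mathbf{Tw}}(X)$, and the fiber sequence \eqref{eq-theta-fiber-seq} canonically splits. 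Only after this simplification does one identify $\mathbf{Hom}(\fr z,\mathring{\mathbf{Tw}}(X))$ with $\mathring\Omega^1_X$-extensions of $\underline{\fr z}$ via Lemma~\ref{lem-twisting-coh-interpretation} and Theorem~\ref{thm-coh-properties}.

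Your second equivalence and your analysis of $\fr j$ are correct and agree with the paper.
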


\noindent
The proof of Theorem \ref{thm-twisting-classification} occupies the remainder of this subsection. The functor $\Psi_{\mathring{\mathbf{Tw}}}$ will be built according to the paradigm of \S\ref{sec-classification-functor-strategy}. Both the construction and the proof that it is an equivalence require Lemma \ref{lem-pic-classification} and Theorem \ref{thm-gerbe-classification}, as well as the $k$-linear structure on $\mathring{\mathbf{Tw}}$.

\subsubsection{} First note that the equivalence between $\Theta_G(\Lambda_T; \mathring{\mathbf{Tw}})$ and $\Theta_G^+(\Lambda_T; \mathring{\mathbf{Tw}})$ is already noted in \S\ref{sec-omega-shift}, the functor being given by a $\omega_X$-shift.

\smallskip

To show the equivalence between the latter with $\mathring{\tn{Par}}_G$, we observe that $\mathring{\mathbf{Tw}}(X)$ is $k$-linear and $\fr z\cong \pi_1G \underset{\mathbb Z}{\otimes} k$, so we may rewrite the fiber sequence \eqref{eq-theta-fiber-seq} as follows:
\begin{equation}
\label{eq-theta-fiber-seq-twisting}
\mathbf{Hom}(\fr z, \mathring{\mathbf{Tw}}(X)) \rightarrow \Theta_G^+(\Lambda_T; \mathring{\mathbf{Tw}}) \rightarrow \cal Q(\Lambda_T; k)^W.
\end{equation}
On the other hand, the automorphism $(-1)^{\kappa(\lambda,\mu)}$ on $\mathring{\mathbf{Tw}}(X)$ is trivial since $d\log$ annihilates all constant sections. Thus an element of $\Theta_G^+(\Lambda_T; \mathring{\mathbf{Tw}})$ consists of the data of $q$ together with a \emph{commutative} multiplicative system $\cal T^{(\lambda)}$, i.e., a $k$-linear morphism $\fr t\rightarrow \mathring{\mathbf{Tw}}(X)$, whose restriction to $\fr t_{\tn{der}}$ is determined by $q$. In particular, \eqref{eq-theta-fiber-seq-twisting} canonically splits. It remains to observe:
\begin{enumerate}[(a)]
	\item $\cal Q(\Lambda_T; k)^W$ identifies with the space of $W$-invariant bilinear forms on $\fr t$;
	\smallskip
	\item $\mathbf{Hom}(\fr z, \mathring{\mathbf{Tw}}(X))$ is the space of $k$-linear maps $\fr z \rightarrow \op R\Gamma_{\et}(X; \mathring{\Omega}^1[1])$ (Lemma \ref{lem-twisting-coh-interpretation}). On the other hand, we have $\op R\Gamma_{\et}(X; \mathring{\Omega}^1[1]) \xrightarrow{\sim} \op R\Gamma_{\tn{Zar}}(X; \mathring{\Omega}^1[1])$ (Theorem \ref{thm-coh-properties}), and we have:
	$$
	\op R\Gamma_{\tn{Zar}}(X; \mathring{\Omega}^1[1]) \xrightarrow{\sim} \op R\tn{Hom}(\underline{\fr z}, \mathring{\Omega}_X[1]),
	$$
	by adjunction of Zariski sheaves.
\end{enumerate}

The functor $\fr j$ is induced from the tautological inclusion $\mathring{\Omega}_X^1\hookrightarrow\omega_X$ of Zariski sheaves. Our assertion follows directly from the comparison of cohomology groups:
$$
\op R\Gamma_{\tn{Zar}}(X; \mathring{\Omega}^1) \rightarrow \op R\Gamma(X; \omega),
$$
observed in \S\ref{sec-curve-coh} using the Gersten resolution of $\mathring{\Omega}_X^1$. The only remaining part of Theorem \ref{thm-twisting-classification} is to produce a canonical equivalence:
$$
\Psi_{\mathring{\mathbf{Tw}}} : \mathring{\mathbf{Tw}}^{\tn{fact}}(\tn{Gr}_{G, \tn{Ran}}) \xrightarrow{\sim} \Theta_G(\Lambda_T; \mathring{\mathbf{Tw}}).
$$

\subsubsection{Tori}
We first define $\Psi_{\mathring{\mathbf{Tw}}, T}$ for a torus $T$ by the functor \eqref{eq-grG-to-theta}. Thus we have a commutative diagram:
$$
\xysmall{
	\mathbf{Pic}^{\tn{fact}}(\op{Gr}_{T, \tn{Ran}}) \ar[r]\ar[d]_{\cong}^{\Psi_{\mathbf{Pic}, T}} & \mathring{\mathbf{Tw}}^{\tn{fact}}(\op{Gr}_{T,\tn{Ran}}) \ar[r]\ar[d]^{\Psi_{\mathring{\mathbf{Tw}}, T}} & \mathring{\mathbf{Ge}}(\tn{Gr}_{T, \tn{Ran}}) \ar[d]_{\cong}^{\Psi_{\mathring{\mathbf{Ge}}, T}} \\
	\Theta(\Lambda_T; \mathbf{Pic}) \ar[r] & \Theta(\Lambda_T; \mathring{\mathbf{Tw}}) \ar[r] & \Theta(\Lambda_T; \mathring{\mathbf Ge})
}
$$
where the rows are fiber sequences of strict Picard $2$-groupoids and $\Psi_{\mathbf{Pic}, T}$ and $\Psi_{\mathring{\mathbf{Ge}}, T}$ are both equivalences (Lemma \ref{lem-pic-classification} and Theorem \ref{thm-gerbe-classification}). In order to show that $\Psi_{\mathring{\mathbf{Tw}}, T}$ is also an equivalence, it remains to prove that it is essentially surjective. By the calculation of $\mathring{\mathbf{Tw}}(X)$ for $X$ a smooth curve, we see that the divisor class map:
$$
\mathbf{Pic}(X) \underset{\mathbb Z}{\otimes}k \rightarrow \mathring{\mathbf{Tw}}(X),\quad (\cal L, a) \leadsto \cal L^a
$$
is essentially surjective; indeed, this is clear for $X$ affine, and for $X$ proper, $\mathring{\mathbf{Tw}}(X)$ is $1$-dimensional and is spanned by the image of any line bundle of nonzero degree.

\smallskip

Let $\mathbf{Pic}_{q=0}^{\tn{fact}}(\op{Gr}_{T, \tn{Ran}})$ denote the subgroupoid of $\mathbf{Pic}^{\tn{fact}}(\tn{Gr}_{T, \tn{Ran}})$ consisting of factorization line bundles whose associated quadratic form vanishes. From the commutative diagram:
$$
\xysmall{
	\mathbf{Pic}^{\tn{fact}}_{q=0}(\op{Gr}_{T, \tn{Ran}}) \underset{\mathbb Z}{\otimes} k \ar[r]\ar[d]^{\cong} & \mathring{\mathbf{Tw}}^{\tn{fact}}(\op{Gr}_{T,\tn{Ran}}) \ar[d] \\
	\mathbf{Hom}(\Lambda_T, \mathbf{Pic}(X)) \underset{\mathbb Z}{\otimes} k \ar@{->>}[r] & \mathbf{Hom}(\Lambda_T, \mathring{\mathbf{Tw}}(X)),
}
$$
we see that objects of the full subgroupoid $\mathbf{Hom}(\Lambda_T, \mathring{\mathbf{Tw}}(X))$ inside $\Theta(\Lambda_T; \mathring{\mathbf{Tw}})$ admit lifts. It thus remains to show that the composition of $\Psi_{\mathring{\mathbf{Tw}}, T}$ with the forgetful functor to $\cal Q(\Lambda_T; k)$ is surjective.

\smallskip

Now, every $q\in \cal Q(\Lambda_T; k)$ is a $k$-linear combination of integral forms $q_i \in \cal Q(\Lambda_T; \mathbb Z)$. Scaling allows us to assume that each $q_i$ is valued in $2\mathbb Z$. Since $\Theta(\Lambda_T; \mathbf{Pic}) \rightarrow \cal Q(\Lambda_T; \mathbb Z)$ surjects onto even-valued forms, we find that the bottom arrow in the following diagram is surjective:
$$
\xysmall{
	\mathbf{Pic}^{\tn{fact}}(\op{Gr}_{T, \tn{Ran}}) \underset{\mathbb Z}{\otimes} k \ar[r]\ar[d]^{\cong} & \mathring{\mathbf{Tw}}^{\tn{fact}}(\op{Gr}_{T,\tn{Ran}}) \ar[d] \\
	\Theta(\Lambda_T; \mathbf{Pic}) \underset{\mathbb Z}{\otimes} k \ar@{->>}[r] & \cal Q(\Lambda_T; k).
}
$$
This concludes the proof that $\Psi_{\mathring{\mathbf{Tw}}, T}$ is essentially surjective, hence an equivalence.

\subsubsection{Simply connected groups} We now turn to the case of a semisimple, simply connected group $G_{\tn{sc}}$. We note that the image of $\Psi_{\mathring{\mathbf{Tw}}, G_{\tn{sc}}}$ in $\cal Q(\Lambda_{T_{\tn{sc}}}; k)$ is $W$-invariant. Indeed, by the compatibility between $\Psi_{\mathring{\mathbf{Tw}}, G_{\tn{sc}}}$ and $\Psi_{\mathring{\mathbf{Ge}}, G_{\tn{sc}}}$, we see that any form $q$ in the image is $W$-invariant modulo $\mathbb Z$. On the other hand, if $q$ belongs to the image, so does $c\cdot q$ for all $c\in k^{\times}$, so $q$ must itself be $W$-invariant.

\smallskip

Therefore, we again have a commutative diagram of fiber sequences:
$$
\xysmall{
	\mathbf{Pic}^{\tn{fact}}(\op{Gr}_{G_{\tn{sc}}, \tn{Ran}}) \ar[r]\ar[d]_{\cong}^{\Psi_{\mathbf{Pic}, G_{\tn{sc}}}} & \mathring{\mathbf{Tw}}{}^{\tn{fact}}(\op{Gr}_{G_{\tn{sc}},\tn{Ran}}) \ar[r]\ar[d]^{\Psi_{\mathring{\mathbf{Tw}}, G_{\tn{sc}}}} & \mathring{\mathbf{Ge}}{}^{\tn{fact}}(\tn{Gr}_{G_{\tn{sc}}, \tn{Ran}}) \ar[d]^{\Psi_{\mathring{\mathbf{Ge}}, G_{\tn{sc}}}}_{\cong} \\
	\cal Q(\Lambda_{T_{\tn{sc}}}; \mathbb Z)^W \ar[r] & \cal Q(\Lambda_{T_{\tn{sc}}}; k)^W \ar[r] & \cal Q(\Lambda_{T_{\tn{sc}}}; k/\mathbb Z)^W_{\tn{restr}}
}
$$
We are done because $\Psi_{\mathbf{Pic}, G_{\tn{sc}}}$ and $\Psi_{\mathring{\mathbf{Ge}}, G_{\tn{sc}}}$ are equivalences and $\cal Q(\Lambda_{T_{\tn{sc}}}; \mathbb Z)^W \underset{\mathbb Z}{\otimes} k$ surjects (in fact, isomorphes) onto $\cal Q(\Lambda_{T_{\tn{sc}}}; k)^W$.

\subsubsection{General case} The paradigm of \S\ref{sec-classification-functor-strategy} now implies that a functor $\Psi_{\mathring{\mathbf{Tw}}}$ exists for any reductive group $G$. An analogous argument reduces the problem to showing that $\Psi_{\mathring{\mathbf{Tw}}}$ is essentially surjective. Recall that every $q\in \cal Q(\Lambda_T; k)^W$ splits into the sum of $q_1 = \sum_{s\in\mathbf S} b_sq_{s,\tn{Kil}}$ and a form $q_2$ induced from $\pi_1G$ (Lemma \ref{lem-quad-form}). We first claim that the composition:
$$
\mathring{\mathbf{Tw}}^{\tn{fact}}(\op{Gr}_{G, \tn{Ran}}) \xrightarrow{\Psi_{\mathring{\mathbf{Tw}}}} \Theta_G(\Lambda_T; \mathring{\mathbf{Tw}}) \rightarrow \cal Q(\Lambda_T; k)^W
$$
surjects onto the span of Killing forms. Indeed, this is because the determinant line bundles construction \eqref{eq-det-line-bundle} gives a section:
$$
\xysmall{
	\mathbf{Pic}^{\tn{fact}}(\op{Gr}_{G, \tn{Ran}})\underset{\mathbb Z}{\otimes} k \ar[r] & \mathring{\mathbf{Tw}}^{\tn{fact}}(\op{Gr}_{G,\tn{Ran}}) \ar[d] \\
	\bigoplus_{s\in\mathbf S} k \ar[r]\ar[u]^{\det} & \cal Q(\Lambda_T; k)^W
}
$$

\smallskip

Therefore, it remains to show that $\Psi_{\mathring{\mathbf{Tw}}}$ surjects onto the full subgroupoid of $\Theta_G(\Lambda_T; \mathring{\mathbf{Tw}})$ where the associated quadratic form descends to $\pi_1G$. This is in turn the space of quadratic forms on the lattice of $Z_G^{\circ}$. Thus the problem reduces to showing that:
\begin{equation}
\label{eq-twisting-pullback-to-center}
\mathring{\mathbf{Tw}}^{\tn{fact}}(\op{Gr}_{G, \tn{Ran}}) \rightarrow \mathring{\mathbf{Tw}}^{\tn{fact}}(\op{Gr}_{Z_G^{\circ}, \tn{Ran}}) \xrightarrow{\sim} \Theta(\Lambda_{Z_G}; \mathring{\mathbf{Tw}})
\end{equation}
is essentially surjective. Let $T_1 := G/G_{\tn{der}}$. Then $Z_G^{\circ} \rightarrow T_1$ is an isogeny of tori, so we have the following equivalence by the $k$-linear structure on tame twistings:
$$
\xysmall{
	\Theta(\Lambda_{Z_G}; \mathring{\mathbf{Tw}}) &
	\op{Gr}_{Z_G, \tn{Ran}} \ar[dr]\ar[dd] & \\
	& & \op{Gr}_{G, \tn{Ran}} \ar[dl] \\
	\Theta(\Lambda_{T_1}; \mathring{\mathbf{Tw}})\ar[uu]^{\cong} & \op{Gr}_{T_1, \tn{Ran}}
}
$$
This provides a splitting of \eqref{eq-twisting-pullback-to-center}. Hence $\Psi_{\mathring{\mathbf{Tw}}}$ is essentially surjective. \qed(Theorem \ref{thm-twisting-classification})

\subsection{Relation to Brylinski--Deligne data}

\subsubsection{} We explain how quantum parameters are related to central extensions by $\mathbf K_2$ considered by Brylinski--Deligne \cite{brylinski2001central}. Let $\mathbf K_2$ denote the Zariski sheafification of the second algebraic K-group, regarded as a sheaf on $\mathbf{Sch}_{/X}$. On the other hand, the reductive group $G$ also defines a Zariski sheaf on $\mathbf{Sch}_{/X}$. By a \emph{Brylinsk--Deligne datum}, we shall mean a central extension:
\begin{equation}
\label{eq-bd-data}
1 \rightarrow \mathbf K_2 \rightarrow \mathbf E \rightarrow G \rightarrow 1.
\end{equation}
Brylinski--Deligne data form a strict Picard groupoid, to be denoted by $\mathbf{CExt}(G, \mathbf K_2)$.

\subsubsection{} We reinstall the assumption $k=\bar k$ and $\tn{char}(k) = 0$. For a Zariski sheaf $\mathbf F$ of groups on $\mathbf{Sch}_{/X}$, denote by $\mathbf F^{\varepsilon}$ the presheaf which sends $S$ to $\mathbf F(S[\varepsilon])$ where $S[\varepsilon] := S\times \Spec(k[\varepsilon]/\varepsilon^2)$. Then $\mathbf F^{\varepsilon}$ is again a Zariski sheaf and is equipped with a tautological map to $\mathbf F$. The \emph{derivative} of $D\mathbf F$ is the kernel of $\mathbf F^{\varepsilon} \rightarrow \mathbf F$, restricted to the \emph{small} Zariski site of $X$. It is clear that $D\mathbf F$ is a sheaf of $\cal O_X$-modules. Over the small site of $X$, the morphism:
$$
\mathbb G_m^{\varepsilon} \otimes \mathbb G_m^{\varepsilon} \rightarrow \Omega_{X[\varepsilon]/k}^2 \cong \omega_{X}\wedge d\varepsilon,\quad f \otimes g\leadsto d\log f\wedge d\log g
$$
induces an isomorphism $D\mathbf K_2 \xrightarrow{\sim} \omega_X$ (see \cite{van1971k2}). Given any short exact sequence \eqref{eq-bd-data} (i.e., $\mathbf K_2$ is not necessarily central), we obtain an extension of $\cal O_X$-modules:
$$
0 \rightarrow \omega_X \rightarrow D\mathbf E \rightarrow \fr g\underset{k}{\otimes}\cal O_X \rightarrow 0.
$$

\subsubsection{}
For $G = T$ a torus, we shall give an alternative description of $D\mathbf E$, which is in line with the Brylinski--Deligne classification of $\mathbf{CExt}(T, \mathbf K_2)$. Let $p :  X\times\mathbb G_m\rightarrow X$ be the projection map. There holds $\op R^1p_*\mathbf K_2 = 0$ and $p_*\mathbf K_2\xrightarrow{\sim} \mathbf K_2\oplus\mathbf K_1$ (c.f.~\cite[\S3.1]{brylinski2001central}), so \eqref{eq-bd-data} gives rise to an extension together with a morphism:
$$
\xysmall{
1 \ar[r] & p_*\mathbf K_2 \ar[r] \ar[d] & p_*\mathbf E \ar[r] & p_*T \ar[r] & 1 \\
& \mathbf K_1
}
$$
Further inducing along $d\log : \mathbf K_1 \rightarrow \omega_X$, we obtain an extension of $p_*T$ by $\omega_X$. Then the map $\underline{\Lambda}_T \rightarrow p_*T$ determines an extension of $\underline{\Lambda}_T$ by $\omega_X$, or equivalently of $\Lambda_T\underset{\mathbb Z}{\otimes} \cal O_X \xrightarrow{\sim} \fr t \underset{k}{\otimes} \cal O_X$ as $\cal O_X$-modules. We denote the resulting $\omega_X$-extension of $\fr t \underset{k}{\otimes} \cal O_X$ by $E$.

\begin{lem}
\label{lem-bd-diff}
For a torus $T$ and a short exact sequence of big Zariski sheaves of groups:
$$
1 \rightarrow\mathbf K_2 \rightarrow \mathbf E \rightarrow T \rightarrow 1,
$$
the extensions of $\cal O_X$-modules $E$ and $D\mathbf E$ are canonically identified.
\end{lem}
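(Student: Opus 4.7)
The plan is to produce a canonical morphism $\phi : D\mathbf E \to E$ of $\cal O_X$-module extensions and invoke the five lemma: both sides are already extensions of $\fr t\otimes\cal O_X$ by $\omega_X$, so it suffices to check that $\phi$ restricts to the identity on the sub-$\omega_X$ and descends to the identity on the quotient $\fr t\otimes\cal O_X$.

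To construct $\phi$, I will use that both ways of producing an $\omega_X$-quotient out of $\mathbf K_2$ factor through the same universal recipe on symbols, $\{f,g\}\leadsto d\log f\wedge d\log g$. On the derivative side this is precisely the Van der Kallen isomorphism $D\mathbf K_2\xrightarrow{\sim}\omega_X$ applied to symbols of the form $\{f,1+\varepsilon g\}$; on the $\mathbb G_m$-pushforward side it is the composition $p_*\mathbf K_2\to \mathbf K_1\xrightarrow{d\log}\omega_X$ applied to tame symbols $\{f,t\}$ involving the coordinate $t$. Given a local section $\alpha\in\mathbf E(U[\varepsilon])$ lifting the identity in $\mathbf E(U)$, I define its image in $E(U)$ by viewing the presheaf morphism $\mathbb G_a\to \mathbb G_m^\varepsilon$ sending $g\mapsto 1+\varepsilon g$, and comparing, for each $\lambda\in\Lambda_T$, the local lift to $\mathbf E^\varepsilon$ of $\lambda$ applied to $1+\varepsilon g$ with the lift induced by $\alpha$; the resulting $\mathbf K_2$-discrepancy lands in $\omega_X$ via $d\log\wedge d\log$ and encodes $\phi(\alpha)$ through the cocycle description of $E$.

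The main obstacle is to verify that $\phi$ restricts to the identity on the sub-$\omega_X$. By functoriality and the Brylinski--Deligne classification of $\mathbf{CExt}(T,\mathbf K_2)$ by pairs $(Q,\cal L^{(\lambda)})$ together with multiplicative structures, this reduces to the universal case $T=\mathbb G_m$ with $\mathbf E$ the Heisenberg-type extension attached to $Q(n)=n^2$. There, the comparison becomes a direct symbol computation in Milnor K-theory: the tame symbol of $\{f,t\}$ evaluated at $t=1+\varepsilon g$ transports under $d\log$ to the same section of $\omega_X$ as the $\{f,g\}$-component of the derivative of $\mathbf E$. The identity on the $\fr t\otimes\cal O_X$ quotient is in turn tautological, because both $DT$ and $\underline{\Lambda}_T\underset{\mathbb Z}{\otimes}\cal O_X$ canonically identify with $\fr t\otimes\cal O_X$, compatibly with the maps out of $\mathbf E$. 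The five lemma then concludes.
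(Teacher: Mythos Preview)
Your proof has two gaps. First, the map $\phi: D\mathbf{E} \to E$ is not actually constructed. A section $\alpha \in D\mathbf{E}(U)$ is a section of $\mathbf{E}$ over $U[\varepsilon]$ reducing to the identity, whereas a section of $E$ over $U$ is built from a lift of some $\lambda \in \Lambda_T$, regarded as a section of $T$ over $U \times \mathbb{G}_m$, to $\mathbf{E}(U \times \mathbb{G}_m)$. Your recipe compares $\alpha$ with a lift of $\lambda(1+\varepsilon g)$, but these do not project to the same element of $T(U[\varepsilon])$ unless the image $\bar\alpha \in \fr t \otimes \cal O_X(U)$ happens to be the pure tensor $\lambda \otimes g$; for general $\alpha$ no single such $(\lambda, g)$ exists, and your description does not say how to assemble choices or why the result is independent of them. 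Second, the reduction ``to the universal Heisenberg extension attached to $Q(n)=n^2$'' is not valid: the Brylinski--Deligne classification of $\mathbf{CExt}(\mathbb{G}_m, \mathbf{K}_2)$ is by pairs $(Q, \cal L)$, so fixing $Q$ does not determine $\mathbf{E}$, and the check you need must be uniform in $\mathbf{E}$.

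The paper avoids building $\phi$ by hand. After reducing to $T = \mathbb{G}_m$, it compares the two $\omega_X$-torsors over $1 \in \cal O_X$ directly: both arise from the single $\mathbf{K}_2$-torsor $\mathbf{E}_1$ on $X \times \mathbb{G}_m$, induced to $\omega_X$ along two routes. Route (a) pushes forward along $p: X \times \mathbb{G}_m \to X$ and induces via $p_*\mathbf{K}_2 \to \mathbf{K}_1 \xrightarrow{d\log} \omega_X$; route (b) restricts to the infinitesimal neighborhood $t = 1+\varepsilon$ and induces via $\mathbf{K}_2^\varepsilon \to D\mathbf{K}_2 \xrightarrow{\sim} \omega_X$. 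The restriction $p_*\mathbf{K}_2 \to \mathbf{K}_2^\varepsilon$ fits into a commutative diagram of split exact sequences with common quotient $\mathbf{K}_2$, and the sole computation---that $\mathbf{K}_1 \to D\mathbf{K}_2 \xrightarrow{\sim} \omega_X$ sends $f \mapsto \{f, 1+\varepsilon\} \mapsto d\log f$---is independent of $\mathbf{E}$ and yields the identification. This is the precise content behind your correct intuition that both sides come from the same symbol formula.
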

\begin{proof}
It suffices to assume $T=\mathbb G_m$ and compare the $\omega_X$-torsors associated to $E$, respectively $D\mathbf E$, over $1 \in \fr t\underset{k}{\otimes}\cal O_X$. Consider the (small) Zariski $\mathbf K_2$-torsor $\mathbf E_{1}$ over $X\times\mathbb G_m$ associated to $\mathbf E$. Denote its restriction to the first infinitesimal neighborhood of the identity section in $X\times\mathbb G_m$ by $\mathbf E_{1}^{\varepsilon}$. Namely, it is defined over a copy of $X[\varepsilon]$:
$$
\xysmall{
	X[\varepsilon] \ar[r]\ar[dr]_-{p^{\varepsilon}} & X\times\mathbb G_m \ar[d]^p \\
	& X
}
$$
The problem is to compare the following $\omega_X$-torsors:
\begin{enumerate}[(a)]
	\item The pushforward $p_*\mathbf E_{1}$, which is a $p_*\mathbf K_2$-torsor oweing to $\tn R^1p_*\mathbf K_2 = 0$, produces an $\omega_X$-torsor via inducing along the composition:
	$$
	p_*\mathbf K_2 \rightarrow \mathbf K_1 \xrightarrow{d\log}\omega_X.
	$$
	\item The pushforward $p^{\varepsilon}_*\mathbf E_{1}^{\varepsilon}$, which is a $p^{\varepsilon}_*\mathbf K_2$ (i.e., $\mathbf K_2^{\varepsilon}$)-torsor, produces an $\omega_X$-torsor via inducing along the composition:
	$$
	\mathbf K_2^{\varepsilon} \rightarrow D\mathbf K_2 \xrightarrow{\sim} \omega_X.
	$$
\end{enumerate}
We note that there is a commutative diagram of \emph{split} short exact sequences:
$$
\xymatrix@R=1.5em{
	0 \ar[r] & \mathbf K_1 \ar[r]^-{\{p^*(-),t\}}\ar[d] & p_*\mathbf K_2 \ar[r]\ar[d] & \mathbf K_2 \ar[d]^{\tn{id}} \ar[r] & 0 \\
	0 \ar[r] & D\mathbf K_2 \ar[r] & \mathbf K_2^{\varepsilon} \ar[r] & \mathbf K_2 \ar[r] & 0
}
$$
where $t$ is the coordinate on $X\times\mathbb G_m$, regarded as a section of $\mathbf K_1$. In particular, the morphism $\mathbf K_1 \rightarrow D\mathbf K_2$ is given by $\{-, 1 + \varepsilon\}$. Hence the composition $\mathbf K_1 \rightarrow D\mathbf K_2\xrightarrow{\sim} \omega_X$ identifies with $d\log$. This proves that the $\omega_X$-torsors of (a) and (b) are canonically identified.
\end{proof}

\subsubsection{}
Since $d\log : \mathbf K_1 \rightarrow \omega_X$ factors through $\mathring{\Omega}_X^1$, we can canonical factorize the derivative construction $D$ through:
$$
\mathring D : \mathbf{CExt}(T, \mathbf K_2) \rightarrow \mathbf{Ext}(\underline{\fr t}, \mathring{\Omega}_X^1).
$$
Here, $\mathbf{Ext}(\underline{\fr t}, \mathring{\Omega}_X^1)$ is the Picard groupoid of extensions of $\underline{\fr t}$ by $\mathring{\Omega}_X^1$ as Zariski sheaves of $k$-vector spaces. For any reductive group $G$, we obtain a functor from Brylinski--Deligne data to the space of tame quantum parameters $\mathring{\tn{Par}}_G$ (\S\ref{sec-quantum-par}):
$$
\mathring D : \mathbf{CExt}(G, \mathbf K_2) \rightarrow  \mathring{\tn{Par}}_G,\quad \mathbf E \leadsto (\kappa, \mathring E),
$$
where $\kappa$ is the bilinear form attached to $\mathbf E$ by the construction of \cite{brylinski2001central}, together with the derivative $\mathring E$ of the restriction of $\mathbf E$ to the torus $Z_G^{\circ}$.

\subsubsection{}
On the other hand, we have a functor of D.~Gaitsgory \cite{gaitsgory2018parameterization}:
$$
\Xi_{\mathbf{Pic}} : \mathbf{CExt}(G, \mathbf K_2) \rightarrow \mathbf{Pic}^{\tn{fact}}(\tn{Gr}_{G, \tn{Ran}}).
$$
It is proved to be an equivalence (in $\tn{char}(k) = 0$) in \cite{tao2019extensions}. The following commutative diagram summarizes the relationship between quantum parameters and Brylinski--Deligne data.

\begin{cor}
\label{cor-compatibility-bd-data}
The following composition canonically identifies with $\mathring D$.
\begin{align*}
	\mathbf{CExt}(G, \mathbf K_2) \xrightarrow{\Xi_{\mathbf{Pic}}} & \mathbf{Pic}^{\tn{fact}}(\tn{Gr}_{G, \Ran}) \\
	& \rightarrow \mathring{\mathbf{Tw}}{}^{\tn{fact}}(\tn{Gr}_{G, \Ran}) \xrightarrow{\Psi_{\mathring{\mathbf{Tw}}}} \mathring{\tn{Par}}_G.
\end{align*}
\end{cor}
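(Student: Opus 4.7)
The plan is to decompose the comparison according to the structure of the enhanced $\Theta$-datum $(\kappa, \mathring E) \in \mathring{\tn{Par}}_G$ and reduce, by functoriality in $G$, to the cases of a torus $T$ and a semisimple simply connected group $\widetilde G_{\tn{der}}$, where both functors admit explicit descriptions.

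First I would check that both functors are compatible with the morphisms induced by $T \hookrightarrow G$ and $\widetilde G_{\tn{der}} \to G_{\tn{der}} \hookrightarrow G$. For $\mathring D$ this is immediate from the definition: the bilinear form $\kappa$ is extracted from the commutator of $\mathbf E$ restricted to $T$, and the $\mathring{\Omega}_X^1$-extension of $\underline{\fr z}$ is obtained from the derivative of $\mathbf E$ restricted to the central torus $Z_G^{\circ} \subset T$. For the composition involving $\Psi_{\mathring{\mathbf{Tw}}}$, the paradigm of \S\ref{sec-classification-functor-strategy} builds $\Psi_{\mathring{\mathbf{Tw}}, G}$ as the combination of its value on $T$ (which produces the pair $(q, \cal T^{(\lambda)})$) and an identification $\varepsilon$ supplied by the classification for $\widetilde G_{\tn{der}}$; likewise, $\Xi_{\mathbf{Pic}}$ is natural in $G$. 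Thus it suffices to compare the two functors separately on $T$ and on $\widetilde G_{\tn{der}}$.

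For $G = \widetilde G_{\tn{der}}$, both sides land in $\cal Q(\Lambda_T^r; k)^W$ (the extension part being trivial since $Z_G^{\circ}$ is trivial). The Brylinski--Deligne theorem identifies $\mathbf{CExt}(\widetilde G_{\tn{der}}, \mathbf K_2)$ with $\cal Q(\Lambda_T^r; \mathbb Z)^W$, and $\Psi_{\mathring{\mathbf{Tw}}, \widetilde G_{\tn{der}}}$ was constructed in the proof of Theorem \ref{thm-twisting-classification} by extending $\Psi_{\mathbf{Pic}, \widetilde G_{\tn{der}}}$ $k$-linearly through the divisor class map. Since \cite{tao2019extensions} establishes that $\Psi_{\mathbf{Pic}, \widetilde G_{\tn{der}}} \circ \Xi_{\mathbf{Pic}}$ is the tautological inclusion $\cal Q(\Lambda_T^r; \mathbb Z)^W \hookrightarrow \cal Q(\Lambda_T^r; k)^W$, the required compatibility follows here. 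For $G = T$, both sides produce an extension of $\underline{\fr t}$ by $\mathring{\Omega}_X^1$. On the right, Lemma \ref{lem-bd-diff} identifies $\mathring D(\mathbf E)$ with the pushout of the $p_*\mathbf K_2$-extension $p_*\mathbf E_1$ along the composition $p_*\mathbf K_2 \to \mathbf K_1 \xrightarrow{d\log} \mathring{\Omega}_X^1$. On the left, I would trace through $\Xi_{\mathbf{Pic}}$---which builds a factorization line bundle out of the Zariski $\mathbf K_2$-torsor via Beauville--Laszlo gluing---and $\Psi_{\mathring{\mathbf{Tw}}, T}$, which extracts $(q, \cal T^{(\lambda)})$ by pulling back to the combinatorial Grassmannian $\tn{Gr}_{T, \tn{comb}}$ and applying the relative purity isomorphism of Lemma \ref{lem-grT-comb-classification} over the closed subschemes $X^{(\lambda,\mu)}$.

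The main obstacle will be the torus case: setting up a clean dictionary between the formal parameter $\varepsilon$ that defines the derivative $D\mathbf E$ and the geometric transverse parameter to $X^{(\lambda)}$ inside $\tn{Gr}_{T,\tn{comb}}$, and verifying that applying $d\log : p_*\mathbf K_2 \to \mathring{\Omega}_X^1$ on the algebraic side matches the tame purity trivialization on the geometric side. Once this translation is in place, one checks that the resulting $\mathring{\Omega}_X^1$-extensions coincide functorially in $\lambda \in \Lambda_T$, and the identification for all reductive $G$ assembles by the reduction in the first paragraph.
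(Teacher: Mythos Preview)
Your reduction to the torus and simply connected cases is the right move, and matches the paper's structure (the paper phrases it as ``by definition of enhanced $\Theta$-data, it suffices to do this for $G = T$''; the simply connected piece is absorbed into the enhancement $\varepsilon$ and handled automatically). Where you diverge is in the torus step, and there you are making your life harder than necessary.

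The paper does not trace through the geometry of $\Xi_{\mathbf{Pic}}$ at all. Instead it uses the compatibility of $\Psi_{\mathring{\mathbf{Tw}}}$ with $\Psi_{\mathbf{Pic}}$ (built into the construction and recorded in the commutative square of Theorem \ref{thm-gerbe-classification} / the proof of Theorem \ref{thm-twisting-classification}) to rewrite the composition as
\[
\mathbf{CExt}(G,\mathbf K_2)\xrightarrow{\Psi_{\mathbf{Pic}}\circ\Xi_{\mathbf{Pic}}}\Theta_G(\Lambda_T;\mathbf{Pic})\to\Theta_G(\Lambda_T;\mathring{\mathbf{Tw}})\xrightarrow{\sim}\mathring{\tn{Par}}_G.
\]
Now the geometric content is entirely inside $\Psi_{\mathbf{Pic}}\circ\Xi_{\mathbf{Pic}}$, and \cite[\S2]{tao2019extensions} already identifies this, for a torus, with the Brylinski--Deligne classification functor $\mathbf{CExt}(T,\mathbf K_2)\xrightarrow{\sim}\Theta^+(\Lambda_T;\mathbf{Pic})$ (after the $\omega$-shift). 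That classification is exactly the $p_*\mathbf K_2\to\mathbf K_1$ pushout you wrote down, so Lemma \ref{lem-bd-diff} finishes the comparison immediately. The ``main obstacle'' you flag---matching the formal parameter $\varepsilon$ to the transverse direction in $\tn{Gr}_{T,\tn{comb}}$---simply does not arise, because that matching has already been packaged into the cited result from \cite{tao2019extensions}. Your route would work if carried out, but it amounts to reproving part of that reference.
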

\begin{proof}
By the classification of factorization tame twistings (Theorem \ref{thm-twisting-classification}) and its compability with the classification of factorization line bundles, it suffices to show that the following composition identifies with $\mathring D$:
\begin{align*}
	\mathbf{CExt}(G, \mathbf K_2) \xrightarrow{\Psi_{\mathbf{Pic}}\circ\Xi_{\mathbf{Pic}}} & \Theta_G(\Lambda_T; \mathbf{Pic}) \\
	& \rightarrow \Theta_G(\Lambda_T; \mathring{\mathbf{Tw}})
\xrightarrow{\sim} \mathring{\tn{Par}}_G.
\end{align*}
By definition of enhanced $\Theta$-data, it suffices to do this for $G = T$ a torus. There, the problem reduces to Lemma \ref{lem-bd-diff} and the fact that $\Psi_{\mathbf{Pic}} \circ \Xi_{\mathbf{Pic}}$ identifies with the Brylinski--Deligne classification functor $\mathbf{CExt}(T, \mathbf K_2) \xrightarrow{\sim} \Theta^+(\Lambda_T; \mathbf{Pic})$ after an $\omega$-shift (\cite[\S2]{tao2019extensions}).
\end{proof}

\subsection{Usual factorization twistings}

\subsubsection{} We now fix a semisimple, simply connected group $G_{\tn{sc}}$. The goal is to classify \emph{usual} factorization twistings on $\tn{Gr}_{G_{\tn{sc}}, \tn{Ran}}$. We will deduce the following Theorem from a combination of Theorem \ref{thm-gerbe-classification} and the affine analogue of the Borel--Weil--Bott theorem.

\begin{thm}
\label{thm-usual-twisting-classification}
There is a canonical equivalence of strict Picard $2$-groupoids:
$$
\Psi_{\mathbf{Tw}, G_{\tn{sc}}} : \mathbf{Tw}^{\tn{fact}}(\tn{Gr}_{G_{\tn{sc}}, \tn{Ran}}) \xrightarrow{\sim} \cal Q(\Lambda_{T_{\tn{sc}}}; k)^W.
$$
\end{thm}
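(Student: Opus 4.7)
The plan is to exploit the fiber sequence of presheaves of strict Picard groupoids
$$
\mathbf{Tw}(X) \to \mathbf{Ge}_{\dR}^+(X) \to \tn{Maps}(X, \op B_{\et}^2\mathbb G_a),
$$
which comes directly from the definition of usual twistings in §\ref{sec-map-to-usual-tw} (after the substitution $\mathbb G_m\leadsto\mathbb G_a$). Since factorization sections are defined by a limit and thus preserve fiber sequences, taking sections over $\tn{Gr}_{G_{\tn{sc}}, \tn{Ran}}$ yields
$$
\mathbf{Tw}^{\tn{fact}}(\tn{Gr}_{G_{\tn{sc}}, \tn{Ran}}) \to \mathbf{Ge}_{\dR}^{+,\tn{fact}}(\tn{Gr}_{G_{\tn{sc}}, \tn{Ran}}) \to \tn{Maps}^{\tn{fact}}(\tn{Gr}_{G_{\tn{sc}}, \tn{Ran}}, \op B_{\et}^2\mathbb G_a).
$$
The strategy will be to identify the middle term with $\cal Q(\Lambda_{T_{\tn{sc}}}; k)^W$ via Theorem \ref{thm-gerbe-classification} and to trivialize the right-hand term using Borel--Weil--Bott, promoting the first arrow to the desired equivalence.

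For the middle term, I would apply Theorem \ref{thm-gerbe-classification} with $\mathbf G = \mathbf{Ge}_{\dR}^+$, which was verified in §\ref{sec-add-dr-context} to be a motivic $\tn h$-theory of gerbes with divisible coefficient $A(-1) = k$. This gives $\mathbf{Ge}_{\dR}^{+,\tn{fact}}(\tn{Gr}_{G_{\tn{sc}}, \tn{Ran}}) \simeq \Theta_{G_{\tn{sc}}}(\Lambda_{T_{\tn{sc}}}; \mathbf{Ge}_{\dR}^+)$. Because $G_{\tn{sc}}$ is simply connected, $\pi_1G_{\tn{sc}} = 0$ collapses the left term of \eqref{eq-theta-fiber-seq}, and by Lemma \ref{lem-sc-restr} every $W$-invariant $k$-valued quadratic form on $\Lambda_{T_{\tn{sc}}}$ is restricted. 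The enhanced $\Theta$-data therefore reduces to $\cal Q(\Lambda_{T_{\tn{sc}}}; k)^W$.

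The hard part will be showing that the third term $\tn{Maps}^{\tn{fact}}(\tn{Gr}_{G_{\tn{sc}}, \tn{Ran}}, \op B_{\et}^2\mathbb G_a)$ is contractible. The essential input is the affine Borel--Weil--Bott theorem (Kumar, Mathieu), which asserts for simply connected $G_{\tn{sc}}$ that each Schubert variety $\tn{Gr}^{\le\lambda^I}_{G_{\tn{sc}}, X^I}$ is flat over $X^I$ with $\op H^0$ of its structure sheaf equal to $\cal O(X^I)$ and higher cohomology vanishing. Flat base change and the colimit presentation of §\ref{sec-schubert} then identify $\tn{Maps}(\tn{Gr}_{G_{\tn{sc}}, X^I}, \op B_{\et}^2\mathbb G_a)$ with $\tn{Maps}(X^I, \op B_{\et}^2\mathbb G_a)$. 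Consequently $\pi_0$ and $\pi_1$ of the third term vanish outright, and $\pi_2$ gets identified with the group of factorization-compatible $\mathbb G_a$-valued functions on $\tn{Ran}$. Such a function $f$ satisfies $f(x^I\cup y^J) = f(x^I) + f(y^J)$ on the disjoint locus, so $f|_{X^{\{1,2\}}}(x,y) = g(x)+g(y)$ for $g := f|_X$; restricting along the diagonal $\Delta : X\to X^{\{1,2\}}$ and using compatibility with $\{1,2\}\twoheadrightarrow\{1\}$ yields $2g = g$, hence $g \equiv 0$ and $f \equiv 0$.

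Combining the two analyses will reduce the fiber sequence to the claimed equivalence
$$
\Psi_{\mathbf{Tw}, G_{\tn{sc}}} : \mathbf{Tw}^{\tn{fact}}(\tn{Gr}_{G_{\tn{sc}}, \tn{Ran}}) \xrightarrow{\sim} \cal Q(\Lambda_{T_{\tn{sc}}}; k)^W.
$$
Canonicity is built in at every step. One may further verify compatibility with the classification of factorization line bundles (Lemma \ref{lem-pic-classification}) by tracing through the square of divisor class maps relating $\mathbf{Pic}$, $\mathbf{Tw}$, and $\mathbf{Ge}_{\dR}^+$, and postcomposing with the forgetful morphism $\mathring{\mathbf{Tw}}\to \mathbf{Tw}$ together with Theorem \ref{thm-twisting-classification} would then yield the assertion after Theorem \ref{thmx-usual-twisting} that every usual factorization twisting on $\tn{Gr}_{G_{\tn{sc}}, \tn{Ran}}$ is canonically tame.
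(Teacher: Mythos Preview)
Your approach is exactly the paper's: the same fiber sequence, the same identification of the middle term via Theorem \ref{thm-gerbe-classification} applied to $\mathbf{Ge}_{\dR}^+$, and the same Borel--Weil--Bott reduction of the third term to factorization $\mathbb G_a$-gerbes on $\tn{Ran}$.

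There is, however, a genuine gap in your treatment of the third term. Your argument that $\pi_2$ (factorization $\mathbb G_a$-valued functions on $\tn{Ran}$) vanishes is correct: two regular functions on $X^{\{1,2\}}$ agreeing on a dense open agree everywhere, so the factorization identity propagates to the diagonal and forces $2g=g$. But the claim that $\pi_0$ and $\pi_1$ ``vanish outright'' is not justified, and the same trick does not obviously work there. For $\pi_1$ one is comparing two $\mathbb G_a$-torsors on $X^2$ that are isomorphic on $X^2\setminus\Delta$; the isomorphism is a section of $\mathbb G_a$ over the complement of a divisor and need not extend. For $\pi_0$ the obstruction lives in $\op H^2(X^I,\cal O)$, which is nonzero for $|I|\ge 2$ when $X$ is proper of positive genus. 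So one cannot simply read off vanishing from the values on individual $X^I$.

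The paper closes this gap with Lemma \ref{lem-ran-contract}, showing that pullback along $\tn{Ran}\to\tn{pt}$ induces $k\xrightarrow{\sim}\op R\Gamma(\tn{Ran};\cal O)$. In the proper case this uses dualizability of each $\op R\Gamma(X^I;\cal O)$ to obtain a K\"unneth formula for $\tn{Ran}\times\tn{Ran}$ and then the contractibility argument of \cite{gaitsgory2013contractibility}; in the affine case it reduces to the fact that global functions on $\tn{Ran}$ are constant. Once $\op R\Gamma(\tn{Ran};\cal O)\cong k$, every $\mathbb G_a$-gerbe on $\tn{Ran}$ is pulled back from a point, and then your factorization trick (or the paper's version with two distinct $k$-points) kills it. You should insert this step.
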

\begin{proof}
We use the interpretation of twistings on $Y\in\mathbf{Sch}^{\tn{ft}}_{/k}$ as \'etale $\mathbb G_a$-gerbes on $Y_{\dR}$ equipped with a trivialization over $Y$ (c.f.~\cite[\S6]{gaitsgory2011crystals}). In other words, there is a fiber sequence:
$$
\mathbf{Tw}^{\tn{fact}}(\tn{Gr}_{G_{\tn{sc}}, \tn{Ran}}) \rightarrow \mathbf{Ge}^{+, \tn{fact}}_{\dR}(\tn{Gr}_{G_{\tn{sc}}, \tn{Ran}}) \rightarrow \mathbf{Ge}_{\mathbb G_a}^{\tn{fact}}(\tn{Gr}_{G_{\tn{sc}}, \tn{Ran}}),
$$
where $\mathbf{Ge}^{+, \tn{fact}}_{\dR}$ denotes the theory of additive de Rham gerbes of \S\ref{sec-add-dr-context}, and $\mathbf{Ge}_{\mathbb G_a}$ the sheaf of \'etale $\mathbb G_a$-gerbes. By Theorem \ref{thm-gerbe-classification}, we have an equivalence:
$$
\Psi_{\mathbf{Ge}^+_{\dR}, G_{\tn{sc}}} : \mathbf{Ge}^{+, \tn{fact}}_{\dR}(\tn{Gr}_{G_{\tn{sc}}, \tn{Ran}}) \xrightarrow{\sim} \cal Q(\Lambda_{T_{\tn{sc}}}; k)^W.
$$
Thus it remains to prove that $\mathbf{Ge}_{\mathbb G_a}^{\tn{fact}}(\tn{Gr}_{G_{\tn{sc}}, \tn{Ran}})$ is contractible.

\smallskip

We consider the projection $\pi : \tn{Gr}_{G_{\tn{sc}}, \tn{Ran}} \rightarrow \tn{Ran}$, and claim that each $\mathbb G_a$-gerbe canonically descends to $\tn{Ran}$. Indeed, over $X^I\rightarrow \tn{Ran}$, we consider the base change of $\pi$ as a colimit of the Schubert stratification (c.f.~\S\ref{sec-schubert}) $\pi^{\le\lambda^I} : \tn{Gr}^{\le\lambda^I}_{G_{\tn{sc}}, X^I} \rightarrow X^I$. By the affine Borel--Weil--Bott theorem, we have $\tn H^i(\tn{Gr}^{\le\lambda}_{G_{\tn{sc}}, x}, \cal O) = 0$ for $i\ge 1$ and $\op H^0(\tn{Gr}_{G_{\tn{sc}}, x}^{\le\lambda}, \cal O) \cong k$ at any $k$-point $x\in X$ (c.f.~\cite[Lemma 2.6]{tao2019extensions}). Thus the same holds for fibers of $\pi^{\le\lambda^I}$ at every $k$-point. Since $\pi^{\le\lambda^I}$ is proper, flat, and $X^I$ is reduced, the canonical map $\cal O_{X^I} \rightarrow \tn R\pi_*^{\le\lambda^I}\cal O_{\tn{Gr}}$ is an isomorphism by cohomology and base change. The same argument applies to products of $\pi$. Thus pullback defines an equivalence:
$$
\mathbf{Ge}^{\tn{fact}}_{\mathbb G_a}(\tn{Ran}) \xrightarrow{\sim} \mathbf{Ge}^{\tn{fact}}_{\mathbb G_a}(\tn{Gr}_{G_{\tn{sc}}, \tn{Ran}}).
$$

\smallskip

Finally, we argue that factorization $\mathbb G_a$-gerbes on $\tn{Ran}$ are canonically trivial. By Lemma \ref{lem-ran-contract} below, such a $\mathbb G_a$-gerbe $\cal G$ is pulled back from $\cal G_1$ along $p : \tn{Ran} \rightarrow \tn{pt}$. We choose distinct $k$-points $x, y \in X$. The pullbacks $x^*\cal G$, $y^*\cal G$, $(x,y)^*\cal G$ all identify with $\cal G_1$. However, factorization implies $(x, y)^*\cal G \xrightarrow{\sim} x^*\cal G \otimes y^*\cal G$ so we obtain a trivialization of $\cal G_1$ which one can see to be canonical.
\end{proof}

\subsubsection{} We supply a quick calculation of the cohomology of $\tn{Ran}$ with values in $\mathbb G_a$.

\begin{lem}
\label{lem-ran-contract}
Pullback along $\tn{Ran}\rightarrow\tn{pt}$ induces an isomorphism $k \xrightarrow{\sim} \op R\Gamma(\tn{Ran}; \cal O)$.
\end{lem}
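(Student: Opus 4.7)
The plan is to reduce the cohomological computation via the colimit presentation of $\tn{Ran}$, and then exploit the commutative semigroup structure of $\tn{Ran}$ to produce a contracting endomorphism. First I would use the presentation $\tn{Ran} \simeq \underset{I \in (\mathbf{fSet}^{\tn{surj}})^{\tn{op}}}{\tn{colim}} X^I$ to rewrite $\op R\Gamma(\tn{Ran}; \cal O) \simeq \lim_{I \in \mathbf{fSet}^{\tn{surj}}} \op R\Gamma(X^I; \cal O)$, and invoke the K\"unneth formula (valid since $X$ is smooth, separated, and of finite type) to identify $\op R\Gamma(X^I; \cal O) \simeq V^{\otimes I}$ for $V := \op R\Gamma(X; \cal O)$; the transition map attached to a surjection $\varphi: I \twoheadrightarrow J$ corresponds to the diagonal embedding $X^J \hookrightarrow X^I$ and implements multiplication of factors of $V$ along the fibers of $\varphi$.

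The canonical map $\eta: k \to \op R\Gamma(\tn{Ran}; \cal O)$ corresponds to the compatible system $(1^{\otimes I})_I$. A splitting comes from a $k$-point $x_0 \in X$ (which exists since $k = \bar k$): evaluation at $x_0$ yields a $k$-algebra augmentation $\varepsilon_V: V \to k$ whose tensor powers $\varepsilon_V^{\otimes I}$ are compatible with the transition maps, assembling into $\varepsilon$ with $\varepsilon \circ \eta = \tn{id}$. Thus $k$ is a direct summand, and the task becomes showing that $\ker(\varepsilon) = 0$.

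For this, I would exploit the commutative semigroup structure $m: \tn{Ran} \times \tn{Ran} \to \tn{Ran}$ given by union, together with $m \circ \delta = \tn{id}$ for $\delta$ the diagonal. Applying $\op R\Gamma$ and K\"unneth furnishes a cocommutative coproduct $m^*: A \to A \otimes A$ on $A := \op R\Gamma(\tn{Ran}; \cal O)$ that is a section of the multiplication $\delta^*$. The endomorphism $\iota_{x_0}^* := (\varepsilon \otimes \tn{id}) \circ m^*$, induced by the idempotent $\iota_{x_0}(S) = S \cup \{x_0\}$, satisfies $\varepsilon \circ \iota_{x_0}^* = \varepsilon$ and $(\iota_{x_0}^*)^2 = \iota_{x_0}^*$; iterating across the filtered system of idempotents $\{\iota_T^*\}$ indexed by finite subsets $T \subset X(k)$ should collapse $A$ onto $\eta(k)$, giving $\tn{id}_A = \eta \circ \varepsilon$.

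The main obstacle is this last contracting step: one must show rigorously, in the derived setting, that iterating $\iota_T^*$ annihilates $\ker(\varepsilon)$. A safer alternative is to filter $\tn{Ran}$ by the subsheaves $\tn{Ran}^{\le n}$ of subsets of cardinality $\le n$ and induct along the stratification; the $\cal O$-cohomology on each stratum is computable via K\"unneth on symmetric products, and the associated spectral sequence collapses to $k$ in degree zero.
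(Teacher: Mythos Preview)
Your overall strategy---split off $k$ via a point, then use the union semigroup structure on $\tn{Ran}$ to contract the complement---is essentially the Gaitsgory argument that the paper invokes for the \emph{proper} case. But there is a genuine gap in your write-up, and it is precisely the step you glossed over rather than the one you flagged.

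The problematic sentence is ``Applying $\op R\Gamma$ and K\"unneth furnishes a cocommutative coproduct $m^*: A \to A \otimes A$.'' For this you need
\[
\op R\Gamma(\tn{Ran}\times\tn{Ran};\cal O)\;\simeq\;\op R\Gamma(\tn{Ran};\cal O)\otimes\op R\Gamma(\tn{Ran};\cal O).
\]
Now $\tn{Ran}\times\tn{Ran}=\underset{I,J}{\tn{colim}}\,X^{I\sqcup J}$, so the left side is $\lim_{I,J} V^{\otimes I}\otimes V^{\otimes J}$, whereas the right side is $(\lim_I V^{\otimes I})\otimes(\lim_J V^{\otimes J})$. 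Commuting the tensor product past these inverse limits requires the terms $V^{\otimes I}$ to be \emph{dualizable}, i.e.\ $V=\op R\Gamma(X;\cal O)$ to be a perfect complex. That holds when $X$ is proper, but fails when $X$ is affine (where $V$ is infinite-dimensional). Since the ambient hypothesis is only that $X$ is smooth and connected, both cases occur, and your argument as written covers only the first. Smoothness, separatedness, and finite type give K\"unneth for each fixed $X^I$, but not for the prestack $\tn{Ran}$.

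The paper handles this by a case split. For $X$ proper one has dualizability, hence K\"unneth for $\tn{Ran}\times\tn{Ran}$, and then the semigroup contraction (your idea, formalized in \cite{gaitsgory2013contractibility}) goes through. For $X$ affine every $\op R\Gamma(X^I;\cal O)$ is concentrated in degree $0$, so $A=\lim_I \Gamma(X^I;\cal O)$ is just the ring of global functions on $\tn{Ran}$, and one checks directly that these are constant. Your proposed alternative via the filtration by $\tn{Ran}^{\le n}$ and symmetric products is not obviously easier: the $\cal O$-cohomology of $\op{Sym}^n X$ is nontrivial in general, and one would still need a mechanism to cancel it across strata.

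Once the K\"unneth issue is settled (in the proper case), the contracting step you worried about is not an obstacle: the argument is that $m^*$ makes $A$ into a cocommutative coalgebra with $\mu\circ m^*=\tn{id}_A$, and together with the augmentation this forces $A\simeq k$ by an Eckmann--Hilton style manipulation, with no need to iterate over finite subsets $T$.
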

\begin{proof}
We note that $\tn R\Gamma(\tn{Ran}; \cal O)$ is by definition $\lim_I \tn R\Gamma(X^I; \cal O)$. Suppose $X$ is proper. Then each $\tn R\Gamma(X^I; \cal O)$ is dualizable. Hence we have:
\begin{align*}
\tn R\Gamma(\tn{Ran}\times\tn{Ran}; \cal O) \xrightarrow{\sim} & \lim_{I, J}\tn R\Gamma(X^I \times X^J; \cal O) \\
& \xrightarrow{\sim} \lim_I \tn R\Gamma(X^I; \cal O) \otimes \lim_J \tn R\Gamma(X^J; \cal O) \xrightarrow{\sim} \tn R\Gamma(\tn{Ran}; \cal O) \otimes \tn R\Gamma(\tn{Ran}; \cal O).
\end{align*}
The argument of \cite[\S6]{gaitsgory2013contractibility} thus applies.

\smallskip

Suppose $X$ is affine. Then $\tn R\Gamma(X^I; \cal O) \xrightarrow{\sim} \Gamma(X^I; \cal O)$ and the problem reduces to the fact that global functions on $\tn{Ran}$ are constant (\cite[Proposition 4.3.10(1)]{zhu2016introduction}).
\end{proof}

\bigskip

\section{Proof of Theorem \ref{thm-gerbe-classification}}
\label{sec-classification-proof}

Throughout this section, we fix a topology $t$ on $\mathbf{Sch}^{\tn{ft}}_{/k}$ stronger than the \'etale topology and such that every object of $\mathbf{Sch}^{\tn{ft}}_{/k}$ is $t$-locally smooth. Let $X$ be a smooth curve, $G$ a reductive group, and $\mathbf G$ be a motivic $t$-theory of gerbes whose coefficient group $A(-1)$ is divisible.

\smallskip

The goal of this section is to prove Theorem \ref{thm-gerbe-classification}.

\subsection{Tori}
\label{sec-classification-tori}

\subsubsection{} To prove Theorem \ref{thm-gerbe-classification} for torus, we recall the definition of $\Psi_{\mathbf G, T}$ as the composition:
$$
\mathbf G^{\tn{fact}}(\tn{Gr}_{T, \tn{Ran}}) \rightarrow \mathbf G^{\tn{fact}}(\tn{Gr}_{T, \tn{comb}}) \xrightarrow{\sim} \Theta(\Lambda_T; \mathbf G),
$$
where the equivalence is already proved in Lemma \ref{lem-grT-comb-classification}.

\begin{lem}
The canonical map $\tn{Gr}_{T, \tn{comb}} \rightarrow \tn{Gr}_{T, \tn{Ran}}$ is an isomorphism after $t$-sheafification.
\end{lem}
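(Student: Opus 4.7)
The plan is to reduce to a pointwise check on smooth $S$-schemes, exploiting the hypothesis that every scheme is $t$-locally smooth. The key preliminary observation is that $\tn{Gr}_{T,\tn{Ran}}$ is already a $t$-sheaf, since it parametrizes $T$-bundles on $S\times X$ with trivializations, both of which satisfy fpqc descent (and hence $t$-descent). Thus the claim reduces to showing that the map induces an equivalence on $S$-points after possibly passing to a $t$-cover of $S$, and we may assume throughout that $S$ is smooth.

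The core step is essential surjectivity. Given $((I,x^{(i)}),\cal P_T,\alpha)\in \tn{Gr}_{T,\tn{Ran}}(S)$ with $S$ smooth, we must produce cocharacters $\lambda^{(i)}\in\Lambda_T$ with $\cal P_T\cong\bigotimes_{i}\cal O(\lambda^{(i)}\Gamma_{x^{(i)}})$ compatibly with $\alpha$. Since $k=\bar k$ the torus $T$ splits, so we may treat each character $\chi\in\check{\Lambda}_T$ separately: the pushout $\cal L_\chi:=\chi_*\cal P_T$ is a line bundle on $S\times X$ trivialized off $\bigcup_i \Gamma_{x^{(i)}}$, and the trivialization determines a meromorphic section whose Cartier divisor is supported on the union of the graphs. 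Because $S\times X$ is smooth and the graphs are distinct integral $S$-flat divisors, this divisor decomposes uniquely as $\sum_i n_i^\chi \Gamma_{x^{(i)}}$ with $n_i^\chi\in\mathbb Z$ locally constant on $S$; after a Zariski cover splitting $S$ into connected components the integers are fixed, and the assignment $\chi\mapsto n_i^\chi$ defines $\lambda^{(i)}\in\Lambda_T$.

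The remaining (full faithfulness) part is essentially formal: each component $X^{\lambda^{(I)}}\hookrightarrow\tn{Gr}_{T,\tn{comb}}$ maps isomorphically onto $\tn{Gr}_{T,X^I}^{\lambda^{(I)}}\xrightarrow{\sim} X^I$, and both sides glue their cells along diagonals by the same rule: for a surjection $\varphi:I\twoheadrightarrow J$, the restriction to $X^J\hookrightarrow X^I$ matches the $(I,\lambda^{(I)})$-cell with the $(J,\lambda^{(J)})$-cell under $\lambda^{(j)}=\sum_{i\in\varphi^{-1}(j)}\lambda^{(i)}$, which is precisely the relation defining the indexing category of $\tn{Gr}_{T,\tn{comb}}$. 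Hence any two sections of $\tn{Gr}_{T,\tn{comb}}$ with the same image in $\tn{Gr}_{T,\tn{Ran}}$ are already identified in the colimit.

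The main obstacle is the divisor decomposition used in surjectivity: even when two sections $x^{(i)},x^{(i')}:S\to X$ coincide on a proper closed subscheme of $S$, their graphs remain distinct integral subschemes of $S\times X$ flat over $S$, and on the smooth ambient scheme a Cartier divisor set-theoretically supported on their union is still a unique $\mathbb Z$-linear combination of them. This is where smoothness of $S$ (inherited via $t$-local smoothness) is genuinely used, and it is the reason the comparison is guaranteed only after $t$-sheafification.
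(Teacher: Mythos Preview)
Your approach is essentially the paper's: reduce to a monomorphism claim plus $t$-local surjectivity. Your surjectivity argument via divisor decomposition on smooth $S$ is correct and more explicit than the paper's one-line reduction to $\mathbb G_m$.

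There is, however, a genuine gap in the injectivity direction. Your claim that $\tn{Gr}_{T,\tn{Ran}}$ is a $t$-sheaf is false: already $\tn{Ran}$ is not a separated presheaf (two distinct finite subsets of $\Hom(S,X)$ can restrict to the same subset on every piece of a Zariski cover of a disconnected $S$), and $\tn{Gr}_{T,\tn{Ran}}$ inherits this via its unit section. The paper's parallel assertion, that the map is ``clearly a monomorphism of prestacks,'' suffers from the same problem: take $S=\Spec(k)\sqcup\Spec(k)$ and four distinct maps $f_1,\dots,f_4:S\to X$ whose values on the two points are $(a,b),(b,a),(a,a),(b,b)$ respectively; then $\Gamma_{f_1}+\Gamma_{f_2}=\Gamma_{f_3}+\Gamma_{f_4}$ as Cartier divisors on $S\times X$, so the weight vectors $(1,1,-1,-1)$ and $(0,0,0,0)$ give distinct points of $\tn{Gr}_{\mathbb G_m,\tn{comb}}(S)$ with the same image. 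Your ``same gluing rule'' heuristic misses this because the identifications on the $\tn{Gr}_{T,\tn{Ran}}$ side are not only along diagonals.

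The fix is already implicit in your write-up. After passing to a $t$-cover and then a Zariski cover by connected components, $S$ may be taken smooth and \emph{connected}, hence integral; then the graphs $\Gamma_f$ for distinct $f\in\Hom(S,X)$ are distinct prime Weil divisors on the smooth scheme $S\times X$ and therefore linearly independent, so your divisor-decomposition argument gives \emph{bijectivity} of $\tn{Gr}_{T,\tn{comb}}(S)\to\tn{Gr}_{T,\tn{Ran}}(S)$. Since smooth connected schemes are cofinal among $t$-covers, this yields both $t$-local injectivity and $t$-local surjectivity, hence an isomorphism after sheafification---without invoking either the sheaf property of the target or a global monomorphism.
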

\begin{proof}
The map is clearly a monomorphism of prestacks.\footnote{We are within classical algebraic geometry.} It suffices to check that it is surjective in the $t$-topology, and we reduce immediately to the case $T=\mathbb G_m$. Consider any $S$-point $(x^{(i)}, \cal L, \alpha)$ of $\tn{Gr}_{\mathbb G_m, \tn{Ran}}$. It belongs to $\op{Gr}_{\mathbb G_m, \tn{comb}}$ if and only if $L$ is isomorphic to $\cal O(\sum_i\lambda_i\Gamma_{x^{(i)}})$ for some $\lambda_i\in\mathbb Z$, and $\alpha$ identifies with its canonical trivialization. This is indeed the case after passing to any $\tau$-cover $\widetilde S\rightarrow S$ with $\widetilde S$ smooth.
\end{proof}

\subsubsection{} Since $\mathbf G$ satisfies $t$-descent, the Lemma implies that we have an isomorphism:
\begin{equation}
\label{eq-tori-without-fact}
\mathbf G(\tn{Gr}_{T, \tn{Ran}}) \xrightarrow{\sim} \mathbf G(\tn{Gr}_{T, \tn{comb}}).
\end{equation}
On the other hand, the map $\tn{Gr}_{T, \tn{Ran}}^{\times n} \rightarrow \tn{Gr}_{T, \tn{comb}}^{\times n}$ is an isomorphism after $t$-sheafification for all $n\ge 1$. Therefore the isomorphism \eqref{eq-tori-without-fact} lifts to one between factorization sections, so we have proved that $\Psi_{\mathbf G, T}$ is an equivalence.

\subsection{Semisimple, simply connected groups}

\subsubsection{}
\label{sec-fact-gerbe-to-form}
For any reductive group $G$ with a fixed maximal torus $T$, we consider the composition:
\begin{align*}
\cal Q_{\mathbf G, G} : \mathbf G^{\tn{fact}}(\tn{Gr}_G) \rightarrow & \mathbf G^{\tn{fact}}(\tn{Gr}_T) \\
& \xrightarrow{\sim} \Theta(\Lambda_T; \mathbf G) \rightarrow \cal Q(\Lambda_T; A(-1))
\end{align*}
Thus $\cal Q_{\mathbf G, G}$ associates a quadratic form to any factorization gerbe. This functor will be the basis of the classification of factorization gerbes for semisimple, simply connected groups.

\subsubsection{} Let $G_{\tn{sc}}$ be a semisimple, simply connected group with maximal torus $T_{\tn{sc}}$. We let $\mathbf S$ denote the set of its simple factors. The analogous procedure to \S\ref{sec-fact-gerbe-to-form} defines an equivalence of Picard groupoids:
\begin{equation}
\label{eq-sc-pic-classify}
\cal Q_{\mathbf{Pic}, G_{\tn{sc}}} : \mathbf{Pic}^{\tn{fact}}(\tn{Gr}_{G, \tn{sc}}) \xrightarrow{\sim} \cal Q(\Lambda_{T_{\tn{sc}}}; \mathbb Z)^W.
\end{equation}
In fact, $\cal Q(\Lambda_{T_{\tn{sc}}}; \mathbb Z)^W$ canonically identifies with $\tn{Maps}(\mathbf S, \mathbb Z)$. For each $s\in\mathbf S$, the mapping which sends $s$ to $1$ and all other elements to zero passes to the \emph{minimal} quadratic form $q_{\tn{min}, s}$ on $\Lambda_{T_{\tn{sc}}}$ which has $q_{\tn{min}, s}(\alpha_s) = 1$ for $\alpha_s$ a short coroot in $\Phi_s$ and vanishes on components associated to other simple factors. Under \eqref{eq-sc-pic-classify}, this passes to the minimal line bundle $\tn{min}_s$ (c.f.~\cite{faltings2003algebraic}) which has a factorization structure by \cite{tao2019extensions}.

\subsubsection{}
The inverse of \eqref{eq-sc-pic-classify} paired with the divisor class map defines a functor:
\begin{equation}
\label{eq-sc-construct}
\cal Q(\Lambda_{T_{\tn{sc}}}; \mathbb Z)^W \underset{\mathbb Z}{\otimes} A(-1) \rightarrow \mathbf G^{\tn{fact}}(\tn{Gr}_{G_{\tn{sc}}, \tn{Ran}}).
\end{equation}
By construction, the composition of \eqref{eq-sc-construct} with $\cal Q_{\mathbf G, G}$ is the forgetful map from $\cal Q(\Lambda_{T_{\tn{sc}}}; \mathbb Z)^W \underset{\mathbb Z}{\otimes} A(-1)$ to $\cal Q(\Lambda_{T_{\tn{sc}}}; A(-1))$.

\subsubsection{} Fix a point $x\in X$. By Lemma \ref{lem-grT-comb-classification} applied to the trivial group, we see that every factorization gerbe on $\tn{Gr}_{G_{\tn{sc}}, \tn{Ran}}$ is canonically trivialized when pulled back to the unit section. Thus the functor of restriction to $x$ factors through the category of gerbes rigidified at the unit point:
\begin{equation}
\label{eq-restriction-to-point}
\tn{Res}_x : \mathbf G^{\tn{fact}}(\tn{Gr}_{G_{\tn{sc}}, \tn{Ran}}) \rightarrow \mathbf G^e(\tn{Gr}_{G_{\tn{sc}}, x}).
\end{equation}

\begin{lem}
\label{lem-gr-point-classification}
The composition of $\eqref{eq-sc-construct}$ with $\tn{Res}_x$ is an equivalence:
$$
\cal Q(\Lambda_{T_{\tn{sc}}}; \mathbb Z)^W \underset{\mathbb Z}{\otimes} A(-1) \xrightarrow{\sim} \mathbf G^e(\tn{Gr}_{G_{\tn{sc}}, x}).
$$
\end{lem}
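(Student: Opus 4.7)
The plan is to reduce to the case of simple $G_{\tn{sc}}$ via a K\"unneth argument, then compute $\mathbf G^e(Y^\lambda)$ on each Schubert variety by combining a Bott--Samelson resolution with the $t$-descent satisfied by $\mathbf G$, and finally pass to the filtered colimit in $\lambda$.

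First, I would decompose $G_{\tn{sc}} = \prod_{s \in \mathbf S} G_s$ into simple factors, giving $\tn{Gr}_{G_{\tn{sc}}, x} \cong \prod_s \tn{Gr}_{G_s, x}$ as ind-projective ind-schemes. Both sides of the desired equivalence split naturally as direct sums indexed by $\mathbf S$. By iterating Lemma \ref{lem-product-decomposition} level by level within the Schubert filtration (each Schubert variety being projective) and then passing to the filtered colimit, the problem reduces to the case where $G_{\tn{sc}}$ is simple. The discreteness hypothesis required to apply Lemma \ref{lem-product-decomposition} propagates through the induction, starting from the trivial case of a point. In the simple case, $\cal Q(\Lambda_{T_{\tn{sc}}}; \mathbb Z)^W \cong \mathbb Z$ is generated by $q_{\tn{min}}$, corresponding under \eqref{eq-sc-pic-classify} to the minimal line bundle $\tn{min}$, so the goal becomes to produce an equivalence $A(-1) \xrightarrow{\sim} \mathbf G^e(\tn{Gr}_{G_{\tn{sc}}, x})$ sending $a$ to $c_1(\tn{min})^{\otimes a}$.

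Writing $\tn{Gr}_{G_{\tn{sc}}, x} = \op{colim}_\lambda Y^\lambda$ via the Schubert filtration with $Y^\lambda := \tn{Gr}^{\le \lambda}_{G_{\tn{sc}}, x}$ projective, it suffices to establish, compatibly in $\lambda$, an equivalence $A(-1) \xrightarrow{\sim} \mathbf G^e(Y^\lambda)$ via $a \mapsto c_1(\tn{min}|_{Y^\lambda})^{\otimes a}$. I would proceed by induction on the Bruhat order. At each step, choose a Bott--Samelson resolution $\pi^\lambda : \widetilde Y^\lambda \to Y^\lambda$; the source is smooth projective and admits a length-$\ell(\lambda)$ iterated $\mathbb P^1$-bundle structure. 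The basic building block is the computation $\mathbf G^e(\mathbb P^1) \cong A(-1)$ generated by $c_1(\cal O_{\mathbb P^1}(1))$, which follows from property (A) (giving $\mathbf G(\mathbb A^1) \cong \mathbf G(\tn{pt})$) combined with property (RP1) applied to the closed embedding $\infty \hookrightarrow \mathbb P^1$. Iterating up the Bott--Samelson tower, $\mathbf G^e(\widetilde Y^\lambda)$ is canonically a free $A(-1)$-module of rank $\ell(\lambda)$, generated by the first Chern classes of the tautological line bundles at each stage.

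The main obstacle, where the descent axiom for $\mathbf G$ enters most essentially, is to descend this computation from the smooth resolution $\widetilde Y^\lambda$ to the singular variety $Y^\lambda$. Here $\pi^\lambda$ is an isomorphism over the top Schubert cell $\tn{Gr}^\lambda$, so it fits into an abstract blowup square with $\partial Y^\lambda := Y^\lambda \backslash \tn{Gr}^\lambda$ and exceptional preimage $\pi^{\lambda,-1}(\partial Y^\lambda)$. Since $t$ is at least as fine as the topology generated by abstract blowups (this is essentially what the ``$t$-locally smooth'' hypothesis of \S\ref{sec-setup-topology} forces, so that one can resolve singularities $t$-locally), $\mathbf G$ turns this square into a homotopy Cartesian square of strict Picard $2$-groupoids, expressing $\mathbf G^e(Y^\lambda)$ as the fiber product of $\mathbf G^e(\widetilde Y^\lambda)$ and $\mathbf G^e(\partial Y^\lambda)$ over $\mathbf G^e(\pi^{\lambda,-1}(\partial Y^\lambda))$. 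The inductive hypothesis governs $\mathbf G^e(\partial Y^\lambda)$, and $\mathbf G^e(\pi^{\lambda,-1}(\partial Y^\lambda))$ is accessible by combining the inductive hypothesis with the iterated $\mathbb P^1$-bundle structure on the Bott--Samelson fibers. Chasing this Cartesian square, together with the compatibility of the divisor class map under pullback, identifies $\mathbf G^e(Y^\lambda) \cong A(-1) \cdot c_1(\tn{min}|_{Y^\lambda})$. Taking the limit over $\lambda$ then yields the desired equivalence on $\tn{Gr}_{G_{\tn{sc}}, x}$.
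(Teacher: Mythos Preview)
Your reduction to the simple case via the product decomposition matches the paper. For the simple case, however, the paper takes a quite different route. Rather than resolving each Schubert variety, it passes to the affine flag variety $\tn{Fl}_G \to \tn{Gr}_{G,x}$ (an \'etale-locally trivial $G/B$-bundle, so the product decomposition applies fiberwise and identifies $\mathbf G^e(\tn{Gr}_{G,x})$ with the fiberwise-trivial part of $\mathbf G^e(\tn{Fl}_G)$), and then invokes Faltings' result that for a large opposite-cell open $\Omega_A \subset \tn{Fl}_G$ and suitable $n$, the quotient $\Omega_A/\mathring I^-(n)$ is represented by a \emph{smooth} scheme. Since $\mathring I^-(n)$ carries a contracting $\mathbb G_m$-action with trivial fixed locus, the contraction lemma gives $\mathbf G(\Omega_A) \cong \mathbf G(\Omega_A/\mathring I^-(n))$, and on this smooth target one reads off $\mathbf G^e \cong \tn{Maps}(\Delta^{\tn{aff}}, A(-1))$ directly from the big-cell stratification via (A), (RP1), (RP2). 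The point is that the paper never descends along a resolution of a singular variety; it manufactures a smooth model by a group-theoretic quotient and applies the purity and homotopy axioms there.

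Your Bott--Samelson approach has two genuine gaps. First, the assertion that a motivic $t$-theory of gerbes turns abstract blow-up squares into Cartesian squares does not follow from the stated hypotheses: ``every object is $t$-locally smooth'' only guarantees that smooth $t$-covers exist, not that the specific sieve $\{p,i\}$ of an abstract blow-up is a $t$-cover. This holds for $t=\eh$ or $t=\tn h$, the examples the paper has in mind, but the theorem is formulated for an abstract $t$, and the paper's argument uses $t$-descent only through the contraction lemma (smooth hypercovers), never through blow-up squares. Second, and more substantively, the step ``chasing this Cartesian square \dots identifies $\mathbf G^e(Y^\lambda) \cong A(-1)$'' is where the entire content lies and is not carried out. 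The boundary $\partial Y^\lambda$ is a union of several lower Schubert varieties glued along their own boundaries, so the inductive hypothesis does not directly determine $\mathbf G^e(\partial Y^\lambda)$; and the exceptional locus $\pi^{\lambda,-1}(\partial Y^\lambda)$ is not an iterated $\mathbb P^1$-bundle over anything simple---its components are Bott--Samelsons for various subwords, with nontrivial overlaps. Explaining why the fiber product of $A(-1)^{\ell(\lambda)}$ with $\mathbf G^e(\partial Y^\lambda)$ over $\mathbf G^e(\pi^{\lambda,-1}(\partial Y^\lambda))$ collapses to a single copy of $A(-1)$ generated by $c_1(\tn{min})$ amounts to a combinatorial analysis of how the tautological line bundles on $\widetilde Y^\lambda$ restrict over the boundary strata, and this analysis is not supplied.
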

\noindent
In particular, $\mathbf G^e(\tn{Gr}_{G_{\tn{sc}}, x})$ is discrete.
\begin{proof}
By the product decomposition (Lemma \ref{lem-product-decomposition}), we reduce to the case $\mathbf S=\{1\}$, i.e., $G_{\tn{sc}}$ is simple and simply connected. We shall denote it simply by $G$. Choose a uniformizer $t$ of $\widehat{\cal O}_{X, x}$ and identify $\tn{Gr}_{G, x}$ with the \'etale quotient $G\loo{t}/G\arc{t}$. Recall that the morphism $\fr p : \tn{Fl}_G \rightarrow \tn{Gr}_{G,x}$ is an \'etale-locally trivial fiber bundle with typical fiber $G/B$.

\smallskip

We first observe that $\mathbf G^e(G/B)$ is canonically isomorphic to $\tn{Maps}(\Delta, A(-1))$ for $\Delta$ the set of simple roots. Indeed, a gerbe rigidified at the unit point $e$ of the big Bruhat cell $N^-e$ must be trivialized over $N^-e$ (Property (A)). The complement of $N^-e$ is an effective Cartier divisor whose irreducible components are labeled by $\Delta$. An appliction of Properties (RP1) and (RP2) shows that $\mathbf G^e(G/B) \xrightarrow{\sim} \tn{Maps}(\Delta, A(-1))$.

\smallskip

Therefore, the product decomposition and \'etale descent shows that
$$
\fr p^* : \mathbf G^e(\tn{Gr}_{G, x}) \rightarrow \mathbf G^e(\tn{Fl}_G)
$$
is fully faithful, and its image identifies with \emph{fiberwise} trivial objects. Since $\tn{Gr}_{G, x}$ is connected, the condition on fiberwise triviality is equivalent to triviality along the unit fiber $G/B\hookrightarrow\tn{Fl}_G$.

\smallskip

Next, we classify gerbes on $\tn{Fl}_G$ using a geometric description given in Faltings \cite[Theorem 7]{faltings2003algebraic}. To recall, let $e\in\tn{Fl}_G$ denote the unit $k$-point. Write $\mathring{I}^-$ for the subgroup of $G[t^{-1}]$ which is the preimage of $N^-$ under the quotient map $G[t^{-1}]\twoheadrightarrow G$. For each $n\ge 1$, write $\mathring I^-(n)$ for the subgroup of $\mathring I^-$ whose projection mod $t^{-n}$ is contained in $T[t^{-1}]$. Then the $\mathring I^-$-orbits on $\op{Fl}_G$ are parametrized by the affine Weyl group $W^{\tn{aff}}$ and:
$$
\mathring I^-w e \subset \overline{\mathring I^-w' e} \iff w'\preceq w \text{ in the Bruhat ordering}.
$$
Consider a subset $A\subset W^{\tn{aff}}$ with the property that $w\in A$ implies $w'\in A$ for all $w'\preceq w$. Then $\Omega_A := \bigcup_{w\in A} w \mathring I^- e$ is an open, $\mathring I^-$-invariant subset of $\op{Fl}_G$. For sufficiently large integer $n$, the quotient (as \'etale sheaves) $\Omega_A/ \mathring I^-(n)$ is represented by a \emph{smooth} scheme (Lemma 6 of \emph{loc.cit.}), and furthermore, the $\mathring I^-$-orbits in $\Omega_A$ are preimages of affine spaces:
$$
\xysmall{
	\mathring I^-we \ar@{^{(}->}[r]\ar[d] & \Omega_A \ar[d] \\
	\mathbb A^d \ar@{^{(}->}[r] & \Omega_A/\mathring I^-(n)
}
$$
and $\mathring I^-we$ is of codimension $l(w)$.

\smallskip

We now make the observation that $\mathring I^-(n)$ has a contracting $\mathbb G_m$-action by scaling $t$ (c.f.~\S\ref{lem-contraction}). Indeed, $G[t^{-1}]$ already admits a contracting $\mathbb G_m$-action which preserves $\mathring I^-(n)$. The fixed-point locus in $G[t^{-1}]$ is the subgroup $G$ and we have $\mathring I^-(n) \cap G = \{1\}$. By Lemma \ref{lem-contraction}, $\mathbf G(\Omega_A)$ identifies with $\mathbf G(\Omega_A \times\mathring I^-(n)^{\bullet})$, so \'etale descent implies an equivalence:
$$
\mathbf G(\Omega_A/ \mathring I^-(n)) \xrightarrow{\sim} \mathbf G(\Omega_A).
$$

\smallskip

On the other hand, for $A$ sufficiently large, the complement of the big cell $\mathring I^-e/\mathring I^-(n)$ in $\Omega_A/\mathring I^-(n)$ is the union of effective Cartier divisors corresponding to the set of simple \emph{affine} roots $\Delta^{\tn{aff}} = \Delta\sqcup\{\theta\}$. Thus an argument as for the usual flag variety implies that $\mathbf G^e(\Omega_A/\mathring I^-(n)) \xrightarrow{\sim} \tn{Maps}(\Delta^{\tn{aff}}, A(-1))$. Summarizing, we have:
\begin{align*}
\mathbf G^e(\tn{Gr}_{G, x})\hookrightarrow \mathbf G^e(\tn{Fl}_G) & \xrightarrow{\sim} \mathbf G^e(\Omega_A) \\
& \xrightarrow{\sim} \mathbf G^e(\Omega_A/\mathring I^-(n)) \xrightarrow{\sim} \op{Maps}(\Delta^{\tn{aff}}, A(-1)).
\end{align*}
It remains to observe that the restriction $\mathbf G^e(\tn{Fl}_G) \rightarrow \mathbf G^e(G/B)$ to the unit fiber passes to the restriction of functions $\op{Maps}(\Delta^{\tn{aff}}, A(-1)) \rightarrow \op{Maps}(\Delta, A(-1))$, and furthermore, the gerbe $\cal G\in\mathbf G^e(\tn{Gr}_G)$ corresponding to the function with value $a\in A(-1)$ at $\theta$ is precisely the $a$th power of the minimal line bundle on $\tn{Gr}_{G, x}$.
\end{proof}

\subsubsection{}
\label{sec-pic-to-gerbe-ss} We now analyze the process of restriction to $x\in X$. Let $A'$ denote the abelian group $\cal Q(\Lambda_{T_{\tn{sc}}}, \mathbb Z)^W \underset{\mathbb Z}{\otimes} A(-1) \cong \op{Maps}(\mathbf S, A(-1))$. Write $\mathbf G^e_{\tn{Gr}_{G_{\tn{sc}}}/X^n}$ for the (small) \'etale sheaf on $X^n$ whose value at $S\rightarrow X^n$ is the strict Picard $2$-groupoid of gerbes on $\tn{Gr}_{G_{\tn{sc}}, \tn{Ran}} \underset{\tn{Ran}}{\times} S$ trivialized at the unit section. For $n=1$, the functor \eqref{eq-sc-construct} defines a morphism of \'etale sheaves on $X$:
\begin{equation}
\label{eq-sc-construct-over-x}
\underline{A}'_X \rightarrow \mathbf G^e_{\tn{Gr}_{G_{\tn{sc}}}/X}.
\end{equation}
By Lemma \ref{lem-gr-point-classification} and Property (B), the stalks of \eqref{eq-sc-construct-over-x} at any $k$-point $x\in X$ are mutual retracts. Hence \eqref{eq-sc-construct-over-x} is an isomorphism. Now, the divisor class map and \eqref{eq-pic-exact-seq} induces a morphism:
\begin{equation}
\label{eq-sc-construct-over-ran}
\xysmall{
0 \ar[r] & \mathbf{Pic}^e_{\tn{Gr}_{G_{\tn{sc}}}/X^I}\underset{\mathbb Z}{\otimes} A(-1) \ar[r]\ar[d]^{\tn{div}} & \boxtimes_{i\in I} \underline{A}'_X \ar[r]^-{\delta}\ar[d]^{\cong} & \bigoplus_{\substack{I\twoheadrightarrow J\\ |J| = |I|-1}} (\Delta_{I\twoheadrightarrow J})_* \boxtimes_{j\in J}\underline{A}'_X \ar[d]^{\cong} \\
 & \mathbf G^e_{\tn{Gr}_{G_{\tn{sc}}}/X^I} \ar[r] & \boxtimes_{i\in I} \underline{A}'_X \ar[r]^-{\delta} & \bigoplus_{\substack{I\twoheadrightarrow J\\ |J| = |I|-1}} (\Delta_{I\twoheadrightarrow J})_* \boxtimes_{j\in J}\underline{A}'_X
}
\end{equation}
Here, the morphism $\mathbf G^e_{\tn{Gr}_{G_{\tn{sc}}}/X^I} \rightarrow \boxtimes_{i\in I} \underline{A}'_X$ is defined by restriction away from all diagonals using \eqref{eq-sc-construct-over-x}. By checking on stalks using Property (B), we see that $\tn{div}$ is also an equivalence. This implies that $\mathbf G^e_{\tn{Gr}_{G_{\tn{sc}}}/X^I}$ identifies with kernel of the map $\delta$.

\smallskip

Since $\delta$ is defined by taking difference along each diagonal, we see that restriction to $x\in X$ defines an equivalence:
$$
\mathbf G^e(\tn{Gr}_{G_{\tn{sc}}, \tn{Ran}}) \xrightarrow{\sim} \mathbf G^e(\tn{Gr}_{G_{\tn{sc}}, x}).
$$
Tautologically, the functor $\tn{Res}_x$ \eqref{eq-restriction-to-point} factors through the above equivalence.

\begin{lem}
\label{lem-restriction-fully-faithful}
The functor $\tn{Res}_x$ is fully faithful.
\end{lem}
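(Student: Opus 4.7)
The plan is to use the equivalence $\mathbf G^e(\tn{Gr}_{G_{\tn{sc}}, \tn{Ran}}) \xrightarrow{\sim} \mathbf G^e(\tn{Gr}_{G_{\tn{sc}}, x})$ established just before the Lemma, through which $\tn{Res}_x$ tautologically factors, to reduce the problem to showing that the forgetful functor
$$
\Phi : \mathbf G^{\tn{fact}}(\tn{Gr}_{G_{\tn{sc}}, \tn{Ran}}) \to \mathbf G^e(\tn{Gr}_{G_{\tn{sc}}, \tn{Ran}})
$$
is fully faithful. Since $\Phi$ is a morphism of strict Picard $2$-groupoids, this is equivalent to contractibility of its homotopy kernel $K$ over the zero object, which is the $2$-groupoid of factorization structures on the trivial unit-rigidified gerbe $\mathbf 1$. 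Unpacking, an object of $K$ consists of unit-rigidified automorphisms $f_{(n)} : \mathbf 1 \xrightarrow{\sim} \mathbf 1$ of the trivial gerbe on $\Omega^{(n)} := (\tn{Gr}_{G_{\tn{sc}}, \tn{Ran}}^{\times n})_{\tn{disj}}$ for $n = 2, 3$, together with a rigidified $2$-cell on $\Omega^{(3)}$ expressing compatibility with $f_{(2)}$ and a higher coherence over $\Omega^{(4)}$; the higher morphisms in $K$ consist of analogous rigidified higher cells on these spaces.

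The heart of the argument is therefore to show that $\mathbf G^e(\Omega^{(n)})$ is discrete at the trivial object (i.e.\ has vanishing $\pi_1$ and $\pi_2$), so that the relevant loop spaces collapse. Using the factorization property of $\tn{Gr}_{G_{\tn{sc}}}$ to identify $\Omega^{(n)}$ with $\tn{Gr}_{G_{\tn{sc}}, \tn{Ran}^{\times n}_{\tn{disj}}}$, I would run an $n$-fold analog of the argument surrounding \eqref{eq-sc-construct-over-x} and \eqref{eq-sc-construct-over-ran}, working stratum-by-stratum over the open subschemes $X^{I_1 \sqcup \cdots \sqcup I_n}_{\tn{disj}} \subset X^{I_1} \times \cdots \times X^{I_n}$. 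On such a disjoint stratum the relevant Schubert subvarieties of $\tn{Gr}_{G_{\tn{sc}}, X^{I_1 \sqcup \cdots \sqcup I_n}}$ decompose as external products of Schubert subvarieties of the factors, so the Picard description \eqref{eq-pic-exact-seq} extends cleanly, while Property (B) together with Lemma \ref{lem-gr-point-classification} continue to supply mutual-retract stalks of the divisor class map at every $k$-point. This extension---that the single-factor argument generalizes to $n$ factors with the correct external-product formulation---is the step I expect to be the main technical obstacle.

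Granted such discreteness, the unit rigidification forces each $f_{(n)}$ to be the identity, and all higher coherence data reduce to equalities of identity cells that hold automatically. Consequently $K$ is contractible, establishing the fully faithfulness of $\Phi$ and hence of $\tn{Res}_x$.
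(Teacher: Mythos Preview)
Your approach is correct and matches the paper's: both reduce to showing the forgetful functor $\mathbf G^{\tn{fact}}(\tn{Gr}_{G_{\tn{sc}}, \tn{Ran}}) \to \mathbf G^e(\tn{Gr}_{G_{\tn{sc}}, \tn{Ran}})$ is fully faithful, and both argue via discreteness of rigidified gerbes on the disjoint products. The paper's proof is a single sentence: ``Since rigidified gerbes on $(\tn{Gr}_{G_{\tn{sc}}, \tn{Ran}})_{\tn{disj}}^{\times 2}$ are classified by the discrete abelian group $A'\times A'$, a factorization structure is unique if it exists.''

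The one place you overcomplicate matters is the step you flag as ``the main technical obstacle.'' There is no $n$-fold analog to re-derive. You already observe that the factorization isomorphism of $\tn{Gr}_{G_{\tn{sc}}}$ identifies $\Omega^{(n)}$ with $\tn{Gr}_{G_{\tn{sc}}, \tn{Ran}^{\times n}_{\tn{disj}}}$, and that over each stratum this is the restriction of $\tn{Gr}_{G_{\tn{sc}}, X^{I_1 \sqcup \cdots \sqcup I_n}}$ to the open subscheme $X^{I_1 \sqcup \cdots \sqcup I_n}_{\tn{disj}}$. But \S\ref{sec-pic-to-gerbe-ss} already computes $\mathbf G^e_{\tn{Gr}_{G_{\tn{sc}}}/X^I}$ for \emph{every} finite set $I$, showing it is a subsheaf of the constant sheaf $\boxtimes_{i\in I}\underline{A}'_X$ and in particular is discrete. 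Applying this with $I = I_1 \sqcup \cdots \sqcup I_n$ and restricting to the disjoint locus gives the discreteness you need immediately; no new Schubert or K\"unneth analysis is required. This is why the paper can dispatch the lemma in one line.
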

\begin{proof}
It remains to prove that the forgetful functor:
$$
\mathbf G^{\tn{fact}}(\tn{Gr}_{G_{\tn{sc}}, \tn{Ran}}) \rightarrow \mathbf G^e(\tn{Gr}_{G_{\tn{sc}}, \tn{Ran}})
$$
is fully faithful. Since rigidified gerbes on $(\tn{Gr}_{G_{\tn{sc}}, \tn{Ran}})_{\tn{disj}}^{\times 2}$ are classified by the discrete abelian group $A'\times A'$, a factorization structure is unique if it exists.
\end{proof}

\subsubsection{} We now finish the classification for $G_{\tn{sc}}$.

\begin{lem}
\label{lem-classification-sc}
Let $G_{\tn{sc}}$ be a semisimple, simply connected group. Then $\cal Q_{\mathbf G, G}$ has image in $\cal Q(\Lambda_{T_{\tn{sc}}}, A(-1))^W_{\tn{restr}}$ and defines an equivalence:
$$
\Psi_{\mathbf G, G_{\tn{sc}}} : \mathbf G^{\tn{fact}}(\tn{Gr}_{G_{\tn{sc}}}) \xrightarrow{\sim} \cal Q(\Lambda_{T_{\tn{sc}}}; A(-1))^W_{\tn{restr}}.
$$
\end{lem}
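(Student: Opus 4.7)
The plan is to combine three previously established inputs: the fully-faithfulness of restriction to a single point (Lemma \ref{lem-restriction-fully-faithful}), the classification of rigidified gerbes on the classical affine Grassmannian (Lemma \ref{lem-gr-point-classification}), and the identification of $W$-invariant restricted quadratic forms with $\cal Q(\Lambda_{T_{\tn{sc}}}; \mathbb Z)^W \underset{\mathbb Z}{\otimes} A(-1)$ for a simply connected group (Lemma \ref{lem-sc-restr}).

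First I would observe that, by Lemma \ref{lem-gr-point-classification}, the composition of \eqref{eq-sc-construct} with $\tn{Res}_x$ is already an equivalence $\cal Q(\Lambda_{T_{\tn{sc}}}; \mathbb Z)^W \underset{\mathbb Z}{\otimes} A(-1) \xrightarrow{\sim} \mathbf G^e(\tn{Gr}_{G_{\tn{sc}}, x})$. Since $\tn{Res}_x$ is fully faithful, a two-out-of-three argument forces $\tn{Res}_x$ to be essentially surjective, hence itself an equivalence, and in turn forces \eqref{eq-sc-construct} to be an equivalence.

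Next, by the very construction of \eqref{eq-sc-construct} via the inverse of $\cal Q_{\mathbf{Pic}, G_{\tn{sc}}}$ paired with the divisor class map, the composition $\cal Q_{\mathbf G, G_{\tn{sc}}} \circ \eqref{eq-sc-construct}$ is tautologically the forgetful map from $\cal Q(\Lambda_{T_{\tn{sc}}}; \mathbb Z)^W \underset{\mathbb Z}{\otimes} A(-1)$ to $\cal Q(\Lambda_{T_{\tn{sc}}}; A(-1))$, a fact already recorded just after \eqref{eq-sc-construct}. Lemma \ref{lem-sc-restr} upgrades this forgetful map to a bijection onto $\cal Q(\Lambda_{T_{\tn{sc}}}; A(-1))^W_{\tn{restr}}$, using that $\pi_1 G_{\tn{sc}} = 0$. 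Chaining these together, the image of $\cal Q_{\mathbf G, G_{\tn{sc}}}$ lies in $\cal Q(\Lambda_{T_{\tn{sc}}}; A(-1))^W_{\tn{restr}}$, and the corestricted functor $\Psi_{\mathbf G, G_{\tn{sc}}}$ factors as $\eqref{eq-sc-construct}^{-1}$ followed by this bijection, hence is an equivalence.

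The substantive mathematics has already been performed upstream: the affine Borel--Weil--Bott input together with the purity axioms underlying Lemma \ref{lem-gr-point-classification}, and the \'etale-sheaf gluing of \S\ref{sec-pic-to-gerbe-ss} that is the content of fully-faithfulness of $\tn{Res}_x$. Consequently I do not anticipate a real obstacle at this stage; the only point requiring care is verifying that the image of $\cal Q_{\mathbf G, G_{\tn{sc}}}$ is exactly the restricted subgroup rather than some larger $W$-invariant piece, and this is precisely what Lemma \ref{lem-sc-restr} supplies in the simply connected case.
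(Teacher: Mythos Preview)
Your argument is correct and matches the paper's proof essentially verbatim: the paper also invokes Lemma~\ref{lem-sc-restr} to identify $\cal Q(\Lambda_{T_{\tn{sc}}}; A(-1))^W_{\tn{restr}}$ with $\cal Q(\Lambda_{T_{\tn{sc}}}; \mathbb Z)^W \underset{\mathbb Z}{\otimes} A(-1)$, then combines Lemma~\ref{lem-gr-point-classification} with Lemma~\ref{lem-restriction-fully-faithful} to conclude that \eqref{eq-sc-construct} is an equivalence, exactly via the two-out-of-three step you spell out. The paper's write-up is simply more compressed.
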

\begin{proof}
Recall that $\cal Q(\Lambda_{T_{\tn{sc}}}; A(-1))^W_{\tn{restr}}$ identifies with $\cal Q(\Lambda_{T_{\tn{sc}}}; \mathbb Z)^W \underset{\mathbb Z}{\otimes} A(-1)$ (Lemma \ref{lem-sc-restr}). We have seen that there is a factoring of its embedding inside $\cal Q(\Lambda_{T_{\tn{sc}}}; A(-1))$ as follows.
$$
\cal Q(\Lambda_{T_{\tn{sc}}}; A(-1))^W_{\tn{restr}} \rightarrow \mathbf G^{\tn{fact}}(\tn{Gr}_{G_{\tn{sc}}}) \xrightarrow{\cal Q_{\mathbf G, G}} \cal Q(\Lambda_{T_{\tn{sc}}}; A(-1)).
$$
The first functor is an equivalence by combining Lemma \ref{lem-gr-point-classification} and Lemma \ref{lem-restriction-fully-faithful}.
\end{proof}

\subsection{Construction of $\Psi_{\mathbf G}$}

\subsubsection{} We start with a mild generalization of the classification result for semisimple, simply connected groups. Let $G$ be a reductive group whose derived subgroup $G_{\tn{der}}$ is simply connected. Denote by $T_1$ the quotient torus $G/G_{\tn{der}}$. We know by \cite[Lemma 3.4]{tao2019extensions} that the projection $\tn{Gr}_{G, \tn{Ran}} \rightarrow \tn{Gr}_{T_1, \tn{Ran}}$ is an \'etale fiber bundle with typical fiber $\tn{Gr}_{G_{\tn{der}}, \tn{Ran}}$. In other words, to every $S$-point of $\tn{Gr}_{T_1, \tn{Ran}}$ one can associate an \'etale cover $\widetilde S\rightarrow S$ and an isomorphism:
\begin{equation}
\label{eq-etale-der-bundle}
\widetilde S \underset{\tn{Ran}}{\times} \op{Gr}_{G_{\tn{der}}} \xrightarrow{\sim} \widetilde S\underset{\tn{Gr}_{T_1, \tn{Ran}}}{\times} \op{Gr}_{G, \tn{Ran}}.
\end{equation}

\subsubsection{} We will now identify the fiber of $\cal Q_{\mathbf G, G}$ (see \S\ref{sec-fact-gerbe-to-form}) when $G_{\tn{der}}$ is simply connected.

\begin{lem}
\label{lem-fiber-quad-der}
Suppose $G_{\tn{der}}$ is simply connected. Then pulling back along $\tn{Gr}_{G, \tn{Ran}}\rightarrow \tn{Gr}_{T_1, \tn{Ran}}$ defines a fiber sequence of strict Picard $2$-groupoids:
$$
\mathbf G^{\tn{fact}}(\op{Gr}_{T_1, \tn{Ran}}) \rightarrow \mathbf G^{\tn{fact}}(\tn{Gr}_{G, \tn{Ran}}) \rightarrow \cal Q(\Lambda_{T_{\tn{der}}}; A(-1)).
$$
\end{lem}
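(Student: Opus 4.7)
The plan is to identify the right-hand map as the composition
$$
\mathbf G^{\tn{fact}}(\tn{Gr}_{G,\tn{Ran}}) \xrightarrow{i^*} \mathbf G^{\tn{fact}}(\tn{Gr}_{G_{\tn{der}},\tn{Ran}}) \xrightarrow[\sim]{\Psi_{\mathbf G, G_{\tn{der}}}} \cal Q(\Lambda_{T_{\tn{der}}}; A(-1))^W_{\tn{restr}} \hookrightarrow \cal Q(\Lambda_{T_{\tn{der}}}; A(-1)),
$$
where $i: \tn{Gr}_{G_{\tn{der}},\tn{Ran}}\hookrightarrow \tn{Gr}_{G,\tn{Ran}}$ is the tautological closed immersion and $\Psi_{\mathbf G, G_{\tn{der}}}$ is the equivalence of Lemma \ref{lem-classification-sc}. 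The composition of this map with $p^*$ is canonically null: the composition $p\circ i$ factors as $e\circ\pi$, where $\pi$ is the projection to $\tn{Ran}$ and $e$ the unit section of $\tn{Gr}_{T_1,\tn{Ran}}$; since any factorization gerbe on $\tn{Gr}_{T_1,\tn{Ran}}$ is canonically trivialized over the unit section (as used to define \eqref{eq-restriction-to-point}), the same is true after applying $\pi^*$.

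The substance is exactness. Given $\cal G\in\mathbf G^{\tn{fact}}(\tn{Gr}_{G,\tn{Ran}})$ with $i^*\cal G$ trivialized as a factorization gerbe, the task is to descend $\cal G$ along $p$ to produce $\cal G_{T_1}\in\mathbf G^{\tn{fact}}(\tn{Gr}_{T_1,\tn{Ran}})$ with $p^*\cal G_{T_1}\cong\cal G$. I would work separately over each $X^I\to\tn{Ran}$ and then assemble the result. The étale-local product structure \eqref{eq-etale-der-bundle} says that on a suitable étale cover $\widetilde S\to S:=\tn{Gr}_{T_1, X^I}$ one has an isomorphism $\widetilde S\underset{S}{\times}\tn{Gr}_{G,X^I}\cong \widetilde S\underset{X^I}{\times}\tn{Gr}_{G_{\tn{der}},X^I}$. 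Exhausting $\tn{Gr}_{G_{\tn{der}},X^I}$ by its Schubert subvarieties $\tn{Gr}_{G_{\tn{der}},X^I}^{\le\lambda^I}$ (proper over $X^I$ since $G_{\tn{der}}$ is semisimple), I would apply a relative Künneth principle to $\widetilde S\underset{X^I}{\times}\tn{Gr}_{G_{\tn{der}},X^I}^{\le\lambda^I}\to\widetilde S$ to show that, given the trivialization of $i^*\cal G$, the pullback of $\cal G$ to this product is canonically pulled back from $\widetilde S$. Passing to the colimit over $\lambda^I$ and then descending through the étale cover yields $\cal G_{T_1}|_{X^I}$; the factorization structure on $\cal G_{T_1}$ is uniquely determined by that of $\cal G$ since $p$ is compatible with disjoint-union maps.

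The main obstacle is the relative Künneth principle: for $Y\to X^I$ a proper morphism whose fibers $Y_x$ satisfy $\mathbf G^e(Y_x)\cong \Lambda$ for a fixed discrete abelian group $\Lambda$ (independently of $x$), the natural map
$$
\mathbf G(\widetilde S)\oplus\underline\Lambda(\widetilde S)\longrightarrow\mathbf G^e(\widetilde S\underset{X^I}{\times}Y)
$$
should be an equivalence (after $e$-rigidification on the second factor). In our setting, $\Lambda=\cal Q(\Lambda_{T_{\tn{der}}};A(-1))^W_{\tn{restr}}$ by Lemma \ref{lem-gr-point-classification}, and the argument should follow the template of Lemma \ref{lem-product-decomposition}, but relativized over $\widetilde S$: the crucial input is weak proper base change (B), which ensures that the stalk of $p_*\mathbf G_Y$ at any $k$-point $s\in \widetilde S$ embeds fully faithfully into $\mathbf G(Y_s)$, hence its restriction to $e$-rigidified gerbes is a subgroup of the discrete group $\Lambda$ and in particular locally constant on $\widetilde S$. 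Once this Künneth principle is in hand, the rest of the proof is formal: the hypothesis that $i^*\cal G$ is trivial forces the $\Lambda$-component at every $k$-point of $\widetilde S$ to vanish, so $\cal G$ pulls back from $\widetilde S$, and étale descent produces $\cal G_{T_1}$.
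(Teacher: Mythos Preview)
Your relative K\"unneth setup parallels the paper's---it too works with the relative cofiber $\mathbf G_{\tn{Gr}_G/\tn{Gr}_{T_1}}$ and identifies it \'etale-locally via \eqref{eq-etale-der-bundle} and the comparison \eqref{eq-sc-construct-over-ran}. But your final step has a gap: triviality of $i^*\cal G$ only controls the $\Lambda$-component over the \emph{identity} component of $\tn{Gr}_{T_1, X^I}$, since that is where $i:\tn{Gr}_{G_{\tn{der}}}\hookrightarrow\tn{Gr}_G$ lands. The base $\tn{Gr}_{T_1, X^I}$ is highly disconnected (its components are indexed by tuples in $\Lambda_{T_1}$), and your locally constant $\Lambda$-valued function could a priori take a different nonzero value on each of the other components. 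Nothing in your argument rules this out.

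The paper closes this gap with the $(\lambda,-\lambda)$ trick of \cite[\S3.4.3]{tao2019extensions}, which uses the factorization structure of $\cal G$ itself, not merely of the eventual descent. One considers the section $g^{(\lambda,-\lambda)}$ over $\tn{Gr}_{T_1}^{(\lambda,-\lambda)}\cong X^2$: its restriction to the diagonal lands in the identity component and hence vanishes; since $\mathbf G_{\tn{Gr}_G/\tn{Gr}_{T_1}}$ is \'etale-locally a subsheaf of the constant sheaf $\boxtimes_{i\in I}\underline{A}'_X$, a section vanishing on $\Delta\hookrightarrow X^2$ vanishes identically; then factorization away from the diagonal forces $g^{(\lambda)}$ to vanish. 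Your proposal invokes factorization only to transport structure \emph{after} descent---the missing idea is that factorization is what makes the descent possible across the nontrivial components in the first place.
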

\begin{proof}
Let $\mathbf G_{\tn{Gr}_G/\tn{Gr}_{T_1}}$ denote the \'etale sheafification of the presheaf on $\tn{Gr}_{T_1}$:
$$
S \leadsto \tn{Cofib}(\mathbf G(S) \rightarrow\mathbf G(S\underset{\tn{Gr}_{T_1, \tn{Ran}}}{\times}\tn{Gr}_{G, \tn{Ran}})).
$$
Let $\mathbf{Pic}_{\tn{Gr}_G/\tn{Gr}_{T_1}}$ be the analogously defined \'etale sheaf where we replace $\mathbf G$ by $\mathbf{Pic}$. We claim that the divisor class map $\mathbf{Pic}_{\tn{Gr}_G/\tn{Gr}_{T_1}}\underset{\mathbb Z}{\otimes} A(-1) \rightarrow \mathbf G_{\tn{Gr}_G/\tn{Gr}_{T_1}}$ is an isomorphism. Indeed, it suffices to show the map on presheaves is an \'etale local equivalence. Take any $S$-point of $\tn{Gr}_{T_1}$, an \'etale cover $\widetilde S$ together with an isomorphism \eqref{eq-etale-der-bundle} reduces the claim to identifying the cofibers of the horizontal maps:
$$
\xysmall{
	\mathbf{Pic}(\widetilde S)\underset{\mathbb Z}{\otimes}A(-1) \ar[r]\ar[d] & \mathbf{Pic}(\widetilde S\underset{\tn{Ran}}{\times} \op{Gr}_{G_{\tn{der}}})\underset{\mathbb Z}{\otimes}A(-1) \ar[d] \\
	\mathbf G(\widetilde S) \ar[r] & \mathbf G(\widetilde S\underset{\tn{Ran}}{\times} \op{Gr}_{G_{\tn{der}}})
}
$$
This in turn follows from the identification $\mathbf{Pic}^e_{\tn{Gr}_{G_{\tn{der}}}/X^I} \underset{\mathbb Z}{\otimes}A(-1) \xrightarrow{\sim} \mathbf G^e_{\tn{Gr}_{G_{\tn{der}}}/X^I}$ of \eqref{eq-sc-construct-over-ran}.

\smallskip

In particular, $\mathbf G_{\tn{Gr}_G/\tn{Gr}_{T_1}}$ is \'etale locally isomorphic to a subsheaf of $\boxtimes_{i\in I}\underline A'_X$ (see \S\ref{sec-pic-to-gerbe-ss}). Then the argument of \cite[\S3.4.3]{tao2019extensions} applies. Namely, starting with a section $g$ of $\mathbf G_{\tn{Gr}_G/\tn{Gr}_{T_1}}$ over $\tn{Gr}_{T_1, \tn{Ran}}$, the hypothesis shows that $g$ vanishes over the unit section. To obtain the vanishing of the restriction $g^{(\lambda)}$ to the connected component $\tn{Gr}_{T_1}^{\lambda}$, we consider the section $g^{(\lambda, -\lambda)}$ over $\tn{Gr}_{T_1}^{(\lambda, -\lambda)}$. The fact that $g^{(\lambda, -\lambda)}$ vanishes over the diagonal in $X^2$ implies that $g^{(\lambda, -\lambda)}$, hence $g^{(\lambda)}$, vanishes. The vanishing of the sections $g^{(\lambda^I)}$ with $|I|\ge 2$ then follows by restriction away from the diagonals (see \cite[\S3.4.3]{tao2019extensions} for details).
\end{proof}

\subsubsection{} Let us control the type of quadratic forms that can arise from factorization gerbes. We remove the assumption on $G_{\tn{der}}$ and instead consider any reductive group $G$.

\begin{lem}
\label{lem-quad-image}
The image of $\cal Q_{\mathbf G, G}$ is contained in $\cal Q(\Lambda_T; A(-1))^W_{\tn{restr}}$.
\end{lem}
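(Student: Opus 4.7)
I will treat Weyl-invariance and the restricted identity in sequence. For Weyl-invariance, I will exploit that for any $\dot w \in N_G(T)(k)$, the conjugation $\op{Ad}_{\dot w}\colon G \to G$ is an \emph{inner} automorphism, so the induced endomorphism $\op{Ad}_{\dot w, *}$ of the factorization prestack $\tn{Gr}_{G, \tn{Ran}}$ is canonically $2$-isomorphic to the identity — right multiplication by $\dot w^{-1}$ supplies a natural $G$-bundle isomorphism $\cal P \xrightarrow{\sim} \op{Ad}_{\dot w}^*\cal P$. On the other hand, the restriction of $\op{Ad}_{\dot w, *}$ to $\tn{Gr}_{T, \tn{Ran}} \hookrightarrow \tn{Gr}_{G, \tn{Ran}}$ agrees with the inclusion precomposed by the Weyl action $w$. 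Combining these yields a canonical equivalence $w^{\ast}(\cal G|_{\tn{Gr}_T}) \simeq \cal G|_{\tn{Gr}_T}$ for any factorization gerbe $\cal G$, and the classification of factorization sections for tori in \S\ref{sec-classification-tori} then forces $\cal Q_{\mathbf G, G}(\cal G)$ to be $W$-invariant.

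For the restricted identity $\kappa(\alpha, \lambda) = \langle\check\alpha, \lambda\rangle q(\alpha)$, I will reduce to the rank-one case. For each coroot $\alpha \in \Phi$, consider the Levi subgroup $G^{(\alpha)} := Z_G((\op{ker}\check\alpha)^\circ)$: it is reductive of semisimple rank $1$, contains $T$, and has coroot system $\{\pm\alpha\}$. Since $\tn{Gr}_T$ embeds compatibly in both $\tn{Gr}_{G^{(\alpha)}}$ and $\tn{Gr}_G$, the quadratic form attached to $\cal G|_{\tn{Gr}_{G^{(\alpha)}}}$ coincides with $\cal Q_{\mathbf G, G}(\cal G)$, so I may replace $G$ by $G^{(\alpha)}$. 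In rank one, $G_{\tn{der}}$ is either $\op{SL}_2$ or $\op{PGL}_2$, and I split further according to whether the root $\check\alpha$ is primitive in $\check\Lambda_T$. In the primitive case — which includes every $\op{PGL}_2$-type Levi — the map $\langle\check\alpha, -\rangle\colon \Lambda_T \to \mathbb Z$ is surjective, and the $W$-invariance already established yields $\langle\check\alpha, \lambda\rangle \cdot f(\lambda) = 0$ for $f(\lambda) := \kappa(\alpha, \lambda) - \langle\check\alpha, \lambda\rangle q(\alpha)$; a short linearity argument using the surjectivity of $\langle\check\alpha, -\rangle$ (translating any $\lambda$ by an appropriate multiple of a preimage of $1$) then forces $f \equiv 0$. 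In the non-primitive case — which occurs exactly when $G$ splits as a direct product $\op{SL}_2 \times Z(G)^\circ$, since $\check\alpha/2 \in \check\Lambda_T$ is equivalent to the trivialization of the $\mu_2$-quotient — I will apply a factorization K\"unneth decomposition to write $\cal G \simeq \cal G_{\op{SL}_2} \boxtimes \cal G_{Z(G)^\circ}$. The cross-pairing $\kappa(\alpha, \mu) = 0$ for $\mu \in \Lambda_{Z(G)^\circ}$ is then automatic, and combining with the identity $\kappa(\alpha, n\alpha) = 2 n q(\alpha)$ yields the restricted identity on all of $\Lambda_T$.

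The principal obstacle will be the non-primitive sub-case, which demands the factorization analogue of Lemma \ref{lem-product-decomposition}: the latter addresses only ordinary gerbes on products of schemes. I expect to assemble the factorization K\"unneth decomposition from Lemma \ref{lem-product-decomposition} together with the discreteness of $\mathbf G^e(\tn{Gr}_{\op{SL}_2, x})$ supplied by Lemma \ref{lem-gr-point-classification}, but threading the argument through the fibration over $\tn{Ran}$ — so that the factorization isomorphisms of $\cal G$ are respected by the decomposition — is where I expect the essential technical work to concentrate.
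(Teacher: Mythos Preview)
Your Weyl-invariance argument has a genuine gap. The prestack $\tn{Gr}_{G,\tn{Ran}}$ is valued in \emph{sets}, so there are no non-identity $2$-morphisms between self-maps; the phrase ``canonically $2$-isomorphic to the identity'' has no content here. Concretely, on the fiber $G\loo{t}/G\arc{t}$ the endomorphism $\op{Ad}_{\dot w,*}$ is \emph{left} translation by $\dot w$, which moves points. The bundle isomorphism $R_{\dot w^{-1}}\colon \cal P \xrightarrow{\sim} \op{Ad}_{\dot w}^*\cal P$ that you invoke does not intertwine the trivializations $\alpha$ and $\op{Ad}_{\dot w}^*(\alpha)$: transported back, the trivialization changes by right multiplication by $\dot w$ on the trivial bundle, which is \emph{left} multiplication by $\dot w$ on the coset space. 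So $\iota$ and $\iota\circ w$ are genuinely different maps $\tn{Gr}_{T}\to\tn{Gr}_{G}$ (e.g.\ already for $\op{SL}_2$ they send $n\alpha$ to $t^{n\alpha}$ versus $t^{-n\alpha}$). To obtain $L_{\dot w}^*\cal G\simeq\cal G$ you must use a \emph{motivic} property of $\mathbf G$: the paper connects $e$ to a lift $\tilde s_\alpha$ by explicit $\mathbb A^1$-paths inside the rank-one Levi and invokes $\mathbb A^1$-invariance (Lemma~\ref{lem-contraction}). Your categorical shortcut does not substitute for this.

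The remainder of your plan is sound and in places cleaner than the paper. Reducing via the Levi $G^{(\alpha)}=Z_G((\ker\check\alpha)^\circ)$ directly (rather than through the parabolic and the contraction of $\tn{Gr}_{N_P}$) is legitimate, since $\tn{Gr}_T\hookrightarrow\tn{Gr}_{G^{(\alpha)}}\hookrightarrow\tn{Gr}_G$ is a chain of factorization maps and the quadratic form is read off from $\tn{Gr}_T$. Your primitive/non-primitive dichotomy exactly matches the paper's split into $\{\op{GL}_2,\op{PGL}_2\}$ versus $\{\op{SL}_2\}$ Levi types, and your linearity argument in the primitive case is a clean replacement for the paper's somewhat terse ``vacuous'' claim. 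For the non-primitive case $G\simeq\op{SL}_2\times Z(G)^\circ$, however, you do not need to build a factorization K\"unneth formula from scratch: Lemma~\ref{lem-fiber-quad-der} is already available and says precisely that a factorization gerbe on $\tn{Gr}_G$ whose form vanishes on $\Lambda_{T_{\tn{der}}}$ descends to $\tn{Gr}_{G/G_{\tn{der}}}$. Subtracting the $\op{SL}_2$-contribution (via Lemma~\ref{lem-classification-sc}) and applying this gives the product decomposition in one step, which is what the paper does.
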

\begin{proof}
Let $\cal G\in\mathbf G^{\tn{fact}}(\tn{Gr}_{G, \tn{Ran}})$ and $q:=\cal Q_{\mathbf G, G}(\cal G)$. We need to establish the following identities for each simple co-root $\alpha_i$ and co-character $\lambda\in\Lambda_T$.
\begin{enumerate}[(a)]
	\item $q(s_{\alpha_i}(\lambda)) = q(\lambda)$;
	\smallskip
	\item $\kappa(\alpha_i, \lambda) = \langle\check{\alpha}_i, \lambda\rangle q(\alpha_i)$.
\end{enumerate}
Consider the parabolic subgroup $P\subset G$ generated by $T$ and $\alpha_i$. The quotient of $P$ by its nilradical $N_P$ is a reductive group $M$ of semisimple rank $1$. We have the following maps:
$$
\xysmall{
	& \op{Gr}_{P, \tn{Ran}} \ar[dl]_{\fr p}\ar[dr]^{\fr q } & \\
	\op{Gr}_{G, \tn{Ran}} & & \op{Gr}_{M, \tn{Ran}}.
}
$$

\smallskip

We observe that $\fr q$ is an \'etale fiber bundle with typical fiber $\tn{Gr}_{N_P, \tn{Ran}}$. On the other hand, there is a contracting $\mathbb G_m$-action on $\tn{Gr}_{N_P, \tn{Ran}}$ given by the co-root $\alpha_i$ whose fixed point locus is the unit section. By Lemma \ref{lem-contraction} and \'etale descent, we see that $\fr p^*\cal G$ canonically identifies with $\fr q^*\cal G_M$ for some $\cal G_M \in \mathbf G^{\tn{fact}}(\tn{Gr}_{M, \tn{Ran}})$. Regarding $\alpha_i$ as a co-root of $M$, we reduce the problem to reductive groups of semisimple rank $1$, with unique simple co-root $\alpha$. Such a group $G$ must be the direct product of a torus $T_1$ with $G_1 = \tn{SL}_2$, $\tn{GL}_2$, or $\tn{PGL}_2$.

\smallskip

To verify (a), we exhibit two paths $\gamma_1,\gamma_2 : \mathbb A^1 \rightarrow G$ such that:
$$
\gamma_1(0) = e,\quad \gamma_1(1) = \gamma_2(1), \quad \gamma_2(0) = \tilde s_{\alpha}.
$$
where $\tilde s_{\alpha}$ a lift of $s_{\alpha}\in W$ to $G$. For instance, we may set $\gamma_1,\gamma_2$ to be identity on the factor $T_1$ and be given by the following matrices for the $G_1$ factor:
$$
\gamma_1(t) = 
\begin{pmatrix}
1 & 2t \\
0 & 1
\end{pmatrix}
,\quad
\gamma_2(t) =
\begin{pmatrix}
t & t+1 \\
t-1 & t
\end{pmatrix}.
$$
As $G$ acts on itself by inner automorphisms, we have action morphisms $\mathbb A^1\times \tn{Gr}_{G,\tn{Ran}} \rightarrow \tn{Gr}_{G, \tn{Ran}}$ defined by $\gamma_1$ and $\gamma_2$. Pulling back $\cal G$ produces two factorization gerbes $\cal G_{\gamma_1}$, $\cal G_{\gamma_2}$ on $\mathbb A^1 \times \tn{Gr}_{G, \tn{Ran}}$. Thus $\mathbb A^1$-invariance (Lemma \ref{lem-contraction}) gives isomorphisms:
$$
\cal G \xrightarrow{\sim} \gamma_1(1)^*\cal G\xrightarrow{\sim} \gamma_2(1)^*\cal G \xrightarrow{\sim} \tilde s_{\alpha}^*\cal G.
$$
This proves identity (a).

\smallskip

For identity (b), we only need to consider the case $G = T_1 \times\tn{SL}_2$ as the other two cases are vacuous (c.f.~\S\ref{sec-quad-restr}). We claim that external product defines an equivalence:
$$
\mathbf G^{\tn{fact}}(\tn{Gr}_{T_1, \tn{Ran}}) \times \mathbf G^{\tn{fact}}(\tn{Gr}_{\tn{SL}_2, \tn{Ran}}) \xrightarrow{\sim} \mathbf G^{\tn{fact}}(\tn{Gr}_{G, \tn{Ran}}).
$$
Indeed, given $\cal G\in\mathbf G^{\tn{fact}}(\tn{Gr}_G)$, pulling back along $\tn{Gr}_{G,\tn{Ran}}\rightarrow \tn{Gr}_{\tn{SL}_2, \tn{Ran}}\rightarrow \tn{Gr}_{G,\tn{Ran}}$ and taking the quotient, we obtain a gerbe $\cal G_1 \in\mathbf G^{\tn{fact}}(\tn{Gr}_G)$ whose associated quadratic form vanishes on $\Lambda_{T_{\tn{der}}}$. Since $\tn{SL}_2$ is simply connected, Lemma \ref{lem-fiber-quad-der} applies and we see that $\cal G_1$ is pulled back from $\tn{Gr}_{T_1, \tn{Ran}}$. Having the product decomposition, the desired identity follows from the classification for semisimple, simply connected groups (Lemma \ref{lem-classification-sc}).
\end{proof}

\subsubsection{} We now combine the above ingredients to build the classification functor:
$$
\Psi_{\mathbf G} : \mathbf G^{\tn{fact}}(\tn{Gr}_{G, \tn{Ran}}) \rightarrow \Theta_G(\Lambda_T; \mathbf G).
$$
Indeed, given $\cal G\in\mathbf G^{\tn{fact}}(\tn{Gr}_{G, \tn{Ran}})$, the procedure of \S\ref{sec-fact-gerbe-to-form} produces a $\Theta$-datum $(q, \cal G^{(\lambda)}) \in \Theta(\Lambda_T; \mathbf G)$. Lemma \ref{lem-quad-image} shows that $q$ indeed lies in $\cal Q(\Lambda_T; A(-1))^W_{\tn{restr}}$.

\smallskip

It remains to produce the isomorphism $\varepsilon$. Indeed, the restriction of $\cal G$ to $\tn{Gr}_{\widetilde G_{\tn{der}}, \tn{Ran}}$ is the factorization gerbe classified by $q\big|_{\Lambda_{\widetilde T_{\tn{der}}}}$ via Lemma \ref{lem-classification-sc}. Thus we obtain an isomorphism $\varepsilon$ of $\Theta$-data for the lattice $\Lambda_{\widetilde T_{\tn{der}}}$ by functoriality of pullback along the following diagram.
$$
\xysmall{
	\tn{Gr}_{\widetilde T_{\tn{der}}, \tn{Ran}} \ar[r]\ar[d] & \tn{Gr}_{\widetilde G_{\tn{der}}, \tn{Ran}} \ar[d] \\
	\tn{Gr}_{T, \tn{Ran}} \ar[r] & \tn{Gr}_{G, \tn{Ran}}
}
$$

\subsection{$\Psi_{\mathbf G}$ is an equivalence}

\subsubsection{} Our final goal is to prove that the classification functor $\Psi_{\mathbf G}$, constructed in the previous subsection, is an equivalence of categories. In order to do so, we will first perform a reduction using the following geometric input.

\begin{lem}
\label{lem-tau-surj}
Suppose $G'\rightarrow G$ is a map of reductive groups whose kernel is a torus. Then the morphism $\op{Gr}_{G', \tn{Ran}} \rightarrow \op{Gr}_{G, \tn{Ran}}$ is surjective in the $t$-topology.
\end{lem}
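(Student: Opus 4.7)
The plan is to recast the lifting problem as the trivialization of a $K$-gerbe and to show that the resulting obstruction vanishes after a $t$-cover of $S$.

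Let $(S, (x^{(i)})_{i \in I}, \cal P_G, \alpha)$ be an $S$-point of $\tn{Gr}_{G, \tn{Ran}}$, factoring through $X^I \subset \tn{Ran}$. Write $D := \bigcup_i \Gamma_{x^{(i)}} \subset S \times X$ and $U := S \times X \backslash D$. Since $K$ is a torus and $G' \to G$ may be taken surjective, the \'etale stack over $S \times X$ classifying $G'$-lifts of $\cal P_G$ is a $K$-gerbe $\cal H$, and the trivialization $\alpha$ induces a trivialization of $\cal H|_U$. Giving an $S$-point of $\tn{Gr}_{G', \tn{Ran}}$ mapping to our given one is the same as extending this to a global trivialization of $\cal H$ compatibly with the one on $U$. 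A diagram chase with the excision long exact sequence for the pair $(S \times X, D)$ identifies the obstruction as a single class in the local cohomology $H^2_D(S \times X, K)$, so it suffices to show this group vanishes after a $t$-cover of $S$.

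I will do this by two reductions via $t$-covers. First, by $t$-local smoothness I may assume $S$ smooth and connected. Next, $X^I$ admits a finite locally closed stratification by subschemes $\Delta_\sim^\circ$ indexed by equivalence relations $\sim$ on $I$, where $\Delta_\sim^\circ$ parametrizes tuples with $x^{(i)}=x^{(i')}$ exactly when $i\sim i'$; this is a cdh-cover, hence a $t$-cover in the main examples $t = \eh, h$. Pulling back to $S$ and reapplying $t$-local smoothness, I may assume $S \to X^I$ factors through a single stratum $\Delta_\sim^\circ$. In that case $D_\tn{red}$ is literally the disjoint union $\sqcup_{j \in I/\sim} \Gamma_{y^{(j)}}$ of smooth relative divisors over $S$, each projecting isomorphically to $S$.

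It remains to check the vanishing. Since \'etale cohomology with supports depends only on the underlying reduced subscheme and the $\Gamma_{y^{(j)}}$ are pairwise disjoint, absolute cohomological purity for the regular codimension-$1$ immersions---which for $\mathbb G_m$ follows from the exact sequence $0 \to \mathbb G_m \to j_*\mathbb G_m \to i_*\underline{\mathbb Z} \to 0$ attached to a smooth divisor in a regular scheme, tensored with the cocharacter lattice $\Lambda_K$ of $K$---gives
$$
H^2_D(S \times X, K) \;=\; \bigoplus_j H^2_{\Gamma_{y^{(j)}}}(S \times X, K) \;\cong\; \bigoplus_j H^1_{\et}(\Gamma_{y^{(j)}}, \Lambda_K).
$$
Since $\Gamma_{y^{(j)}} \cong S$ is normal connected while $\Lambda_K$ is a finitely generated torsion-free abelian group, continuous homomorphisms from the profinite group $\pi_1^{\et}(S)$ to $\Lambda_K$ are trivial, so each summand vanishes. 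The principal obstacle will be the clean gerbe-theoretic formulation of the combined bundle-plus-trivialization lifting problem and the careful identification of its obstruction as a class in $H^2_D(S \times X, K)$; once that is in place, the preceding reductions and the cohomology computation are routine.
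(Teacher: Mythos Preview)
Your argument is correct and takes a genuinely different route from the paper's proof. The paper first invokes the Drinfeld--Simpson theorem so that, after an \'etale cover of $S$, the bundle $\cal P_G$ becomes Zariski-locally trivial; it then identifies the obstruction as a Zariski class and kills it via the exact sequence
$$\op{Pic}(\widetilde S\times X)\rightarrow \op{Pic}(U)\rightarrow \op H^2(\widetilde S\times X;i^!\mathbb G_m)\rightarrow 0,$$
using that the restriction $\op{Pic}(\widetilde S\times X)\rightarrow \op{Pic}(U)$ is surjective once $\widetilde S$ is smooth. By contrast, you stay in the \'etale topology, bypass Drinfeld--Simpson entirely, and instead pass to a cdh-refinement so that the support $D$ becomes a disjoint union of smooth relative divisors; purity then rewrites the obstruction group as $\bigoplus_j H^1_{\et}(\Gamma_{y^{(j)}};\Lambda_K)$, which vanishes because a profinite group has no nontrivial continuous homomorphisms to a finitely generated torsion-free discrete group. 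Your route is more elementary in that it needs only divisor purity for $\mathbb G_m$ and a basic $\pi_1^{\et}$ fact, whereas the paper's is a one-line Picard argument once the heavier input of Drinfeld--Simpson is in place. One small trade-off: your stratification step requires $t$ to refine cdh, which you correctly note holds for $\eh$ and $\tn h$; the paper's argument uses only \'etale covers and $t$-local smoothness, so it applies verbatim to any topology meeting the standing hypotheses of \S\ref{sec-setup-topology}.
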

\begin{proof}
One takes an $S$-point of $\op{Gr}_G$ represented by $(x^{(i)}, \cal P_G, \alpha)$. By the Drinfeld--Simpson theorem, we may assume that $\cal P_G$ is Zariski-locally trivial after an \'etale cover of $S$. A reduction of the datum $(\cal P_G, \alpha)$ to the structure group $G'$ is thus equivalent to the trivialization of a section of $i^!T[2]$ in the Zariski topology of $S\times X$, where $i$ denotes the closed immersion:
$$
\bigcup_{i\in I} \Gamma_{x^{(i)}} \xrightarrow{i} S\times X \xleftarrow{j} U_{\{x^{(i)}\}}.
$$
We shall show that over a $t$-cover $\widetilde S\rightarrow S$ with $\widetilde S$ smooth, every section of $i^!T[2]$ admits a trivialization. To prove this statement, one reduces to $T=\mathbb G_m$. The canonical triangle $i^!\mathbb G_m\rightarrow\mathbb G_m\rightarrow\op Rj_*\mathbb G_m$ induces a long exact sequence:
$$
\op{Pic}(\widetilde S\times X) \rightarrow \op{Pic}(U_{\{x^{(i)}\}}) \rightarrow \op H^2(\widetilde S\times X; i^!\mathbb G_m) \rightarrow 0.
$$
The map on Picard groups is surjective by smoothness of $\widetilde S$. Thus $\op H^2(\widetilde S\times X; i^!\mathbb G_m) = 0$.
\end{proof}

\subsubsection{} Recall that a \emph{$z$-extension} of $G$ is a short exact sequence of reductive groups:
$$
1 \rightarrow T_2 \rightarrow G' \rightarrow G\rightarrow 1.
$$
where the derived subgroup $G'_{\tn{der}}\subset G'$ is simply connected. Its existence is assured by the combinatorics of root data (c.f.~\cite[Proposition 3.1]{milne1982conjugates}). We fix a $z$-extension of $G$ and let $T_1$ be the quotient torus $G'/G'_{\tn{der}}$. Then the quotient of lattices $\Lambda_{T_1}/\Lambda_{T_2}$ identifies with $\pi_1G$.

\subsubsection{} One sees directly that $T_2$ is central in $G'$. Thus the \v Cech nerve of $G'\rightarrow G$ is in fact a co-simplicial system of group schemes $G'\times T_2^{\bullet}$. Since the formation of the affine Grassmannian commutes with product of groups, we see that the \v Cech nerve of $\tn{Gr}_{G', \tn{Ran}} \rightarrow \tn{Gr}_{G, \tn{Ran}}$ is co-simplicial system of prestacks $\tn{Gr}_{G' \times T_2^{\bullet}, \tn{Ran}}$.

\smallskip
We have a commutative diagram of strict Picard $2$-groupoids:
$$
\xymatrix@R=1.5em@C=4em{
	\mathbf G^{\tn{fact}}(\tn{Gr}_{G, \tn{Ran}}) \ar[r]^-{\Psi_{\mathbf G, G}}\ar[d] & \Theta_G(\Lambda_T; \mathbf G) \ar[d] \\
	\lim_{\Delta^{\tn{op}}} \mathbf G^{\tn{fact}}(\tn{Gr}_{G'\times T_2^{\bullet}, \tn{Ran}}) \ar[r]^-{\Psi_{\mathbf G, G'\times T_2^{\bullet}}} & \lim_{\Delta^{\tn{op}}} \Theta_G(\Lambda_{T'\times T_2^{\bullet}}; \mathbf G)
}
$$
Lemma \ref{lem-tau-surj} shows that the left vertical arrow is an equivalence. A direct argument shows that the right vertical arrow is an equivalence as well. Therefore, in proving that $\Psi_{\mathbf G, G}$ is an equivalence, we may assume:

\smallskip

\emph{---the derived subgroup $G_{\tn{der}}$ is simply connected}.

\subsubsection{} Under this assumption, we can write $T_1 = G/G_{\tn{der}}$ and $\Lambda_{T_1}$ is isomorphic to $\pi_1G$.

\begin{lem}
Suppose $G_{\tn{der}}$ is simply connected. Then $\Psi_{\mathbf G, G}$ is an equivalence.
\end{lem}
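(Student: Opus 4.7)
My plan is to realize $\Psi_{\mathbf G, G}$ as the middle arrow in a morphism of fiber sequences of strict Picard $2$-groupoids whose outer arrows are equivalences. Since $G_{\tn{der}}$ is simply connected, we have $\Lambda_{T_{\tn{der}}} = \Lambda_T^r$ and $\Lambda_{T_1} = \pi_1 G$. Combining Lemma \ref{lem-fiber-quad-der} with Lemmas \ref{lem-quad-image} and \ref{lem-sc-restr}, the top row of the diagram below is a fiber sequence,
$$\mathbf G^{\tn{fact}}(\tn{Gr}_{T_1, \tn{Ran}}) \xrightarrow{\fr p^*} \mathbf G^{\tn{fact}}(\tn{Gr}_{G, \tn{Ran}}) \longrightarrow \cal Q(\Lambda_T^r; A(-1))^W_{\tn{restr}}.$$

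On the combinatorial side, I claim an analogous fiber sequence
$$\Theta(\Lambda_{T_1}; \mathbf G) \xrightarrow{(\tn{pb})^*} \Theta_G(\Lambda_T; \mathbf G) \xrightarrow{\tn{res}} \cal Q(\Lambda_T^r; A(-1))^W_{\tn{restr}},$$
in which $\tn{res}$ restricts a form from $\Lambda_T$ to $\Lambda_T^r$ and $(\tn{pb})^*$ pulls back along $\Lambda_T \twoheadrightarrow \Lambda_{T_1}$. The fiber condition at the basepoint: given $(q, \cal G^{(\lambda)}, \varepsilon)$ with $q|_{\Lambda_T^r} = 0$, the enhancement $\varepsilon$ identifies $\cal G^{(\lambda)}|_{\Lambda_T^r}$ with the trivial multiplicative system attached to the zero form, so the whole datum descends uniquely to $\Lambda_T/\Lambda_T^r = \Lambda_{T_1}$, yielding a $\Theta(\Lambda_{T_1}; \mathbf G)$-datum. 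Essential surjectivity of $\tn{res}$ uses divisibility of $A(-1)$: every restricted $W$-invariant form on $\Lambda_T^r$ is an $A(-1)$-combination of restrictions of Killing forms $q_{s, \tn{Kil}}$, which extend canonically to $\Lambda_T$.

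The classification functor now fits into the diagram
$$
\xysmall{
\mathbf G^{\tn{fact}}(\tn{Gr}_{T_1, \tn{Ran}}) \ar[r]^-{\fr p^*} \ar[d]_{\Psi_{\mathbf G, T_1}} & \mathbf G^{\tn{fact}}(\tn{Gr}_{G, \tn{Ran}}) \ar[r] \ar[d]^{\Psi_{\mathbf G, G}} & \cal Q(\Lambda_T^r; A(-1))^W_{\tn{restr}} \ar@{=}[d] \\
\Theta(\Lambda_{T_1}; \mathbf G) \ar[r]^-{(\tn{pb})^*} & \Theta_G(\Lambda_T; \mathbf G) \ar[r]^-{\tn{res}} & \cal Q(\Lambda_T^r; A(-1))^W_{\tn{restr}}.
}
$$
The right square commutes tautologically (both compositions read off the form on $\Lambda_T^r$). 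The left square commutes by the naturality of the torus classification of \S\ref{sec-classification-tori} under $T \twoheadrightarrow T_1$, combined with the recipe for $\Psi_{\mathbf G, G}$ in \S\ref{sec-classification-functor-strategy} via restriction to $\tn{Gr}_T$. Since the left vertical arrow is an equivalence, the long exact sequence of homotopy groups attached to the two fiber sequences forces $\Psi_{\mathbf G, G}$ to be an equivalence.

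The main technical obstacle will be verifying the bottom fiber sequence, specifically that descent from $\Lambda_T$ to $\Lambda_{T_1}$ respects the multiplicative structure, the associativity constraint, and the $\kappa$-twisted commutativity constraint. One must check in particular that the $2$-homotopy $h_{\lambda, \mu}$ (equivalently, the square root of $\kappa(\lambda,\lambda)$ dictated by strictness, cf.~Remark \ref{rem-homotopy-to-square-root}) descends compatibly with the trivialization of $\cal G^{(\lambda)}|_{\Lambda_T^r}$ forced by $\varepsilon$.
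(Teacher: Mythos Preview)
Your approach is essentially the paper's: both set up a morphism of fiber sequences with the torus classification on the left and quadratic forms on the right, then conclude by a diagram chase. The packaging differs slightly --- the paper takes the base to be $\cal Q(\Lambda_T; A(-1))^W_{\tn{restr}}$ with fiber $\mathbf{Hom}(\pi_1 G, \mathbf G(X))$ (using \eqref{eq-theta-fiber-seq}), whereas you restrict further to $\cal Q(\Lambda_T^r; A(-1))^W_{\tn{restr}}$ and absorb the quadratic form on $\Lambda_{T_1}$ into the larger fiber $\Theta(\Lambda_{T_1}; \mathbf G)$. Either works.

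There is one genuine gap. A morphism of fiber sequences of connective spectra with equivalences on the outer terms does \emph{not} force the middle map to be an equivalence: the long exact sequence terminates at $\pi_0$ of the base without surjectivity, so the five-lemma gives isomorphisms on $\pi_i$ for $i \ge 1$ and injectivity on $\pi_0$, but no more. (Concretely: a subgroup inclusion $E \hookrightarrow B$ of discrete abelian groups is a fiber sequence $0 \to E \to B$ with the same fiber and base as $0 \to B \to B$.) For surjectivity of $\Psi_{\mathbf G, G}$ on $\pi_0$ you must know that the \emph{top} map
\[
\mathbf G^{\tn{fact}}(\tn{Gr}_{G, \tn{Ran}}) \longrightarrow \cal Q(\Lambda_T^r; A(-1))^W_{\tn{restr}}
\]
is surjective. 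You verified surjectivity of $\tn{res}$ on the bottom row but not on the top. The paper faces exactly the same issue (it invokes the ``$4$-lemma'' and then checks surjectivity of $\alpha$ by hand), and the fix is the same: the determinant line bundles $\det_{\fr g_s}$ on $\tn{Gr}_{G, \tn{Ran}}$ produce factorization gerbes whose forms are $A(-1)$-multiples of the Killing forms $q_{s, \tn{Kil}}$, and these span $\cal Q(\Lambda_T^r; A(-1))^W_{\tn{restr}}$. Once you add this sentence, your argument is complete.
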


\begin{proof}[Fully faithfulness]
Since $\Psi_{\mathbf G, G}$ is a morphism of strict Picard $2$-groupoids, it suffices to show that $\Psi_{\mathbf G}$ has contractible fiber at $\mathbf 0 \in \Theta_G(\Lambda_T; \mathbf G)$. Let $(\cal G; \alpha)$ be an object of the fiber, so $\cal G\in\mathbf G^{\tn{fact}}(\op{Gr}_{G, \tn{Ran}})$ and $\alpha$ is a trivialization of its image $(q, \cal G^{(\lambda)}, \varepsilon) \in \Theta_G(\Lambda_T; \mathbf G)$. Since $q = 0$, Lemma \ref{lem-fiber-quad-der} implies that $\cal G$ descends to a factorization gerbe $\cal G_1$ over $\tn{Gr}_{T_1, \tn{Ran}}$.

\smallskip

By the classification for tori (\S\ref{sec-classification-tori}), we see that $\cal G_1$ corresponds to an object in $\Theta(\Lambda_T; \mathbf G)$ with vanishing quadratic form, i.e., an object of $\mathbf{Hom}(\Lambda_{T_1}, \mathbf G(X))$. In particular, the datum of the trivialization $\alpha$ is equivalent to a trivialization of $\cal G_1$.
\end{proof}

\begin{proof}[Essential surjectivity]
We have a morphism between fiber sequences of strict Picard $2$-groupoids, where the top fiber sequence comes from Lemma \ref{lem-fiber-quad-der} and the classification for tori.
$$
\xysmall{
	\mathbf{Hom}(\Lambda_{T_1}, \mathbf G(X))\ar[r]\ar[d]^{\cong} & \mathbf G^{\tn{fact}}(\op{Gr}_{G, \tn{Ran}})\ar[d]^{\Psi_{\mathbf G}} \ar[r]^-{\alpha} & \cal Q(\Lambda_T; A(-1))_{\tn{restr}}^W \ar[d]^{\cong} \\
	\mathbf{Hom}(\Lambda_{T_1}, \mathbf G(X))\ar[r] & \Theta_G(\Lambda_T; \mathbf G) \ar[r] & \cal Q(\Lambda_T; A(-1))_{\tn{restr}}^W
}
$$
By the $4$-lemma, it is enough to show that $\alpha$ is surjective. We note that the determinant line bundle construction \eqref{eq-det-line-bundle} gives a section:
$$
\xysmall{
	& \bigoplus_{s\in\mathbf S} A(-1)\ar[dl]_{\det}\ar[d]^{\tn{Kil}} \\
	\mathbf G^{\tn{fact}}(\op{Gr}_{G, \tn{Ran}}) \ar[r]^-{\alpha} & \cal Q(\Lambda_T; A(-1))^W_{\tn{restr}}
}
$$
Thus, by Lemma \ref{lem-quad-form}, it remains to consider quadratic forms pulled back from $\cal Q(\Lambda_{T_1}; A(-1))$. However, each such form $q$ lifts to some $\Theta$-datum $(q, \cal G^{(\lambda)})\in\Theta(\Lambda_{T_1}; \mathbf G)$ after choosing a square root $\frac{1}{2}q$. Indeed, such choice is possible because $\Lambda_{T_1}$ is free and $A(-1)$ is divisible. We are thus done by the section $\nu$:
$$
\xysmall{
	& \Theta(\Lambda_{T_1}; \mathbf G)\ar[dl]_{\nu}\ar[d] \\
	\mathbf G^{\tn{fact}}(\op{Gr}_{G, \tn{Ran}}) \ar[r]^-{\alpha} & \cal Q(\Lambda_T; A(-1))^W_{\tn{restr}}
}
$$
constructed by composing the equivalence $\Psi_{\mathbf G, T_1}^{-1} : \Theta(\Lambda_{T_1}; \mathbf G) \xrightarrow{\sim} \mathbf G^{\tn{fact}}(\op{Gr}_{T_1, \tn{Ran}})$ with the pullback along $\op{Gr}_{G, \tn{Ran}} \rightarrow \op{Gr}_{T_1, \tn{Ran}}$.
\end{proof}
\qed(Theorem \ref{thm-gerbe-classification})

\bigskip

\bibliographystyle{amsplain}
\bibliography{../biblio}

\end{document}